\definecolor{light-gray}{gray}{0.7}
\newtheorem{theorem}{Theorem}
\newtheorem{lemma}[theorem]{Lemma}
\newtheorem{proposition}[theorem]{Proposition}
\newtheorem{definition}[theorem]{Definition}
\newtheorem{remark}[theorem]{Remark}
\newcommand{\nocontentsline}[3]{}
\newcommand{\tocless}[2]{\bgroup\let\addcontentsline=\nocontentsline#1{#2}\egroup}
\numberwithin{equation}{section}  %numberwithin goes before cleverefs when using hyperref
\numberwithin{theorem}{section} 
\crefname{app}{Appendix}{Appendices}
\newcommand{\h}{\mathbf{h}}
\newcommand{\w}{\mathbf{w}}
\newcommand{\scalar}[2]{\langle #1 , #2\rangle}
\newcommand{\si}{\sigma} 
\renewcommand{\l}{\lambda} 
\newcommand{\e}{\varepsilon} 
\renewcommand{\a}{\alpha} 
\renewcommand{\b}{\beta} 
\newcommand{\ent}{{\rm ENT} }
\newcommand{\tc}{\, |\, }
\newcommand{\ind}{\mathbf{1}}
\newcommand{\pimin}{\pi_{\rm min}}
\newcommand{\pimax}{\pi_{\rm max}}
\newcommand{\g}{\gamma} 
\newcommand{\bd}{\mathbf d}
\newcommand{\bP}{\mathbf P}
\newcommand{\bE}{\mathbf E}
\newcommand{\D}{\Delta}
\renewcommand{\d}{\delta}
\newcommand{\tx}{{\textsc{tx}}}
\renewcommand{\t}{\tau}
\newcommand{\tv}{\texttt{TV}}
\newcommand{\tent}{T_\ent}
\newcommand{\muin}{\mu_{\rm in}}
\newcommand{\G}{\Gamma}
\newcommand{\maxtwo}[2]{\max_{\substack{#1 \\ #2}}} % sup with 2 lines
\newcommand{\cA}{\ensuremath{\mathcal A}} 
\newcommand{\cB}{\ensuremath{\mathcal B}} 
\newcommand{\cC}{\ensuremath{\mathcal C}} 
\newcommand{\cD}{\ensuremath{\mathcal D}} 
\newcommand{\cE}{\ensuremath{\mathcal E}} 
\newcommand{\cF}{\ensuremath{\mathcal F}} 
\newcommand{\cG}{\ensuremath{\mathcal G}} 
\newcommand{\cH}{\ensuremath{\mathcal H}} 
\newcommand{\cI}{\ensuremath{\mathcal I}}
\newcommand{\cL}{\ensuremath{\mathcal L}}
\newcommand{\cQ}{\ensuremath{\mathcal Q}}
\newcommand{\cT}{\ensuremath{\mathcal T}} 
\newcommand{\cV}{\ensuremath{\mathcal V}} 
\newcommand{\cW}{\ensuremath{\mathcal W}} 
\newcommand{\cX}{\ensuremath{\mathcal X}} 
\newcommand{\cY}{\ensuremath{\mathcal Y}}
\newcommand{\bbC}{{\ensuremath{\mathbb C}} } 
\newcommand{\bbE}{{\ensuremath{\mathbb E}} }
\newcommand{\bbN}{{\ensuremath{\mathbb N}} } 
\newcommand{\bbP}{{\ensuremath{\mathbb P}} } 
\newcommand{\bbR}{{\ensuremath{\mathbb R}} }
\newcommand{\E}{\ensuremath{\mathbb{E}}}
\renewcommand{\P}{\ensuremath{\mathbb{P}}}
\def\({\left(}
\def\){\right)}
\def\[{\left[}
\def\]{\right]}
\newacro{NE}{Nash equilibrium}
\newacro{PNE}{pure Nash equilibrium}
\newacro{PFNE}{prior-free Nash equilibrium}
\newacro{WE}{Wardrop equilibrium}
\newacro{SO}{socially optimum}
\newacro{KKT}{Karush\textendash Kuhn\textendash Tucker}
\newacro{OD}[O/D]{origin-destination}
\newacro{PoA}{price of anarchy}
\newacro{PoS}{price of stability}
\newacro{PoCS}{price of correlated stability}
\newacro{BPR}{bureau of public roads}
\newacro{FIP}{finite improvement property}
\newacro{BPG}{buck-passing game}
\newacro{SBPG}{stochastic buck-passing game}
\newacro{MBPG}{mixed extension of the buck-passing game}
\begin{document}
\title%[Diameter, Stationary Distribution and Cover Time of RW on sparse random digraphs]
{Stationary Distribution and Cover Time of sparse directed configuration models}
\author[P.~Caputo]{Pietro Caputo$^{\#}$}
\address{$^{\#}$ Dipartimento di Matematica e Fisica, Universit\`a di Roma Tre, Largo S. Leonardo Murialdo 1, 00146 Roma, Italy.}
\email{caputo@mat.uniroma3.it}
\author[M.~Quattropani]{Matteo Quattropani$^{\flat}$}
\address{$^{\flat}$ Dipartimento di Matematica e Fisica, Universit\`a di Roma Tre, Largo S. Leonardo Murialdo 1, 00146 Roma, Italy.}
\email{matteo.quattropani@uniroma3.it}
%\urladdr{\url{http://www.there.com}}
\maketitle              % typeset the header of the contribution
\begin{abstract}
We consider sparse digraphs generated by the configuration model with given in-degree and out-degree sequences. 
We establish that with high probability the cover time is linear up to a poly-logarithmic correction. For 
 a large class of degree sequences we determine the exponent $\g\geq1$ of the logarithm and show that the cover time grows as $n\log^{\g}(n)$, where $n$ is the number of vertices. The results are obtained by analysing %through the analysis of the asymptotic behaviour of 
 the extremal values of the stationary distribution of the digraph. 
%Following the theory developed by Cooper and Frieze, we attack the problem by analyzing the extremal values of the stationary distribution. 
In particular, we show that the stationary distribution $\pi$ is uniform up to a poly-logarithmic factor,  and that for 
 a large class of degree sequences the minimal values of $\pi$ have the form $\frac1n\log ^{1-\g}(n)$, while the maximal values of $\pi$ behave as  $\frac1n\log ^{1-\kappa}(n)$ for some 
other exponent $\kappa\in[0,1]$. In passing, we prove tight bounds on the diameter of the digraphs and show that the latter coincides with the typical distance between two vertices. 
%\keywords{digraph, stationary distribution, cover time, hitting time, mixing time, diameter.}
\end{abstract}
%\tableofcontents
%\newpage
%
% Section ------------------------------------------

\thispagestyle{empty}

\section{Introduction}
The problem of determining the cover time of a graph is a central one in combinatorics and probability  \cite{aleliunas1979random,aldous1983time,kahn1989cover,aldous1989introduction,lovasz1993random,feige1995tight,feige1995tight2}. In recent years, the cover
time of random graphs has been extensively studied \cite{jonasson1998cover,CF1,cooper2007cover,CF2,CF3}.
All these works consider undirected graphs, with the notable exception of the paper \cite{CF2} by Cooper and Frieze, where the authors compute the cover time of directed Erd\H{o}s-Renyi random graphs in the regime of strong connectivity, that is with a logarithmically diverging average degree. The main difficulty in the directed case is that, in contrast with the undirected case, the graph's stationary distribution is an unknown random variable.  

In this paper we address the problem of determining the cover time of sparse random digraphs with  bounded  degrees. More specifically, we consider random digraphs $G$ with given in- and out-degree sequences, generated via the configuration model. For the sake of this introductory discussion let us look at the special case where all vertices have either in-degree 2 and out-degree 3 or in-degree 3 and out-degree 2, with the two types evenly represented in the vertex set $V(G)$. We refer to this as the $(2,3)(3,2)$ case. With high probability $G$ is strongly connected and we may 
ask how long the random walk on $G$ takes to cover all the nodes. The expectation of this quantity, maximized over the initial point of the walk defines $T_{\rm cov}(G)$, the cover time of $G$. We will show that with high probability as the number of vertices $n$ tends to infinity one has
  \begin{equation}\label{tcovo}
T_{\rm cov}(G)\asymp n
\log^\g (n)
%\log^{\g+o(1)}n
\end{equation}
 where $\g = \frac{\log 3}{\log 2}\approx 1.58$, and $a_n\asymp b_n$ stands for $C^{-1}\leq a_n/b_n \leq C$ for some constant $C>0$.   The constant $\g$ can be understood in connection with the statistics of the extremal  values of the stationary distribution $\pi$ of $G$. Indeed, following the theory developed by 
Cooper and Frieze, if the graphs satisfy suitable requirements, %that are typically shown to be met when the graph is take uniformly at random) 
then the problem of determining the cover time can be reformulated in terms of the control of the minimal values of $\pi$. In particular, we will see that the hitting time of a vertex $x\in V(G)$ effectively behaves as an exponential random variable with parameter $\pi(x)$, and that to some extent these random variables are weakly dependent. This supports the heuristic picture that represents the cover time as the expected value of $n$ independent exponential random variables, each with parameter $\pi(x)$, $x\in V(G)$. Controlling the stationary distribution is however a rather challenging task, especially if the digraphs have bounded degrees. 

Recently, Bordenave, Caputo and Salez \cite{BCS1} analyzed the mixing time of sparse random digraphs with given degree sequences and their work provides some important information on the distribution of the values of $\pi$. In particular, in the $(2,3)\-(3,2)$ case, the empirical distribution of the values $\{n\pi(x),\, x\in V(G)\}$ converges as $n\to\infty$ to the probability law $\mu$ on $[0,\infty)$ of the random variable $X$ given by
 \begin{equation}\label{eq:receq}
X=\tfrac25\sum_{k=1}^N Z_k\,,
\end{equation}
where $N$ is the random variable with $N=2$ with probability $\frac12$ and $N=3$  with probability $\frac12$, and the $Z_k$ are independent and identically distributed mean-one random variables uniquely determined by the recursive distributional equation 
 \begin{equation}\label{eq:receq2}
Z_1 \stackrel{d}{=}  
\tfrac{1}{M}\sum_{k=1}^{5-M}Z_k,
\end{equation}
where $M$ is the random variable with $M=2$ with probability $2/5$ and $M=3$  with probability $3/5$,  independent of the $Z_k$'s, and $\stackrel{d}{=}$ denotes equality in distribution.
%See figure \ref{fig} \what for a simulation of the probability density of $\mu$. 
%\begin{figure}[h!]
%\begin{center}
%\includegraphics[angle =0,height = 3.5cm]{rescaled-pi-10000vertices-(23)(32)bis.pdf}
%%\includegraphics[angle =0,height = 6.5cm]{equilibrium.pdf}
%%\caption{Distance to equilibrium along time (left) and histogram of the vertex weights multiplied by $n$ under the equilibrium measure (right) for the random walk on a random digraph with $n=5000+5000+5000$ vertices with respective degrees $(d^+,d^-)=(3,2)$,  $(3,4)$ and  $(4,4)$.}
%\label{fig:cutoff}
%\end{center}
%\end{figure}

This gives convergence of the distribution of the bulk values of $\pi$, that is of the values of $\pi$ on the scale $1/n$. What enters in the cover time analysis are however the extremal values, notably the minimal ones, and thus what is needed is a local convergence result towards the left tail of $\mu$, which cannot be extracted from the analysis in \cite{BCS1}. To obtain a heuristic guess of the size of the minimal values of $\pi$ at large but finite $n$ one may pretend that  
the values of $n\pi$ are $n$ i.i.d.\ samples from $\mu$. This would imply that $\pimin$, the minimal value of $\pi$ is such that 
$n\pimin\sim \e(n)$ where $\e(n)$ is a sequence for which $n\mu([0,\e(n)])\sim 1$, if $ \mu([0,x])$ denotes the mass given by $\mu$ to the interval $[0,x]$.

Recursive distributional equations of the form \eqref{eq:receq2} are well studied, and many properties of the distribution $\mu$ can be derived. In particular, it has been shown by Liu \cite{Liu1} that the left tail of $\mu$ is of the form 
$$  
\log \mu([0,x])\asymp - x^{-\a}\,,\qquad x\to 0^+, 
$$
where $\a=1/(\g-1)$, with the coefficient $\g$ taking the value $\g = \frac{\log 3}{\log 2}$ in the $(2,3)(3,2)$ case.  Thus, returning to our heuristic reasoning, one has that the minimal value of $\pi$ should %be such that 
satisfy 
  \begin{equation}\label{pimin1}
n\pimin \asymp \log ^{1-\g}(n).
\end{equation}
Moreover, this argument %heuristics 
also predicts that with high probability there should be at least $n^\b$ vertices $x\in V(G)$, for some constant $\b>0$,
such that $n\pi(x)$ is as small as $O(\log^{1-\g}( n))$. 

A similar heuristic argument, this time based on the analysis of the right tail of $\mu$, see  \cite{Liu2,Liu3}, predicts that $\pimax$, the maximal value of $\pi$, should satisfy 
  \begin{equation}\label{pimaxo}
n\pimax \asymp \log ^{1-\kappa}(n),
\end{equation}
where $\kappa$ takes the value $\kappa = \frac{\log 2}{\log 3}\approx 0.63$ in the $(2,3)(3,2)$ case. 

Our main results below will confirm these heuristic predictions. %, up to $o(1)$ corrections in the exponent of $\log n$. 
The proof involves the %a fine 
analysis of the statistics of %path weights in 
the in-neighbourhoods of a node. Roughly speaking, it will be seen that the smallest values of $\pi$ are achieved at vertices $x\in V(G)$ whose in-neighbourhood at distance $\log_2\log n$ is a directed tree composed entirely of vertices with in-degree 2 and out-degree 3, while the the maximal values of $\pi$ are achieved at $x\in V(G)$ whose in-neighbourhood at distance $\log_3\log n$ is a directed tree composed entirely of vertices with in-degree 3 and out-degree 2. %This will allow us to obtain the desired asymptotic \eqref{pimin1} and \eqref{pimaxo}. 
Once the results \eqref{pimin1} and \eqref{pimaxo} are established, the cover time asymptotic \eqref{tcovo} will follow from an appropriate implementation of the Cooper-Frieze approach. 

We conclude this preliminary discussion by comparing our estimates \eqref{pimin1} and \eqref{pimaxo} with related results for different random graph models. 
%observing that controlling the extremal values of the stationary distribution of a digraph is an interesting problem with applications that go beyond the determination of the cover time \what any ref?.
%However, only few results seem to be available in this direction. 
The asymptotic of extremal values of $\pi$ has been determined in \cite{CF2} for the directed Erd\H{o}s-Renyi random graphs with logarithmically diverging average degree. There, the authors show that $n\pimin$ and $n\pimax$ are essentially of order 1, which can be interpreted as a concentration property %law of large number 
enforced by the divergence of the degrees. On the other hand, for uniformly random out-regular digraphs, that is with constant out-degrees but random in-degrees, the recent paper \cite{AbBP} shows that the stationary distribution restricted to the strongly connected component satisfies $n\pimin=n^{-\eta+o(1)}$, where $\eta$ is a 
computable constant, and $n\pimax=n^{o(1)}$. Indeed, in this model in contrast with our setting one can have in-neighborhoods made by long and thin filaments which determine a power law deviation from uniformity.    

We now turn to a more systematic exposition of our results.

\subsection{Model and statement of results}\label{se:results}
%Let $V$ be a set of $n$ vertices. For simplicity we often write $V=[n]$, with 
Set $[n]=\{1,\dots,n\}$, and 
 for each integer $n$, fix two sequences $\bd^+=(d_x^+)_{x\in[n]}$ and $\bd^-=(d_x^-)_{x\in[n]}$ of positive integers such that 
 \begin{equation}\label{eq:degs}
 m=\sum_{x=1}^nd_x^+=\sum_{x=1}^nd_x^-.
 \end{equation} 
 The {\em directed configuration model} DCM($\bd^\pm$) is the distribution of the random digraph $G$ with vertex set $V(G)=[n]$ obtained by the following procedure: 1) equip each node $x$ with $d_x^+$ tails and $d_x^-$ heads; 2) pick uniformly at random one of the $m!$ bijective maps  
 from the set of all tails into the set of all heads, call it $\omega$; 3) for all $x,y\in [n]$, add a directed edge $(x,y)$ every time a tail from $x$ is mapped into a head from $y$ through $\omega$. The resulting digraph $G$ may have self-loops and multiple edges, however it is classical that by conditioning on the event that there are no multiple edges and no self-loops $G$ has the uniform distribution among simple digraphs with in degree sequence $\bd^-$ and out degree sequence $\bd^+$.  
 
 Structural properties of digraphs obtained in this way have been studied in  ~\cite{CF0}. Here we consider the sparse case corresponding to bounded degree sequences and, in order to avoid non irreducibility issues, we shall assume that all degrees are at least $2$. Thus, from now on it will always be assumed that 
 \begin{equation}\label{degs}
\d_\pm=\min_{x\in[n]}d_x^\pm\geq 2\,%\quad \d_+= \min_{x\in[n]}d_x^+\ge 2,
\qquad\quad
\D_\pm=\max_{x\in[n]}d_x^\pm=O(1). % \mathbf{1}\lambda
\end{equation}
Under the first assumption  it is known that DCM($\bd^\pm$) is strongly connected with high probability. Under the second assumption, it is known that DCM($\bd^\pm$) has a uniformly (in $n$) positive probability of having no self-loops nor multiple edges. In particular, any property that holds with high probability for DCM($\bd^\pm$) will also hold with high probability for a uniformly random simple digraph with degrees given by $\bd^-$ and $\bd^+$ respectively. Here and throughout the rest of the paper we say that a property holds with high probability (w.h.p. for short) if the probability of the corresponding event converges to $1$ as $n\to\infty$. 

The (directed) distance $d(x,y)$ from $x$ to $y$ is the minimal number of edges that need to be traversed to reach $y$ from $x$. The diameter is the maximal distance between two distinct vertices, i.e.\ 
\begin{equation}\label{eq:diam}
{\rm diam}(G)=\max_{x\not=y}d(x,y).
\end{equation}

%\what need to be distinct ? maximal loops ? define $d(x,x)=0 ?$

We begin by showing %Our first result is 
that the diameter ${\rm diam}(G)$ concentrates around the value $c\log n$ within a $O(\log\log n)$ window, where $c$ is given by $c=1/\log \nu$ and $\nu$ is the parameter defined by
\begin{equation}\label{eq:nu}
\nu=\frac{1}{m}\sum_{y=1}^nd_y^{-}d_y^{+}.
\end{equation}

 \begin{theorem}\label{th:diameter}
Set ${\rm d}_\star=\log_\nu n$. There exists $\varepsilon_n=O\(\frac{\log\log(n)}{\log(n)}\)$ such that
\begin{equation}
\P\((1-\varepsilon_n)\,{\rm d}_\star\leq {\rm diam}(G)\leq (1+\varepsilon_n)\,{\rm d}_\star\)=1-o(1).
\end{equation}
Moreover, for any $x,y\in[n]$
\begin{equation}
\P\((1-\varepsilon_n)\,{\rm d}_\star\leq d(x,y)\leq (1+\varepsilon_n)\,{\rm d}_\star\)=1-o(1).
\end{equation}

\end{theorem}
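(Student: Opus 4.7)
The parameter $\nu$ arises as the mean offspring of the multi-type Galton--Watson process that approximates a BFS exploration of the out- (or in-) neighbourhood in DCM($\bd^\pm$): when the exploration pairs a free tail, the head it receives lies on vertex $y$ with probability $d_y^-/m$, and that vertex contributes $d_y^+$ new tails, so the expected number of new tails per pairing is exactly $\nu=\tfrac1m\sum_y d_y^-d_y^+$. Because the degree sequence is bounded, the offspring law has bounded support and all exponential moments. My plan is to establish two-sided polylogarithmic control on the sizes of out- and in-balls, and then to close both bounds by a sprinkling / first-moment argument on the remaining configuration.

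\medskip

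\emph{Step 1 (Neighbourhood growth).} Write $B^+_k(x)$, $B^-_k(y)$ for the out- and in-balls of radius $k$. I would first prove that there exists $A>0$ such that, for every vertex $x$ and every $k$ with $\nu^k\leq\sqrt{n}/\log^{A} n$,
\begin{equation*}
\bbP\bigl((\log n)^{-A}\,\nu^k \leq |B^+_k(x)| \leq (\log n)^A\,\nu^k\bigr) \geq 1-n^{-3},
\end{equation*}
and likewise for $B^-_k(y)$. The upper bound is stochastic domination by an idealised GW tree (no depletion correction needed). The lower bound requires a second-moment / concentration estimate on successive generations of the approximating multi-type GW process, exploiting $\nu>1$ under \eqref{degs} together with the bounded offspring support. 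As long as the exposed ball has size $o(\sqrt n)$, the per-pairing depletion of the half-edge pool is $o(1/\sqrt n)$, which allows the coupling to be propagated through $\tfrac12\log_\nu n+O(\log\log n)$ generations without losing the polylogarithmic accuracy.

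\medskip

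\emph{Step 2 (Upper bounds).} Fix $x,y\in[n]$ and set $k_\star=\lceil\tfrac12(1+\varepsilon_n){\rm d}_\star\rceil$ with $\varepsilon_n=C\log\log n/\log n$ for a large constant $C$. Expose $B^+_{k_\star}(x)$ and $B^-_{k_\star}(y)$ by pairing only the half-edges demanded by the two BFS. By Step~1, off an event of probability at most $n^{-3}$ the boundary of $B^+_{k_\star}(x)$ carries at least $N\geq \sqrt n\,(\log n)^{C'}$ still-unmatched tails, and $B^-_{k_\star}(y)$ contains at least $M\geq \sqrt n\,(\log n)^{C'}$ still-unmatched heads, with $C'=C'(C)\to\infty$ as $C\to\infty$. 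Conditional on the exposure, the remaining pairing is uniform on the unmatched half-edges, so the probability that no boundary tail of $B^+_{k_\star}(x)$ is matched into $B^-_{k_\star}(y)$ is bounded by
\begin{equation*}
\Bigl(1-\tfrac{M}{m}\Bigr)^{N}\leq \exp(-NM/m)\leq \exp(-(\log n)^{2C'}).
\end{equation*}
Taking $C$ large enough that $2C'>1$ beats $n^{-3}$, and a union bound over $(x,y)\in[n]^2$ gives ${\rm diam}(G)\leq(1+\varepsilon_n){\rm d}_\star$ w.h.p., as well as the individual upper bound on $d(x,y)$.

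\medskip

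\emph{Step 3 (Lower bounds).} For the typical-distance lower bound, fix $x,y$ and choose $k_1,k_2$ with $k_1+k_2=\lfloor(1-\varepsilon_n){\rm d}_\star\rfloor$. By Step~1,
$|B^+_{k_1}(x)|\cdot |B^-_{k_2}(y)|\leq \nu^{k_1+k_2}(\log n)^{2A}=n^{1-\varepsilon_n}(\log n)^{2A}$ with probability $1-o(1)$. Since the event $d(x,y)\leq k_1+k_2$ demands at least one pairing between the tails at the frontier of $B^+_{k_1}(x)$ and the heads of $B^-_{k_2}(y)$, the expected number of such pairings is $O(n^{-\varepsilon_n}(\log n)^{2A})=o(1)$ for the chosen $\varepsilon_n$, and Markov closes the argument. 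The diameter lower bound is simpler: Step~1 applied at radius $(1-\varepsilon_n){\rm d}_\star$ from vertex $1$ produces an out-ball of size $\leq n^{1-\varepsilon_n}(\log n)^A\ll n$, hence some vertex lies at distance exceeding $(1-\varepsilon_n){\rm d}_\star$ from $1$.

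\medskip

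\emph{Main obstacle.} The technical heart is Step~1, in particular the lower bound on $|B^+_k(x)|$ with polylogarithmic accuracy up to $k=\tfrac12\log_\nu n+\Omega(\log\log n)$: one needs to rule out atypically small BFS generations with probability $1-n^{-3}$ via a second-moment / concentration estimate on the approximating GW process, while simultaneously absorbing the depletion correction from already-paired half-edges. The bounded-degree, $\nu>1$ setting of \eqref{degs} is what makes both ingredients available in a clean form.
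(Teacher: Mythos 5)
Your proposal follows essentially the same route as the paper: two-sided polylogarithmic control of $|\partial\cB^\pm_h|$ via a Galton--Watson coupling with Hoeffding-type concentration and depletion control (the paper's Lemmas \ref{lem:easyBh}--\ref{le:lemmasize2}), an upper bound by exposing two balls of radius $\tfrac12(1+\e_n)\log_\nu n$ and bounding the probability of no matching between their frontiers by $\exp(-NM/m)$ with a union bound over pairs (Lemma \ref{lem:ubdiam}), and a first-moment lower bound on connections between balls of radius $\tfrac12(1-\e_n)\log_\nu n$ (Lemma \ref{lem:lbdiam}).

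The one point I would press you on is the claimed unconditional probability $1-n^{-3}$ in Step~1, which is load-bearing for the union bound over $n^2$ pairs in Step~2. The direct collision-counting argument only yields $1-O(n^{-1-\chi})$ per vertex (a ball can fail to grow because of a couple of early collisions, an event of probability polynomially larger than $n^{-3}$ unless one analyses carefully what two collisions in the first $O(\log\log n)$ generations can and cannot do to the boundary). The paper resolves this not by strengthening the unconditional bound but by first restricting to the global event $\cG(\hslash)$ that every neighbourhood has tree excess at most one, and then showing (Lemma \ref{le:lemmasize2}) that, conditionally on any admissible $2$-in-neighbourhood, failure of the growth event requires at least three collisions among polylogarithmically many pairings, giving $O(n^{-2-\chi})$ — which is exactly what the union bound needs. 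Your sketch would go through once this (or an equivalent refinement) is supplied, but as stated the $n^{-3}$ assertion is not justified. A minor further remark: your diameter lower bound via ``one out-ball of radius $(1-\e_n)\mathrm{d}_\star$ cannot cover $[n]$'' is a valid variant; the paper instead deduces it directly from the typical-distance lower bound for a single fixed pair, which requires only an $o(1)$ failure probability and hence no union bound.
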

The proof of Theorem \ref{th:diameter} is a directed version of a classical argument for undirected graphs \cite{BFdv}. It requires controlling the size of in- and out-neighborhoods of a node, which in turn follows  ideas from  \cite{AbBP} and \cite{BCS1}. The value ${\rm d}_\star=\log_\nu n$ can be interpreted as follows: both the in- and the out-neighborhood of a node are tree-like with average branching given by $\nu$, so that their boundary at depth $h$ has typically size $\nu^h$, see Lemma \ref{eq:lemmasize-in}; if the in-neighborhood of $y$ and the out-neighborhood of $x$ are exposed up to depth $h$, one finds that the value $h=\frac12\log_\nu(n)$ is critical for the formation of an arc connecting the two neighborhoods.

In particular, Theorem \ref{th:diameter} shows that w.h.p.\ the digraph is strongly connected, so there exists a unique stationary distribution $\pi$ characterized by the equation
\begin{equation}\label{eq:stat}
\pi(x)=\sum_{y=1}^n\pi(y)P(y,x)\,,\qquad x\in[n],
\end{equation}
with the normalization $\sum_{x=1}^n\pi(x)=1$. Here $P$ is the transition matrix of  the simple random walk on $G$, namely
\begin{equation}\label{eq:Pxy}
P(y,x)=\frac{m(y,x)}{d^+_y},
\end{equation}
and we write $m(y,x)$ for the multiplicity of the edge $(y,x)$ in the digraph $G$.  %Except for the Eulerian case, that is when 
If the sequences $\bd^\pm$ are such that $d_x^+=d_x^-$ for all $x\in[n]$, then the stationary distribution is given by 
\begin{equation}\label{eq:Eulerian}
\pi(x)=
\frac{d^\pm_x}{m}.
\end{equation} 
The digraph is called Eulerian in this case. In all other cases the stationary distribution is a nontrivial random variable. 
%not known explicitly. 
To discuss our results on the extremal values of $\pi$ it is convenient to introduce the following notation. 
\begin{definition}\label{def:types}
We say that a vertex $x\in [n]$ is of type $(i,j)$, and write $x\in \cV_{i,j}$, if $(d^-_x,d^+_x)=(i,j)$. 
We call $\cC=\cC(\bd^\pm)$ the set of all types that are present in the double sequence $\bd^\pm$, that is 
$\cC=\{(i,j):\, |\cV_{i,j}|>0\}.$ 
The assumption \eqref{degs} implies that the number of distinct types is bounded by a fixed constant $C$ independent of $n$, that is $|\cC|\leq C$.  We say that the type $(i,j)$ has linear size, %and write $(i,j)\in\cL$, 
if  
\begin{equation}
\liminf_{n\to\infty}\frac{|\cV_{i,j}|}{n}>0.
\end{equation}
We call $\cL\subset \cC$ the set of  types with linear size, and define the parameters
\begin{equation}\label{eq:def-gamma01}
\gamma_0:=\frac{\log\D_+}{\log \d_-}\,,\qquad\gamma_1:=\max_{(k,\ell)\in\cL}\frac{\log \ell}{\log k}\,,\qquad\kappa_1:=\min_{(k,\ell)\in\cL}\frac{\log \ell}{\log k}\,,\qquad \kappa_0:=\frac{\log\d_+}{\log \D_-}.
\end{equation}
\end{definition}

\begin{theorem}\label{th:pimin}
Set $\pimin=\min_{x\in[n]}\pi(x)$. 
There exists a constant $C>0$ such that 
\begin{equation}\label{eq:th-pimin-upperandlower}
\P\left(C^{-1}\log^{1-\gamma_0}(n)\leq n\pimin\leq C\,\log^{1-\gamma_1}(n)
\right)=1-o(1).
\end{equation}
Moreover, there exists $\b>0$ such that 
\begin{equation}\label{eq:th-pimin-size}
\P\Big(\exists S\subset[n],\:|S|\geq n^\b\,,\; n \max_{y\in S}\pi(y)\leq C\log^{1-\gamma_{1}}(n) \Big)=1-o(1).
\end{equation}

\end{theorem}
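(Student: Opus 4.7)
The strategy is to express $\pi(x)$ via the identity $\pi(x)=\sum_y \pi(y)P^h(y,x)$ at a scale $h$ of order $\log\log n$---small enough that the depth-$h$ in-neighborhood of $x$ is w.h.p.\ a directed tree, yet large enough to capture the Poisson-like fluctuations driving the extremal values of $\pi$. Throughout, we write $L_x$ for the leaves of this tree, and we control its statistics through the sequential edge-exposure mechanism that underlies the proof of \Cref{th:diameter}.

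\textbf{Upper bound and the size estimate \eqref{eq:th-pimin-size}.} Fix a type $(k,\ell)\in\cL$ attaining $\gamma_1=\log\ell/\log k$, put $p=|\cV_{k,\ell}|/n=\Omega(1)$, and set $h=\lfloor\log_k\log n\rfloor-C$ for a constant $C$ to be tuned. Call $x$ \emph{homogeneous} if the depth-$h$ in-neighborhood of $x$ is a tree whose internal vertices all lie in $\cV_{k,\ell}$. Sequential exposure gives $\P(x\text{ homogeneous})\geq p^{k^h}\geq n^{-1+\b}$ for some $\b>0$ (tunable via $C$), and a companion second-moment bound---whose cross terms are negligible because two depth-$h$ in-neighborhoods overlap with probability $O(\log^{2}(n)/n)$---produces at least $\tfrac{1}{2}n^{\b}$ homogeneous vertices w.h.p. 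For homogeneous $x$, every length-$h$ walk ending at $x$ uses the unique tree path from one of the $k^{h}$ leaves with probability exactly $\ell^{-h}$, hence
\begin{equation*}
\pi(x)=\ell^{-h}\sum_{y\in L_{x}}\pi(y).
\end{equation*}
A first-moment argument---using that the leaves are essentially uniformly distributed in $[n]$ and that $\sum_{y}\pi(y)=1$---gives $\E\bigl[\sum_{y\in L_{x}}\pi(y)\bigr]=O(k^{h}/n)$, and Markov's inequality then shows that a positive fraction of homogeneous vertices satisfy $\pi(x)=O\bigl((k/\ell)^{h}/n\bigr)=O(\log^{1-\gamma_{1}}(n)/n)$. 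This simultaneously yields the upper bound in \eqref{eq:th-pimin-upperandlower} and \eqref{eq:th-pimin-size}.

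\textbf{Lower bound.} For an arbitrary vertex $x$ set $h'=\lceil\log_{\d_{-}}\log n\rceil+C'$. By the neighborhood-growth estimate (Lemma~\ref{eq:lemmasize-in}) combined with a union bound over $x\in[n]$, w.h.p.\ the depth-$h'$ in-neighborhood of every $x$ is a tree with at least $\d_{-}^{h'}\geq\log n$ leaves; since all out-degrees are at most $\D_{+}$, each such path has probability at least $\D_{+}^{-h'}$, and hence
\begin{equation*}
\pi(x)\geq \D_{+}^{-h'}\sum_{y\in L_{x}}\pi(y).
\end{equation*}
The crucial auxiliary step is to show $\sum_{y\in L_{x}}\pi(y)\geq c\,\d_{-}^{h'}/n$ uniformly in $x$. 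I would do this through a dedicated ``spreading'' estimate with two ingredients: (i)~the set $B=\{y:n\pi(y)<c_{0}\}$ of atypically light vertices satisfies $|B|\leq n^{1-\alpha}$ w.h.p.\ for some $\alpha>0$, from a first-moment bound applied to a truly local ($O(1)$-depth) lower bound on $\pi(y)$ obtained by truncating the recursion~\eqref{eq:receq2}; and (ii)~no $L_{x}$ of size $\log n$ contains more than half its vertices in $B$, by the near-independence of in-neighborhoods of distinct vertices in the configuration model. These combine to give $\pi(x)\geq c(\d_{-}/\D_{+})^{h'}/n=c\log^{1-\gamma_{0}}(n)/n$.

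\textbf{Main obstacle.} The uniform-in-$x$ lower bound is the hardest step: the naive inequality $\sum_{y\in L_{x}}\pi(y)\geq|L_{x}|\pimin$ is circular and yields nothing, so one cannot avoid a dedicated estimate quantifying both the rarity and the geometric spread of the lower tail of $\pi$ at the scale $1/n$. Upgrading the local weak convergence of~\cite{BCS1} into such a quantitative left-tail statement---valid for every vertex, not just in distribution---is the technical heart of the argument.
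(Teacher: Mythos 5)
Your upper-bound/size argument is essentially the paper's: the paper also fixes the extremal type $(\d_*,\D_*)$ realizing $\gamma_1$, plants $n^{\b_1}$ ``unlucky'' vertices whose depth-$(\log_{\d_*}\log n - C)$ in-neighborhoods are trees entirely of that type, and shows a positive fraction of them have $\pi$ of order $n^{-1}\log^{1-\gamma_1}(n)$. The one technical device you gloss over is how to compute $\E\bigl[\sum_{y\in L_x}\pi(y)\bigr]$: since $\pi$ and $L_x$ are both functions of the same random graph, ``the leaves are essentially uniform'' is not a proof. The paper first replaces $\pi(y)$ by $\l_t(y)=\frac1n\sum_x P^t(x,y)$ with $t=\Theta(\log^3 n)$ (licensed by the cutoff result \eqref{cutoff}), then computes first and second moments of $\sum_{y}\l_t(y)$ via an annealed walk, conditioning on the exposed neighborhoods; Chebyshev (not Markov) is needed to get the failure probability down to $o(1)$ rather than a small constant. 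These are fixable details, not a change of strategy.

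The lower bound is where your proposal has a genuine gap, and you have correctly located it yourself: everything hinges on the ``spreading estimate'' $\sum_{y\in L_x}\pi(y)\gtrsim |L_x|/n$, and neither of your two ingredients is available. For (i), there is no ``truly local $O(1)$-depth lower bound on $\pi(y)$'': truncating $\pi=\pi P^h$ at any finite depth leaves $\pi$ evaluated on the boundary, which you can only bound by $\pimin$ --- the same circularity you flag --- and the recursion \eqref{eq:receq2} is a distributional identity for the limit law $\mu$, not a pointwise inequality for the finite-$n$ random vector $\pi$. Moreover, since $\mu$ charges every neighborhood of $0$, the set $B=\{y:n\pi(y)<c_0\}$ has \emph{linear} size for any fixed $c_0>0$, not size $n^{1-\alpha}$, so (i) is false as stated; salvaging (ii) with a linear-sized $B$ would require quantitative near-independence between the event $y\in L_x$ and the lower tail of $\pi(y)$, which is again the statement you are trying to prove. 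The paper escapes the circularity by a different mechanism entirely: it proves a \emph{uniform} lower bound $P^{t_\star}(x,y)\geq \frac{c}{n}\Gamma_{h_y}(y)$ at the mixing-time scale $t_\star\approx(1+2\eta)\tent$ (Lemma \ref{lem:lower-nice-gamma}), built from the BCS1 tree of forward paths from $x$ of depth $(1-\eta)\tent$, the in-neighborhood of $y$ of depth $\eta'\tent$ (order $\log n$, not $\log\log n$), and Chatterjee's concentration inequality for random matchings. Then $\pi(y)=\sum_x\pi(x)P^{t_\star}(x,y)\geq\min_x P^{t_\star}(x,y)$ uses $\sum_x\pi(x)=1$ to do the averaging for free, and the combinatorial estimate $\Gamma_{h}(y)\geq c\,\d_-^{h_0}\D_+^{-h_0}=c\log^{1-\gamma_0}(n)$ (Lemma \ref{le:claim2}, via a martingale/Hoeffding argument on coupled random trees) supplies the exponent. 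Without some substitute for this mixing-time-scale transition-probability bound, your lower bound does not close.
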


\begin{remark}\label{rem:optimal}
Notice that $\g_0\geq \g_1\geq 1$.  If the sequences $\bd^\pm$ are such that 
$(\d_-,\D_+)\in\cL$, then 
$\g_0=\g_1=:\g$, so in these cases Theorem \ref{th:pimin} implies that 
\begin{equation}\label{eq:pminopt}
\pimin \asymp \frac1n\log^{1-\gamma}(n)\,\qquad {\rm w.h.p.}
\end{equation}
In all other cases, the estimate \eqref{eq:th-pimin-upperandlower} can be strengthened by replacing $\g_0$ with $\g'_0$ where
\begin{equation}\label{eq:g0prime}
\g'_0:=\frac{\log\D'_+}{\log \d'_-}\,,\qquad \D'_+ := \max\{\ell:\; (k,\ell)\in\cL_0\} \,,\quad \d'_- := \min\{k:\; (k,\ell)\in\cL_0\},
\end{equation}
and $\cL_0\subset \cC$ is defined as the set of $(k,\ell)\in\cC$ such that
\begin{equation}\label{eq:g0p}
\limsup_{n\to\infty}\frac{|\cV_{k,\ell}|}{n^{1-a}}=+\infty\,,\qquad \forall a>0.
\end{equation}
We refer to Remark \ref{re:gamma0L} below for additional details on this improvement. 
\end{remark}

%\begin{remark}\label{rem:better}
%We shall actually prove a stronger statement than Theorem \ref{th:pimin}. Namely, for the lower bound we will show that 
%\begin{equation}\label{eq:better-lowbo-pimin}
%\P\left(n\pimin\geq c\log^{1-\gamma_0}(n)
%\right)=1-o(1),
%\end{equation}
%if $c>0$ is a sufficiently small constant.
%For the upper bound, we will show that for some constant $\b>0$, for any fixed sequence $u_n\geq 0$ such that $u_n\to\infty$:
% \begin{equation}\label{eq:better-upbo-pimin}
%\P\left(\exists S\subset[n],\:|S|\geq n^\b,\,\; n \max_{y\in S}\pi(y)\leq  u_n\log^{1-\gamma_1}(n)
%\right)=1-o(1),
%\end{equation}
%  \pietro{\what Decidere se riformulare teorema o lasciare questo remark } 
%\end{remark}

Concerning the maximal values of $\pi$ we establish the following estimates. 
\begin{theorem}\label{th:pimax}
Set $\pimax=\max_{x\in[n]}\pi(x)$. There exists a constant $C>0$ such that
\begin{equation}\label{eq:th-pimax-upperandlower}
\P\left(C^{-1}\log^{1-\kappa_1}(n)\leq n\pimax\leq \log^{1-\kappa_0}(n)
\right)=1-o(1).
\end{equation}
Moreover, there exists $\b>0$ such that 
\begin{equation}\label{eq:th-pimax-size}
\P\Big(\exists S\subset[n],\:|S|\geq n^\b\,,\; n \min_{y\in S}\pi(y)\geq C^{-1}\log^{1-\kappa_{1}}(n) \Big)=1-o(1).
\end{equation}
\end{theorem}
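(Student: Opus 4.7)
The argument parallels the proof of Theorem \ref{th:pimin}, with the extremal tail shifted to the right: the heuristic is that the largest values of $\pi$ are produced at vertices whose in-neighbourhood is densely populated by vertices of high in-degree and low out-degree, so that many paths feed into $x$ while little mass is lost along each. Iterating the stationary equation \eqref{eq:stat} up to depth $h$ along the in-tree of $x$, and assuming that every vertex in this tree has type $(k,\ell)$, one obtains
\[
\pi(x)\;=\;\ell^{-h}\sum_{y\in\partial^-_h(x)}\pi(y)\;\asymp\;\frac{1}{n}\Big(\frac{k}{\ell}\Big)^h,
\]
so that the choice $h=\Theta(\log_k\log n)$ converts the right-hand side into $\tfrac1n\log^{1-\log\ell/\log k}(n)$. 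Optimising $\log\ell/\log k$ over linear-density types (resp.\ over all admissible types) yields the exponents $\kappa_1$ (resp.\ $\kappa_0$) appearing in the statement.

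\textbf{Lower bound and existence of many heavy vertices.} Fix a type $(k,\ell)\in\cL$ realising $\kappa_1=\log\ell/\log k$ and pick $h$ such that $k^h=c_0\log n$ for a small $c_0>0$ to be tuned. Let $\cE_x$ denote the event that the in-neighbourhood of $x$ up to depth $h$ is a directed tree in which every vertex has type $(k,\ell)$. By a direct computation in $\mathrm{DCM}(\bd^\pm)$, combining the positive density $p:=|\cV_{k,\ell}|/n$ with the local tree-likeness of the model, $\P(\cE_x)\ge q^{k^h}=n^{-c_0\log(1/q)}$ for some constant $q>0$; for $c_0$ small this gives $\E[|S|]\ge n^\beta$ where $S:=\{x:\cE_x\text{ holds}\}$ and $\beta>0$. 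A second-moment argument mirroring the one used in the proof of Theorem \ref{th:pimin} upgrades this to $|S|\ge n^\beta$ w.h.p. On $\cE_x$ the iterated stationary equation reduces to $\pi(x)=\ell^{-h}\sum_{y\in\partial^-_h(x)}\pi(y)$, and combined with a bulk lower bound $\sum_{y\in\partial^-_h(x)}\pi(y)\ge C^{-1}k^h/n$ this gives $n\pi(x)\ge C^{-1}(k/\ell)^h\asymp\log^{1-\kappa_1}(n)$, establishing both the lower bound in \eqref{eq:th-pimax-upperandlower} and the statement \eqref{eq:th-pimax-size}.

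\textbf{Upper bound.} For any $x\in[n]$ whose in-neighbourhood up to depth $h$ is tree-like (which is w.h.p.\ the case uniformly in $x$ for $h=o(\log n)$), the iterated stationary equation gives
\[
\pi(x)\;\le\;\delta_+^{-h}\sum_{y\in\partial^-_h(x)}\pi(y).
\]
Combined with a uniform bulk estimate $\sum_{y\in\partial^-_h(x)}\pi(y)\le C|\partial^-_h(x)|/n\le C\Delta_-^h/n$, which follows from the concentration of the bulk of $\pi$ around $1/n$ shown in \cite{BCS1} and a union bound over the (polynomial number of) possible depth-$h$ in-neighbourhoods, the choice $h=\lfloor\log\log n/\log\Delta_-\rfloor$ gives $\Delta_-^h\asymp\log n$ and $\delta_+^h\asymp\log^{\kappa_0}(n)$, hence $n\pi(x)\le C\log^{1-\kappa_0}(n)$ uniformly in $x$.

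\textbf{Main obstacle.} The delicate step is the uniform bulk control $\sum_{y\in\partial^-_h(x)}\pi(y)\asymp|\partial^-_h(x)|/n$: from below over the $n^\beta$ candidate vertices in $S$, and from above over all $x\in[n]$. The depth-$h$ ancestors are correlated with the selection event $\cE_x$ (lower bound) and with the identity of the vertex $x$ itself (upper bound), so a straight annealed computation is insufficient. This is handled by exposing the in-neighbourhood of $x$ in stages and coupling the residual graph to a fresh DCM on the remaining half-edges, so that conditionally on the exposed tree the stationary values at its leaves are approximately independent of the conditioning and concentrate near $1/n$ with high probability.
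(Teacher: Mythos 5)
Your heuristic and overall skeleton (tree-like in-neighbourhoods of an extremal type, iteration of the stationary equation to depth $h=\Theta(\log\log n)$) match the paper's, and you correctly isolate the main obstacle; but the way you propose to close it fails in both directions, and that closure is where essentially all of the work lies. For the lower bound you invoke $\sum_{y\in\partial\cB^-_h(x)}\pi(y)\ge C^{-1}k^h/n$ on the grounds that, after exposing the in-tree, the stationary values at its leaves ``concentrate near $1/n$''. Individual values of $\pi$ do not concentrate near $1/n$: by Theorem \ref{th:pimin} they can be as small as $n^{-1}\log^{1-\gamma_0}(n)$, and $\partial\cB^-_h(x)$ has only $\Theta(\log n)$ elements, so even an averaged law of large numbers over this particular set is precisely what must be proven, not a black box one can appeal to. The paper avoids $\pi$ at the leaves altogether: it establishes the pointwise inequality $\pi(y)\ge \tfrac{c}{n}\Gamma_{h_y}(y)$ (Lemma \ref{lem:lower-nice-gamma}) via the nice-paths construction of \cite{BCS1} together with Chatterjee's concentration inequality for random matchings, and then lower-bounds the purely local quantity $\Gamma_{h_1}(y_*)$ on lucky vertices by a martingale/Hoeffding argument (Lemmas \ref{le:claim2} and \ref{lem:CFcon}, and \eqref{eq:gastar3b}). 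Nothing in your proposal substitutes for that machinery.

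For the upper bound the gap is more serious. You claim $\sum_{y\in\partial\cB^-_h(x)}\pi(y)\le C\,|\partial\cB^-_h(x)|/n$ uniformly in $x$ by combining the bulk convergence of \cite{BCS1} with ``a union bound over the polynomial number of possible in-neighbourhoods''. The convergence of the empirical law of $n\pi$ gives no control on the total mass of a prescribed set of $\Theta(\log n)$ vertices, and a union bound over $x\in[n]$ requires a failure probability $o(n^{-1})$ for each fixed $x$ --- which is the entire difficulty, since a priori each term of the sum could be as large as $n^{-1}\log^{1-\kappa_0}(n)$, which would destroy the exponent. The paper's Lemma \ref{lem:maxlat} devotes most of Section \ref{suse:pi-max-ub} to exactly this point: it replaces $\pi$ by $\lambda_t$ with $t=\Theta(\log^3 n)$ and bounds the $K$-th moment of $\lambda_t(\cB^-_{h_0}(y))$ with $K=\log n$ by running $K$ annealed walks and controlling their collisions and the time they spend inside the exposed digraph. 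You would need an argument of comparable strength; as written, the proposal assumes the two key estimates rather than proving them.
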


\begin{remark}
Notice that $\kappa_0\leq \kappa_1\leq 1$.  If the sequences $\bd^\pm$ are such that 
$(\D_-,\d_+)\in\cL$, then 
$\kappa_0=\kappa_1=:\kappa$, and in these cases Theorem \ref{th:pimax} implies 
\begin{equation}\label{eq:pmaxopt}
\pimax \asymp \frac1n\log^{1-\kappa}(n)\,\qquad {\rm w.h.p.}
\end{equation}
In analogy with Remark \eqref{rem:optimal}, if $(\D_-,\d_+)\notin\cL$, then \eqref{eq:th-pimax-upperandlower} can be improved by replacing 
$\kappa_0$ with $\kappa'_0$ where
\begin{equation}\label{eq:k0prime}
\kappa'_0:=\frac{\log\d'_+}{\log \D'_-}\,,\qquad \d'_+ := \min\{\ell:\; (k,\ell)\in\cL_0\} \,,\quad \D'_- := \max\{k:\; (k,\ell)\in\cL_0\},
\end{equation}

%%In Section \ref{sec:lbpimax} we prove a stronger lower bound, namely that for some $\e>0$ there are at least $n^\e$ vertices $x$ with $n\pi(x)\geq C^{-1}\log^{1-\kappa}(n)$. 
%If the sequences $\bd^\pm$ are such that 
%$(\D_-,\d_+)\in\cL$, then $\kappa=\frac{\log \d_+}{\log \D_-}$. In this case we believe the lower bound $n\pimax\geq C^{-1}\log^{1-\kappa}(n)$ to be tight up to the constant $C$. 
%This conjecture can be motivated by a heuristic argument 
%similar to the one in \eqref{pimin1} for $\pimin$, but this time one requires the analysis of the right tail of the distribution $\mu$; see \cite{Liu2,Liu3}.
\end{remark}

%\begin{remark}
%Let us briefly comment on why the upper bound on $\pi_{\max}$ is so rough. As we will see in \cref{suse:nice}, the techniques introduced in \cite{BCS1}, and in particular the law of large number for the paths weight, give us the chance to get precise bounds on the transition probabilities, but only along certain--nice--paths. Doing so, we can therefore get lower bounds on quantities of the form $P^t(x,y)$. On the other hand, in order to get an upper bound, we should control \emph{all} the possible paths from $x$ to $y$ of length $t$. Nonetheless, being the mixing time larger than the diameter, a control on all such paths leads to several difficulties concerning counting problems, and the concentration results in the vein of \cite{BCS1} cannot be used.
%\end{remark}

We turn to a description of our results concerning the cover time. Let $X_t$, $t=0,1,2,\dots$, denote the simple random walk on the digraph $G$, that is the Markov chain with transition matrix $P$ defined in \eqref{eq:Pxy}. Consider the hitting times %maximal hitting time %random variable 
\begin{equation}
H_y=\inf\{ t\ge 0:\: X_t=y \}\,,\qquad \tau_{\rm cov}=\max_{y\in [n]}H_y.
\end{equation}
The cover time $T_{\rm cov}=T_{\rm cov}(G)$ is defined by
\begin{equation}
T_{\rm cov}=\max_{x\in[n]}\,\bE_x[\tau_{\rm cov}],
\end{equation}
where $\bE_x$ denotes the expectation with respect to the law of the random walk $(X_t)$ with initial point $X_0=x$ in a fixed realization of the digraph $G$. Let $\g_0,\g_1$ be as in Definition \ref{def:types}

\begin{theorem}\label{th:covertime}
There exists a constant $C>0$ such that 
\begin{equation}\label{eq:th-covertime}
\P\left(C^{-1}n\log^{\gamma_1}(n)\leq T_{\rm cov}\leq C\,n\log^{\gamma_0}(n) \right)=1-o(1).
\end{equation}
%
%Moreover, the following concentration takes place,
%
%\begin{equation}\label{eq:th-cover-concentration}
%\P\(\forall x\in[n],\:\: \mathbf{P}_x\(n\log^{\gamma_1-\e}(n)\le\tau_{\rm cov}\le n\log^{\gamma_0+\e}(n)\)= 1-o(1) \)=1-o(1),
%\end{equation}
%%\begin{equation}\label{eq:th-cover-concentration}
%%\P\(\forall x\in[n],\:\: \mathbf{P}_x\(\tau_{\rm cov}\ge n\log^{\gamma_1-\varepsilon}(n)\)=1-o(1) \)=1-o(1).
%%\end{equation}
%where $\mathbf{P}_x$ denotes the law of the the random walk $(X_t)$ with initial point $X_0=x$.
\end{theorem}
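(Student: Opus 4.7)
My plan is to follow the Cooper--Frieze strategy, reducing the two cover-time bounds to the extremal-$\pi$ estimates of Theorem~\ref{th:pimin}, combined with the mixing time bound for the DCM obtained in \cite{BCS1}.

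\textbf{Upper bound.} The first step is to control the maximum mean hitting time: I would show that, w.h.p.,
\[
\max_{x,y\in[n]} \bE_x[H_y] \leq C\, n\,\log^{\gamma_0-1}(n).
\]
For a rapidly mixing chain $\bE_\pi[H_y]\leq (1+o(1))/\pi(y)$, so the lower bound $n\pimin\gtrsim \log^{1-\gamma_0}(n)$ from Theorem~\ref{th:pimin} yields the estimate when the walk is started from stationarity. To replace $\pi$ by an arbitrary starting vertex $x$, I would use the fact that the mixing time of the DCM is $O(\log n)$, which is negligible compared to $n\log^{\gamma_0-1}(n)$. Matthews' inequality
\[
T_{\rm cov}\leq (1+\log n)\,\max_{x,y\in[n]}\bE_x[H_y]
\]
then gives the desired $T_{\rm cov}\leq C\,n\log^{\gamma_0}(n)$.

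\textbf{Lower bound.} The plan is to exhibit a large family of hard-to-hit vertices whose hitting times behave as approximately independent exponentials. By the size estimate \eqref{eq:th-pimin-size} of Theorem~\ref{th:pimin} there is, w.h.p., a set $S\subset[n]$ with $|S|\geq n^\beta$ and $\pi(y)\leq C\,n^{-1}\log^{1-\gamma_1}(n)$ for every $y\in S$. Started from a well-mixed state, each $H_y$ is approximately exponential with mean $1/\pi(y)\gtrsim n\log^{\gamma_1-1}(n)$. The core step is a Poisson-type decoupling: for $t=c\,n\log^{\gamma_1}(n)$ with $c<\beta$ small, and for a polylogarithmic burn-in $t_0$,
\[
\prob\bigl(\tau_{\rm cov}\leq t_0+t\bigr)\;\leq\;\prod_{y\in S}\bigl(1-e^{-(1+o(1))\pi(y)\,t}\bigr)\;+\;o(1).
\]
Each factor is at most $1-n^{-c+o(1)}$, so the product is bounded by $\exp(-n^{\beta-c+o(1)})=o(1)$. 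Hence $\bE_x[\tau_{\rm cov}]\geq (1-o(1))\,c\,n\log^{\gamma_1}(n)$ for every starting point $x$, which is the claimed lower bound.

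\textbf{Main obstacle.} The upper bound is essentially routine once the hitting-time bound above and the mixing-time input are in place. The delicate part is the Poisson decoupling needed in the lower bound: showing that at $n^\beta$ distinct extremal vertices the first-visit times jointly behave as independent exponentials, with an error small enough to be summable over $|S|=n^\beta$ factors. The strategy is to exploit the structural information built into the proof of Theorem~\ref{th:pimin}: the vertices $y\in S$ are selected precisely because their in-neighborhoods up to depth of order $\log\log n$ are disjoint directed trees with controlled topology. From these local structures one can estimate return probabilities and first-visit laws separately, and then glue them via a Poisson-process approximation in the spirit of \cite{CF2}. Propagating the small error terms through a union of $n^\beta$ targets, while keeping the burn-in and coupling losses under control, is the main technical hurdle.
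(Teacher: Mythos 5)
Your upper bound follows a genuinely different route from the paper: you invoke Matthews' inequality together with $\bE_\pi[H_y]\lesssim 1/\pi(y)$, whereas the paper bounds $\bE_x[\tau_{\rm cov}]\le t_*+\sum_{s\ge t_*}\sum_y\bP_x(\cA_y^T(s))$ directly, using the Cooper--Frieze exponential tail (Lemma \ref{exptail}) for locally tree-like vertices and a separate geometric-decay argument for the $O(\D^{15\vartheta})$ non-LTL vertices. Both give $C n\log^{\gamma_0}(n)$. One caveat: the step you call routine, $\bE_\pi[H_y]\le(1+o(1))/\pi(y)$, is not a consequence of rapid mixing alone; the correct identity is $\bE_\pi[H_y]=Z_{yy}/\pi(y)$ with $Z_{yy}=\sum_t(P^t(y,y)-\pi(y))$, so you still need to show that the expected number of returns to $y$ before mixing is $1+o(1)$ (or at least $O(1)$ uniformly in $y$, including the non-tree-like vertices). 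That return-probability control is precisely the content of condition (iii) of Lemma \ref{exptail} and Proposition \ref{pr:rt-ltl}, so your route does not avoid the local structural analysis; it only repackages it.

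The lower bound contains a genuine gap. Your core inequality
$\prob(\tau_{\rm cov}\le t_0+t)\le\prod_{y\in S}(1-e^{-(1+o(1))\pi(y)t})+o(1)$
asserts joint (near-)independence of the visit events across \emph{all} $n^\beta$ vertices of $S$ simultaneously; this is much stronger than anything established in the paper, and you give no mechanism for it beyond naming it as the hurdle. The probability that all of $S$ is covered does not factorize into a product of marginals without a positive/negative association or full Poisson-approximation argument, and controlling the joint law of $n^\beta$ indicators is exactly the hard part you have deferred. The paper avoids this entirely by the second moment method: with $W_t$ the unvisited part of a carefully chosen set $W$, one only needs $\bE_x[|W_t|^2]=(1+o(1))\bE_x[|W_t|]^2$, i.e.\ \emph{pairwise} factorization $\bP_x(\cA_y^T(t)\cap\cA_{y'}^T(t))=(1+o(1))\bP_x(\cA_y^T(t))\bP_x(\cA_{y'}^T(t))$. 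That pairwise statement is proved by contracting $y,y'$ into a single vertex $y_*$ (which again yields a directed configuration model), applying Lemma \ref{exptail} to the contracted graph, and showing via a perturbation argument (Lemma \ref{lem:pistar}) that $\pi^*(y_*)=\pi(y)+\pi(y')+o(\pimin/\log n)$. Note also that the set $S$ produced by \eqref{eq:th-pimin-size} is not directly usable: the second-moment argument additionally requires the vertices of $W$ to be locally tree-like, pairwise at distance greater than $2\vartheta$, and to have stationary values equal up to $\pimin\log^{-K}(n)$ (Proposition \ref{de:candidate}); these properties must be extracted by revisiting the construction in Lemma \ref{lalemma} and a pigeonhole step, not merely quoted from Theorem \ref{th:pimin}.
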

\begin{remark}\label{rem:optimal2}
%It follows from Theorem \ref{th:covertime} that f
For sequences $\bd^\pm$ such that $(\d_-,\D_+)\in\cL$ one has $\g_0=\g_1=\g$ and Theorem \ref{th:covertime} implies
\begin{equation}\label{eq:tcovopt}
T_{\rm cov}\asymp n\log^{\gamma}(n)\,,\qquad {\rm w.h.p.}
\end{equation}
As in Remark \ref{rem:optimal}, if $(\d_-,\D_+)\notin\cL$, then   Theorem \ref{th:covertime} can be strengthened by replacing $\g_0$ with the constant $\g_0'$ defined in \eqref{eq:g0prime}.  
\end{remark}

Finally, we observe that when the sequences $\bd^\pm$ are Eulerian, that is $d_x^+=d_x^-$ for all $x\in[n]$, then the estimates in Theorem \ref{th:covertime} can be refined considerably, and one obtains results that are at the same level of precision of those already established %available 
in the case of random undirected graphs \cite{CF3}.  

\begin{theorem}\label{th:covertime-eulerian}
Suppose $d_x^-=d_x^+=d_x$ for every $x\in[n]$. Call $\cV_{d}$ the set of vertices of degree $d$, and write $\bar d=m/n$
%\frac1n\sum_{d}d|\cV_d|$ 
for the average degree. Assume
\begin{equation}
|\cV_{d}|= n^{\alpha_d+o(1)}
\end{equation}
for some constants $\alpha_d\in[0,1]$, for each type $d$.
Then, 
\begin{equation}\label{eq:th-cover-eulerian}
T_{\rm cov}=(\b+o(1)) \,n\log n\,,\qquad {\rm w.h.p.}
%\P\((1-o(1))\beta n\log n \le T_{\rm cov}\le(1+o(1))\beta n\log n \)=1-o(1),
\end{equation}
where $\beta:=\bar d\,\max_{d}\frac{\a_d}{d}$.
\end{theorem}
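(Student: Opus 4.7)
\emph{Plan.} In the Eulerian setting the stationary distribution is given by the deterministic formula $\pi(x)=d_x/m=d_x/(\bar d\, n)$, so the random quantities $\pimin,\pimax$ are completely explicit and it remains only to convert the knowledge of $\pi$ into a sharp asymptotic for $T_{\rm cov}$ via an implementation of the Cooper--Frieze approach. The central input is a \emph{first-visit-time lemma}: w.h.p.\ the digraph $G$ satisfies, for every $v\in[n]$, every starting point $x$, and every $t=(\log n)^{1+o(1)}$,
\begin{equation*}
\bP_x(H_v>t) = (1+o(1))\,\exp\(-(1+o(1))\,\pi(v)\,t\).
\end{equation*}
The exponent is of the form $\pi(v)t/\rho_v$ with $\rho_v=\sum_{k\geq 0}P^k(v,v)$ summed over times short compared to $t$; the $O(\log n)$ mixing estimate of \cite{BCS1} together with the w.h.p.\ absence of short cycles through any $v\in[n]$ (a standard first moment computation for DCM($\bd^\pm$), as used in the proof of Theorem \ref{th:diameter}) force $\rho_v=1+o(1)$ uniformly in $v$.

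\emph{Upper bound.} Fix $\varepsilon>0$ and put $t_+=(1+\varepsilon)\beta n\log n$. For $v\in \cV_d$ the tail bound gives
\begin{equation*}
\bP_x(H_v>t_+) \leq n^{-(1+\varepsilon/2)\,d\,\max_{d'}(\a_{d'}/d')+o(1)}\leq n^{-(1+\varepsilon/2)\a_d+o(1)},
\end{equation*}
using $\pi(v)=d/(\bar d\,n)$ and the definition of $\beta$. A union bound over $v\in\cV_d$ (of size $n^{\a_d+o(1)}$) and over the $O(1)$ distinct degrees yields $\sup_x\bP_x(\tau_{\rm cov}>t_+)=o(1)$, and iterating with the Markov property via $\sup_x\bP_x(\tau_{\rm cov}>kt_+)\leq(\sup_x\bP_x(\tau_{\rm cov}>t_+))^k$ extends this to the expectation bound $T_{\rm cov}\leq(1+\varepsilon+o(1))\beta n\log n$.

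\emph{Lower bound.} Let $d^\star$ attain the maximum in the definition of $\beta$, put $T=\cV_{d^\star}$ with $|T|=n^{\a_{d^\star}+o(1)}$, and set $t_-=(1-\varepsilon)\beta n\log n$. With $N=|\{v\in T:H_v>t_-\}|$, the first-visit-time lemma gives $\bE_x[N]=n^{\varepsilon\a_{d^\star}+o(1)}\to\infty$. Paley--Zygmund then reduces $\bP_x(N\geq1)\to 1$ to the two-point decoupling
\begin{equation*}
\bP_x(H_v>t_-,H_w>t_-)\leq (1+o(1))\,\bP_x(H_v>t_-)\,\bP_x(H_w>t_-)
\end{equation*}
for pairs $v,w\in T$ whose in-neighborhoods up to depth $\tfrac12\log_\nu n$ are vertex-disjoint; pairs violating disjointness number $o(|T|^2)$ by a first moment computation of the same kind as in the proof of Theorem \ref{th:diameter} and contribute a negligible amount to $\bE_x[N^2]$. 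Paley--Zygmund then delivers $\tau_{\rm cov}\geq t_-$ w.h.p.

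\emph{Main obstacle.} The delicate step is precisely the pair-decoupled form of the first-visit-time lemma: one must show that, conditionally on not having hit $\{v,w\}$ up to a time much smaller than $t_-$, the chain has essentially ``forgotten'' and behaves as if the two hitting events were independent, with a $(1+o(1))$ prefactor uniform in $(v,w)\in T\times T$. This requires combining the joint tree-likeness of the two in-neighborhoods (handled by a union bound over pairs, enabled by the fact that pairs are $O(n^2)$ while each bad local configuration costs a polynomial factor) with the quantitative mixing of \cite{BCS1} applied to the chain on $G$ with $\{v,w\}$ removed; everything else in the argument is a one-line Paley--Zygmund calculation.
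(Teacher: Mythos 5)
Your overall architecture (first-visit-time lemma plus union bound for the upper bound, second moment/Paley--Zygmund for the lower bound) is the same as the paper's, but there are two concrete gaps. First, the claim that a ``standard first moment computation'' gives the w.h.p.\ absence of short cycles through any $v\in[n]$ is false: in a sparse configuration model the expected number of directed cycles of length $\ell$ is of order $\nu^{\ell}/\ell$ with $\nu>1$, so short cycles (including self-loops and $2$-cycles) exist with probability bounded away from $0$, and for a vertex $v$ on such a cycle one has $\rho_v\geq 1+\d^{-\ell}$, not $1+o(1)$. Your uniform return estimate therefore fails for a (polylogarithmically small but nonempty) set of vertices, and with it the uniform validity of condition (iii) of the first-visit-time lemma. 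The paper repairs exactly this by splitting $[n]=V_1\cup V_2$ into locally tree-like vertices and the rest (Definition \ref{def:ltl}, Proposition \ref{pr:nice}), proving $R_y^T(1)=1+O(\delta^{-\vartheta})$ only for $y\in V_1$, and handling the at most $\Delta^{15\vartheta}$ vertices of $V_2$ in the upper bound by a separate argument: each lies within $10\vartheta$ of an LTL vertex and is therefore covered with conditional probability bounded below on each block of length $T+t_1$, giving a geometric tail. You need some version of this; the union bound as you state it does not close for $v\in V_2$.

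Second, the two-point decoupling that you correctly identify as the ``main obstacle'' of the lower bound is not actually proved; the sentence ``this requires combining the joint tree-likeness \dots with the quantitative mixing \dots applied to the chain on $G$ with $\{v,w\}$ removed'' is a statement of what is needed, not an argument, and making the conditional-forgetting heuristic quantitative with a uniform $(1+o(1))$ prefactor is precisely the hard part. The paper's mechanism is different and worth knowing: merge $y$ and $y'$ into a single vertex $y_*$ with degrees $d_{y_*}^{\pm}=d_y^{\pm}+d_{y'}^{\pm}$; the resulting digraph $G^*$ is again a directed configuration model, so the \emph{one-point} first-visit-time lemma applies to $y_*$ in $G^*$ and yields $\bP_x(\cA^T_y(t)\cap\cA^T_{y'}(t))=(1+o(1))\bP^*_x(\cA^T_{y_*}(t))$ after comparing $\pi$ with $\pi^*$ --- a comparison which in the Eulerian case is exact, since $\pi^*(y_*)=d_{y_*}/m=\pi(y)+\pi(y')$. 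You should also take your target set $W$ inside $\cV_{d^\star}\cap V_1$ with pairwise (directed) distances exceeding $2\vartheta$, as the paper does, so that the first-visit-time lemma applies to each point and to each merged pair; discarding the non-LTL and close pairs costs only a $\Delta^{O(\vartheta)}$ factor and preserves $|W|=n^{\a_{d^\star}+o(1)}$.
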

In particular, if all present  types have linear size then $\a_d\in\{0,1\}$ for all $d$ and \eqref{eq:th-cover-eulerian}
holds with  $\b=\bar d/\d$, where $\d$ is the minimum degree. In any case it is not difficult to see that $\b\geq 1$, since $\bar d$ is determined only by types with linear size. For some general bounds on cover times of Eulerian graphs we refer to \cite{boczkowski2018sensitivity}.

The rest of the paper is divided into three sections. The first is a collection of preliminary structural facts about the directed configuration model. It also includes the proof of Theorem \ref{th:diameter}. The second section is the core of the paper. There we establish Theorem \ref{th:pimin} and  
 Theorem \ref{th:pimax}. The last section contains the proof of the cover time results Theorem \ref{th:covertime} and Theorem \ref{th:covertime-eulerian}.

%%%%%%%%%%%%%%%% Section Neighborhoods and diameter

\section{Neighborhoods and diameter}\label{sec:structure}
We start by recalling some simple facts about the directed configuration model. 

\subsection{Sequential generation}
Each vertex $x$ has $d^-_x$ labeled heads and $d^+_x$ labeled tails, and we call $E_x^-$ and $E_x^+$ the sets of heads and tails at $x$ respectively. The uniform bijection $\omega$ between heads $E^-=\cup_{x\in[n]}E_x^-$ and tails $E^+=\cup_{x\in[n]}E_x^+$, viewed as a matching, can be sampled by iterating the  following steps until there are no unmatched heads left:
\begin{enumerate}[1)]
\item pick an unmatched head $f\in E^-$ according to some priority rule;
\item pick an unmatched tail $e\in E^+$ uniformly at random;
\item match $f$ with $e$, i.e.\ set $\omega(f)=e$, and call $ef$ the resulting edge.
\end{enumerate}
This gives the desired uniform distribution over matchings $\omega :E^-\mapsto E^+$ regardless of the priority rule chosen at step 1. The digraph $G$ is obtained by adding a directed edge $(x,y)$ whenever $f\in E_y^-$ and $e\in E_x^+$ in step 3 above. 

\subsection{In-neighborhoods and out-neighborhoods}
We will use the notation 
\begin{equation}\label{eq:dD}
\d=\min\{\d_-,\d_+\}\,,\qquad \D=\max\{\D_-,\D_+\}.
\end{equation}
For any $h\in\bbN$, the $h$-in-neighborhood of a vertex $y$, denoted $\cB^-_{h}(y)$, is the digraph defined as the union of all directed paths  of length $\ell\leq h$ in $G$ which terminate at vertex $y$. In the sequel a path is always understood as a sequence of directed edges $(e_1f_1,\dots,e_kf_k)$ such that $v_{f_i}=v_{e_{i+1}}$ for all $i=1,\dots,k-1$, and we use the notation $v_e$ (resp.\ $v_f$) for the vertex $x$ such that $e\in E_x^+$ (resp.\ $f\in E_x^-$).

To generate the random variable $\cB^-_{h}(y)$, we use the following breadth-first procedure. Start at vertex $y$ and run the sequence of steps described above, by giving priority to those unmatched heads which have minimal distance to vertex $y$, until this minimal distance exceeds $h$, at which point the process stops. 
Similarly, for any $h\in\bbN$, the $h$-out-neighborhood of a vertex $x$, denoted $\cB^+_{h}(x)$ is defined as the subgraph induced by the set of directed paths of length $\ell\leq h$ which start at vertex $x$. To generate the random variable $\cB^+_{h}(x)$, we use the same breadth-first procedure described above except that we invert the role of heads and tails. With slight abuse of notation we sometimes write $\cB^\pm_{h}(x)$ for the vertex set of $\cB^\pm_{h}(x)$. We also warn the reader that to simplify the notation we often avoid taking explicitly the integer part of the various parameters entering our proofs. In particular, whenever we write $\cB_h^\pm(x)$ it is always understood that $h\in\bbN$.  

During %For either the in- or out-neighborhood, during 
the generation process of the in-neighborhood, say that a {\em collision} occurs whenever a tail gets chosen, whose end-point $x$ was already exposed, in the sense that some tail in $E^+_x$ or head in $E^-_x$ had already been matched. 
Since less than $2k$ vertices are exposed when the $k^{\textrm{th}}$ tail gets matched, less than $2\Delta k$ of the $m-k+1$ possible choices can result in a collision. Thus, the conditional chance that the $k^{\textrm{th}}$ step causes a collision, given the past, is less than
$p_k=\frac{2\Delta k}{m-k+1}$. It follows that the 
number $Z_k$ of collisions caused by the first $k$ arcs is stochastically dominated by the binomial random variable  Bin$\left(k,p_k\right)$. In particular, 
\begin{equation}\label{eq:bin1}
\P\left(Z_k\geq \ell\right) \leq  \frac{k^\ell p_k^\ell}{\ell!}\,,\qquad \ell\in\bbN.
%\P\left(Z_k\geq 1\right) \leq  kp_k
%=\frac{2\Delta k^2}{m-k+1}\,,\qquad \P\left(Z_k\geq 2\right)  \leq  \frac12\,k^2p_k^2=\frac{2\Delta^2 k^4}{(m-k+1)^2}.
\end{equation}
%vertex $x$ is {\em exposed} if at least one of the tails $e_+\in E_x^+$ or heads $e_-\in E_x^-$ has been already matched. Notice that as long as in step 2 no tail $e_+$ is picked from exposed vertices, the resulting digraph is a directed tree. 
%
Notice that as long as  no collision occurs, the resulting digraph is a directed tree.  
The same applies to out-neighborhoods simply by inverting the role of heads and tails.

For any digraph $G$, define the tree excess of $G$ as 
$$\tx(G)=1+|E|-|V|,$$  
where $E$ is the set of directed edges and $V$ is the set of vertices of $G$. %Note that $\tx(G)=0$ iff $G$ is a tree.
In particular, $\tx(\cB^\pm_{h}(x))=0$ iff $\cB^\pm_{h}(x)$ is a directed tree, and $\tx(\cB^\pm_{h}(x))\leq 1$ iff there is at most one collision during the generation of the neighborhood $\cB^\pm_{h}(x)$. Define the events
\begin{equation}\label{eq:eve}
\cG_x(h)=\left\{\tx(\cB_h^-(x))\leq 1 \;\text{and}\;\tx(\cB_h^+(x))\le 1\right\}\,,\qquad \cG(h)=\cap_{x\in[n]}\cG_x(h).
\end{equation}
Set also
\begin{equation}\label{eq:hslash}
\hslash= \frac{1}{5}\log_\Delta(n)\,.
\end{equation}
\begin{proposition}\label{pr:tx<=1}
There exists $\chi>0$ such that   $\P\(\cG_x(\hslash)\)
=1-O(n^{-1-\chi})$ for any $x\in[n]$. In particular,
\begin{equation}\label{eq:tx1}
\P\(\cG(\hslash)\)%\,,\;\forall h\leq \hslash\)
=1-O(n^{-\chi}).
\end{equation}
\end{proposition}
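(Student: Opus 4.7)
The plan is to bound the probability that two or more collisions occur during the breadth-first generation of $\cB_\hslash^\pm(x)$, using the binomial tail estimate \eqref{eq:bin1} with $\ell=2$, and then to union bound over the two neighborhoods and over all $n$ vertices.

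First I would establish a deterministic upper bound on the total number of matchings performed in the generation of $\cB_\hslash^-(x)$. Since every exposed vertex has in-degree at most $\Delta_-\leq\Delta$, the number of heads exposed at depth $j$ is at most $\Delta^j$, so summing over $j=1,\dots,\hslash$ gives
$$
k_0\;:=\;\sum_{j=1}^{\hslash}\Delta^{j}\;\leq\;2\Delta^{\hslash}\;=\;2\,n^{1/5}
$$
as a universal upper bound on the number of head-matchings performed. The symmetric argument (swapping heads and tails) yields the same bound for $\cB_\hslash^+(x)$. The one point that needs a brief justification here is that collisions can only merge branches of the exploration and therefore never enlarge the pool of unmatched heads/tails beyond the tree estimate; in particular the bound on $k_0$ holds deterministically on every realization.

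Next I would apply \eqref{eq:bin1} at $k=k_0$ and $\ell=2$, with $p_{k_0}=\frac{2\Delta k_0}{m-k_0+1}$. Since $m\asymp n$ by the boundedness of the degrees (every vertex has degree between $\delta$ and $\Delta$, so $m\in[\delta n,\Delta n]$), one obtains
$$
\P\bigl(\tx(\cB_\hslash^-(x))\geq 2\bigr)\;\leq\;\P(Z_{k_0}\geq 2)\;\leq\;\tfrac{1}{2}\,k_0^{2}\,p_{k_0}^{2}\;=\;O\!\left(\frac{k_0^{4}}{m^{2}}\right)\;=\;O(n^{-6/5}).
$$
The identical estimate for the out-neighborhood (by the symmetry of the generation procedure) combined with a union bound on the two events yields $\P(\cG_x(\hslash)^c)=O(n^{-6/5})$, so the first claim holds with $\chi=1/5$. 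A final union bound over $x\in[n]$ gives \eqref{eq:tx1}.

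No real obstacle is expected; the entire argument rests on the deterministic exponential-growth bound for $k_0$ and the second-moment-type collision estimate \eqref{eq:bin1}. The choice of the exponent $1/5$ in the definition \eqref{eq:hslash} of $\hslash$ is precisely what makes the product $k_0^4/m^2$ a negative power of $n$, and is the minimal concession needed so that the probability of two collisions is summable in $n$ across all vertices and both in/out explorations.
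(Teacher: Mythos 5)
Your proof is correct and follows essentially the same route as the paper: the paper also bounds the number of edges created in generating $\cB_\hslash^\pm(x)$ by $\Delta^{\hslash}=n^{1/5}$, applies \eqref{eq:bin1} with $\ell=2$ to get $O(n^{-1-\chi})$ per vertex, and concludes by a union bound. Your write-up merely makes explicit the arithmetic ($k_0^2p_{k_0}^2=O(n^{-6/5})$, hence $\chi=1/5$) that the paper leaves implicit.
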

\begin{proof}
During the generation of $\cB_h^-(x)$ one creates at most $\D^h$ edges. It follows from \eqref{eq:bin1} with $\ell=2$ that the probability of the complement of $\cG_x(\hslash)$ is $O(n^{-1-\chi})$ for all $x\in[n]$ for some absolute constant $\chi>0$:
\begin{equation}\label{eq:tx1t}
\P\(\cG_x(\hslash)\)%\,,\;\forall h\leq \hslash\)
=1-O(n^{-1-\chi}).
\end{equation} 
The conclusion follows from the union bound.  
\end{proof}

We will need to control the size of the boundary of our neighborhoods. To this end, we introduce the notation $\partial\cB_t^-(y)$ for the set of vertices $x\in[n]$ such that $d(x,y)=t$. 
Similarly, $\partial\cB_t^+(x)$ is the set of vertices $y\in[n]$ such that $d(x,y)=t$. 
%Once we have generated the $t$-in-neighborhood $\cB_y^-(t)$, we call $\cF^-_y(t)$ the set of free (i.e.\ unmatched)  heads from vertices in $\partial\cB_y^-(t)$. Similarly, $\cF^+_y(t)$ denotes the set of free tails from vertices in $\partial\cB_y^+(t)$. 
Clearly, $|\partial\cB_t^\pm(y)|\leq \D^{h}$ for any $y\in[n]$ and $h\in\bbN$.
\begin{lemma}
\label{lem:easyBh}
%With high probability
There exists $\chi>0$ such that for all $y\in[n]$,
 \begin{equation}\label{eq:easylow}
\P\(|\partial\cB_h^\pm(y)|\geq \tfrac12{\d_\pm^h},\:\forall h\in[1,\hslash]\)=1-O(n^{-1-\chi}).
\end{equation}
\end{lemma}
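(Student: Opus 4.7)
The plan is to leverage Proposition \ref{pr:tx<=1} to reduce everything to the tree case, and then run a deterministic BFS recursion for the boundary sizes. I treat the in-neighborhood; the bound for $\partial\mathcal{B}_h^+(y)$ follows by the same argument with the roles of heads and tails exchanged.

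Let $c_h$ denote the number of collisions that occur during the BFS step that creates the vertices at level $h$, i.e.\ while matching the (unmatched) heads of the vertices at distance exactly $h-1$ from $y$. Since every vertex $x\in\partial\mathcal{B}_{h-1}^-(y)$ contributes $d_x^-\geq \delta_-$ heads to be matched, and each such match either produces a fresh level-$h$ vertex or is a collision with an already exposed vertex, the BFS construction gives the deterministic recursion
\begin{equation*}
|\partial\mathcal{B}_h^-(y)|\;\geq\;\delta_-\,|\partial\mathcal{B}_{h-1}^-(y)| - c_h.
\end{equation*}
Iterating this from $|\partial\mathcal{B}_0^-(y)|=1$ yields
\begin{equation*}
|\partial\mathcal{B}_h^-(y)|\;\geq\;\delta_-^{\,h} - \sum_{k=1}^{h}\delta_-^{\,h-k}\,c_k.
\end{equation*}

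On the event $\mathcal{G}_y(\hslash)$ one has $\tx(\mathcal{B}_\hslash^-(y))\leq 1$, so $\sum_{k=1}^{\hslash}c_k\leq 1$; the worst case is a single collision occurring at $k=1$, which contributes a loss of at most $\delta_-^{\,h-1}$. Hence, for every $h\in[1,\hslash]$,
\begin{equation*}
|\partial\mathcal{B}_h^-(y)|\;\geq\;\delta_-^{\,h} - \delta_-^{\,h-1}\;=\;(\delta_- - 1)\,\delta_-^{\,h-1}\;\geq\;\tfrac12\,\delta_-^{\,h},
\end{equation*}
where the last inequality uses $\delta_-\geq 2$ from \eqref{degs}. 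The same argument applied to the BFS generation of $\mathcal{B}_\hslash^+(y)$ gives $|\partial\mathcal{B}_h^+(y)|\geq \tfrac12\delta_+^{\,h}$ on $\mathcal{G}_y(\hslash)$.

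The probability bound is then immediate from the single-vertex estimate \eqref{eq:tx1t}: $\mathbb{P}(\mathcal{G}_y(\hslash))=1-O(n^{-1-\chi})$, and on this event both lower bounds hold simultaneously for all $h\leq\hslash$. The only genuinely delicate point is the accounting of how much damage a single collision can do: a collision encountered at level $k$ removes at most one would-be vertex at that level, and after $h-k$ further generations this single lost subtree accounts for at most $\delta_-^{\,h-k}$ missing vertices at level $h$. This bookkeeping, which is the main (and quite mild) obstacle in the argument, is transparent precisely because the BFS order ensures that the damage propagates multiplicatively and linearly in $c_k$. Everything else is elementary.
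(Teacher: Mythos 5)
Your proof is correct, and it rests on the same probabilistic reduction as the paper's: condition on the event $\cG_y(\hslash)$ from Proposition \ref{pr:tx<=1}, so that at most one collision occurs in the whole generation of $\cB_\hslash^\pm(y)$, and then show deterministically that a single collision cannot halve the boundary. Where you diverge is in the deterministic counting: the paper enumerates the possible collision types (a multi-edge or a second out-edge into the previous level, versus an in-edge back into the already-explored part, with further sub-cases) and verifies the bound $\geq \tfrac12\d_-^h$ separately in each configuration, whereas you replace this case analysis by the uniform recursion $|\partial\cB_h^-(y)|\geq \d_-|\partial\cB_{h-1}^-(y)|-c_h$, iterated to give a loss of at most $\d_-^{h-k}$ per collision at round $k$. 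The two computations produce identical arithmetic (e.g.\ the paper's case-1 bound $(\d_-^{t-1}-1)\d_-^{h-t+1}+(\d_--1)\d_-^{h-t}$ is exactly $\d_-^h-\d_-^{h-t}$), so your bookkeeping is a cleaner packaging of the same estimate; its validity hinges on the observation, which you correctly use, that in the breadth-first generation each head of a level-$(h-1)$ vertex either discovers a fresh level-$h$ vertex or registers as one collision, so the number of level-$h$ vertices is exactly the number of heads processed minus $c_h$. The paper's explicit case analysis buys nothing extra for this lemma itself, but it is reused almost verbatim in the proof of Lemma \ref{lem:gamma0} to count the number of length-$h$ paths from $z$ to $y$ (at most two, depending on the collision type), which your recursion does not directly provide.
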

\begin{proof} 
By symmetry we may restrict to the case of in-neighborhoods. By \eqref{eq:tx1t}
it is sufficient to show that $|\partial\cB_h^\pm(y)|\geq \tfrac12{\d_\pm^h}$, for all $h\in[1,\hslash]$, if $\cG_y(\hslash)$ holds.
If the tree excess of the $h$-in-neighborhood $\cB_h^-(y)$ is at most $1$ then there is at most one collision in the generation of $\cB_h^-(y)$. This collision can be of two types: 

\begin{enumerate}
	\item\label{it:collision1} there exists some $1\leq t\leq h$ and a $v\in\partial\cB^-_{t}(y)$ s.t.\ $v$ has two out-neighbors $w,w'\in\partial\cB^-_{t-1}(y)$;
	
	\item\label{it:collision2} there exists some $0\leq t\leq h$ and a  $v\in\partial\cB^-_{t}(y)$ s.t. $v$ has an in-neighbor $w$ in $\cB^-_{t}(y)$.
\end{enumerate}
The first case can be further divided into two cases: a) $w=w'$, and b) $w\neq w'$; see Figure \ref{fig:1}.

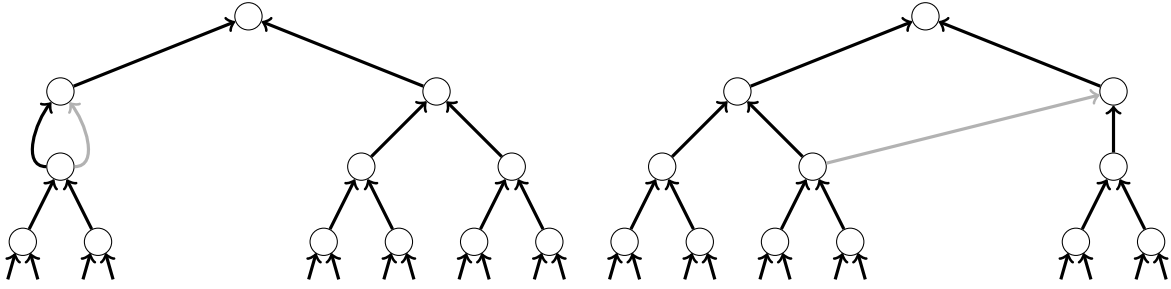
\begin{figure}
\centering

\begin{tikzpicture}
\draw (0-5,0) node(0)[circle,draw] {};
\draw (-2.5-5,-1) node(1)[circle,draw] {};
\draw (2.5-5,-1) node(2)[circle,draw] {};
\draw (-2.5-5,-2) node(3)[circle,draw]{};
\draw (1.5-5,-2) node(5)[circle,draw]{};
\draw (3.5-5,-2) node(6)[circle,draw]{};

\draw (-3-5,-3) node(7)[circle,draw]{};
\draw (-2-5,-3) node(8)[circle,draw]{};

\draw (4-5,-3) node(14)[circle,draw]{};
\draw (3-5,-3) node(13)[circle,draw]{};
\draw (2-5,-3) node(12)[circle,draw]{};
\draw (1-5,-3) node(11)[circle,draw]{};

\draw [->,line width=1.2pt] (1) -- (0) ;
\draw [->,line width=1.2pt] (2) -- (0) ;

\draw [->,line width=1.2pt] (3) to [out=180,in=230] (1);

\draw [->,line width=1.2pt,light-gray] (3) to [out=0,in=310] (1);

\draw [->,line width=1.2pt] (5) -- (2) ;
\draw [->,line width=1.2pt] (6) -- (2) ;

\draw [->,line width=1.2pt] (7) -- (3) ;
\draw [->,line width=1.2pt] (8) -- (3) ;

\draw [->,line width=1.2pt] (11) -- (5) ;
\draw [->,line width=1.2pt] (12) -- (5) ;

\draw [->,line width=1.2pt] (13) -- (6) ;
\draw [->,line width=1.2pt] (14) -- (6) ;

%F8
\draw[->,line width=1.2pt] (-2.8-5,-3.5)--(-2.9-5,-3.15);
\draw[->,line width=1.2pt] (-3.2-5,-3.5)--(-3.1-5,-3.15);

%F9
\draw[->,line width=1.2pt] (-1.8-5,-3.5)--(-1.9-5,-3.15);
\draw[->,line width=1.2pt] (-2.2-5,-3.5)--(-2.1-5,-3.15);

%F11
\draw[->,line width=1.2pt] (0.8-5,-3.5)--(0.9-5,-3.15);
\draw[->,line width=1.2pt] (1.2-5,-3.5)--(1.1-5,-3.15);

%F12
\draw[->,line width=1.2pt] (1.8-5,-3.5)--(1.9-5,-3.15);
\draw[->,line width=1.2pt] (2.2-5,-3.5)--(2.1-5,-3.15);

%F13
\draw[->,line width=1.2pt] (2.8-5,-3.5)--(2.9-5,-3.15);
\draw[->,line width=1.2pt] (3.2-5,-3.5)--(3.1-5,-3.15);

%F14
\draw[->,line width=1.2pt] (3.8-5,-3.5)--(3.9-5,-3.15);
\draw[->,line width=1.2pt] (4.2-5,-3.5)--(4.1-5,-3.15);

\draw (0+4,0) node(0a)[circle,draw] {};
\draw (-2.5+4,-1) node(1a)[circle,draw] {};
\draw (2.5+4,-1) node(2a)[circle,draw] {};
\draw (-3.5+4,-2) node(3a)[circle,draw]{};
\draw (-1.5+4,-2) node(4a)[circle,draw]{};

\draw (2.5+4,-2) node(6a)[circle,draw]{};

\draw (-4+4,-3) node(7a)[circle,draw]{};
\draw (-3+4,-3) node(8a)[circle,draw]{};
\draw (-2+4,-3) node(9a)[circle,draw]{};
\draw (-1+4,-3) node(10a)[circle,draw]{};

\draw (3+4,-3) node(14a)[circle,draw]{};
\draw (2+4,-3) node(13a)[circle,draw]{};

\draw [->,line width=1.2pt] (1a) -- (0a) ;
\draw [->,line width=1.2pt] (2a) -- (0a) ;

\draw [->,line width=1.2pt] (3a) -- (1a) ;
\draw [->,line width=1.2pt] (4a) -- (1a) ;

\draw [->,line width=1.2pt,light-gray] (4a) -- (2a) ;
\draw [->,line width=1.2pt] (6a) -- (2a) ;

\draw [->,line width=1.2pt] (7a) -- (3a) ;
\draw [->,line width=1.2pt] (8a) -- (3a) ;

\draw [->,line width=1.2pt] (9a) -- (4a) ;
\draw [->,line width=1.2pt] (10a) -- (4a) ;

\draw [->,line width=1.2pt] (13a) -- (6a) ;
\draw [->,line width=1.2pt] (14a) -- (6a) ;

%F7
\draw[->,line width=1.2pt] (-3.8+4,-3.5)--(-3.9+4,-3.15);
\draw[->,line width=1.2pt] (-4.2+4,-3.5)--(-4.1+4,-3.15);

%F8
\draw[->,line width=1.2pt] (-2.8+4,-3.5)--(-2.9+4,-3.15);
\draw[->,line width=1.2pt] (-3.2+4,-3.5)--(-3.1+4,-3.15);

%F9
\draw[->,line width=1.2pt] (-1.8+4,-3.5)--(-1.9+4,-3.15);
\draw[->,line width=1.2pt] (-2.2+4,-3.5)--(-2.1+4,-3.15);

%F10
\draw[->,line width=1.2pt] (-0.8+4,-3.5)--(-0.9+4,-3.15);
\draw[->,line width=1.2pt] (-1.2+4,-3.5)--(-1.1+4,-3.15);

%F12
\draw[->,line width=1.2pt] (1.8+4,-3.5)--(1.9+4,-3.15);
\draw[->,line width=1.2pt] (2.2+4,-3.5)--(2.1+4,-3.15);

%F13
\draw[->,line width=1.2pt] (2.8+4,-3.5)--(2.9+4,-3.15);
\draw[->,line width=1.2pt] (3.2+4,-3.5)--(3.1+4,-3.15);

\end{tikzpicture}

\caption{The light-coloured arrow represents a collision of type (1a) (left) and  a collision of type (1b) (right). 
%Collision of this type are characterized by a vertex in $\partial\cB_t^-$ having two tails connected to heads from $\partial\cB^-_{t-1}$.
}\label{fig:1}

\end{figure}

In case 1a) we note that  the $(h-t)$-in-neighborhood of $v$ must be a directed tree with at least $\d_-^{h-t}$ elements on its boundary and with no intersection with the $(h-t)$-in-neighborhoods of other $v'\in\partial\cB^-_{t}(y)$. Moreover, $\cB^-_{t-1}(y)$ must be a directed tree with  $|\partial\cB^-_{t-1}(y)|\geq \d_-^{t-1}$, and all elements of $ \partial\cB^-_{t-1}(y)$ except one have disjoint $(h-t+1)$-in-neighborhoods with $\d_-^{h-t+1}$ elements on their boundary. Therefore 
$$|\partial\cB^-_{h}(y)|\geq (\d_-^{t-1}-1) \d_-^{h-t+1} + (\d_--1)\d_-^{h-t}\geq \frac12\d_-^h.$$
In case 1b) one has that $t\geq 2$, $\cB^-_{t-1}(y)$ is a directed tree with  $|\partial\cB^-_{t-1}(y)|\geq \d_-^{t-1}$, and for all $z\in \partial\cB^-_{t}(y)$,  the $(h-t)$-in-neighborhoods of $z$ are disjoint directed trees with at least $\d_-^{h-t}$ elements on their boundary. Since $|\partial\cB^-_{t}(y)|\geq \d_-^{t}-1$ it follows that 
%Thus, neglecting the in-neighborhood of $v$ one has
$$
|\partial\cB^-_{h}(y)|\geq (\d_-^t-1)\d_-^{h-t}\geq \frac12\d_-^h.
%(\d_-^{t-1}-2) \d_-^{h-t+1} + 2(\d_--1)\d_-^{h-t}\geq \frac12\d_-^h.
$$
Collisions of type 2 can be further divided into two types: a) $w\in\partial\cB^-_{s}(y)$ with $s< t$  and there is no path from $v$ to $w$ of length $t-s$, or $w\in\partial\cB^-_{t}(y)$ and $w\neq v$, and b) $w\in\partial\cB^-_{s}(y)$ with $s< t$  and there is a path from $v$ to $w$ of length $t-s$, or $w=v$. 
Note that in contrast with collisions of type 2a), a  collision of type 2b) creates a directed cycle within $\cB^-_{t}(y)$; see Figure \ref{fig:2a} and Figure \ref{fig:2b}.

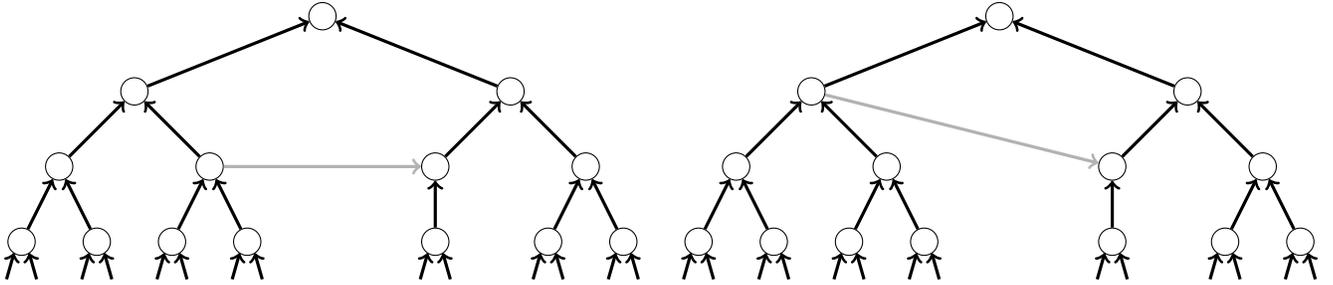
\begin{figure}
\centering
\begin{tikzpicture}
\draw (0-4,0) node(0a)[circle,draw] {};
\draw (-2.5-4,-1) node(1a)[circle,draw] {};
\draw (2.5-4,-1) node(2a)[circle,draw] {};
\draw (-3.5-4,-2) node(3a)[circle,draw]{};
\draw (-1.5-4,-2) node(4a)[circle,draw]{};
\draw (1.5-4,-2) node(5a)[circle,draw]{};
\draw (3.5-4,-2) node(6a)[circle,draw]{};

\draw (-4-4,-3) node(7a)[circle,draw]{};
\draw (-3-4,-3) node(8a)[circle,draw]{};
\draw (-2-4,-3) node(9a)[circle,draw]{};
\draw (-1-4,-3) node(10a)[circle,draw]{};

\draw (4-4,-3) node(14a)[circle,draw]{};
\draw (3-4,-3) node(13a)[circle,draw]{};
\draw (1.5-4,-3) node(11a)[circle,draw]{};

\draw [->,line width=1.2pt] (1a) -- (0a) ;
\draw [->,line width=1.2pt] (2a) -- (0a) ;

\draw [->,line width=1.2pt] (3a) -- (1a) ;
\draw [->,line width=1.2pt] (4a) -- (1a) ;
\draw [->,line width=1.2pt,light-gray] (4a) -- (5a) ;

\draw [->,line width=1.2pt] (5a) -- (2a) ;
\draw [->,line width=1.2pt] (6a) -- (2a) ;

\draw [->,line width=1.2pt] (7a) -- (3a) ;
\draw [->,line width=1.2pt] (8a) -- (3a) ;

\draw [->,line width=1.2pt] (9a) -- (4a) ;
\draw [->,line width=1.2pt] (10a) -- (4a) ;

\draw [->,line width=1.2pt] (11a) -- (5a) ;

\draw [->,line width=1.2pt] (13a) -- (6a) ;
\draw [->,line width=1.2pt] (14a) -- (6a) ;

%F7
\draw[->,line width=1.2pt] (-3.8-4,-3.5)--(-3.9-4,-3.15);
\draw[->,line width=1.2pt] (-4.2-4,-3.5)--(-4.1-4,-3.15);

%F8
\draw[->,line width=1.2pt] (-2.8-4,-3.5)--(-2.9-4,-3.15);
\draw[->,line width=1.2pt] (-3.2-4,-3.5)--(-3.1-4,-3.15);

%F9
\draw[->,line width=1.2pt] (-1.8-4,-3.5)--(-1.9-4,-3.15);
\draw[->,line width=1.2pt] (-2.2-4,-3.5)--(-2.1-4,-3.15);

%F10
\draw[->,line width=1.2pt] (-0.8-4,-3.5)--(-0.9-4,-3.15);
\draw[->,line width=1.2pt] (-1.2-4,-3.5)--(-1.1-4,-3.15);

%F11new
\draw[->,line width=1.2pt] (1.3-4,-3.5)--(1.4-4,-3.15);
\draw[->,line width=1.2pt] (1.7-4,-3.5)--(1.6-4,-3.15);

%F13
\draw[->,line width=1.2pt] (2.8-4,-3.5)--(2.9-4,-3.15);
\draw[->,line width=1.2pt] (3.2-4,-3.5)--(3.1-4,-3.15);

%F14
\draw[->,line width=1.2pt] (3.8-4,-3.5)--(3.9-4,-3.15);
\draw[->,line width=1.2pt] (4.2-4,-3.5)--(4.1-4,-3.15);

\draw (0+5,0) node(0)[circle,draw] {};
\draw (-2.5+5,-1) node(1)[circle,draw] {};
\draw (2.5+5,-1) node(2)[circle,draw] {};
\draw (-3.5+5,-2) node(3)[circle,draw]{};
\draw (-1.5+5,-2) node(4)[circle,draw]{};
\draw (1.5+5,-2) node(5)[circle,draw]{};
\draw (3.5+5,-2) node(6)[circle,draw]{};

\draw (-4+5,-3) node(7)[circle,draw]{};
\draw (-3+5,-3) node(8)[circle,draw]{};
\draw (-2+5,-3) node(9)[circle,draw]{};
\draw (-1+5,-3) node(10)[circle,draw]{};

\draw (4+5,-3) node(14)[circle,draw]{};
\draw (3+5,-3) node(13)[circle,draw]{};
\draw (1.5+5,-3) node(11)[circle,draw]{};

\draw [->,line width=1.2pt] (1) -- (0) ;
\draw [->,line width=1.2pt] (2) -- (0) ;

\draw [->,line width=1.2pt] (3) -- (1) ;
\draw [->,line width=1.2pt] (4) -- (1) ;
\draw [->,line width=1.2pt,light-gray] (1) -- (5) ;

\draw [->,line width=1.2pt] (5) -- (2) ;
\draw [->,line width=1.2pt] (6) -- (2) ;

\draw [->,line width=1.2pt] (7) -- (3) ;
\draw [->,line width=1.2pt] (8) -- (3) ;

\draw [->,line width=1.2pt] (9) -- (4) ;
\draw [->,line width=1.2pt] (10) -- (4) ;

\draw [->,line width=1.2pt] (11) -- (5) ;

\draw [->,line width=1.2pt] (13) -- (6) ;
\draw [->,line width=1.2pt] (14) -- (6) ;

%F7
\draw[->,line width=1.2pt] (-3.8+5,-3.5)--(-3.9+5,-3.15);
\draw[->,line width=1.2pt] (-4.2+5,-3.5)--(-4.1+5,-3.15);

%F8
\draw[->,line width=1.2pt] (-2.8+5,-3.5)--(-2.9+5,-3.15);
\draw[->,line width=1.2pt] (-3.2+5,-3.5)--(-3.1+5,-3.15);

%F9
\draw[->,line width=1.2pt] (-1.8+5,-3.5)--(-1.9+5,-3.15);
\draw[->,line width=1.2pt] (-2.2+5,-3.5)--(-2.1+5,-3.15);

%F10
\draw[->,line width=1.2pt] (-0.8+5,-3.5)--(-0.9+5,-3.15);
\draw[->,line width=1.2pt] (-1.2+5,-3.5)--(-1.1+5,-3.15);

%F11new
\draw[->,line width=1.2pt] (1.3+5,-3.5)--(1.4+5,-3.15);
\draw[->,line width=1.2pt] (1.7+5,-3.5)--(1.6+5,-3.15);

%F13
\draw[->,line width=1.2pt] (2.8+5,-3.5)--(2.9+5,-3.15);
\draw[->,line width=1.2pt] (3.2+5,-3.5)--(3.1+5,-3.15);

%F14
\draw[->,line width=1.2pt] (3.8+5,-3.5)--(3.9+5,-3.15);
\draw[->,line width=1.2pt] (4.2+5,-3.5)--(4.1+5,-3.15);
\end{tikzpicture}
\caption{Two examples of collision of type $(2a)$. 
%Those are characterized by a vertex in $v\in\partial\cB^-_t$ having one of its tails matched to a head in $\cup_{s\ge t}\cF_t$ that is not connected to a descendant of $v$ in $\cB^-_t$.
}\label{fig:2a}
\end{figure}

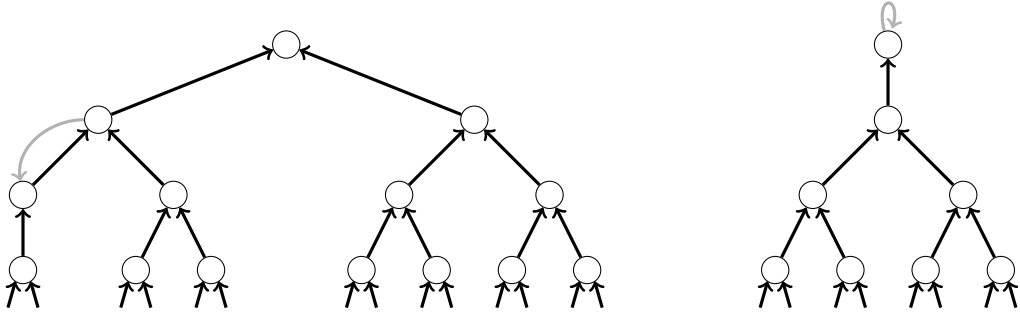
\begin{figure}
\centering
\begin{tikzpicture}
\draw (0-4,0) node(0)[circle,draw] {};
\draw (-2.5-4,-1) node(1)[circle,draw] {};
\draw (2.5-4,-1) node(2)[circle,draw] {};
\draw (-3.5-4,-2) node(3)[circle,draw]{};
\draw (-1.5-4,-2) node(4)[circle,draw]{};
\draw (1.5-4,-2) node(5)[circle,draw]{};
\draw (3.5-4,-2) node(6)[circle,draw]{};

\draw (-3.5-4,-3) node(7)[circle,draw]{};
\draw (-2-4,-3) node(9)[circle,draw]{};
\draw (-1-4,-3) node(10)[circle,draw]{};

\draw (4-4,-3) node(14)[circle,draw]{};
\draw (3-4,-3) node(13)[circle,draw]{};
\draw (2-4,-3) node(12)[circle,draw]{};
\draw (1-4,-3) node(11)[circle,draw]{};

\draw [->,line width=1.2pt] (1) -- (0) ;
\draw [->,line width=1.2pt] (2) -- (0) ;

\draw [->,line width=1.2pt] (3) -- (1) ;
\draw [->,line width=1.2pt] (4) -- (1) ;

\draw [->,line width=1.2pt] (5) -- (2) ;
\draw [->,line width=1.2pt] (6) -- (2) ;

\draw [->,line width=1.2pt] (7) -- (3) ;

\draw [->,line width=1.2pt] (9) -- (4) ;
\draw [->,line width=1.2pt] (10) -- (4) ;

\draw [->,line width=1.2pt] (11) -- (5) ;
\draw [->,line width=1.2pt] (12) -- (5) ;

\draw [->,line width=1.2pt] (13) -- (6) ;
\draw [->,line width=1.2pt] (14) -- (6) ;

%\draw [->,line width=1.2pt] (0) edge[loop above];

\draw [->,line width=1.2pt,light-gray] (1) to [out=180,in=100] (3);

%F8new
\draw[->,line width=1.2pt] (-3.3-4,-3.5)--(-3.4-4,-3.15);
\draw[->,line width=1.2pt] (-3.7-4,-3.5)--(-3.6-4,-3.15);

%F9
\draw[->,line width=1.2pt] (-1.8-4,-3.5)--(-1.9-4,-3.15);
\draw[->,line width=1.2pt] (-2.2-4,-3.5)--(-2.1-4,-3.15);

%F10
\draw[->,line width=1.2pt] (-0.8-4,-3.5)--(-0.9-4,-3.15);
\draw[->,line width=1.2pt] (-1.2-4,-3.5)--(-1.1-4,-3.15);

%F11
\draw[->,line width=1.2pt] (0.8-4,-3.5)--(0.9-4,-3.15);
\draw[->,line width=1.2pt] (1.2-4,-3.5)--(1.1-4,-3.15);

%F12
\draw[->,line width=1.2pt] (1.8-4,-3.5)--(1.9-4,-3.15);
\draw[->,line width=1.2pt] (2.2-4,-3.5)--(2.1-4,-3.15);

%F13
\draw[->,line width=1.2pt] (2.8-4,-3.5)--(2.9-4,-3.15);
\draw[->,line width=1.2pt] (3.2-4,-3.5)--(3.1-4,-3.15);

%F14
\draw[->,line width=1.2pt] (3.8-4,-3.5)--(3.9-4,-3.15);
\draw[->,line width=1.2pt] (4.2-4,-3.5)--(4.1-4,-3.15);

\draw (0+4,0) node(0a)[circle,draw] {};

\draw (0+4,-1) node(2a)[circle,draw] {};

\draw (-1+4,-2) node(5a)[circle,draw]{};
\draw (1+4,-2) node(6a)[circle,draw]{};

\draw (1.5+4,-3) node(14a)[circle,draw]{};
\draw (.5+4,-3) node(13a)[circle,draw]{};
\draw (-0.5+4,-3) node(12a)[circle,draw]{};
\draw (-1.5+4,-3) node(11a)[circle,draw]{};

\draw [->,line width=1.2pt] (2a) -- (0a) ;

\draw [->,line width=1.2pt] (5a) -- (2a) ;
\draw [->,line width=1.2pt] (6a) -- (2a) ;

\draw [->,line width=1.2pt] (11a) -- (5a) ;
\draw [->,line width=1.2pt] (12a) -- (5a) ;

\draw [->,line width=1.2pt] (13a) -- (6a) ;
\draw [->,line width=1.2pt] (14a) -- (6a) ;

\draw [->,line width=1.2pt,light-gray] (0a) edge[loop above] (0a);

%F7
\draw[->,line width=1.2pt] (-1.3+4,-3.5)--(-1.4+4,-3.15);
\draw[->,line width=1.2pt] (-1.7+4,-3.5)--(-1.6+4,-3.15);

%F8
\draw[->,line width=1.2pt] (-0.3+4,-3.5)--(-0.4+4,-3.15);
\draw[->,line width=1.2pt] (-0.7+4,-3.5)--(-0.6+4,-3.15);

%F9
\draw[->,line width=1.2pt] (0.7+4,-3.5)--(0.6+4,-3.15);
\draw[->,line width=1.2pt] (0.3+4,-3.5)--(0.4+4,-3.15);

%F10
\draw[->,line width=1.2pt] (1.7+4,-3.5)--(1.6+4,-3.15);
\draw[->,line width=1.2pt] (1.3+4,-3.5)--(1.4+4,-3.15);
\end{tikzpicture}
\caption{Two examples of collision of type (2b). 
%The red arrow is a directed edge $(w,v)$ where $w$ at height $s$ and its head connected to a vertex at height $t\geq s$ which is a descendant of the tail-vertex, or it is a self loop. This gives rise to a directed cycle in the in-neighborhood.
}\label{fig:2b}
\end{figure}
We remark that in either case 2a) or case 2b),  $\partial\cB^-_{t}(y)$ has at least $\d_-^{t}$ elements, and the vertex $v\in\partial\cB^-_{t}(y)$ has at least $\d_--1$ in-neighbors whose $(h-t-1)$-in-neighborhoods are disjoint directed trees. 
All other $v'\in\partial\cB^-_{t}(y)$ have $(h-t)$-in-neighborhoods that are disjoint directed trees. Therefore, in case
2): $$
|\partial\cB^-_{h}(y)|\geq (\d_-^{t}-1) \d_-^{h-t} + (\d_--1)\d_-^{h-t-1}\geq \frac12\d_-^h.
$$

\end{proof}

%
%If \cref{it:collision1} takes place then 
%
%\begin{equation}
%\sum_{x\in\cB_y^-(\hslash)}P^\hslash(x,y)\le \Gamma_{\hslash}(y)+O\(\delta^{-t}\Gamma_{\hslash-t}(v)\).
%\end{equation}
%
%If \cref{it:collision2} takes place then there are two possible scenarios
%
%\begin{itemize}
%	\item The collision does not create a directed cycle. Then
%	
%	\begin{equation}
%	\sum_{x\in\cB_y^-(\hslash)}P^\hslash(x,y)\le \Gamma_{\hslash}(y)+O\(\delta^{-s}\Gamma_{\hslash-s}(v)\).
%	\end{equation}
%
% \item The collision creates a directed cycle. Call $C=(v=v_1,\dots,v_\ell=w,v_1)$ be the c
 
%at least $\frac12\d^h$ elements in $\partial\cB_y^-(h)$, the worst case being when there is a self loop at $y$.  

We shall need a more precise control of the size of $\partial\cB_h^\pm(y)$, and for values of $h$ that are larger than $\hslash$.  
Recall the definition \eqref{eq:nu} of the parameter $\nu$. We use the following notation in the sequel:
\begin{equation}\label{eq:hs}
\ell_0=4\log_\delta\log(n),\qquad h_\eta=(1-\eta)\log_\nu(n).
\end{equation}

\begin{lemma}\label{le:lemmasize} 	
For every $\eta\in(0,1)$, %and for all $h\in\[\ell_0, h_\eta\]$  we have that 
there exist %explicit 
constants $c_1,c_2>0,\chi>0$ such that for all $y\in[n]$,

\begin{equation}\label{eq:lemmasize-in}
\P\(\nu^{h}\log^{-c_1}(n)\le|\partial\cB_h^\pm(y)|\le \nu^{h}\log^{c_2}(n)\,,\;\forall h\in\[\ell_0, h_\eta\] \)=1-O(n^{-1-\chi}).
\end{equation}

\end{lemma}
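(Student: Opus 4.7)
The plan is to reduce the statement to the concentration of the auxiliary quantity
$$S_t := \sum_{v \in \partial\cB_t^-(y)} d_v^-,$$
the total number of in-heads waiting to be matched at generation $t$. By symmetry between heads and tails it suffices to treat the in-neighborhood case, and since $\delta \le d_v^- \le \Delta$ we have $S_t/\Delta \le |\partial\cB_t^-(y)| \le S_t/\delta$, so a polylogarithmic two-sided bound of the form $\nu^h \log^{-c_1}(n) \le S_h \le \nu^h \log^{c_2}(n)$ translates directly to the claim. For the base case at $t = \ell_0$, note that $\ell_0 \le \hslash$ for large $n$, so Lemma~\ref{lem:easyBh} gives $|\partial\cB_{\ell_0}^-(y)| \ge \tfrac12\delta^{\ell_0} = \tfrac12\log^4(n)$ with probability $1-O(n^{-1-\chi})$; combined with the trivial upper bound $\Delta^{\ell_0}$, $S_{\ell_0}$ is sandwiched between two polylogarithms of $n$.

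For the inductive step, exposing generation $t+1$ given the $\sigma$-algebra $\FF_t$ amounts to matching the $S_t$ in-heads of $\partial\cB_t^-(y)$ sequentially to uniform unmatched tails. Letting $\xi_i\in[0,\Delta]$ denote the contribution of the $i$-th match to $S_{t+1}$, i.e.\ $d_{w_i}^-$ if the matched vertex $w_i$ is fresh and $0$ otherwise, we have $S_{t+1}=\sum_{i=1}^{S_t}\xi_i$. Introducing the stopping time $T := \min\{t : |\cB_t^-(y)| > n^{1-\eta/2}\}$, on $\{t<T\}$ at most $\Delta n^{1-\eta/2} \ll m$ half-edges have been used, so
$$\E[\xi_i \mid \FF_{t,i-1}] = \frac{\sum_{w\ \text{fresh}}d_w^-d_w^+}{m-K_{t,i-1}} = \nu\bigl(1+O(n^{-\eta/2})\bigr),$$
hence $\E[S_{t+1}\mid\FF_t] = \nu S_t(1+O(n^{-\eta/2}))$. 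Applying Azuma--Hoeffding to the Doob martingale $M_i = \E[S_{t+1}\mid \FF_{t,i}]$, whose increments are bounded by $O(\Delta^2)$, yields on $\{t<T\}$
$$\P\bigl(|S_{t+1}-\nu S_t|\ge C\Delta^2\sqrt{S_t \log n}\,\big|\,\FF_t\bigr) \le 2n^{-3}.$$
Union-bounding over the $O(\log n)$ depths $t\in[\ell_0,h_\eta]$ gives a ``good event'' of probability $1-O(n^{-2})$. On this event the per-step multiplicative deviation is $\varepsilon_t = O(\sqrt{\log n/S_t}) + O(n^{-\eta/2})$, which equals $O(\log^{-3/2}n)$ whenever $S_t \ge \tfrac14\log^4(n)$; telescoping, $\prod_t(1+\varepsilon_t) \in [\tfrac12,2]$, so $S_h \asymp \nu^{h-\ell_0} S_{\ell_0}$ for all $h\in[\ell_0,h_\eta]$, which is exactly the desired two-sided polylog bound. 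The induction closes because, on the same event, $|\cB_h^-(y)| \le h \cdot 2\nu^{h-\ell_0}S_{\ell_0}/\delta = n^{1-\eta+o(1)}$ for $h \le h_\eta$, forcing $T > h_\eta$.

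The main obstacle is bounding the Doob martingale increment by $O(\Delta^2)$: a single matching choice directly changes $\xi_i$ by at most $\Delta$, but it can also flip the freshness status of later matches, and the estimate requires showing that at most $O(\Delta)$ downstream matches are affected by swapping which tail of some vertex $w$ gets used. This follows from the observation that on $\{t<T\}$ each fresh vertex has at most $\Delta$ tails, so at most $\Delta$ later conflicts can be created or destroyed by such a swap; handling this carefully (rather than appealing to a naive ``changes at most one $\xi_i$'' bound) is the only nontrivial analytic ingredient.
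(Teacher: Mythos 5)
Your proof is correct and follows the same architecture as the paper's: the base case at depth $\ell_0$ comes from Lemma~\ref{lem:easyBh}, and the growth from $\ell_0$ to $h_\eta$ is controlled generation by generation through the head-count $S_t=|\cF_t|$, whose conditional mean is $\nu S_t\bigl(1+O(n^{-\eta/2})\bigr)$ as long as only $O(n^{1-\eta/2})$ half-edges have been exposed. The only technical difference is the concentration step — the paper sandwiches the per-head contributions between i.i.d.\ variables and applies Hoeffding, whereas you apply Azuma to the Doob martingale with a bounded-differences constant $O(\Delta^2)$ justified by a tail-swapping coupling — and both are valid, with your per-step relative error $O(\log^{-3/2}n)$ telescoping correctly over the $O(\log n)$ generations.
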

\begin{proof}
We run the proof for the in-neighborhood only since the case of the out-neighborhood is obtained in the same way. We generate $\cB^-_h(y)$, $h \in\[\ell_0, h_\eta\]$ sequentially in a breadth first fashion. After the depth $j$ neighborhood $\cB^-_j(y)$ has been sampled, we call $ \cF_{j}$ the set of all heads attached to vertices in $\partial \cB^-_j(y)$. Set $$u= \log^{-7/8}(n).$$
 For any $ h\geq \ell_0$ define
 \begin{equation}
\kappa_h:=[\nu(1-u)]^{h-\ell_0}\log^{7/2}(n),\qquad \widehat{\kappa}_h:=[\nu(1+u)]^{h-\ell_0}\Delta^{\ell_0}.
\end{equation}
%We are going to prove 
% \begin{equation}\label{eq:lemmasize-in-o}
%\P\(\kappa_h\le|\partial\cB_y^\pm(h)|\le \widehat{\kappa}_h\,,\;\forall h\in\[\ell_0, h_\eta\] \)=1-O(n^{-1-\chi}).
%\end{equation}
We are going to prove 
 \begin{equation}\label{eq:lemmasize-in-t}
\P\(\kappa_h\le|\cF_{h}|\le \widehat{\kappa}_h\,,\;\forall h\in\[\ell_0, h_\eta\] \)=1-O(n^{-1-\chi}).
\end{equation}
Notice that, choosing suitable constants $c_1,c_2>0$,  \eqref{eq:lemmasize-in} is a consequence of
\eqref{eq:lemmasize-in-t}. 

Consider the events
\begin{equation}
A_j=\left\{ |\cF_i|\in\[\kappa_i,\widehat{\kappa}_i \]\,,\; \forall i\in[\ell_0,j] \right\}.
\end{equation}
Thus, we need to prove $\P(A_h)=1-O(n^{-1-\chi})$, for $h=h_\eta$. From Lemma \ref{lem:easyBh}
%\eqref{eq:tx1t}, \eqref{eq:easylow} 
and the choice of $\ell_0$, it follows that
\begin{equation}\label{24}
\P(A_{\ell_0})=1-O(n^{-1-\chi}).
\end{equation}
For $h>\ell_0$ we write
\begin{equation}\label{244}
\P(A_h)= \P(A_{\ell_0})\prod_{j=\ell_0+1}^{h}\P(A_j|A_{j-1}).
\end{equation}
To estimate $\P(A_j|A_{j-1})$, note that $A_{j-1}$ depends only on the in-neighborhood $\cB^-_{j-1}(y)$, so if $\sigma_{j-1}$ denotes a realization of $\cB^-_{j-1}(y)$ with a slight abuse of notation we write $\sigma_{j-1}\in A_{j-1}$ if $A_{j-1}$ occurs for this given $\sigma_{j-1}$. Then
\begin{equation}\label{245}
\P(A_j|A_{j-1})=\frac{\sum_{\si_{j-1}}\P(\sigma_{j-1})\P(A_j|\sigma_{j-1})1_{\sigma_{j-1}\in A_{j-1}}}{\P(A_{j-1})}.
\end{equation}
Therefore, to prove a lower bound on $\P(A_j|A_{j-1})$ it is sufficient to prove a lower bound on $\P(A_j|\sigma_{j-1})$ that is uniform over all $\sigma_{j-1}\in A_{j-1}$. 

Suppose we have generated the neighborhood $\si_{j-1}$ up to depth $j-1$, for a $\si_{j-1}\in A_{j-1}$. In some arbitrary order we now generate the matchings of all heads $f\in\cF_{j-1}$. We define the random variable $X_f^{(j)}$, $f\in\cF_{j-1}$, which, for every $f$ evaluates to the in-degree $d_z^-$ of the vertex $z$ that is matched to $f$ 
if the vertex $z$ was not yet exposed, and evaluates to zero otherwise.
In this way
\begin{equation}\label{eq:sumxf}
|\cF_j|= \sum_{f\in\cF_{j-1}}X_f^{(j)}.
\end{equation}
Therefore,
%Call $\P_{A_j}(\cdot)=\P(\cdot| A_j)$, call $\cF_{j-1}^{-}(y)=\{f_1,\dots,f_{\cF_{j-1}^{-}(y)} \}$ and define the random variables 
%$X_i^{(j)})_{i\le|\cF_{j-1}^-(y)|}$
%which, for every fixed $i$, equals the in-degree of the vertex that is matched to $f_i$ if the latter was not discovered in $A_{j-1}$ and zero otherwise.
%
\begin{align}\label{eq:conclu1}
&\P(A_{j}|\sigma_{j-1})=\P\(\nu(1-u)\kappa_{j-1}\le |\cF_j|\le \nu(1+u)\widehat\kappa_{j-1}\tc \sigma_{j-1}\)\\
&\quad=1-\P\Big( \sum_{f\in\cF_{j-1}}X_f^{(j)}<\nu(1-u)\kappa_{j-1}\tc \sigma_{j-1}\Big)-\P\Big(\sum_{f\in\cF_{j-1}}X_f^{(j)}> \nu(1+u)\widehat\kappa_{j-1}\tc \sigma_{j-1}\Big).
\nonumber 
\end{align}
To sample the variables $X_f^{(j)}$,
at each step we pick a tail uniformly at random among all unmatched tails and evaluate the in-degree of its end point if it is not yet exposed. Since $\si_{j-1}\in A_{j-1}$, at any such step the number of exposed vertices is at most $K=O(n^{1-\eta/2})$. 
%Thus, if we d
In particular, for any $f\in\cF_{j-1}$ and any $d\in[\delta,\Delta]$, $\si_{j-1}\in A_{j-1}$:
\begin{equation*}
\P\(X_f^{(j)}= d \tc \si_{j-1}\)\geq \frac{\left[\(\sum_{k=1}^{n} d^+_{k}\ind_{d^-_{k}=d}\) -\D K\right]_+}{m}=:p(d),
\end{equation*}
where $[\cdot]_+$ denotes the positive part. 
This shows that $X_f^{(j)}$ stochastically dominates the random variable $Y^{(j)}$ and is stochastically dominated by the random variable $\widehat{Y}^{(j)}$, where $Y^{(j)}$ and $\widehat{Y}^{(j)}$ are defined by  \begin{gather*}
\forall d\in[\delta,\Delta],\qquad\P(Y^{(j)}=d)=\P(\widehat{Y}^{(j)}=d)= p(d)
%\frac{\left[\(\sum_{k=1}^{n} d^+_{k}\ind_{d^-_{k}=d}\) -\D K\right]_+}{m}\le\P\(X_i^{(j)}= d \),
\\
\P\(\widehat{Y}^{(j)}= \Delta+1 \)=\P\(Y^{(j)}= 0 \)=1-\sum_{d=\delta}^{\Delta}p(d) .
\end{gather*}
Notice that 
\begin{equation}\label{eq:eyj}
\E\(Y^{(j)}\)=\sum_{d=\delta}^{\Delta}dp(d)\geq \nu - \frac{\D^2K}m = \nu - O(n^{-\eta/2}).
\end{equation}
Similarly,
\begin{equation}\label{eq:eyj2}
\E\(\widehat{Y}^{(j)}\)\leq \nu + \frac{\D^2K}m = \nu + O(n^{-\eta/2}).
\end{equation}
Moreover, letting $Y_i^{(j)}$ and $\widehat{Y}_i^{(j)}$ denote i.i.d.\ copies of the random variables $Y^{(j)}$ and $\widehat{Y}^{(j)}$ respectively, since $\si_{j-1}\in A_{j-1}$, the sum in \eqref{eq:sumxf} stochastically dominates 
$\sum_{i=1}^{\kappa_{j-1}}Y_i^{(j)}$, and is stochastically dominated by 
$\sum_{i=1}^{\widehat \kappa_{j-1}}Y_i^{(j)}$. Therefore, $\sum_{f\in\cF_{j-1}}X_f^{(j)}<\nu(1-u)\kappa_{j-1}$ implies that 
\begin{equation}\label{eq:eyj3}
\sum_{i=1}^{\kappa_{j-1}}\left[Y_i^{(j)}-\E\(Y^{(j)}\)\right]\leq -\frac12\,u\kappa_{j-1},
\end{equation}
if $n$ is large enough. Similarly, $\sum_{f\in\cF_{j-1}}X_f^{(j)}>\nu(1+u)\widehat\kappa_{j-1}$ 
implies that 
\begin{equation}\label{eq:eyj4}
\sum_{i=1}^{\widehat\kappa_{j-1}}\left[\widehat{Y}_i^{(j)}-\E\(\widehat{Y}^{(j)}\)\right]\geq \frac12\,u\widehat\kappa_{j-1}.\end{equation}
An application of Hoeffding's inequality shows that the probability of the events \eqref{eq:eyj3} and \eqref{eq:eyj4} is bounded by 
$e^{-c u^2 \kappa_{j-1}}$ and $e^{-c u^2 \widehat\kappa_{j-1}}$ respectively, for some absolute constant $c>0$.
Hence, from \eqref{eq:conclu1} we conclude that for some constant $c>0$:
\begin{align*}
\P(A_{j}|\sigma_{j-1})\geq 1- e^{-c u^2 \kappa_{j-1}} - e^{-c u^2 \widehat\kappa_{j-1}}.
\end{align*}
Therefore, using $u^2 \widehat\kappa_{j-1}\geq  u^2\kappa_{j-1}\geq u^2\kappa_{0} \geq \log^{3/2}(n)$,
\begin{equation}\label{eq:ahe1}
\P(A_{j}|\sigma_{j-1})=1-O(n^{-3}),
\end{equation}
uniformly in $j\in[\ell_0,h_\eta]$ and $\sigma_{j-1}\in A_{j-1}$. By \eqref{245} the same bound applies to $\P(A_{j}|A_{j-1})$ and 
going back to \eqref{244}, for $h=h_\eta$ we have obtained 
\begin{equation*}\label{eq:ahe}
\P(A_{h})=1-O(n^{-1-\chi}).
\end{equation*}
\end{proof}
We shall also need the following refinement of Lemma \ref{le:lemmasize}.
Define the events
\begin{equation}\label{eq:evfy}
F_y^\pm=F_y^\pm(\eta,c_1,c_2)=\left\{\nu^{h}\log^{-c_1}(n)\le|\partial\cB_{h}^\pm(y)|\le \nu^{h}\log^{c_2}(n)\,,\;\forall h\in\[\ell_0, h_\eta\] \right\}.
\end{equation}
Lemma \ref{le:lemmasize} states that 
$$\P\((F_y^\pm)^c\)=O(n^{-1-\chi}).$$
Let $\cG(\hslash)$ be the event from Proposition \ref{pr:tx<=1}.
\begin{lemma}\label{le:lemmasize2} 	
For every $\eta\in(0,1)$, %and for all $h\in\[\ell_0, h_\eta\]$  we have that 
there exist %explicit 
constants $c_1,c_2>0,\chi>0$ such that for all $y\in[n]$,
\begin{equation}\label{eq:lemmasize-2in}
\P\((F_y^\pm)^c; \cG(\hslash)\)=O(n^{-2-\chi}).
\end{equation}
\end{lemma}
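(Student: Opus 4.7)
My plan is to observe that the bottleneck in Lemma \ref{le:lemmasize} was not the iterative Hoeffding step but rather the initial condition: the probability $\P(A_{\ell_0}^c)$, bounded via Lemma \ref{lem:easyBh}, is only $O(n^{-1-\chi})$, whereas each conditional step $\P(A_j^c \mid A_{j-1})$ is super-polynomially small, of order $e^{-c u^2 \kappa_{j-1}} \leq e^{-c\log^{3/2}(n)}$. Consequently, to upgrade the estimate by the needed factor $n^{-1}$ it suffices to show that the event $A_{\ell_0}$ is automatically satisfied on $\cG(\hslash)$.

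This will be the core of the argument. For $n$ large, $\ell_0 = 4\log_\delta \log(n) \leq \hslash = \tfrac{1}{5}\log_\Delta(n)$, so $\cG(\hslash)$ in particular forces $\mathrm{tx}(\cB_h^\pm(y))\le 1$ for every $h \leq \ell_0$ and every $y\in[n]$. I then invoke the case-by-case analysis performed in the proof of Lemma \ref{lem:easyBh}, which shows that whenever the tree-excess is at most $1$ up to depth $h$ we have the deterministic bound
$$|\partial\cB_h^\pm(y)|\ge \tfrac12\,\delta_\pm^h.$$
Applying this at $h=\ell_0$ yields
$$\tfrac12\,\delta^{\ell_0}\;\le\; |\partial\cB_{\ell_0}^\pm(y)|\;\le\;\Delta^{\ell_0},$$
and passing from boundary vertices to incident heads produces bounds on $|\cF_{\ell_0}|$ of the same order up to the multiplicative constants $\delta$ and $\Delta$. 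These bounds can be absorbed into the window $[\kappa_{\ell_0},\widehat\kappa_{\ell_0}]$ (after possibly adjusting the constants in the definition of $\kappa_h,\widehat\kappa_h$, and the exponents $c_1,c_2$ in the definition \eqref{eq:evfy} of $F_y^\pm$). Hence $A_{\ell_0}$ holds deterministically on $\cG(\hslash)$.

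With the initial step free of cost on $\cG(\hslash)$, I simply iterate as in the proof of Lemma \ref{le:lemmasize}. Writing
$$\P\bigl((F_y^\pm)^c,\cG(\hslash)\bigr)\;\le\;\sum_{j=\ell_0+1}^{h_\eta}\P(A_j^c\mid A_{j-1}),$$
the Hoeffding bound used there gives each summand of order $e^{-c\log^{3/2}(n)}$, and since $h_\eta = O(\log n)$, the total is $o(n^{-K})$ for every $K>0$. This is well within the required $O(n^{-2-\chi})$.

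The main obstacle is really only a bookkeeping one: verifying that the deterministic window $[\tfrac12 \delta^{\ell_0+1},\Delta^{\ell_0+1}]$ for $|\cF_{\ell_0}|$ provided by $\cG(\hslash)$ is contained in the concentration window $[\kappa_{\ell_0},\widehat\kappa_{\ell_0}]$ used in Lemma \ref{le:lemmasize}. Since the constants $c_1,c_2$ entering $F_y^\pm$ are ours to choose, and since $\delta^{\ell_0} = \log^4(n) \gg \log^{7/2}(n)$, there is enough slack to accommodate the constant prefactors $\delta$ and $\Delta$; the rest of the argument is then a direct transcription of the proof of Lemma \ref{le:lemmasize}.
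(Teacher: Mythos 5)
Your proof is correct, and it obtains the crucial extra factor of $n^{-1}$ by a genuinely different mechanism than the paper. The paper does not use the deterministic implication $\cG(\hslash)\Rightarrow A_{\ell_0}$; instead it conditions on the $2$-in-neighborhood $\cB^-_2(y)=D$ for $D$ compatible with $\cG(\hslash)$, notes that any such $D$ supplies at least $4$ heads at depth $2$, and then argues that the initial event $|\cF_{\ell_0}|\geq \d^{\ell_0}/C$ can fail only if at least $3$ collisions occur up to depth $\ell_0$, an event of probability $O(p_k^3k^3)=O(n^{-2-\chi})$ by \eqref{eq:bin1}; the iteration is then rerun conditionally on $D$. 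You instead exploit the intersection with $\cG(\hslash)$ head-on: by the deterministic case analysis in the proof of Lemma \ref{lem:easyBh}, $\tx(\cB^-_{\ell_0}(y))\le 1$ forces $|\partial\cB^-_{\ell_0}(y)|\ge\tfrac12\d_-^{\ell_0}$, so the inclusion $(F_y^-)^c\cap\cG(\hslash)\subset A^c_{h_\eta}\cap A_{\ell_0}$ holds (with the harmless constant adjustments you flag, e.g.\ $|\cF_{\ell_0}|\le\D^{\ell_0+1}$ versus $\widehat\kappa_{\ell_0}=\D^{\ell_0}$), and only the superpolynomially small Hoeffding errors from the telescoping $\P(A^c_{h_\eta}\cap A_{\ell_0})\le\sum_{j>\ell_0}\P(A_j^c\cap A_{j-1})$ survive. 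This is sound: the event $\{\tx(\cB^-_{\ell_0}(y))\le 1\}$ is measurable with respect to the exploration up to depth $\ell_0$, so no conditioning on the global event $\cG(\hslash)$ is ever required, and $\tfrac12\d^{\ell_0}=\tfrac12\log^4(n)\gg\log^{7/2}(n)=\kappa_{\ell_0}$ as you note. Your route is arguably the more economical, since it uses the restriction to $\cG(\hslash)$ --- which is the entire difference between Lemma \ref{le:lemmasize} and Lemma \ref{le:lemmasize2} --- exactly where it is needed, whereas the paper uses $\cG(\hslash)$ only to constrain the depth-$2$ data and pays for the remaining collisions probabilistically.
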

\begin{proof}
By symmetry we may prove the inequality for the event $F_y^-$ only. 
Consider the set $\cD^-_y$ of all possible $2$-in-neighborhoods of $y$ compatible with the event $\cG(\hslash)$, that is the set of labeled digraphs $D$ such that 
\begin{equation}\label{eq:defDy}
\P(\cB^-_2(y)=D\,;\,\cG(\hslash))>0.
\end{equation}
%Similarly, define $\cD^-_y$ as the set of all $2$-out-neighborhoods of $y$ compatible with the event $\cG(\hslash)$. 
Then
\begin{equation}\label{eq:Dy1}
\P\((F_y^-)^c; \cG(\hslash)\)\leq \sup_{D\in\cD^-_y}\P\((F_y^\pm)^c\tc\cB^-_2(y)=D \).
\end{equation}
Thus it is sufficient to prove that 
\begin{equation}\label{eq:Dy2}
\P\((F_y^\pm)^c\tc\cB^-_2(y)=D \)=O(n^{-2-\chi}),
\end{equation}
uniformly in $D\in\cD^-_y$. 
%\begin{equation}\label{eq:lemmasize-in}
%\inf_{D\in\cD^-_y}\P\(\nu^{h}\log^{-c_1}(n)\le|\cF_{h}^-(y)|\le \nu^{h}\log^{c_2}(n)\,,\;\forall h\in\[\ell_0, h_\eta\] \tc\cB^-_y(2)=D \)=1-O(n^{-2-\chi}).
%\end{equation}
%The same estimate holds by replacing $\cD^-_y$ with $\cD^+_y$, $\cB^-_y(2)$ with $\cB^+_y(2)$ and $\cF_{h}^-(y)$ with $\cF_{h}^+(y)$.
%
To this end, we may repeat exactly the same argument as in the proof of Lemma \ref{le:lemmasize} with the difference that now we condition from the start on the event $ \cB^-_2(y)=D$ for a fixed $D\in\cD_y$. 
The key observation is that \eqref{24} can be strenghtened to $ O(n^{-2-\chi})$ if we condition on $\cB^-_2(y)=D$. That is, for some $\chi>0$, uniformly in $D\in\cD_y$, 
\begin{equation}\label{eq:cond-D}
\P\(A_{\ell_0}\tc\cB^-_2(y)=D \)=1-O(n^{-2-\chi}),
\end{equation}
To prove \eqref{eq:cond-D} notice that if the $2$-in-neighborhood of $y$ is given by $\cB^-_2(y)=D\in\cD_y$ then the set $\cF_{2}^-(y)$ has at least $4$ elements. Therefore, taking a sufficiently large constant $C$, for the event $|\cF_{\ell_0}^-(y)|\geq \d^{\ell_0}/C$ to fail it is necessary to have at least 3 collisions
in the generation of $\cB^-_t(y)$, $t\in\{3,\dots,\ell_0\}$. From the estimate \eqref{eq:bin1} the probability of this event is bounded by $p_k^3k^3$ with $k=\D^{\ell_0}$, which implies \eqref{eq:cond-D} if $\chi\in(0,1)$.
Once \eqref{eq:cond-D} is established, the rest of the proof is a repetition of the argument in \eqref{244}-\eqref{eq:ahe1}. 
\end{proof}

\subsection{Upper bound on the diameter}\label{sec:ubdiam}
The upper bound in Theorem \ref{th:diameter} is reformulated as follows.
\begin{lemma}\label{lem:ubdiam}
There exist constants $C,\chi>0$ such that if $\varepsilon_n=\frac{C\log\log(n)}{\log(n)}$, 
\begin{equation}
\P\({\rm diam}(G)>(1+\varepsilon_n)\,{\rm d}_\star\)=O(n^{-\chi}).
\end{equation}
\end{lemma}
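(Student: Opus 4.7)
The plan is to implement the classical two-ball exploration argument of \cite{BFdv} in the directed setting. Fix distinct vertices $x,y\in[n]$ and set $h_1=h_2=\lfloor\tfrac{1}{2}((1+\varepsilon_n)\,{\rm d}_\star-1)\rfloor$ with $\varepsilon_n=C\log\log(n)/\log(n)$, where the constant $C$ will be fixed at the end. This ensures $h_1+h_2+1\leq (1+\varepsilon_n)\,{\rm d}_\star$, and since $\varepsilon_n\to 0$, the depth $h_1$ lies in the admissible range $[\ell_0,h_\eta]$ of Lemma~\ref{le:lemmasize} for any fixed $\eta<1/2$ and $n$ large, making the boundary size estimates of Lemmas~\ref{le:lemmasize} and~\ref{le:lemmasize2} available at both $x$ and $y$.

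First, I fix the global good event
\[\cE=\cG(\hslash)\cap\bigcap_{z\in[n]}\bigl(F_z^+\cap F_z^-\bigr),\]
which by Proposition~\ref{pr:tx<=1}, Lemma~\ref{le:lemmasize2} and a union bound over $z\in[n]$ satisfies $\P(\cE)=1-O(n^{-\chi})$ for some $\chi>0$. On $\cE$, the frontiers of every in- and out-ball at every depth in $[\ell_0,h_\eta]$ are pinned between $\nu^h\log^{-c_1}(n)$ and $\nu^h\log^{c_2}(n)$.

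Next, for each ordered pair $(x,y)$ I grow $\cB^+_{h_1}(x)$ by breadth-first matching of tails, and then continue by growing $\cB^-_{h_2}(y)$ via breadth-first matching of heads, re-using the partial matching already produced. Either the second exploration meets $\cB^+_{h_1}(x)$, in which case $d(x,y)\leq h_1+h_2+1$ and we are done, or the two balls remain disjoint. In the latter case, on $\cE$ the out-frontier of $x$ carries $T_1\geq \delta\,\nu^{h_1}\log^{-c_1}(n)$ unmatched tails and the in-frontier of $y$ carries $T_2\geq\delta\,\nu^{h_2}\log^{-c_1}(n)$ unmatched heads. Conditionally on the two explored neighborhoods, the residual matching is uniform on the remaining tails and heads (of which there are $m-O(\nu^{h_1})=(1-o(1))m$), so a sequential computation gives
\[\P\bigl(\text{no edge between the two frontiers}\bigr)\leq\exp\!\left(-\tfrac{1}{2}\,T_1T_2/m\right)\leq \exp\!\left(-c\log^{C-2c_1}(n)\right),\]
which is at most $n^{-3}$ once $C\geq 2c_1+3$ and $n$ is large.

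Finally, a union bound over the at most $n^2$ ordered pairs, combined with $\P(\cE^c)=O(n^{-\chi})$, yields $\P({\rm diam}(G)>h_1+h_2+1)=O(n^{-\chi'})$ for some $\chi'>0$, and the lemma follows from $h_1+h_2+1\leq(1+\varepsilon_n)\,{\rm d}_\star$. The main technical obstacle is the conditional matching estimate in the third paragraph: one must verify rigorously that, given the two explored balls on $\cE$, the residual matching really is uniform on the remaining tails and heads. This follows from the sequential generation procedure of Section~\ref{sec:structure}, but requires careful bookkeeping to absorb the ``at most one collision'' hypothesis built into $\cG(\hslash)$ and to check that the lower bounds on $T_1,T_2$ from $F_x^+$ and $F_y^-$ are preserved under this per-pair conditioning.
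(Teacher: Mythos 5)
Your proposal is correct and follows essentially the same route as the paper: grow $\cB^+_{h_1}(x)$ and $\cB^-_{h_2}(y)$ to half the target depth, use the frontier lower bounds from Lemma~\ref{le:lemmasize}/\ref{le:lemmasize2} on a good event, bound the conditional probability that the uniform residual matching places no edge between the two frontiers by $\exp(-c\,T_1T_2/m)=O(n^{-3})$ for $C$ large, and union bound over pairs. The only (harmless) organizational difference is that you package all the events $F_z^\pm$ into one global good event via Lemma~\ref{le:lemmasize} and a union bound over vertices, whereas the paper keeps only $\cG(\hslash)$ global and invokes the per-pair refinement $\P\((F_y^\pm)^c;\cG(\hslash)\)=O(n^{-2-\chi})$ of Lemma~\ref{le:lemmasize2} inside the union bound over pairs; both bookkeepings close the argument.
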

\begin{proof}
From Proposition \ref{pr:tx<=1} we may restrict to the event $\cG(\hslash)$. From the union bound
\begin{equation}\label{eq:ubdiam}
\P\({\rm diam}(G)>(1+\varepsilon_n)\,{\rm d}_\star;\cG(\hslash)\)
\leq \sum_{x,y\in[n]}
\P\(d(x,y)>(1+\varepsilon_n){\rm d}_\star;\cG(\hslash)\).
\end{equation}
From Lemma \ref{le:lemmasize2}, for all $x,y\in[n]$
\begin{equation}\label{eq:ubdiam2}
\P\(d(x,y)>(1+\varepsilon_n){\rm d}_\star;\cG(\hslash)\)=\P\(d(x,y)>(1+\varepsilon_n){\rm d}_\star;F_x^+\cap F_y^-\) + O(n^{-2-\chi}).
\end{equation}
Fix 
$$k=\frac{1+\e_n}2\,\log_\nu n.$$

Let us use sequential generation to sample first $\cB^+_k(x)$ and then $\cB^-_{k-1}(y)$. Call $\si$ a realization of these two neighborhoods. Consider the event 
$$U_{x,y}=\{|\partial\cB_k^+(x)|\geq \nu^k\log^{-c_1}(n)\,;\;|\partial\cB_{k-1}^-(y)|\geq \nu^{k-1}\log^{-c_1}(n)\}.$$
Clearly, $ F_x^+\cap F_y^-\subset U_{x,y}$. Moreover  $U_{x,y}$ depends only on $\si$. Note also that $\{d(x,y)>(1+\varepsilon_n){\rm d}_\star\}\subset E_{x,y}$, where we define the event
\begin{equation}\label{defek}
E_{x,y}=\{\text{There is no path of length  $\leq 2k-1$ from $x$ to $y$} \}.
\end{equation}
The event $E_{x,y}$ depends only on $\si$. We say that $\si\in U_{x,y}\cap E_{x,y}$ if $\si$ is such that both $E_{x,y}$ and $U_{x,y}$ occur. 
Thus, we write
\begin{align}\label{eq:ubdiam3}
\P\(d(x,y)>(1+\varepsilon_n){\rm d}_\star;F_x^+\cap F_y^-\)&\leq 
\P\(d(x,y)>(1+\varepsilon_n){\rm d}_\star; U_{x,y}\cap E_{x,y}\)\nonumber\\
& \leq \sup_{\si\in U_{x,y}\cap E_{x,y}}\P\(d(x,y)>(1+\varepsilon_n){\rm d}_\star\tc \si\).
\end{align}
Fix a realization $\si\in U_{x,y}\cap E_{x,y}$. The event $E_{x,y}$ implies that %By construction, 
all vertices on $\partial\cB_{k-1}^-(y)$ have all  their heads unmatched and the same holds for all the tails of vertices in $\partial\cB_k^+(x)$. Call $\cF_{k-1}$ the heads attached to vertices in $\partial\cB_{k-1}^-(y)$ and $\cE_{k}$ the tails attached to vertices in $\partial\cB_k^+(x)$. 
The event $d(x,y)>(1+\e_n){\rm d}_\star$ implies that there are no matchings between $\cF_{k-1}$ and $\cE_k$. The probability of this event is dominated by
$$
\(1-\frac{|\cE_{k}|}{m}\)^{|\cF_{k-1}|}\leq \(1-n^{-\frac12+\frac{\e_n}4}\)^{n^{\frac12+\frac{\e_n}4}}\leq \exp{(-n^{{\e_n}/2})}\,,
$$
if $n$ is large enough and $\e_n=C\log\log n/\log n$ with $C$ large enough. 
Therefore, uniformly in $\si\in U_{x,y}\cap E_{x,y}$, %we have the estimate
$$
\P\(d(x,y)>(1+\varepsilon_n){\rm d}_\star\tc \si\)\leq \exp{(-n^{{\e_n}/2})}=O(n^{-2-\chi}).
$$
Inserting this in \eqref{eq:ubdiam}-\eqref{eq:ubdiam2} completes the proof. 
\end{proof}

\subsection{Lower bound on the diameter}\label{sec:lbdiam}
We prove the following lower bound on the diameter. Note that Lemma \ref{lem:ubdiam} and Lemma \ref{lem:lbdiam} imply Theorem \ref{th:diameter}.
\begin{lemma}\label{lem:lbdiam}
There exists $C>0$ such that taking $\varepsilon_n=\frac{C\log\log(n)}{\log(n)}$, 
%for any sequence $\a_n\to\infty$, %There exists $\varepsilon_n=O\(\frac{\log\log(n)}{\log(n)}\)$ and a constant $\chi>0$ such that
for any $x,y\in[n]$,
\begin{equation}
\P\(d(x,y)\leq %(1-\varepsilon_n)\,
(1-\e_n){\rm d}_\star\)=o(1).
\end{equation}
\end{lemma}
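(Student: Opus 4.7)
The plan is to use the first-moment method applied to the number of directed walks of length at most $h:=(1-\e_n){\rm d}_\star=(1-\e_n)\log_\nu n$ from $x$ to $y$. Since $d(x,y)\le h$ is equivalent to the existence of such a walk, setting $N_\ell$ to be the number of walks of length exactly $\ell$ from $x$ to $y$ in the multigraph $G$, Markov's inequality gives
\begin{equation*}
\P\(d(x,y)\le h\)\le\sum_{\ell=1}^{h}\E[N_\ell].
\end{equation*}

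To estimate $\E[N_\ell]$ I would encode each walk $x=v_0\to v_1\to\cdots\to v_\ell=y$ as a sequence of tails and heads $(e_0,f_1,e_1,\dots,e_{\ell-1},f_\ell)$ with $e_0\in E^+_x$, $f_\ell\in E^-_y$, and $f_i\in E^-_{v_i}$, $e_i\in E^+_{v_i}$ for $1\le i\le\ell-1$; such a walk is realised in $G$ precisely when $\omega(e_i)=f_{i+1}$ for every $i$. Since $\omega$ is a uniform bijection on the $m$ tails, the probability that $\omega$ coincides with a prescribed partial matching of $\ell$ pairs (with distinct tails and heads) equals $(m-\ell)!/m!$. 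Summing over the choice of intermediate vertices $v_1,\ldots,v_{\ell-1}$ and their tail/head labels, and using the identity $\sum_{v}d_v^- d_v^+=m\nu$, one obtains
\begin{equation*}
\E[N_\ell]\le\frac{d_x^+\, d_y^-\,(m\nu)^{\ell-1}}{m(m-1)\cdots(m-\ell+1)}\le C\,\frac{\nu^\ell}{n},
\end{equation*}
uniformly in $\ell\le h=O(\log n)$, since the correction factor $\bigl(m/(m-\ell)\bigr)^\ell$ equals $1+O(\ell^2/m)=1+o(1)$ in this regime and $d_x^+,d_y^-\le\Delta$.

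Summing the geometric series (noting that $\nu\ge 2$, which follows from $\d_\pm\ge 2$) and using $\nu^h=n^{1-\e_n}$, one concludes
\begin{equation*}
\P\(d(x,y)\le h\)\le C'\,\frac{\nu^h}{n}=C'\,n^{-\e_n}=C'(\log n)^{-C},
\end{equation*}
which tends to zero for any constant $C>0$ in the choice $\e_n=C\log\log(n)/\log(n)$, proving the lemma.

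I do not expect any serious obstacle in this argument: it is a bare first-moment computation, in sharp contrast with the upper bound of Lemma~\ref{lem:ubdiam} where a lower bound on neighbourhood sizes was needed. The only mildly delicate steps are checking that the ratio $\bigl(m/(m-\ell)\bigr)^\ell$ is $1+o(1)$ uniformly for $\ell\le h$, and observing that the overcounting of tail/head sequences with coinciding entries at repeated vertices can only weaken the bound (such sequences contribute zero to the matching probability).
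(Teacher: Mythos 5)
Your argument is a direct first-moment (union) bound over paths from $x$ to $y$, which is genuinely different from the paper's proof: the paper instead exposes $\cB^+_\ell(x)$ and $\cB^-_\ell(y)$ with $\ell=\frac{1-\e_n}{2}\log_\nu n$, uses the neighbourhood-size control of Lemma \ref{le:lemmasize} to bound both by $n^{(1-\e_n)/2}$ times a polylog, and then bounds the probability that the two explorations collide. Your route is more elementary (it needs no control of neighbourhood sizes and no sprinkling), and it yields essentially the same rate $O((\log n)^{-cC})$; the paper's route has the advantage of reusing machinery (Lemma \ref{le:lemmasize}) that is needed elsewhere anyway.

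There is, however, one step that is not correct as written. You bound $\E[N_\ell]$ by (number of tail--head sequences) $\times\frac{(m-\ell)!}{m!}$, and in the final remark you claim that degenerate sequences ``contribute zero to the matching probability.'' That is true when the same tail appears with two \emph{different} heads, but it is false when an entire pair $(e_i,f_{i+1})$ is repeated, i.e.\ when the walk traverses the same edge twice: such a sequence imposes only $\ell'<\ell$ distinct constraints and has probability $\frac{(m-\ell')!}{m!}>\frac{(m-\ell)!}{m!}$, so the per-sequence bound you use fails for those terms, and a naive separate accounting of them is delicate (the number of length-$\ell$ itineraries inside a small edge set can be as large as $\Delta^{\ell}=n^{\log\Delta/\log\nu}$). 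The repair is immediate: since $d(x,y)\le h$ if and only if there is a \emph{simple} path of length at most $h$ from $x$ to $y$, redefine $N_\ell$ to count only self-avoiding paths. Then every surviving sequence consists of $\ell$ distinct pairs with distinct tails and distinct heads, the probability of realising it is exactly $\frac{(m-\ell)!}{m!}$, the unrestricted count $d_x^+d_y^-(m\nu)^{\ell-1}$ still upper-bounds the number of such sequences, and the rest of your computation (the $\left(1-\ell/m\right)^{-\ell}=1+o(1)$ correction, the geometric sum using $\nu\ge 2$, and $\nu^h/n=n^{-\e_n}=(\log n)^{-C\log\nu/\log e}\to 0$) goes through verbatim.
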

\begin{proof}
Define $$\ell=\frac{1-\e_n}2\,\log_\nu n. 
%\,,\qquad \e_n=\frac{\a_n}{\log_\nu n}.
$$
We start by sampling the 
out-neighborhood of $x$ up to distance $\ell$. 
Consider the event 
$$J_x=\left\{| \cB^+_\ell(x)| 
\leq n^{\frac{1-\e_n}2}%\nu^{\ell}
\log^{c_2}(n) \right\}.$$
From Lemma \ref{le:lemmasize}, 	% it follows that 
$\P(J_x)=1-O(n^{-1-\chi})$ for suitable constants $c_2,\chi>0$, and therefore 
%. Next, consider the event
%$y\in \cB^+_\ell(x)$. We have
\begin{equation}\label{eq:lbd1}
\P(y\in \cB^+_\ell(x))= \P(y\in \cB^+_\ell(x); J_x)+O(n^{-1-\chi}).
\end{equation}
If $J_x$ holds, in the generation of $\cB^+_\ell(x)$ there are at most $K:=n^{\frac{1-\e_n}2}%\nu^{\ell}
\log^{c_2}(n) $ attempts to include $y$ in  $\cB^+_\ell(x)$, each with probability at most $d^-_y/(m-K) \leq 2\D/m$ of success, so that %Therefore, 
\begin{equation}\label{eq:lbd101}
\P(y\in \cB^+_\ell(x); J_x)\leq \frac{2\D}{m}\,K = O(n^{-\frac12}) .
\end{equation}
Once the out-neighborhood $\cB^+_\ell(x)$ has been generated, if $y\notin \cB^+_\ell(x)$, we generate the in-neighborhood $\cB^-_\ell(y)$. 
If $d(x,y)\le(1-\e_n){\rm d}_\star$ then there must be a collision with  $\partial\cB^+_\ell(x)$, and
\begin{equation}\label{eq:lbd2}
\P(d(x,y)\le(1-\e_n){\rm d}_\star\,;\:  y\notin \cB^+_\ell(x))= %\leq
  \P(y\notin \cB^+_\ell(x)\,;\: \cB^{-}_\ell(y)\cap\partial\cB^{+}_\ell(x)\neq\emptyset).
\end{equation}
Consider the event
\begin{equation*}
J_y=\left\{| \cB^-_\ell(y)| <n^{\frac{1-\e_n}2}\log^{c_2}(n) \right\}.
\end{equation*}
From Lemma \ref{le:lemmasize} 	 it follows that $\P(J_y)=1-O(n^{-1-\chi})$ for suitable constants $c_2,\chi>0$. If $J_x$ and $J_y$ hold, in the generation of $\cB^{-}_\ell(y)$ there are at most $K=n^{\frac{1-\e_n}2}\log^{c_2}(n)$ attempts to collide with $\partial\cB^{+}_\ell(x)$, each of which with success probability at most $\Delta K/m$, and therefore 
\begin{equation}\label{eq:lbd3}
\P(y\notin \cB^+_\ell(x)\,;\: \cB^{-}_\ell(y)\cap\partial\cB^{+}_\ell(x)\neq\emptyset)
\leq \frac{\Delta K^2}m = O(n^{-\e_n/2})=o(1),
\end{equation}
where we take the constant $C$ in the definition of $\e_n$ sufficiently large. 
In conclusion,
\begin{align*}\label{eq:lbd4}
\P\(d(x,y)\leq (1-\e_n){\rm d}_\star\)\leq 
\P\(y\in \cB^+_\ell(x)\) + 
\P\(d(x,y)\le(1-\e_n){\rm d}_\star\,;\:  y\notin \cB^+_\ell(x)\), 
\end{align*}
and the inequalities \eqref{eq:lbd1}-\eqref{eq:lbd3} end the proof. 
\end{proof}

%%%%%%%%%%%%%%%%%%%%%%%% PI MIN %%%%%%%%%%%%%%%%%%%%%
\section{Stationary distribution}\label{sec:stationary}
We start by recalling some key facts established in \cite{BCS1}.

\subsection{Convergence to stationarity}\label{sec:conv}
Let $P^t(x,\cdot)$ denote the distribution after $t$ steps of the random walk started at $x$.
The total variation distance between two probabilities $\mu,\nu$ on $[n]$ is defined as 
$$
\|\mu-\nu\|_\tv = \frac12\sum_{x\in[n]}|\mu(x)-\nu(x)|. 
$$ 
Let the entropy $H$ and the associated {\em entropic time} $\tent$ be defined by 
\begin{equation}\label{def:tent}
H=\sum_{x\in V}\frac{d_x^-}m\,\log d^+_x, \;\qquad \tent = \frac{\log n}H.
\end{equation}
Note that under our assumptions on $\bd^\pm$, the deterministic quantities $H,\tent$ satisfy $H=\Theta(1)$ and $\tent = \Theta(\log n)$. Theorem 1 of \cite{BCS1} states that %uniformly in the starting point $x\in [n]$,  the total variation distance satisfies 
\begin{equation}\label{cutoff}
\max_{x\in [n]}\left|\|P^{s\tent}(x,\cdot)-\pi\|_\tv - \vartheta(s)
\right|\overset{\P}{\longrightarrow}0\,,\qquad \forall s>0, \:s\neq 1,
\end{equation}
where $\vartheta$ denotes the step function $\vartheta(s)=1$ if $s<1$ and $\vartheta(s)=0$ if $s>1$, 
and we use the notation $\overset{\P}{\longrightarrow}$ for convergence in probability as $n\to\infty$. In words, convergence to stationarity for the random walk on the directed configuration model displays  with high probability a cutoff phenomenon, uniformly in the starting point, with mixing time given by the entropic time $\tent$. We remark that, by Jensen's inequality the mixing time $\tent=\frac{\log n}H$ is always larger than the diameter ${\rm d}_\star=\frac{\log n}{\log\nu}$ in Theorem \ref{th:diameter},
\begin{equation}\label{diamvsmixing}
H= \sum_{x=1}^n\frac{d_x^-}m\,\log d^+_x \leq \log \( \sum_{x=1}^n\frac{d_x^-}m\,d^+_x\)= \log \nu,
\end{equation}
with equality if and only if the sequence is out-regular, that is  $d_x^+\equiv d$. Thus, the analysis of convergence to stationarity  requires investigating the graph on a length scale that may well exceed the diameter. Considering all possible paths on this length scale is not practical, and we shall rely on a powerful construction of \cite{BCS1} that allows one to restrict to a subset of paths with a tree  structure, see Section \ref{sec:bcsconc} below for the details. 

\subsection{The local approximation}\label{sec:local}
A consequence of the arguments of \cite{BCS1} is that the unknown stationary distribution at a node $y$ admits an approximation in terms of the in-neighborhood of $y$ at a distance that is much smaller than the mixing time. More precisely, %letting $\hslash$ be defined as in \eqref{def:hslas}, 
it follows from \cite[Theorem 3]{BCS1} that 
for any sequence $t_n\to\infty$
\begin{equation}\label{localapp}
\|\pi-\muin P^{t_n}\|_\tv 
\overset{\P}{\longrightarrow}0,
\end{equation}
where we use the notation $\muin$ for the in-degree distribution
\begin{equation}\label{muin}
\muin(x)=\frac{d_x^-}m,
\end{equation}
and for any probability $\mu$ on $[n]$, $\mu P^t$ is the distribution 
$$
\mu P^t(y) = \sum_{x\in[n]}\mu(x)P^t(x,y)\,,\qquad y\in[n].
$$
We refer to \cite[Lemma 1]{CQ:PageRank} for a stronger statement than \eqref{localapp} where $\muin$ is replaced by any sufficiently widespread probability on $[n]$. While these facts are
very useful to study the typical values of $\pi$, they give very poor information on its extremal values $\pimin$ and $\pimax$, and to prove Theorem \ref{th:pimin} and Theorem \ref{th:pimax} we need a stronger control of the local approximation of the stationary distribution. 

A key role in our analysis is played by the quantity $\G_h(y)$ defined as follows. Consider the set $\partial\cB^-_h(y)$ of all vertices $z\in[n]$ such that $d(z,y)=h$, 
and define
%call $ \partial\cB^{*}_h(y)$ the subset of $\partial\cB^-_h(y)$ consisting of nodes $x$ such that there exists a unique path of length at most $h$ from $x$ to $y$ (since $x\in\partial\cB^-_h(y)$ this path must have length $h$).  \pietro{Ho riformulato la def. di gamma. Check!} The same argument of Lemma \ref{lem:easyBh} shows that with high probability
%\begin{equation}\label{eq:easybstar}
% |\partial\cB^{*}_h(y)|\geq \frac12\d_-^h,
% \end{equation}
% for all $y\in[n]$ and $h\leq \hslash$. 
%Notice that here we define paths as a sequence of vertices, so that 
%if e.g. $x,y$ are such that there are two edges from $x$ to $y$ then we still have $x\in  \partial\cB^{*}_1(y)$; see case 1a) in the proof of Lemma \ref{lem:easyBh} with $t=1$. 
%
%We call $\cF^*_y(h)$ the set of all heads attached to vertices in $ \partial\cB^{*}_h(y)$, that is
% \begin{equation}\label{eq:fstar}
% \cF^*_y(h) = \cup_{x\in\partial\cB^{*}_h(y)}E_x^-. 
%\end{equation}
%% Moreover, by construction there is no path from $z(f)$ to $y$ with length smaller than $h$. We call $\cF^*_y(h)\subset \cF^-_y(h)$ the set of $f$ such that there is a unique path (of length $h$) from $z(f)$ to $y$. 
%Given $f\in  \cF^*_y(h)$, with endpoint $x\in\partial\cB^{*}_h(y)$ we define the weight
% \begin{equation}\label{eq:weight}
%\w(f)=\prod_{j=1}^h\frac1{d_{z_i}^+}\,,
%\end{equation}
%where $(z_1=x,z_2,\dots,z_h,z_{h+1}=y)$ is the unique path from $x$ to $y$ with length at most $h$. 
%With this notation we define 
\begin{equation}\label{eq:gamma}
\Gamma_{h}(y):=\sum_{z\in\partial\cB^-_h(y)}\!\!d_z^- \,P^h(z,y).
\end{equation}
The definitions  \eqref{eq:gamma} and \eqref{eq:Pxy} are such that 
for any $y\in[n]$ and $h\in\bbN$
\begin{equation}\label{eq:gammavsp}
\Gamma_{h}(y)\leq m\,\muin P^h(y),
\end{equation}
where $\muin$ is defined in \eqref{muin}.
If $\cB^-_h(y)$ is a tree, then %$\partial\cB^{*}_h(y)=\partial\cB^{-}_h(y)$ and 
\eqref{eq:gammavsp} is an equality. 
In any case, $\Gamma_{h}(y)$ satisfies the following rough inequalities.
\begin{lemma}
\label{lem:gamma0}
With high probability, for all $y\in[n]$, for all $h\in[1,\hslash]$:
\begin{equation}\label{eq:gammabounds}
\left(\frac{\d_-}{\D_+}\right)^h\leq \Gamma_{h}(y)\leq 2\D_-\left(\frac{\D_-}{\d_+}\right)^h.
\end{equation}
\end{lemma}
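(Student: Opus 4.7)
The plan is to split the statement into the two deterministic/probabilistic halves. The upper bound will follow from an entirely deterministic backward recursion, requiring no structural information about the in-neighborhoods; the lower bound will instead exploit the simple lower bound on $|\partial\cB^-_h(y)|$ already established in Lemma \ref{lem:easyBh}.

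For the upper bound, I introduce the auxiliary quantity
\[
\Phi_k(u):=\sum_{z\in[n]}P^k(z,u),\qquad k\geq 0,\;u\in[n].
\]
Since $d^-_z\leq\D_-$ for every $z$, one has $\Gamma_h(y)\leq\D_-\,\Phi_h(y)$, so it suffices to show $\Phi_h(y)\leq (\D_-/\d_+)^h$ pointwise on the graph. Writing $P^k(z,u)=\sum_w P^{k-1}(z,w)\,m(w,u)/d^+_w$ and summing over $z$ yields the recursion
\[
\Phi_k(u)=\sum_{w}\frac{m(w,u)}{d^+_w}\,\Phi_{k-1}(w).
\]
Bounding $d^+_w\geq \d_+$ and using $\sum_w m(w,u)=d^-_u\leq \D_-$ gives $\Phi_k(u)\leq (\D_-/\d_+)\max_w\Phi_{k-1}(w)$. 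Iterating from $\Phi_0\equiv 1$ delivers $\Phi_h(y)\leq (\D_-/\d_+)^h$, and hence the upper bound in \eqref{eq:gammabounds} with room to spare.

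For the lower bound, the key remark is that for any $z\in\partial\cB^-_h(y)$ there is, by definition, at least one directed path of length exactly $h$ from $z$ to $y$. Following that specific tail-head sequence, the random walk traverses the path with probability $\prod_{i=0}^{h-1}(1/d^+_{v_i})\geq 1/\D_+^h$. In particular $P^h(z,y)\geq 1/\D_+^h$, so
\[
\Gamma_h(y)\;\geq\;\d_-\,|\partial\cB^-_h(y)|\,\D_+^{-h}.
\]
Lemma \ref{lem:easyBh} asserts that, for each fixed $y\in[n]$, the inequality $|\partial\cB^-_h(y)|\geq \tfrac12\d_-^h$ holds simultaneously for all $h\in[1,\hslash]$ with probability $1-O(n^{-1-\chi})$. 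On this event,
\[
\Gamma_h(y)\;\geq\;\tfrac12\,\d_-^{h+1}\,\D_+^{-h}\;=\;\tfrac{\d_-}{2}\,\Big(\tfrac{\d_-}{\D_+}\Big)^{h}\;\geq\;\Big(\tfrac{\d_-}{\D_+}\Big)^{h},
\]
since $\d_-\geq 2$ by assumption \eqref{degs}. A union bound over $y\in[n]$ then promotes this to a statement holding simultaneously for every $y$ with probability $1-O(n^{-\chi})=1-o(1)$, which combined with the deterministic upper bound proves the lemma.

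There is no substantive obstacle: the upper bound is purely algebraic and uses no tree structure whatsoever (in particular it does not require restricting to the good event $\cG(\hslash)$), while the lower bound only needs the crude boundary-size estimate from Lemma \ref{lem:easyBh}, so the main check is that the constant $\d_-/2\geq 1$ suffices to absorb the factor lost when comparing $\d_-^{h+1}/(2\D_+^h)$ with $(\d_-/\D_+)^h$.
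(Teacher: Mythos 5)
Your proof is correct. The lower bound follows the paper's route essentially verbatim: a single length-$h$ path from $z\in\partial\cB^-_h(y)$ to $y$ gives $P^h(z,y)\geq \D_+^{-h}$ deterministically, and combining with $|\partial\cB^-_h(y)|\geq\frac12\d_-^h$ from Lemma \ref{lem:easyBh} plus $\d_-\geq 2$ and a union bound does the job. Your upper bound, however, is genuinely different and in fact cleaner than the paper's. The paper works on the event $\cG(\hslash)$, bounds $|\partial\cB^-_h(y)|\leq\D_-^h$, and uses the tree-excess-at-most-one structure to argue there are at most two length-$h$ paths from $z$ to $y$, whence $P^h(z,y)\leq 2\d_+^{-h}$; this is where the factor $2\D_-$ comes from. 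You instead bound $\Gamma_h(y)\leq\D_-\sum_{z\in[n]}P^h(z,y)$ and run the deterministic recursion $\Phi_k(u)=\sum_w \frac{m(w,u)}{d^+_w}\Phi_{k-1}(w)\leq\frac{\D_-}{\d_+}\max_w\Phi_{k-1}(w)$, which holds on every realization of the graph and every $h$ (not just $h\leq\hslash$), and even yields the slightly better constant $\D_-$ in place of $2\D_-$. What the paper's path-counting buys in exchange is the two-sided pointwise estimate \eqref{eq:0gammabounds} on $P^h(z,y)$ itself, which is reused later (e.g.\ in Lemmas \ref{lem:CFcon} and \ref{lem:maxlat}), so the structural argument is not wasted effort in the larger scheme; but for the statement of Lemma \ref{lem:gamma0} alone your argument is a valid and somewhat more economical alternative.
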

\begin{proof}
From Proposition \ref{pr:tx<=1} we may assume that the event $\cG(\hslash)$ holds. From Lemma \ref{lem:easyBh} we know that $\frac12\d_-^h \leq |\partial\cB^-_h(y)|\leq \D_-^h$. 
Thus it suffices to show that for any $z\in\partial\cB^-_h(y)$, $h\in[1,\hslash]$:
\begin{equation}
\label{eq:0gammabounds}
\D_+^{-h}\leq P^h(z,y)\leq 2\d_+^{-h}.
\end{equation}
The bounds in \eqref{eq:0gammabounds} follow from the observation that any path of length $h$ from $z$ to $y$ has weight at least $\D_+^{-h}$ and at most $\d_+^{-h}$, and that there is at least one and at most two such paths if $z\in\partial\cB^-_h(y)$ and $\cG(\hslash)$ holds. The latter fact can be seen with the same argument used in the proof of Lemma \ref{lem:easyBh}. With reference to that proof: in case 1) there are at most two paths from $z$ to $y$,  see Figure \ref{fig:1}; in case 2) there is only one path from $z$ to $y$; see Figure \ref{fig:2a} and   Figure \ref{fig:2b}.
\end{proof}
Roughly speaking, in what follows the extremal values of $\pi$ will be controlled by approximating  $\pi(y)$ in terms of $\G_h(y)$ for values of $h$ of order $\log \log n$, for every node $y$. 
%Note that if $h$ is of order $\log\log n$ then the bounds in \eqref{eq:gammabounds} are both poly-logarithmic. 
The next two results allow us to control $\G_h(y)$ in terms of $\G_{h_0}(y)$ for all $h\in\[h_0,\hslash\]$
where $h_0$ is of order $\log\log n$. 
%We shall rely on the following lower bound on $\Gamma_{h}(y)$.

\begin{lemma}\label{le:claim2}
There exist constants $c>0$ and $C>0$ such that: %For every $\e>0$,	
\begin{equation}\label{eq:claim2}
\P\left(\forall y\in[n],\:\forall h\in\[h_0,\hslash\],\:\Gamma_{h}(y)\geq c\log^{1-\gamma_0}(n)\right)=1-o(1),
\end{equation}
where $\g_0$ is the constant from Theorem \ref{th:pimin} and $ h_0:=\log_{\delta_-}\!\!\log(n) + C$. 
%\begin{equation}\label{eq:h0e}
%h_0(\e):=\frac{1+\e}{\log(\delta_-)}\log\log(n)
%\end{equation}
\end{lemma}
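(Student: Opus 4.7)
The plan is to establish the bound first at $h=h_0$ via the deterministic estimate of Lemma \ref{lem:gamma0}, and then extend it to all $h\in(h_0,\hslash]$ by an approximate martingale argument on the breadth-first exposure of the in-neighborhood.

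\emph{Base case at $h=h_0$.} On the event $\cG(\hslash)$ of Proposition \ref{pr:tx<=1}, which holds with probability $1-O(n^{-\chi})$, Lemma \ref{lem:gamma0} gives $\Gamma_h(y)\geq (\delta_-/\Delta_+)^h$. Setting $h=h_0=\log_{\delta_-}\log(n)+C$ and recalling that $\gamma_0=\log\Delta_+/\log\delta_-$, this becomes
$$\Gamma_{h_0}(y)\;\geq\;\delta_-^{h_0(1-\gamma_0)}\;=\;\delta_-^{-C(\gamma_0-1)}\,\log^{1-\gamma_0}(n)\;\geq\;c\,\log^{1-\gamma_0}(n),$$
uniformly in $y$, with $c=\delta_-^{-C(\gamma_0-1)}$.

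\emph{Extension to $h>h_0$.} Fix $y$ and expose $\cB_h^-(y)$ BFS-wise; let $\cF_h=\sigma(\cB_h^-(y))$. On the tree event one has the recursion
$$\Gamma_{h+1}(y)\;=\!\!\sum_{z\in\partial\cB_h^-(y)}\!\!P^h(z,y)\,S_z,\qquad S_z:=\!\!\sum_{z'\in\cN^-(z)\text{ new}}\!\!\frac{d^-_{z'}}{d^+_{z'}}.$$
Each head of $z$ is matched to a uniformly random unmatched tail, and using $\sum_x d^-_x=m$ one has $\E[d^-_{z'}/d^+_{z'}]=1$ up to a relative correction of order $|\cB_h^-(y)|/m=O(n^{-\chi})$. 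Hence $\E[\Gamma_{h+1}(y)\mid \cF_h]=(1+O(n^{-\chi}))\,\Gamma_h(y)$, i.e.\ $(\Gamma_h(y))_{h\in[h_0,\hslash]}$ is an approximate non-negative martingale starting from a value no smaller than $c\log^{1-\gamma_0}(n)$. Conditional on $\cF_h$, the variance of the increment $\Gamma_{h+1}(y)-\Gamma_h(y)$ can be bounded in terms of $\max_z P^h(z,y)$ and $\Gamma_h(y)$. A Freedman-type maximal inequality applied to the martingale $\Gamma_h-\Gamma_{h_0}$ then yields, for every $y\in[n]$,
$$\P\Bigl(\min_{h\in[h_0,\hslash]}\Gamma_h(y)<\tfrac12\Gamma_{h_0}(y)\;\Big|\;\cF_{h_0}\Bigr)\;=\;o(n^{-1}),$$
provided the constant $C$ in $h_0$ is chosen large enough. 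A union bound over the $n$ vertices $y$, combined with the deterministic base case on $\cG(\hslash)$, will then give the lemma.

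\emph{Main obstacle.} Since $\Gamma_{h_0}(y)$ is only polylogarithmic while the martingale is evolved over $\hslash-h_0=O(\log n)$ steps, the fluctuations must be controlled on the scale $\Gamma_{h_0}(y)$ itself. The naive bound $\max_zP^h(z,y)\leq\delta_+^{-h}$ is not sharp enough to furnish the $o(n^{-1})$ per-vertex rate needed for the union bound; the key technical point will be to replace it by a typical value of order $\Gamma_h(y)/|\partial\cB_h^-(y)|$, using Lemma \ref{le:lemmasize2}, or equivalently to pass to the multiplicative martingale $\log\Gamma_h(y)$ to turn the multiplicative concentration into an additive one.
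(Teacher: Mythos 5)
Your base case is fine: on $\cG(\hslash)$, Lemma \ref{lem:gamma0} indeed gives $\Gamma_{h_0}(y)\geq(\d_-/\D_+)^{h_0}=(\d_-/\D_+)^{C}\log^{1-\gamma_0}(n)$, and this is exactly how the exponent $\gamma_0$ enters in the paper as well. The gap is in the extension step, and it is the very obstacle you flag at the end; neither of your proposed fixes resolves it. Any Bernstein/Freedman/Chatterjee-type bound for $\Gamma_{h+1}(y)$ given $\cB^-_h(y)$ has exponent of order $a^2\,\E[\Gamma_{h+1}(y)\mid\cF_h]/\|c\|_\infty$ with $\|c\|_\infty\asymp\max_z P^{h}(z,y)\geq \d_+^{-h}$, i.e.\ an ``effective sample size'' $\Gamma_{h}(y)\,\d_+^{h}$. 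At $h=h_0$ this is at best of order $\log^{1-\gamma_0+\log\d_+/\log\d_-}(n)$ (e.g.\ $\log^{2-\log_23}(n)\approx\log^{0.42}(n)$ in the $(2,3)(3,2)$ case), and increasing $C$ only changes the constant, not the power of $\log n$. So the per-vertex failure probability is $\exp(-c\log^{\theta}(n))$ with $\theta<1$, which is nowhere near the $o(n^{-1})$ you need for the union bound. Passing to $\log\Gamma_h$ does not change this ratio, and Lemma \ref{le:lemmasize2} controls $|\partial\cB^-_h(y)|$ but not the spread of the weights $P^h(z,y)$, which is the real culprit: a vertex can have $\Gamma_{h_0}(y)\asymp\log^{1-\gamma_0}(n)$ while a few boundary vertices carry weight $\d_+^{-h_0}$ each. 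This is precisely why the paper's own martingale-plus-Chatterjee argument (Lemma \ref{lem:CFcon}) is only run for $h\geq h_1=K\log\log n$ with $K$ large — where $\d^{h}$ is a large power of $\log n$ — and, importantly, that lemma \emph{uses} the present Lemma \ref{le:claim2} as input (via \eqref{eq:need}) to lower bound the conditional mean; so your route would also be circular if pushed into that regime.

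The paper avoids the problem by not tracking the weighted sum $\Gamma_h$ as a process at all. It discards the actual weights in favor of the uniform lower bound $P^{h_0}(z,y)\geq\D_+^{-h_0}$, writing $\Gamma_h(y)\geq\D_+^{-h_0}\sum_{i}\Gamma_{h-h_0}(z_i)$ over $R=\tfrac12\d_-^{h_0}=\tfrac12\d_-^{C}\log n$ boundary vertices $z_i$, and then couples the $(h-h_0)$-in-neighborhoods of the $z_i$ with \emph{independent} marked random trees (handling the $O(1)$ coupling failures by splitting $\{1,\dots,R\}$ into ten blocks). On each tree, $\Gamma_{h-h_0}(z_i)$ equals a nonnegative martingale $M^i_{h-h_0}$ with mean $d_{z_i}^-\geq 1$; truncating at a constant produces i.i.d.\ bounded variables with mean $\geq1$, and Hoeffding gives failure probability $e^{-c_1R}=e^{-c_1\d_-^C\log n/2}=o(n^{-2})$ once $C$ is large. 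The key structural difference from your proposal is that the concentration is over the $\Theta(\log n)$ \emph{unweighted, independent} subtrees — so the effective sample size is the full boundary cardinality $\d_-^{h_0}$, and the ``$+C$'' in $h_0$ is exactly what buys the union bound over $y$ and $h$.
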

\begin{proof}
From Lemma \ref{lem:easyBh}
% \eqref{eq:easybstar} 
we may assume that 
$ |\partial\cB^-_{h_0}(y)|\geq \frac12\d_-^{h_0}=:R$ for all $y\in[n]$, where $h_0$ is as in the statement above with $C$ to be fixed later. %=\log_{\delta_-}\!\log(n) + C$ and $C$ will be fixed below. 
Once we have the in-neighborhood $\cB^{^-}_{h_0}(y)$ we proceed with the generation of the $(h-h_0)$-in-neghborhoods
of all $z\in   \partial\cB^-_{h_0}(y)$. Consider the first $R$ elements of $\partial\cB^-_{h_0}(y)$, and order them as $(z_1,\dots,z_R)$ in some arbitrary way. We sample sequentially $ \cB^{-}_{h-h_0}(z_1)$, then $ \cB^{-}_{h-h_0}(z_2)$, and so on. 
We want to couple the random variables $Z_i:= \cB^{-}_{h-h_0}(z_i)$, $i=1,\dots,R$ with a sequence of independent rooted directed random trees $W_i$, $i=1,\dots,R$, defined as follows. The tree $W_i$ is defined as the first $h-h_0$ generations of the marked random tree $\cT_i$ produced by the following  instructions:
\begin{itemize}
\item
the root is given the mark $z_i$;
\item
every vertex with mark $j$ has $d^-_j$ children, each of which is given independently the mark $k\in[n]$ with probability $d^+_k/m$.
\end{itemize}
Consider the generation of the $i$-th variable   $Z_i$. This is achieved by the breadth-first sequential procedure, where at each step a head is matched with a tail chosen uniformly at random from all unmatched tails; see Section \ref{sec:structure}. If instead we pick the tail uniformly at random from all possible tails, then we need to reject the outcome if the chosen tail belongs to the set of tails that have been already matched. Since the total number of tails matched at any step of this generation is at most $K:=\D^\hslash=O(n^{1/5})$, it follows that the probability of a rejection is bounded by $p:=K/m = O(n^{-4/5})$.  Let us now consider the event of a collision, that is when the chosen tail belongs to a vertex that has already been exposed during the previous steps, including the generation of $\cB^{^-}_{h_0}(y)$ and of the  $Z_j$, $j\leq i$. Notice that the total number of exposed vertices is at most $K$ and therefore the probability of a collision is bounded by $p'=\D K/m = O(n^{-4/5})$. 
Since the generation of $Z_i$ requires at most $K$ matchings, we see that conditionally on the past, a $Z_i$ with no rejections and no collisions is created with probability uniformly bounded from below by $1-q$, where $q=O(n^{-3/5})$.  
We say that $Z_i$ is {\em bad}  if its generation produced a  rejection or a collision. 
Once the $Z_i$'s have been sampled we define a set $\cI$ such that $i\in \cI$ if and only if either $Z_i$ is bad or there is a bad $Z_j$ such that the generation of $Z_j$ produced a collision with a vertex from $Z_i$. 
With this notation, $W_i=Z_i$ for all $i\notin \cI$ and 
\begin{equation}\label{eq:gastar2}
\Gamma_{h}(y)\geq\D_+^{-h_0}\sum_{i\notin \cI} \Gamma_{h-h_0}(z_i).
\end{equation}
The above construction shows that the  cardinality of the set $\cI$ is stochastically dominated by twice the binomial  ${\rm Bin}(R,q)$. Therefore, 
 \begin{equation}\label{eq:hoeff1}
  \bbP(|\cI|\geq 10)\leq \bbP({\rm Bin}(R,q)\geq 5) \leq (Rq)^5 = o(n^{-2}).
\end{equation}
On the other hand, notice that for all $i\notin\cI$:
\begin{equation}\label{eq:gastar3}
\Gamma_{h-h_0}(z_i) = M^{i}_{h-h_0},
\end{equation}
where $M^{i}_t$, $t\in\bbN$, is defined as follows. Let $\cT_{t,i}$ denote the set of vertices forming generation $t$ of the tree $\cT_i$ rooted at $z_i$, and for $x\in \cT_{t,i}$, write \begin{equation}\label{eq:gastar5}
\w(x):=\w\(x\mapsto z_i; \cT_i\)=
\prod_{u=1}^t\frac1{d^+_{x_u}},
	\end{equation}
	for the weight of the path $(x_t=x,x_{t-1},\dots,x_1,x_0=z_i)$ from $x$ to $z_i$ along $\cT_i$. Then $M^{i}_t $ is defined by 
\begin{equation}\label{eq:gastar4}
M^{i}_t = 
 \sum_{x\in\cT_{t,i}}d_x^-\w(x),\qquad M^{i}_0 = d_{z_i}^-.
	\end{equation}
It is not hard to check (see e.g.\ \cite[Proposition 4]{CQ:PageRank}) that for fixed $n$, $(M^{i}_t)_{t\geq 0}$ is a martingale with $$\bbE[M^{i}_t]=M^{i}_0 = d_{z_i}^-.$$ 
In particular, by truncating at a sufficiently large constant $C_1>0$ one has $ M^{i}_{h-h_0} \geq X_i$, where $$X_i:=\min\{M^{i}_{h-h_0},C_1\}$$  are independent random variables with $0\leq X_i\leq C_1$ and $\bbE[X_i]\geq 1$ for all $i$. Therefore, Hoeffding's inequality gives, for any $k\in\bbN$:
\begin{align}\label{eq:gastar06}
\P\Big(\sum_{i=1}^k M^{i}_{h-h_0}
 \leq k/2\Big)& \leq e^{-c_1 k},
\end{align}
where $c_1>0$ is a suitable constant. 

Divide the integers $\{1,\dots,R\}$ into 10 disjoint intervals $I_1,\dots,I_{10}$, each containing $R/10$ elements. If $|\cI|<10$ then there must be one of the intervals, say  $I_{j_*}$, such that $I_{j_*}\cap \cI=\emptyset$. It follows that if $|\cI|<10$, then 
\begin{equation}\label{eq:gastar20}
%\Gamma_{h}(y)\geq\D_+^{-h_0}
\sum_{i\notin \cI} \Gamma_{h-h_0}(z_i)\geq \sum_{i\in I_{j^*}} M^{i}_{h-h_0} \geq \min_{\ell=1,\dots,10} \sum_{i\in I_\ell} M^{i}_{h-h_0}.
\end{equation}
Using \eqref{eq:hoeff1}, and \eqref{eq:gastar06}-\eqref{eq:gastar20} we conclude that, for a suitable constant $c_2>0$: 
\begin{align}\label{eq:gastar6}
\P\Big(\sum_{i\notin \cI} \Gamma_{h-h_0}(z_i) \leq c_2 R\Big)& \leq 
\P\Big(\min_{\ell=1,\dots,10} \sum_{i\in I_\ell} M^{i}_{h-h_0} \leq c_2 R\Big) + \bbP(|\cI|\geq 10)\nonumber\\
&\leq 10 \exp{\(-c_1 R/10\)}+ o(n^{-2}). 
\end{align}
Since $R=\frac12 \d_-^{h_0}=\frac12{\d_-^C}\log n$, the probability in \eqref{eq:gastar6} is $o(n^{-2})$ if $C$ is large enough.   
From \eqref{eq:gastar2}, on the event $\sum_{i\notin \cI} \Gamma_{h-h_0}(z_i) > c_2R$ one has
\begin{equation}\label{eq:gastar200}
\Gamma_{h}(y)\geq\tfrac12c_2\d_-^{h_0}\D_+^{-h_0} = c \,\log ^{1-\g_0}(n),
\end{equation}
where $c=\tfrac12c_2(\d_-/\D_+)^{C}$.
Thus the event \eqref{eq:gastar200} has probability $1-o(n^{-2})$, and the desired conclusion follows by taking a union bound over $y\in[n]$ and $h\in[h_0,\hslash]$. 
\end{proof}

\begin{lemma}\label{lem:CFcon}
There exists a constant $K>0$ such that for all $\e>0$, with high probability:
\begin{equation}
\max_{y\in[n]}\max_{h\in[h_1,\hslash]}\Big|\frac{\Gamma_h(y)}{\Gamma_{h_1}(y)}-1
\Big|\leq \e,
\end{equation}
where $h_1:=K\log\log(n)$.
\end{lemma}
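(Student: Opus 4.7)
The plan is to reduce the claim to a concentration estimate for a Mandelbrot-type martingale on an i.i.d.\ random tree approximating the in-neighborhood of $y$, and then to combine step-by-step Hoeffding bounds with a telescoping argument to handle the maximum over $h$ and $y$ simultaneously.

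\textit{Reduction to a martingale.}  By Proposition~\ref{pr:tx<=1} we may work on $\cG(\hslash)$, on which each in-neighborhood $\cB^-_h(y)$ with $h\leq\hslash$ has tree-excess at most one. Coupling the generation of $\cB^-_\hslash(y)$ with the i.i.d.\ random tree $\cT^y$ introduced in Lemma~\ref{le:claim2} (each vertex of mark $j$ has $d^-_j$ children, marked independently according to $k\mapsto d^+_k/m$), one obtains, up to an error of order $\d_+^{-h}$ coming from the possible single extra edge (negligible next to the $\Omega(\log^{1-\g_0}n)$ lower bound of Lemma~\ref{le:claim2}), the identity
$$\Gamma_h(y)=M^y_h:=\sum_{x\in\partial\cT^y_h}d^-_x\,\w(x),\qquad \w(x):=\prod_{u=1}^h (d^+_{x_u})^{-1}.$$
The sequence $(M^y_h)_{h\geq 0}$ is a martingale in its natural filtration: since $\sum_k(d^-_k/d^+_k)(d^+_k/m)=1$, one checks $\bbE[M^y_{h+1}\mid\cF_h]=M^y_h$. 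Writing $R_c:=d^-_c/d^+_c\in[\d/\D,\D/\d]$, the increment decomposes as a sum of independent centred bounded random variables,
$$M^y_{h+1}-M^y_h=\sum_{x\in\partial\cT^y_h}\w(x)\!\!\sum_{c\text{ child of }x}\!(R_c-1),$$
with conditional variance at most $C\,V^y_h$ where $V^y_h:=\sum_{x\in\partial\cT^y_h}d^-_x\,\w(x)^2$. The bound $\w(x)\leq\d_+^{-h}$ gives the deterministic inequality $V^y_h\leq \d_+^{-h}\,M^y_h$.

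\textit{Step bound and telescoping.} Let $H:=\hslash-h_1=O(\log n)$. Hoeffding's inequality applied to the increment $M^y_{h+1}-M^y_h$, viewed as a sum of $\sum_x d^-_x$ independent bounded variables of magnitude $O(\w(x))$, yields, conditionally on $\cF_h$,
$$\bbP\!\left(|M^y_{h+1}-M^y_h|>(\e/H)\,M^y_h\,\big|\,\cF_h\right)\leq 2\exp\!\left(-\tfrac{c\,\e^2\,M^y_h\,\d_+^h}{H^2}\right).$$
Invoking $M^y_h\geq c_0\log^{1-\g_0}(n)$ (Lemma~\ref{le:claim2}), $\d_+^h\geq\d_+^{h_1}=(\log n)^{K\log\d_+}$, and $H\leq C\log n$, the right-hand side is bounded by $2\exp\!\big(-c'\e^2(\log n)^{K\log\d_+-\g_0-1}\big)$. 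Choosing $K>(\g_0+3)/\log\d_+$, this quantity is $o(n^{-2})$ uniformly in $h\in[h_1,\hslash-1]$ and $y\in[n]$, so a union bound over the $nH$ pairs $(y,h)$ ensures that with probability $1-o(1)$, $|M^y_{h+1}/M^y_h-1|\leq\e/H$ for every $y\in[n]$ and $h\in[h_1,\hslash-1]$. Telescoping on this event, for any $h\in[h_1,\hslash]$:
$$\left|\log\frac{M^y_h}{M^y_{h_1}}\right|\leq\sum_{s=h_1}^{h-1}\!\left|\log\frac{M^y_{s+1}}{M^y_s}\right|\leq 2\sum_{s=h_1}^{h-1}\frac{\e}{H}\leq 2\e,$$
whence $|M^y_h/M^y_{h_1}-1|\leq e^{2\e}-1\leq 5\e$ uniformly in $y$ and $h\in[h_1,\hslash]$. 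Rescaling $\e$ and transferring back from $M^y_h$ to $\Gamma_h(y)$ as in Step 1 completes the proof.

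\textit{Main obstacle.} The essential difficulty is the need for \emph{exponential} rather than polynomial concentration in the one-step estimate: a direct Doob-$L^2$/Chebyshev argument only produces a bound of order $V^y_{h_1}/(\e\,M^y_{h_1})^2$, which is polynomially small in $\log n$ and cannot survive a union bound over $n$ vertices. This is circumvented by exploiting that each martingale increment itself decomposes as a sum of many independent bounded variables (one per child in the i.i.d.\ tree), which yields Hoeffding-type exponential decay. The price of imposing the tighter step-wise tolerance $\e/H$ rather than $\e$ is affordable because $\d_+^{h_1}=(\log n)^{K\log\d_+}$ can be made, by taking $K$ large enough, to dominate both the factor $H^2$ and the factor $\log^{\g_0-1}n$ coming from the worst-case lower bound on $M^y_{h_1}$.
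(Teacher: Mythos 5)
Your overall scheme (step-wise conditional concentration with tolerance $\e/H$ per level, a union bound over the $O(n\log n)$ pairs $(y,h)$, the lower bound on the conditional mean from Lemma \ref{le:claim2}, and telescoping) mirrors the paper's proof. The gap is in the reduction step: you cannot replace $\Gamma_h(y)$ by the Mandelbrot martingale $M^y_h$ on the i.i.d.\ tree uniformly over all $y\in[n]$. The event $\cG(\hslash)$ only controls the \emph{shape} of $\cB^-_\hslash(y)$ (tree excess at most one); it does not make the marks i.i.d. In the configuration model the tails are matched \emph{without replacement}, and the total variation distance between the law of the marks on a neighborhood with $k$ exposed edges and the i.i.d.\ law is of order $k^2/m$ (the probability of a repeat in the with-replacement sample). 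At depth $\hslash$ one has $k=n^{\theta}$ for some $\theta>0$ (up to $n^{1/5}$), so the coupling fails for a given $y$ with probability $\Omega(n^{2\theta-1})$. This error dominates the $o(n^{-2})$ concentration bound you derive on the i.i.d.\ tree and, summed over the $n$ choices of $y$, gives $n^{2\theta}\to\infty$: the uniform-in-$y$ statement does not follow. (This is also why Lemma \ref{le:claim2} only extracts a \emph{lower} bound from the tree coupling — there one can simply discard the finitely many failed copies — whereas here you need a two-sided estimate.)

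The independence of the children's marks is exactly what your Hoeffding step uses, so the gap is essential rather than cosmetic: conditionally on the realized $\cB^-_h(y)$, the increment $\Gamma_{h+1}(y)-\E[\Gamma_{h+1}(y)\tc\si_h]$ is a weighted linear statistic $\sum_{e}c(e,\omega(e))$ of a uniform random bijection $\omega$ between unmatched tails and heads, not a sum of independent variables, and plain Hoeffding does not apply. The fix is to run your one-step estimate directly in that setting using a concentration inequality for functions of a uniform random matching — this is what the paper does via Chatterjee's Bernstein-type bound (\cite[Proposition 1.1]{chatterjee2007stein}), with $\|c\|_\infty\le 2\D\d^{-h-1}$ playing the role of your $\d_+^{-h}$ bound on the summands and $\E[\Gamma_{h+1}(y)\tc\si_h]\ge c\log^{1-\g_0}(n)$ supplying the same lower bound on the mean. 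With that substitution your exponent bookkeeping and the telescoping argument go through essentially unchanged.
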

\begin{proof}
For any $h\geq h_1$, let $\si_h$ denote a realization of the in-neighborhood $\cB^-_{h}(y)$, obtained with the usual breadth-first sequential generation. From Proposition \ref{pr:tx<=1} we may assume that the tree excess of $\cB^-_{h}(y)$ is at most 1, as long as $h\leq \hslash$. Call $\cE_{tot,h},\cF_{tot,h}$ the set of unmatched tails and unmatched heads, respectively, after the generation of $\si_h$. Let also $\cE_h\subset\cE_{tot,h}$ denote the set of unmatched tails belonging to vertices not yet exposed, and let $\cF_h$ be the subset of %unmatched 
heads attached to   $\partial\cB^-_h(y)$.  
By construction, all heads attached to $\partial\cB^-_h(y)$ must be unmatched at this stage so that $\cF_h\subset \cF_{tot,h}$. Moreover,    
\begin{equation}
\Gamma_h(y)=\sum_{f\in\cF_h}P^h(v_f,y),
\end{equation}
where $v_f$ denotes the vertex to which the head $f$ belongs. 
To compute $\G_{h+1}$ given $\si_h$ we let $\omega:\cE_{tot,h}\mapsto\cF_{tot,h}$ denote a uniform random matching of $\cE_{tot,h}$ and $\cF_{tot,h}$, 
%Let also $\cE^0_h\subset\cE_{tot,h}$ denote the set of unmatched tails belonging to vertices not yet exposed, and let $\cF_h\subset \cF_{tot,h}$ be the subset of unmatched heads attached to   $\partial\cB^-_h(y)$. 
%\pietro{ double check} By construction $\cF^0_h$ coincides with $\cF^*_h(y)$, and 
%By construction, all heads attached to $\partial\cB^-_h(y)$ must be unmatched at this stage and   \begin{equation}
%\Gamma_h(y)=\sum_{f\in\cF_h}P^,
%\end{equation}
%where we define the weight 
and notice that a vertex $z$ is in $\partial\cB^-_{h+1}(y)$ if and only if $z$ is revealed by matching one of the heads $f\in \cF_h$ with one of the tails $e\in \cE_h$. Therefore,
 \begin{align}
\Gamma_{h+1}(y)&=\sum_{e\in\cE_h}\frac{d^-_e}{d^+_e}\sum_{f\in\cF_{h}}
P^h(v_f,y)\ind_{\omega(e)=f}\nonumber\\
&=\sum_{e\in\cE_{tot,h}}c(e,\omega(e)), \label{eq:chacha}
\end{align}
where we use the notation $d^\pm_e$ for the degrees of the vertex to which the tail $e$ belongs, and the function $c$ is defined by
 \begin{equation}\label{eq:chatte0}
c(e,f)=\frac{d^-_e}{d^+_e}P^h(v_f,y)\ind_{e\in\cE_h,f\in\cF_h}.
\end{equation}	
Since $\si_h$ is such that $\tx(\cB^-_{h}(y))\leq 1$, we may estimate $P^h(v_f,y)$ as in \eqref{eq:0gammabounds},   so that
\begin{equation}\label{eq:chatte00}
\|c\|_\infty = \max_{e,f}c(e,f)\leq 2\D\,\d^{-h-1}.
\end{equation}
We now use a version of Bernstein's inequality proved by Chatterjee (\cite[Proposition 1.1]{chatterjee2007stein}) which applies to any function of a uniform random matching of the form \eqref{eq:chacha}.
It follows that for any fixed $\si_h$, for any $s>0$:
\begin{equation}\label{eq:chatteo1}
\P\(|\Gamma_{h+1}(y)-\E\left[\Gamma_{h+1}(y)\tc \si_h\right]|\geq s\tc \si_h\) 
\leq 2\exp\(-\frac{s^2}{2\left\|c \right\|_{\infty}(2\E\left[\Gamma_{h+1}(y)\tc \si_h\right]+s)} \).
\end{equation}
Taking $s=a\E\left[\Gamma_{h+1}(y)\tc \si_h\right]$, $a\in(0,1)$, one has
\begin{equation}\label{eq:chatterjeeo2}
\P\(|\Gamma_{h+1}(y)-\E\left[\Gamma_{h+1}(y)\tc \si_h\right]|\geq s\tc \si_h\) 
\leq  2\exp\(-\frac{a^2\E\left[\Gamma_{h+1}(y)\tc \si_h\right]}{6\left\|c \right\|_{\infty}} \).
\end{equation}
Since  the probability of the event $\omega(e)=f$ conditioned on $\si_h$ is $\frac1{|\cE_{tot,h}|}=\frac1m(1+O(\D^h/m))$, we have
\begin{align}\label{eq:gas}
\E\left[\Gamma_{h+1}(y)\tc \si_h\right]&=
\frac1{|\cE_{tot,h}|}\sum_{e\in\cE_h}\frac{d^-_e}{d^+_e}\G_h(y) \nonumber\\&=
\frac1m\(1+O(\D^h/m)\)\(m-\sum_{e\notin\cE_h}\frac{d^-_e}{d^+_e}\)\G_h(y) \nonumber \\&=
\(1+O(\D^h/m)\)\G_h(y)=
\(1+O(n^{-1/2})\)\G_h(y),
\end{align}
for all $h\in[h_1,\hslash]$, where we use the fact that the sum over all tails $e$ (matched or unmatched) of $d^-_e/d^+_e$ equals $m$. In particular, from Lemma \ref{le:claim2} it follows that for some constant $c>0$:
\begin{equation}\label{eq:need}
\E\left[\Gamma_{h+1}(y)\tc \si_h\right]\geq c\log^{-\g_0 +1}(n),
\end{equation}
and therefore, using \eqref{eq:chatte00}, one finds 
\begin{equation}\label{eq:need0}
\|c\|_\infty^{-1}\E\left[\Gamma_{h+1}(y)\tc \si_h\right]\geq \log^6(n),
\end{equation} 
for all $h\geq h_1$, if the constant $K$ in the definition of $h_1$ is large enough.
%Using \eqref{eq:chatte00} and \eqref{eq:nice-event} one concludes that  \eqref{eq:chatt1}
%holds for all $\si\in\cY_{x,y}$ and for all $n$ large enough.
From \eqref{eq:chatterjeeo2}, \eqref{eq:gas} and \eqref{eq:need0} it follows that, letting
$$
\cA:=\left\{|\Gamma_{h+1}(y)-\Gamma_{h}(y)|\leq a  \Gamma_{h}(y)\,, \;\forall h\in[h_1,\hslash]\right\},
$$
with $a:=\log^{-2}(n)$, then 
 \begin{equation}\label{eq:chatterjeeo3}
\P\(
\cA\) =1-o(1).
\end{equation}
Moreover, on the event $\cA$, for all $h\in[h_1,\hslash]$:
$$
|\Gamma_{h}(y)-\Gamma_{h_1}(y)|\leq \sum_{j=h_1}^{h-1}\left|\Gamma_{j+1}(y)-\Gamma_{j}(y)\right|\leq \e \Gamma_{h_1}(y).
$$
\end{proof}

%Bounds on $\pi_{\min}$}\label{se:pimin}

\subsection{Lower bound on $\pi_{\min}$}\label{suse:pimin-lb}
If for some $t\in\bbN$ and $a>0$ one has $P^t(x,y)\geq a$ for all $x,y\in[n]$, then 
\begin{equation}\label{eq:pih}
\pi(z) = \sum_{x=1}^n
\pi(x)P^t(x,z)\geq a,
\end{equation}
and therefore $\pimin\geq a$. We will prove the lower bound on $P^t(x,y)$ by choosing $t$ of the form $t=(1+\e)\tent$, for some small enough $\e>0$; see \eqref{def:tent} for the definition of $\tent$. More precisely, fix a constant $\eta>0$, set $\eta'= 3\eta\frac{H}{\log\d}$, %$\e=\e_2-\eta$, 
and define 
\begin{equation}\label{eq:tmixe}
t_\star=%(1+\e)\tent+1=
h_x+h_y+1\,,\quad %\e=\e_2-\eta\,,\quad 
h_x=(1-\eta)\tent\,,\quad
h_y=\eta'\tent.
\end{equation}
Note that $\eta'\geq 3\eta$ and thus $t_\star=t_\star(\eta)\geq (1+2\eta)\tent$. %We prove the following two statements. 

\begin{lemma}\label{lem:lower-nice-gamma}
There exists $\eta_0>0$ such that for all $\eta\in(0,\eta_0)$:
		\begin{equation}\label{eq:www}
	\P\(\forall x,y\in[n],\:\: P^{t_\star+1}(x,y)\geq \tfrac{c}{n}\,\Gamma_{h_y}(y) \)=1-o(1),
	\end{equation}
	for some constant $c=c(\eta,\D)>0$. 
	\end{lemma}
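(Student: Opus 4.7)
The plan is to decompose the walk of length $t_\star+1=h_x+h_y+2$ as an initial leg of $h_x+1$ steps that disperses the walker away from $x$, followed by a final leg of $h_y+1$ steps that reaches $y$ through its in-neighborhood. Writing
$$P^{t_\star+1}(x,y)=\sum_{w\in[n]}P^{h_x+1}(x,w)\,P^{h_y+1}(w,y),$$
I would lower-bound the second factor by restricting to trajectories that enter $\partial\cB^-_{h_y}(y)$ at their very first step and then follow directed paths inside $\cB^-_{h_y}(y)$ to $y$, which yields $P^{h_y+1}(w,y)\ge r(w)/d_w^+$ with $r(w):=\sum_{z\in\partial\cB^-_{h_y}(y)}m(w,z)\,P^{h_y}(z,y)$. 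The identity $\sum_w m(w,z)=d_z^-$ then gives $\sum_w r(w)=\Gamma_{h_y}(y)$, and the lemma reduces to the flow estimate
$$\sum_w \tfrac{P^{h_x+1}(x,w)}{d_w^+}\,r(w)\;\ge\;\tfrac{c}{n}\,\Gamma_{h_y}(y).\qquad(\star)$$

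Next I would expose $\cB^-_{h_y+1}(y)$ via breadth-first sequential generation. By Lemma \ref{le:lemmasize2} and Proposition \ref{pr:tx<=1} this neighborhood is tree-like and of size at most $n^{O(\eta)}$ outside an event of probability $O(n^{-2-\chi})$, and the function $r(\cdot)$ is measurable with respect to this exposure. The remaining random matching then determines $P^{h_x+1}(x,\cdot)$, and since $h_x+1<\tent$ the tree construction from \cite{BCS1} represents $P^{h_x+1}(x,\cdot)$ (up to an event of negligible probability) as the leaf-weight distribution of a random exploration tree $\cT_x$ of out-depth $h_x+1$. A straightforward first-moment calculation, using that each head $f$ attached to a vertex $z\in\partial\cB^-_{h_y}(y)$ is paired with a uniformly random unmatched tail, then gives
$$\bE\Big[\sum_w \tfrac{P^{h_x+1}(x,w)}{d_w^+}\,r(w)\,\Big|\,\cB^-_{h_y+1}(y)\Big]\;\ge\;\frac{1-o(1)}{m}\,\Gamma_{h_y}(y),$$
which is of the desired order $\Gamma_{h_y}(y)/n$.

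The final step is to upgrade this expectation bound to the pointwise inequality $(\star)$ uniformly in $x,y$, and then take a union bound over the $n^2$ pairs. I would do this by applying Chatterjee's Bernstein-type inequality for functions of a uniform matching (as in the proof of Lemma \ref{lem:CFcon}): the supremum norm of the matching coefficients is $O(\delta^{-(h_y+1)}/m)$ since each contribution is a product of a leaf-weight of $\cT_x$ and of a path weight of length $h_y$ inside $\cB^-_{h_y}(y)$, while the conditional mean is at least $c\,\Gamma_{h_y}(y)/m\gtrsim \log^{1-\gamma_0}(n)/m$ by Lemma \ref{le:claim2}. The specific choice $h_y=\eta'\tent$ with $\eta'=3\eta H/\log\delta$ is tuned so that the ratio of the mean to the supremum norm is at least $\log^{2}(n)$, making the Chatterjee bound deliver a deviation probability of $o(n^{-2})$ and allowing the union bound over all $(x,y)\in[n]^2$ to succeed. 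The main obstacle is precisely this balancing act: $h_x$ must remain below $\tent$ so that the BCS tree representation is valid, while $h_y$ must be large enough (but not too large) that $\Gamma_{h_y}(y)$ achieves its poly-logarithmic lower bound without inflating the matching-functional coefficients — the exponent $\eta'=3\eta H/\log\delta$ is exactly what makes both requirements compatible, and verifying that the resulting concentration is strong enough to beat the $n^2$ union bound is the main technical hurdle.
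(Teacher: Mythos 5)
Your overall architecture matches the paper's: restrict to a distinguished family of trajectories built from the BCS out-exploration tree of $x$ glued to the in-neighborhood $\cB^-_{h_y}(y)$, compute a conditional first moment of order $\Gamma_{h_y}(y)/m$, upgrade it with Chatterjee's matching concentration inequality, and union-bound over the $n^2$ pairs. However, there is a genuine gap in the concentration step: your claim that the matching coefficients satisfy $\|c\|_\infty=O(\delta^{-(h_y+1)}/m)$ is false in general, because the leaf weights of the exploration tree $\cT_x$ at depth $h_x$ are \emph{not} $O(1/m)$. A leaf reached through a path of vertices of out-degree $\d_+$ has weight $\d_+^{-h_x}=n^{-(1-\eta)\log\d_+/H}$, and since $H>\log\d_+$ whenever the sequence is not out-regular (Jensen, cf.\ \eqref{diamvsmixing}), this is polynomially larger than $1/n$. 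With that sup-norm the ratio of the conditional mean ($\asymp \log^{1-\g_0}(n)/n$) to $\|c\|_\infty$ tends to zero, so Chatterjee's inequality gives nothing and the union bound cannot be closed.

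The paper repairs exactly this by inserting the truncation $\ind_{\w(e)\le n^{2\eta-1}}$ into the definition of the nice-path probability, which forces $\|c\|_\infty\le 2n^{2\eta-1}\d^{-h_y}=2n^{-1-\eta}$, and then invoking the entropic concentration result of \cite{BCS1,BCS2} (see \eqref{eq:wow3} and Lemma \ref{lem:eveA1}) to show that the truncated tails still carry total weight at least $1/2$, so the first-moment lower bound survives. This is the one genuinely deep input of the proof and the reason $h_x$ is calibrated to the entropic time $\tent=\log n/H$ rather than to $\log_\d n$; your proposal omits it. A secondary, more minor omission: you should also justify that the heads of $\partial\cB^-_{h_y}(y)$ left unmatched after exposing $\cT_x$ still carry weight comparable to $\Gamma_{h_y}(y)$ (the paper's Lemma \ref{lem:eveA2}, via the ``bad vertex'' argument), since collisions between the two explorations could in principle deplete a constant fraction of $B_{x,y}$.
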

From  \eqref{eq:pih} and
Lemma \ref{lem:lower-nice-gamma} it follows that w.h.p.\ for all $y$ %for all $\e>0$,
		\begin{equation}\label{eq:pigamma}
	\pi(y)\geq \tfrac{c}{n}\,\Gamma_{h_y}(y) .
	\end{equation}
Lemma \ref{le:claim2} thus implies, for some new constant $c>0$
		\begin{equation}\label{eq:piminlowbo}
	\P\(\pimin\geq \tfrac{c}{n}\log^{1-\gamma_0}(n)\right)=1-o(1),
	\end{equation}
%	for any constant $\e>0$,
which settles the lower bound in Theorem \ref{th:pimin}. 
%We point out that the the constant $\frac15$ in \eqref{eq:pigamma} is somewhat arbitrary, and the proof below can be used to provide the same bound with any constant $a<\frac12$ instead of $\frac15$. With some extra work one can show that the bound actually holds with  any constant $a<1$ but we shall not need this level of precision here.

To prove Lemma \ref{lem:lower-nice-gamma} we will restrict to a subset of {\em nice} paths from $x$ to $y$. This will allow us to obtain a concentration result for the probability to reach $y$ from $x$ in $t_\star$ steps.

\subsubsection{A concentration result for %The tree $\cT(x)$ and the 
nice paths}\label{sec:bcsconc}
The definition of the nice paths follows a construction introduced in \cite{BCS1}, which we now recall. In contrast with \cite{BCS1} however, here we need a lower bound on $P^{t_\star}(x,y)$ and thus the argument is somewhat different. 

Following \cite[Section 6.2]{BCS1} and \cite[Section 4.1]{BCS2}, we introduce the rooted directed tree $\cT(x)$,  namely the subgraph of the $h_x$-out-neighborhood of $x$ defined by the following process: initially all tails and heads are unmatched and $\cT(x)$ is identified with its root, $x$; throughout the process, we let $\partial_+\cT(x)$ (resp. $\partial_-\cT(x)$) denote the set of unmatched tails (resp. heads) whose endpoint belongs to $\cT(x)$; the height $\h(e)$ of a tail $e\in\partial_+\cT(x)$ is defined as $1$ plus the number of edges in the unique path in $\cT(x)$ from $x$ to the endpoint of $e$; the weight of $e\in\partial_+\cT(x)$ is defined as 
\begin{equation}\label{eq:weighte}
\w(e) = \prod_{i=0}^{\h(e)-1}\frac1{d_{x_i}^+}\,,
\end{equation}
where $(x=x_0,x_1,\dots,x_{\h(e)-1})$ denotes the path in $\cT(x)$ from $x$ to the endpoint of $e$;
we then iterate the following steps:
	\begin{itemize}
		\item a tail $e\in \partial_+\cT(x)$ is selected with maximal weight among all $e\in\partial_+\cT(x)$ with %height
		 $\h(e) \leq h_x-1$ and %weight
		  $\w(e) \geq \w_{min}:=n^{-1+\eta^2}$ (using an arbitrary ordering of the tails to break ties); 
		
		\item $e$ is matched to a uniformly chosen unmatched head $f$, forming the edge $ef$;
		
		\item if $f$ was not in $\partial_-\cT(x)$, then its endpoint and the edge $ef$ are added to  $\cT(x)$.
	\end{itemize}
	The process stops when there are no tails $e\in\partial_+\cT(x)$ with height $\h(e) \leq h_x-1$ and weight $\w(e)\geq \w_{min}$.  
Note that $\cT(x)$ remains a directed tree at each step. The final value of $\cT(x)$
represents the desired directed tree. 
After the generation of the tree $\cT(x)$ a total number $\kappa$ of edges has been revealed, some of which may not belong to $\cT(x)$. As in	\cite[Lemma 7]{BCS2}, it is not difficult to see that when exploring the out-neighborhood of $x$ in this way the random variable $\kappa$ is deterministically bounded as 
	\begin{equation}\label{eq:kappaw}
	\kappa\leq n^{1-\frac{\eta^2}2}.
	\end{equation}
At this stage, let us call $\cE^*(x)$ 
	the set of unmatched tails $e\in \partial_+\cT(x)$ such that $\h(e)=h_x$.
	% after the generation of $\cT(x)$.
%	\what Recall the definition of the set $\partial\cB^{*}_{h_y}(y)\subset\partial\cB^-_{h_y}(y)$, see \eqref{eq:fstar}.  

\begin{definition}
	A path ${\bf p}=(x_0=x,x_1,\dots,x_{t_\star}=y)$ of length $t_\star$ starting at $x$ and ending at $y$ is called \emph{nice} if it satisfies:	
	\begin{enumerate}
		\item\label{it:nicepaths1} The first $h_x$ steps of ${\bf p}$
		%, say $(x_0=x,x_1,\dots,x_{h_x-1})$, 
		are contained in $\cT(x)$, and satisfy %the corresponding weight satisfies
		$$
		\prod_{i=0}^{h_x}\frac1{d_{x_i}^+}\leq n^{2\eta-1};
		$$
		\item\label{it:nicepaths2} 
%		there is a unique path of length at most $h_y$ from 
		$x_{h_x+1}\in\partial\cB^-_{h_y}(y)$.
	\end{enumerate}
\end{definition}
To obtain a useful expression for the probability of going from $x$ to $y$ along a nice path, we need to  generate $\cB^-_{h_y}(y)$, the $h_y$-in-neighborhood of $y$. 
To this end, assume that $\kappa$ edges in the $h_x$-out-neighborhood of $x$ have been already sampled according to the procedure described above, and then sample $\cB^-_{h_y}(y)$ according to the sequential generation described in Section \ref{sec:structure}. Some of the matchings producing $\cB^-_{h_y}(y)$ may have already been revealed during the previous stage. In any case, this second stage creates an additional random number $\tau$ of edges, satisfying the crude bound $\tau\leq\Delta^{h_y+1}$. 
	We call $\cF_{tot}$ the set of unmatched heads, and $\cE_{tot}$ the set of unmatched tails after the sampling of these $\kappa+\tau$ edges. 
Consider the set $\cF^0:= \cF_{h_y}\cap \cF_{tot}$, where $\cF_{h_y}$ denotes the set of all heads (matched or unmatched) attached to vertices in $\partial \cB^{-}_{h_y}(y)$. 
%that is the set of unmatched heads attached to vertices in $\partial \cB^{*}_{h_y}(y)$. 
	Moreover, call $\cE^0:= \cE^*(x)\cap\cE_{tot}$ the subset of unmatched tails which are attached to vertices at height $h_x$ in $\cT(x)$.
	Finally,  complete the generation of the digraph by matching the $m-\kappa-\tau$ unmatched tails $\cE_{tot}$ to the $m-\kappa-\tau$ unmatched heads $\cF_{tot}$ using a uniformly random bijection $\omega:\cE_{tot}\mapsto\cF_{tot}$. For any $f\in\cF_{h_y}$ we introduce the notation
	\begin{equation}\label{eq:weightf}
		\w(f):=P^{h_y}(v_f,y),
		\end{equation}
		where $v_f$ denotes the vertex  $v\in \partial \cB^{-}_{h_y}(y)$ such that $f\in E_v^-$.
With the notation introduced above, the probability to go from $x$ to $y$ in $t_\star$ steps following a nice path can now be written as
	\begin{equation}\label{eq:nice-xy}
	P_{0,t_\star}(x,y):=\sum_{e\in\cE^0}\sum_{f\in\cF^0}\w(e)\w(f) \ind_{\omega(e)=f}\ind_{\w(e)\le n^{2\eta-1}}. %\ind_{\w(f)\le\frac{1}{n^{3\eta}}}.
	\end{equation}
	Note that, conditionally on the construction of the first $\kappa+\tau$ edges described above, each Bernoulli random variable $ \ind_{\omega(e)=f}$ appearing in the above sum has probability of success at least $1/m$. 
	In particular, if $\si$ denotes a fixed realization of the $\kappa+\tau$ edges, then 
	%sampled with the above described procedure, then
\begin{equation}\label{eq:nice-e}
\E\left[P_{0,t_\star}(x,y)\tc \si\right]
\geq \frac1m\,A_{x,y}(\si)B_{x,y}(\si)\,,
\end{equation}
where
\begin{equation}\label{eq:defaxby}
A_{x,y}(\si):=\sum_{e\in\cE^0}
\ind_{\w(e)\le n^{2\eta-1}}\w(e)\,,\quad B_{x,y}(\si):=\sum_{f\in\cF^0}\w(f).
\end{equation}
Moreover, the probability of $\omega(e)=f$ for any fixed $e\in\cE^0,f\in\cF^0$ is at most $1/(m-\kappa-\tau)$, so that %for all $n$ large enough
\begin{equation}\label{eq:nice-ep}
\E\left[P_{0,t_\star}(x,y)\tc \si\right]
\leq \frac{(1+o(1))}m\,A_{x,y}(\si)B_{x,y}(\si)\leq \frac{(1+o(1))}m\,\G_{h_y}(y),
\end{equation}
where we use $A_{x,y}\leq 1$ and $B_{x,y}\leq \G_{h_y}(y)$.
Consider the event 
\begin{equation}\label{eq:nice-event}
\cY_{x,y}=\Big\{\si:\; A_{x,y}(\si) %\sum_{e\in\cE^0}\w(e)\ind_{\w(e)\le n^{2\eta-1}}
\geq \tfrac 12\,,\;
B_{x,y}(\si)%\sum_{f\in\cF^0}\w(f)
\geq \log^{-\g_0}(n)\,, \:\tx(\cB_{h_y}^-(y))\leq 1
\Big\},
%=:\frac{1}{m}\Xi_{h_x}(x)\Gamma_{h_y}(y),
\end{equation}
where the exponent $-\g_0$ is chosen for convenience only and any exponent $-c$ with $c>\g_0-1$ would be as good. 
\begin{lemma}
\label{lem:chatt}
There exists $\eta_0>0$ such that for all $\eta\in(0,\eta_0)$, for any $\si\in \cY_{x,y}$, any $a\in(0,1)$:
\begin{equation}\label{eq:chatt1}
\P\(|P_{0,t_\star}(x,y)-\E\left[P_{0,t_\star}(x,y)\tc \si\right]|\geq a\E\left[P_{0,t_\star}(x,y)\tc \si\right]\tc \si\) 
\leq 2\exp\(-a^2n^{\eta/2}\)
\end{equation}
\end{lemma}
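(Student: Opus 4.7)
The plan is to apply Chatterjee's concentration inequality for functions of a uniform random matching, exactly as was done in the proof of Lemma \ref{lem:CFcon}. Observe that \eqref{eq:nice-xy} expresses $P_{0,t_\star}(x,y)$ as a linear function of the matching $\omega:\cE_{tot}\mapsto\cF_{tot}$; indeed, writing
\[
P_{0,t_\star}(x,y)=\sum_{e\in\cE_{tot}}c(e,\omega(e))\,,\qquad c(e,f):=\w(e)\w(f)\,\ind_{e\in\cE^0,\,f\in\cF^0}\,\ind_{\w(e)\le n^{2\eta-1}},
\]
we are in the exact setting of \cite[Proposition 1.1]{chatterjee2007stein}, which yields, for any $s>0$,
\[
\P\bigl(|P_{0,t_\star}(x,y)-\E[P_{0,t_\star}(x,y)\mid\si]|\ge s\,\big|\,\si\bigr)\le 2\exp\!\left(-\frac{s^2}{2\|c\|_\infty(2\E[P_{0,t_\star}(x,y)\mid\si]+s)}\right).
\]
Choosing $s=a\,\E[P_{0,t_\star}(x,y)\mid\si]$ with $a\in(0,1)$, the right-hand side reduces to $2\exp(-a^2\,\E[P_{0,t_\star}(x,y)\mid\si]/(6\|c\|_\infty))$, so everything is reduced to lower-bounding the ratio $\E[P_{0,t_\star}\mid\si]/\|c\|_\infty$ by $n^{\eta/2}$.

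The second step is to control the sup-norm of the kernel. Because of the cut-off indicator we have $\w(e)\le n^{2\eta-1}$ for every $e$ contributing to the sum. For the factor $\w(f)=P^{h_y}(v_f,y)$, we use that $\si\in\cY_{x,y}$ forces $\tx(\cB^-_{h_y}(y))\le 1$; then the argument already used in Lemma \ref{lem:gamma0} (at most two directed paths of length $h_y$ from $v_f\in\partial\cB^-_{h_y}(y)$ to $y$) gives $\w(f)\le 2\d_+^{-h_y}$. Combining and recalling that $h_y=\eta'\tent$ with $\eta'=3\eta H/\log\d$ and $\tent=\log n/H$, so that $\d^{-h_y}\le n^{-3\eta}$, we obtain
\[
\|c\|_\infty\le 2\,n^{2\eta-1}\d^{-h_y}\le 2\,n^{-1-\eta}.
\]

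The third step is to lower-bound the conditional mean using $\si\in\cY_{x,y}$. By \eqref{eq:nice-e} and the definition of $\cY_{x,y}$,
\[
\E[P_{0,t_\star}(x,y)\mid\si]\ge \tfrac{1}{m}A_{x,y}(\si)B_{x,y}(\si)\ge \tfrac{1}{2m}\log^{-\g_0}(n)\ge c\,n^{-1}\log^{-\g_0}(n).
\]
Putting the two bounds together,
\[
\frac{\E[P_{0,t_\star}(x,y)\mid\si]}{\|c\|_\infty}\ge \frac{c\,n^{-1}\log^{-\g_0}(n)}{2n^{-1-\eta}}=\frac{c}{2}\,\frac{n^{\eta}}{\log^{\g_0}(n)}\ge n^{\eta/2}
\]
for all sufficiently large $n$, provided $\eta<\eta_0$ (the value of $\eta_0$ is not actually restrictive here, it will be constrained by later arguments). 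Substituting into Chatterjee's bound yields the claimed $2\exp(-a^2 n^{\eta/2})$.

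There is no serious obstacle: the work is entirely in the choice of the scales. The only subtlety is matching the two exponents, $n^{2\eta-1}$ coming from the nice-path weight cut-off $\w_{min}$ and $n^{-3\eta}$ coming from the depth $h_y=\eta'\tent$; the specific choice $\eta'=3\eta H/\log\d$ is exactly what produces $\|c\|_\infty\le n^{-1-\eta}$, which in turn is what makes the ratio $\E[\cdot]/\|c\|_\infty$ polynomial in $n$, overwhelming the polylogarithmic loss from the lower bound on $B_{x,y}$.
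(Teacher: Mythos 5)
Your proposal is correct and follows essentially the same route as the paper: write $P_{0,t_\star}(x,y)$ as a linear statistic of the uniform matching, apply Chatterjee's inequality \cite[Proposition 1.1]{chatterjee2007stein}, bound $\|c\|_\infty\le 2n^{-1-\eta}$ via the weight cut-off and the tree-excess condition in $\cY_{x,y}$, and lower-bound the conditional mean by $\tfrac1{2m}\log^{-\g_0}(n)$ using \eqref{eq:nice-e} and the definition of $\cY_{x,y}$. The only cosmetic point is that the factor $6$ from Chatterjee's bound should be absorbed when passing from the ratio $\gtrsim n^{\eta}\log^{-\g_0}(n)$ to $n^{\eta/2}$, which holds for $n$ large, exactly as in the paper.
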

	\begin{proof}
Conditioned on $\si$, $P_{0,t_\star}(x,y)$ is a function %$c(\o)$ 
of the uniform random permutation  $\omega:\cE_{tot}\mapsto\cF_{tot}$,
  \begin{equation}\label{eq:chatt0}
P_{0,t_\star}(x,y)
=\sum_{e\in\cE_{tot}} c(e,\omega(e))\,,\quad c(e,f)=\w(e)\w(f)\ind_{\w(e)\le n^{2\eta-1}}\ind_{e\in\cE^0,f\in\cF^0}.
\end{equation}	
Since we are assuming $\tx(\cB_{h_y}^-(y))\leq 1$, we can use \eqref{eq:0gammabounds} to estimate $\w(f)\leq2 \d^{-h_y}=n^{-3\eta}$ 
for any $f\in\cF^0$. Therefore
 \begin{equation}\label{eq:chatt0ne}
\|c \|_{\infty}=\max_{e,f}c(e,f)\leq 2n^{-1-\eta}.
\end{equation}
As in Lemma \ref{lem:CFcon}, and as in \cite{BCS1}, we use Chatterjee's concentration inequality for uniform random matchings \cite[Proposition 1.1]{chatterjee2007stein}  to obtain
	for any $s>0$:
\begin{equation}\label{eq:chatte1}
\P\(|P_{0,t_\star}(x,y)-\E\left[P_{0,t_\star}(x,y)\tc \si\right]|\geq s\tc \si\) 
\leq 2\exp\(-\frac{s^2}{2\left\|c \right\|_{\infty}(2\E\left[P_{0,t_\star}(x,y)\tc \si\right]+s)} \).
\end{equation}
Taking $s=a\E\left[P_{0,t_\star}(x,y)\tc \si\right]$, $a\in(0,1)$, one has
\begin{equation}\label{eq:chatterjee2}
\P\(|P_{0,t_\star}(x,y)-\E\left[P_{0,t_\star}(x,y)\tc \si\right]|\geq s\tc \si\) 
\leq  2\exp\(-\frac{a^2\E\left[P_{0,t_\star}(x,y)\tc \si\right]}{6\left\|c \right\|_{\infty}} \).
\end{equation}
Using \eqref{eq:nice-e},  \eqref{eq:nice-event}, and \eqref{eq:chatt0ne} one concludes that  \eqref{eq:chatt1}
holds for all $\si\in\cY_{x,y}$ and for all $n$ large enough.
	\end{proof}
	
\subsubsection{Proof of Lemma \ref{lem:lower-nice-gamma}}
Let $V_*$ denote the set of all $z\in [n]$ such that $\cB_\hslash^+(z)$ is a directed tree. As observed in \cite[Proposition 6]{BCS1}, it is an immediate consequence of Proposition \ref{pr:tx<=1} that with high probability, for all $x\in[n]$:
\begin{equation}\label{eq:pxvstar0}
P(x,V_*)=\sum_{z\in V_*}P(x,z) \geq \tfrac12.
\end{equation}
Therefore, %we may write
\begin{equation}\label{eq:pxvstar2}
P^{t_\star +1}(x,y)\geq \tfrac12\min_{x\in V_*}P^{t_\star }(x,y).
\end{equation}
Since $P^{t_\star }(x,y)\geq P_{0,t_\star}(x,y)$ it is sufficient to prove 
\begin{equation}\label{eq:wow2}
	\P\(\forall x\in V_*, \forall y\in[n],\:\: P_{0,t_\star}(x,y)\geq \tfrac{c}{n}\,\Gamma_{h_y}(y) \)=1-o(1),
	\end{equation}
	for some constant $c=c(\eta,\D)>0$. The proof of \eqref{eq:wow2} is based on  Lemma \ref{lem:chatt} and the following estimates which allow us to make sure the events $\cY_{x,y}$ in Lemma \ref{lem:chatt} have large probability.  
%To prove Lemma \ref{lem:lower-nice-gamma},  we need to control the weight of tails $e\in\cE^0$ and $f\in\cF^0$ rather than $e\in\cE^*(x)$ and $f\in\cF^*(y)$. 
\begin{lemma}\label{lem:eveA1}
The event
%\begin{gather*}\label{eq:eves}
$\cA_1= \{\forall x\in V_*, \forall y\in[n]: A_{x,y}
%\sum_{e\in\cE^0}\w(e)\ind_{\w(e)\le n^{2\eta-1}}
\geq \tfrac 12\}$ %
%\end{gather*}
has probability
$$\P\(\cA_1 \)=1-o(1)\,.$$ 
\end{lemma}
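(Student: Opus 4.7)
The plan is to split
\[
A_{x,y}=\widetilde A_x-R_{x,y},
\]
where $\widetilde A_x:=\sum_{e\in\cE^*(x)}\w(e)\ind_{\w(e)\leq n^{2\eta-1}}$ depends only on the construction of $\cT(x)$, and $R_{x,y}:=\sum_{e\in\cE^*(x)\setminus\cE_{tot}}\w(e)\ind_{\w(e)\leq n^{2\eta-1}}$ collects the contribution of those tails in $\cE^*(x)$ that become matched during the subsequent sampling of $\cB^-_{h_y}(y)$. It then suffices to show that, with high probability, $\widetilde A_x\geq 3/4$ uniformly in $x\in V_*$ and $R_{x,y}\leq 1/4$ uniformly in $x\in V_*,y\in[n]$.

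The bound on $R_{x,y}$ is essentially deterministic: at most $\tau\leq\Delta^{h_y+1}$ tails can be matched during the generation of $\cB^-_{h_y}(y)$, and every tail contributing to $R_{x,y}$ has weight at most $n^{2\eta-1}$. Using $h_y=\eta'\tent$ with $\eta'=3\eta H/\log\delta$, one has $\Delta^{h_y}=n^{3\eta\log\Delta/\log\delta}$, so
\[
R_{x,y}\leq \Delta\cdot n^{\eta(2+3\log\Delta/\log\delta)-1},
\]
which is $O(n^{-\chi})$ for some $\chi>0$ provided $\eta_0$ is fixed so that $\eta_0(2+3\log\Delta/\log\delta)<1$. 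For the bulk bound on $\widetilde A_x$, the starting point is the mass-balance identity $\sum_{e\in\partial_+\cT(x)}\w(e)=1-C_x$, where $C_x$ is the total weight of the collided tails during the construction of $\cT(x)$. Splitting $\partial_+\cT(x)$ into $\cE^*(x)$ and the early-stopped tails (height $<h_x$, weight $<\w_{min}$), and $\cE^*(x)$ into a ``bulk'' part (weight $\leq n^{2\eta-1}$) and a ``heavy'' part (weight $>n^{2\eta-1}$), yields $\widetilde A_x=1-C_x-T_x-H_x$, where $T_x,H_x$ are the early-stopped and heavy losses, respectively. Each of $C_x,T_x,H_x$ will be shown to be $o(1)$ w.h.p.\ uniformly in $x\in V_*$: the quantities $T_x$ and $H_x$ are controlled by a Cram\'er-type estimate on the path log-weight $-\log\w(e)=\sum_i\log d^+_{x_i}$, which, in the local tree approximation (valid up to depth $\hslash$ for $x\in V_*$ and propagated through the greedy construction), is a sum of random variables with mean $H$, so the total weight of paths of length $h_x$ whose log-weight deviates by $\Omega(\eta\log n)$ from the typical value $h_xH=(1-\eta)\log n$ is small; the collision weight $C_x$ is bounded using that on $\cG(\hslash)\cap\{x\in V_*\}$ no collision arises before depth $\hslash$, that at each subsequent step the conditional collision probability is $O(\Delta\kappa/m)=O(n^{-\eta^2/2})$, and that $\sum_{\text{steps}}\w(e_{\text{step}})\leq h_x=O(\log n)$.

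The main obstacle is the \emph{uniform-in-$x$} control of $C_x,T_x,H_x$: a naive Markov bound yields only per-vertex failure probability polynomial in $n$, which does not survive a union bound over the $n$ possible starting vertices. Overcoming this requires sharpening to stretched-exponential concentration, which is achieved by repeatedly applying the Chatterjee inequality for uniform random matchings --- in the same spirit as the proofs of Lemma~\ref{lem:CFcon} and Lemma~\ref{lem:chatt} --- at every level of the greedy max-weight construction of $\cT(x)$. This is the technical heart of the argument, and closely mirrors the concentration machinery developed in \cite{BCS1} for the analysis of the mixing time.
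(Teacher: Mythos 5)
Your decomposition $A_{x,y}=\widetilde A_x-R_{x,y}$ is exactly the paper's strategy, and your treatment of the correction term $R_{x,y}$ is correct and in fact slightly cleaner than the paper's: you observe that at most $\tau\le\Delta^{h_y+1}$ tails of $\cE^*(x)$ can be consumed during the generation of $\cB^-_{h_y}(y)$ (a deterministic count), each contributing at most $n^{2\eta-1}$ because of the indicator, so $R_{x,y}\le \Delta\, n^{\eta(2+3\log\Delta/\log\delta)-1}=o(1)$ once $\eta$ is small. The paper reaches the same bound via a slightly roundabout stochastic domination of the collision count by a binomial plus Hoeffding; your shortcut is valid and uniform in $x,y$ for free.

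The divergence is in the bulk bound $\widetilde A_x\ge 3/4$ uniformly over $x\in V_*$. The paper does not prove this at all: it invokes it directly as a known consequence of the tree-approximation machinery of \cite{BCS1,BCS2} (specifically Theorem~4 and Lemma~11 of \cite{BCS2}, which give $\min_{x\in V_*}\sum_{e\in\cE^*(x)}\w(e)\ind_{\w(e)\le n^{2\eta-1}}\ge 1-\e$ w.h.p.). Your mass-balance identity $\widetilde A_x=1-C_x-T_x-H_x$ is correct, and your per-vertex estimates for $C_x$ (total selected weight at most $h_x$, conditional collision probability $O(\kappa\Delta/m)=O(n^{-\eta^2/2})$) and for $T_x,H_x$ (large-deviation control of the path log-weight around $h_xH$) are the right ingredients. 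But, as you yourself flag, the step that makes the statement \emph{uniform in $x$} — upgrading Markov-type per-vertex bounds to failure probability $o(1/n)$ — is only asserted ("repeatedly applying the Chatterjee inequality at every level of the greedy construction"). That step is the entire technical content of the cited lemmas in \cite{BCS2} and is not a routine iteration; as written, your argument for $\widetilde A_x$ is a plan, not a proof. The gap is easily repaired by doing what the paper does and citing \cite[Theorem 4 and Lemma 11]{BCS2}; if you insist on a self-contained derivation, the uniform concentration of $C_x,T_x,H_x$ must actually be carried out.
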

\begin{proof}
Let us first note that the event
$\widehat\cA_1= \{\forall x\in V_*: \sum_{e\in\cE^*(x)}\w(e)\ind_{\w(e)\le n^{2\eta-1}}\geq 0.9\}$
%\cA_2=\Big\{\forall y\in[n]: \sum_{f\in\cF^*(y)}\w(f) \geq \min\{\log^{-2\g_0}(n)\Big\},
%\cA_3=\left\{\forall x,y\in[n]: |\cT(x)\cap\cB_{h_y}(y)|\leq K\right\},
%\end{gather*}
satisfies
$$\P\(\widehat\cA_1 \)=1-o(1).$$ 
Indeed, this fact  is  a consequence of \cite{BCS1,BCS2}, which established that 
for any $\e>0$, with high probability
\begin{equation}\label{eq:wow3}
\min_{x\in V_*}\sum_{e\in\cE^*(x)}\w(e)\ind_{\w(e)\le n^{2\eta-1}}\geq 1-\e,
\end{equation}
see e.g.\ \cite[Theorem 4 and Lemma 11]{BCS2}.
Thus, it remains to show that replacing $\cE^*(x)$ with $\cE^0$ does not alter much the sum. Suppose the $\kappa$ edges generating $\cT(x)$ have been revealed and then sample the $\t$ edges generating the neighborhood $\cB_{h_y}^-(y)$. Let $K$ denote the number of collisions between $\cT(x)$ and $\cB_{h_y}^-(y)$. 
There are at most $N:=\D^{h_y}=n^{3\eta\log\D/\log \d}$ attempts each with success probability at most $p:=\kappa/(m-\kappa)$. Thus $K$ is stochastically dominated by a binomial ${\rm Bin}(N,p)$, and therefore by Hoeffding's inequality %the probability of $K> 2Np'$ is bounded by $e^{
 $$
 \P(K>Np+N)\leq \exp{\(-2N\)} \leq \exp{\(-n^{3\eta}\)}. 
 $$
 Thus by a union bound we may assume that all $x,y$ are such that the corresponding collision count $K$ satisfies $K\leq Np+N\leq 2N$. Therefore, on the event $\widehat\cA_1$ %\eqref{eq:wow3}: 
  $$
  \sum_{e\in\cE^0}\w(e)\ind_{\w(e)\le n^{2\eta-1}} \geq 0.9- 2N  \,n^{2\eta-1} \geq\frac12,
  $$
  if $\eta$ is small enough.
\end{proof}
%To control the contribution of $f\in\cF^*(y)\setminus \cF^0$ a little more work is needed.
\begin{lemma}\label{lem:eveA2}
Fix a constant $c>0$ and consider the event
$\cA_2= \{\forall x,y\in [n]: B_{x,y}
%\sum_{e\in\cE^0}\w(e)\ind_{\w(e)\le n^{2\eta-1}}
\geq c\,\G_{h_y}(y)\}$.
If $c>0$ is small enough
$$\P\(\cA_2 \)=1-o(1)\,.$$ 
\end{lemma}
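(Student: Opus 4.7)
The plan is to control the \emph{lost mass} $L_{x,y}:=\G_{h_y}(y)-B_{x,y}=\sum_{f\in\cF_{h_y}\setminus \cF^0}\w(f)$, i.e.\ the weight carried by heads attached to $\partial\cB^-_{h_y}(y)$ that fail to sit in $\cF^0$ because they were consumed by some other matching. The aim is to show $L_{x,y}\leq \tfrac12\G_{h_y}(y)$ uniformly in $x,y$, which settles $\cA_2$ with $c=\tfrac12$. The key combinatorial observation is that the breadth-first sampling of $\cB^-_{h_y}(y)$ only matches heads attached to vertices at distance strictly less than $h_y$ from $y$; in particular no head of $\cF_{h_y}$ is consumed during that phase. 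Hence the only matchings that can remove a head from $\cF^0$ are the $\kappa\leq n^{1-\eta^2/2}$ matchings used to build $\cT(x)$. By exchangeability of the uniform matching I reverse the order and sample $\cB^-_{h_y}(y)$ first and then $\cT(x)$, which allows me to treat $\cF_{h_y}$ and the weights $\{\w(f):f\in\cF_{h_y}\}$ as fixed during the second stage.

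Next, I would restrict to the high-probability event, uniform in $y$, on which (i) $\tx(\cB^-_{h_y}(y))\leq 1$ (valid by Proposition~\ref{pr:tx<=1} once $\eta_0$ is small enough to ensure $h_y\leq \hslash$), and (ii) $\G_{h_y}(y)\geq c_0\log^{1-\g_0}(n)$ (by Lemma~\ref{le:claim2}, since $h_y\gg h_0$). Note also $\tau\leq \Delta^{h_y+1}=n^{o(1)}$. Under (i), the case analysis in the proof of Lemma~\ref{lem:gamma0} shows that each $f\in\cF_{h_y}$ satisfies
\[
\w(f)=P^{h_y}(v_f,y)\leq 2\delta^{-h_y}=2n^{-3\eta},
\]
since $v_f$ admits at most two length-$h_y$ directed paths to $y$.

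For the concentration step, let $f_j$ denote the head matched at step $j\in\{1,\dots,\kappa\}$ of the $\cT(x)$ generation and set $\xi_j:=\w(f_j)\ind_{f_j\in\cF_{h_y}}$, so that $L_{x,y}=\sum_{j=1}^\kappa \xi_j$. Conditionally on the history $F_{j-1}$, $f_j$ is uniformly distributed over the $m-O(\kappa+\tau)\geq m/2$ still-unmatched heads, so
\[
\bE[\xi_j\mid F_{j-1}]\leq \frac{2\,\G_{h_y}(y)}{m},\qquad S:=\sum_{j=1}^\kappa \bE[\xi_j\mid F_{j-1}]\leq \frac{2\kappa\G_{h_y}(y)}{m}\leq n^{-\eta^2/2}\G_{h_y}(y),
\]
which is negligible compared with the target. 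The martingale differences $\xi_j-\bE[\xi_j\mid F_{j-1}]$ are bounded by $2n^{-3\eta}$ and the predictable quadratic variation by $(2n^{-3\eta})S$, so Freedman's inequality applied to $L_{x,y}-S$ with target $\G_{h_y}(y)/3$ yields
\[
\P\bigl(L_{x,y}\geq \tfrac12\G_{h_y}(y)\mid \cB^-_{h_y}(y)\bigr)\leq \exp\bigl(-c_1 n^{3\eta}\G_{h_y}(y)\bigr)\leq \exp\bigl(-c_2 n^{3\eta}\log^{1-\g_0}(n)\bigr),
\]
which is super-polynomially small in $n$. A union bound over the $n^2$ pairs $(x,y)$ then settles the lemma with $c=1/2$. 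Alternatively, one can phrase $L_{x,y}$ directly as a function of the uniform matching $\omega$ and invoke Chatterjee's inequality as in Lemma~\ref{lem:chatt}, obtaining the same bound.

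The main point to be careful about is the bookkeeping of \emph{which} matchings can consume a head of $\cF_{h_y}$: the breadth-first structure of the $\cB^-_{h_y}(y)$ sampling is what rules out losses during the $y$-phase and reduces the analysis to at most $\kappa$ uniform head draws. Once this is in place, the large gap between the per-head bound $\w(f)\leq 2n^{-3\eta}$ and the aggregate lower bound $\G_{h_y}(y)\geq c_0\log^{1-\g_0}(n)$ makes Freedman's inequality decisive, and no further restriction on $\eta_0$ is needed beyond $h_y\leq \hslash$.
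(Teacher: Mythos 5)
Your proof is correct, and it takes a genuinely different route from the one in the paper. You attack the \emph{lost mass} $L_{x,y}=\G_{h_y}(y)-B_{x,y}$ head-on: after the order reversal (sample $\cB^-_{h_y}(y)$ first, then $\cT(x)$ — the paper performs the same reversal), the only way a head of $\cF_{h_y}$ can leave $\cF^0$ is to be hit by one of the at most $\kappa\le n^{1-\eta^2/2}$ uniform head-draws of the $\cT(x)$ generation; each such draw removes weight at most $2\d^{-h_y}=2n^{-3\eta}$ (via \eqref{eq:0gammabounds} on $\tx(\cB^-_{h_y}(y))\le 1$) and at most $2\G_{h_y}(y)/m$ in conditional expectation, so Freedman gives $\P(L_{x,y}\ge\tfrac12\G_{h_y}(y)\mid\si)\le\exp(-c\,n^{3\eta}\G_{h_y}(y))$, which is $o(n^{-2})$ once you invoke Lemma~\ref{le:claim2} to keep $\G_{h_y}(y)\gtrsim\log^{1-\g_0}(n)$. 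The paper instead argues combinatorially: it partitions $\partial\cB^-_{h_y}(y)$ into families $V_z$ indexed by ancestors $z\in\partial\cB^-_{h_y-T}(y)$, shows that w.h.p.\ no family of size $\ge\tfrac12\d^T$ is entirely consumed by $\cT(x)$ (that would require $\ge\tfrac12\d^T$ collisions, probability $O(n^{-\d^T\eta^2/4})$), keeps one surviving head per family worth $\ge\D^{-T-1}P^{h_y-T}(z,y)$, and then uses Lemma~\ref{lem:CFcon} to convert $\G_{h_y-T}(y)$ back into $\G_{h_y}(y)$. Your argument buys a better constant ($c=1/2$, i.e.\ asymptotically all the mass survives, versus the paper's $c\asymp\D^{-T(\eta)}$) and avoids Lemma~\ref{lem:CFcon}, at the price of needing the a priori lower bound on $\G_{h_y}(y)$; the paper's survival argument is cruder but needs no quantitative control on $\G_{h_y}(y)$ itself. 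One small caution: your parenthetical ``alternatively invoke Chatterjee as in Lemma~\ref{lem:chatt}'' is not immediate, because $L_{x,y}$ is a functional of the adaptively revealed first $\kappa+\tau$ edges rather than of the final uniform matching $\omega:\cE_{tot}\to\cF_{tot}$, so the clean form \eqref{eq:chatt0} is not directly available; stick with the martingale formulation, which is the right one here.
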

\begin{proof}
By definition, $\sum_{f\in\cF_{h_y}}\w(f)=\G_{h_y}(y)$. Thus, we need to show that 
if we replace $\cF^0$ by $\cF_{h_y}$ the sum defining $B_{x,y}$ is still comparable to $\G_{h_y}(y)$.
For any constant $T>0$, %consider the set $\partial\cB_{h_y-T}^*(y)$ of points $z$ at distance $h_y-T$ from $y$ and such that there is a unique path from $z$ to $y$ with distance $h_y-T$. F
for each $z\in \partial\cB_{h_y-T}^-(y)$, let $V_z$ denote the set of $w\in \partial\cB_{h_y}^-(y)$ such that $d(w,z)=T$. Notice that if the event $\cG(\hslash)$ from Proposition \ref{pr:tx<=1} holds then for each $z\in \partial\cB_{h_y-T}^-(y)$ one has $|V_z|\geq \frac12\d^T$. Consider the generation of the $\kappa +\tau$ edges as above, and call a vertex $z\in \partial\cB_{h_y-T}^-(y)$ {\em bad} if all heads attached to $V_z$ are matched, or equivalently if none of these heads is in $\cF_{tot}$.  Given a $z\in \partial\cB_{h_y-T}^-(y)$, we want to estimate the probability that it is bad. To this end, we use the same construction given in Section \ref{sec:bcsconc} but this time we first generate  the in-neighborhood $\cB_{h_y}^-(y)$ and then the tree $\cT(x)$. 
%Next, sample the %$\kappa$ edges generating the 
%tree $\cT(x)$ with the same algorithm described in. 
Let $K$ denote the number of collisions between $\cT(x)$ and the set $V_z$. Notice that $|V_z|\leq \D^T$ and that $|\cT(x)|\leq n^{1-\eta^2/2}$, so that $K$ is stochastically dominated by the binomial ${\rm Bin}(N,p)$ where $N=n^{1-\eta^2/2}$ and $p=\D^{T+1}/n$. 
Therefore, %the probability that $K>\frac12\d^T$ satisfies 
 $$
 \P\(K>\tfrac12\d^T\)\leq  (Np)^{\frac12\d^T}\leq \(\D^{T+1}n^{-\eta^2/2}\)^{\frac12\d^T}. 
 $$
Since $|V_z|\geq \frac12\d^T$, if $z$ is bad then $K>\frac12\d^T$ and thus the probability of the event that $z$ is bad is at most $O(n^{-\d^T\eta^2/4})$. The probability that there exists a bad $z\in \partial\cB_{h_y-T}^-(y)$ is then bounded by $O(\D^{h_y}n^{-\d^T\eta^2/4})$. In conclusion, if $T=T(\eta)$ is a large enough constant, we can ensure that for any $y\in[n]$ the probability that there exists a bad $z\in \partial\cB_{h_y-T}^-(y)$ is $o(n^{-2})$, and therefore, by a union bound, with high probability there are no bad $z\in \partial\cB_{h_y-T}^-(y)$, for all $x,y\in[n]$.  On this event, for all $z$ we may select one vertex $w\in V_z$ with at least one head $f\in\cF^0$ attached to it. Notice that $\w(f)\geq \D^{-T-1}P^{h_y-T}(z,y)$.
%, where $\w(z\mapsto y)$ is the weight of the unique path of length $h_y-T$ from $z$ to $y$.
   Therefore, assuming that there are no bad $z\in \partial\cB_{h_y-T}^-(y)$:
\begin{align*}
B_{x,y}(\si)&=\sum_{f\in\cF^0}\w(f) \\&
\geq \D^{-T}\!\!\!\sum_{z\in \partial\cB_{h_y-T}^-(y)}P^{h_y-T}(z,y) %\\&
\geq\D^{-T-1}\G_{h_y-T}(y).
%\geq \D^{-2T-2}\G_{h_y}(y).
\end{align*}
From Lemma \ref{lem:CFcon} we may finish with the estimate $\G_{h_y-T}(y)\geq \frac12 \G_{h_y}(y)$. 
\end{proof}

We can now conclude the proof of \eqref{eq:wow2}.
Consider the event 
 \begin{equation}
\cA=\cA_1\cap\cA_2\cap\cG(\hslash).
\end{equation}
For any $s>0$, %we may estimate 
\begin{align}\label{eq:eveG2}
\P\(\forall x,y\in[n],\:\: P_{0,t_\star}(x,y)\geq \tfrac{s}{n}\,\Gamma_{h_y}(y) \)
%&\geq
%\P\(\forall x,y\in[n],\:\: P_{0,t_\star}(x,y)\geq \frac{1}{5m}\,\Gamma_{h_y}(y); \cA \) \nonumber
%\\& 
\geq \P(\cA) - \sum_{x,y\in[n]}\P\(P_{0,t_\star}(x,y)<\tfrac{s}{n}\,\Gamma_{h_y}(y); \cA\).
\end{align}
From Lemma \ref{lem:eveA1}, Lemma \ref{lem:eveA2}, and Proposition \ref{pr:tx<=1} it follows that $\P(\cA) =1-o(1)$.
Let $\cW_{x,y}$ denote the event 
\begin{equation}\label{eq:nice-ex}
\E\left[P_{0,t_\star}(x,y)\tc \si\right]
\geq \tfrac{c}{2m}\,\G_{h_y}(y),
\end{equation}
where $c$ is the constant from Lemma \ref{lem:eveA2}. From Lemma \ref{le:claim2} we infer that 
$$
\cA\subset \cW_{x,y}\cap\cY_{x,y}, 
$$
for all $x,y$, and for all $n$ large enough. 
Therefore, %using 
\begin{align}\label{eq:eveG6}
\P\(P_{0,t_\star}(x,y)< \tfrac{s}{n}\,\Gamma_{h_y}(y); \cA\) 
\leq \sup_{\si\in\cW_{x,y}\cap\cY_{x,y}}  \P\(P_{0,t_\star}(x,y)< \tfrac{s}{n}\,\Gamma_{h_y}(y)\tc \si\). 
\end{align}
%From Lemma \ref{lem:chatt}, t
Taking $s>0$ a small enough constant and using \eqref{eq:nice-ep} and \eqref{eq:nice-ex},   
we see that $P_{0,t_\star}(x,y)< \frac{s}{n}\,\Gamma_{h_y}(y)$ implies 
$$
|P_{0,t_\star}(x,y)-\E\left[P_{0,t_\star}(x,y)\tc \si\right]|\geq a\,\E\left[P_{0,t_\star}(x,y)\tc \si\right],
$$
for some constant $a>0$, and therefore from Lemma \ref{lem:chatt}
\begin{align}\label{eq:eveG7}
\sup_{\si\in\cW_{x,y}\cap\cY_{x,y}} 
\P\(P_{0,t_\star}(x,y)< \tfrac{s}{n}\,\Gamma_{h_y}(y)\tc \si\)=o(n^{-2}).
\end{align}
The bounds \eqref{eq:eveG2} and \eqref{eq:eveG7} end the proof of \eqref{eq:wow2}.
This ends the proof of Lemma \ref{lem:lower-nice-gamma}. 

\begin{remark}\label{re:gamma0L}
Let us show that if the type $(\d_-,\D_+)$ is not in the set of linear types $\cL$ one can improve the lower bound on $\pimin$ as mentioned in Remark \ref{rem:optimal}.
The proof given above shows that it is sufficient to replace $\g_0$ by $\g'_0$ in Lemma \ref{le:claim2}, where $\g'_0$ is defined by \eqref{eq:g0prime}.
To this end, for any $\e>0$, let $\cL_\e$ denote the set of types 
$(k,\ell)\in\cC$ such that
\begin{equation}\label{eq:g0pe}
\limsup_{n\to\infty}\frac{|\cV_{k,\ell}|}{n^{1-\e}}=+\infty\,,
\end{equation}
where $\cV_{k,\ell}$ denotes the set of vertices of type $(k,\ell)$, and define 
\begin{equation}\label{eq:g0primee}
\g'_\e:=\frac{\log\D'_{\e,+}}{\log \d'_{\e,-}}\,,\qquad \D'_{\e,+} := \max\{\ell:\; (k,\ell)\in\cL_\e\} \,,\quad \d'_{\e,-} := \min\{k:\; (k,\ell)\in\cL_\e\}.
\end{equation}
The main observation is that if $(k,\ell)\notin\cL_\e$, then w.h.p.\ there are at most a finite number of vertices of type $(k,\ell)$ in all in-neighborhoods $\cB^-_{h_0}(y)$, $y\in[n]$,  for any $h_0=O(\log\log n)$. Indeed, for a fixed $y\in[n]$ the number of $v\in\cV_{k,\ell}\cap\cB^-_{h_0}(y) $
is stochastically dominated by the binomial ${\rm Bin}\(\Delta^{h_0},n^{-\e/2} \)$, and therefore if $K=K(\e)$ is a sufficiently large constant then the probability of having more than $K$ such vertices is bounded by $(\Delta^{h_0}n^{-\e/2})^K=o(n^{-1})$. Taking a union bound over $y\in[n]$ shows that w.h.p.\ all  $\cB^-_{h_0}(y)$, $y\in[n]$ have at most $K$ vertices with type $(k,\ell)$. Then we may repeat the argument of 
Lemma \ref{le:claim2} with this constraint, to obtain that for all $\e>0$, w.h.p.\ 
$\G_{h_y}(y)\geq c(\e) \log^{1-\g'_\e}(n)$. Since the number of types is finite one concludes that if $\e$ is small enough then $\g'_0=\g'_\e$ and the desired conclusion follows. 
\end{remark}

\subsection{Upper bound on $\pi_{\min}$}\label{suse:pimin-ub}
In this section we prove the upper bound on $\pimin$ given in \eqref{eq:th-pimin-size}. We  first show that we can replace $\pi(y)$ in  \eqref{eq:th-pimin-size} by a more convenient quantity. 
Define the distances
\begin{equation}\label{distances}
d(s)=\max_{x\in [n]}\|P^{s}(x,\cdot)-\pi\|_\tv\,,\quad 
\bar d(s) = \max_{x,y\in [n]}\|P^{s}(x,\cdot)-P^{s}(y,\cdot)\|_\tv.
\end{equation}
It is standard that, for all $k,s\in\bbN$,
\begin{equation}\label{distances1}
d(ks)\leq \bar d(ks) \leq \bar d(s)^k \leq 2^kd(s)^k,
\end{equation}
see e.g.\ \cite{LevPer:AMS2017}. In particular, defining 
\begin{equation}\label{lamt}
\l_t(y)=\frac1n\sum_{x\in[n]}P^t(x,y)\,,
\end{equation}
for any $k\in\bbN$, setting $t=2k\tent$, one has
\begin{equation}\label{distances2}
\max_{y\in[n]}|\l_{t}(y)- \pi(y)|\leq d(2k\tent)\leq 2^{k}d(2\tent)^k.
\end{equation}
From \eqref{cutoff} we know that w.h.p. $d(2\tent)\leq \frac1{2e}$ so that the right hand side above is at most $e^{-k}$. If $k=\Theta(\log^2(n))$ we can safely replace $\pi(y)$ with $\l_t(y)$ in  \eqref{eq:th-pimin-size}. Thus, it suffices to prove the following statement. 
\begin{lemma}\label{lalemma}
For some constants $\b>0$, $C>0$, %for any fixed sequence $u_n\geq 0$ such that $u_n\to\infty$ 
and for any $t=t_n=\Theta(\log^3(n))$:
 \begin{equation}\label{eq:lale1}
\P\Big(\exists S\subset[n],\:|S|\geq n^\b\,,\; n \max_{y\in S}\l_{t}(y)\leq  C\,\log^{1-\gamma_1}(n)
\Big)=1-o(1).
\end{equation}

\end{lemma}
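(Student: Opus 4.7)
Let $(k_*,\ell_*)\in\cL$ achieve $\gamma_1=\log\ell_*/\log k_*$ and set $h_0=\lfloor\log_{k_*}\log n\rfloor$, so $k_*^{h_0}\asymp\log n$ and $\ell_*^{h_0}\asymp\log^{\gamma_1}(n)$. By \eqref{distances2} with $k=\Theta(\log^2 n)$, $\max_y|\lambda_t(y)-\pi(y)|\leq e^{-\Theta(\log^2 n)}$, which is negligible compared to the target $\log^{1-\gamma_1}(n)/n\geq 1/n$; it therefore suffices to exhibit $S\subseteq[n]$ with $|S|\geq n^\beta$ and $\max_{y\in S}\pi(y)\leq C\log^{1-\gamma_1}(n)/n$.

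Declare $y$ \emph{good} if $y\in\cV_{k_*,\ell_*}$ and $\cB^-_{h_0}(y)$ is a directed tree in which at least a $(1-\delta_n)$ fraction of the vertices is of type $(k_*,\ell_*)$, for some parameter $\delta_n=o(1/\log\log n)$. During the breadth-first generation of Section \ref{sec:structure}, each new tail points to a type-$(k_*,\ell_*)$ vertex with probability $q:=\ell_*|\cV_{k_*,\ell_*}|/m\geq c>0$ (since $(k_*,\ell_*)\in\cL$), and the tree has $\Theta(\log n)$ matchings; a Cramer-type lower bound on the resulting binomial yields $\P(y\text{ good})\gtrsim n^{-\theta}/\mathrm{polylog}(n)$ with $\theta=2D(1-\delta_n\,\|\,q)+o(1)$, $D$ denoting binary relative entropy. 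One must choose $\delta_n$ so that simultaneously $\theta<1$ (for the first moment to be nontrivial) and $\Gamma_{h_0}(y)\leq C\log^{1-\gamma_1}(n)$ is preserved (which constrains $\delta_n\log\log n=O(1)$). Thus $\E[|S|]\geq n^{1-\theta-o(1)}$, and a second-moment computation—exploiting that two good vertices with disjoint $(h_0+1)$-in-neighborhoods are nearly independent, while overlapping pairs form only a polylogarithmic fraction—concentrates $|S|\geq n^\beta$ w.h.p., $\beta:=(1-\theta)/2>0$.

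For $y\in S$ on the event $\cG(\hslash)$ of Proposition \ref{pr:tx<=1}, further restricted to the high-probability event that $\cB^+_\hslash(y)$ is itself a tree, the tree structure of $\cB^-_{h_0}(y)$ together with the absence of short cycles through $y$ force $P^{h_0}(z,y)=0$ unless $z\in\partial\cB^-_{h_0}(y)$, in which case $P^{h_0}(z,y)=w(z,y):=\prod_{i=0}^{h_0-1}(d^+_{z_i})^{-1}$ is the unique tree-path weight. Indeed, any walk of length $h_0\leq\hslash$ ending at $y$ must enter the interior of $\cB^-_{h_0}(y)$ through the boundary and then trace the unique tree path; the tree condition on the out-neighborhood rules out looping back through $y$. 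By stationarity,
\begin{equation}
\pi(y)=\sum_{z\in\partial\cB^-_{h_0}(y)}\pi(z)\,w(z,y),
\end{equation}
and the tree composition yields $\Gamma_{h_0}(y)=\sum_{z}d^-_z w(z,y)\leq C\log^{1-\gamma_1}(n)$.

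The remaining task—bounding the weighted $\pi$-mass on the boundary by $C\log^{1-\gamma_1}(n)/n$—is the main obstacle, and I expect it to be the crux of the proof. Since $\sum_z w(z,y)\leq\Gamma_{h_0}(y)/\d_-$ is already $O(\log^{1-\gamma_1}(n))$, the issue is to prevent the boundary $z$'s from being $\pi$-heavy. The crude bound $\pi(z)\leq\pimax\leq\log^{1-\kappa_0}(n)/n$ would only yield $\pi(y)\leq C\log^{2-\kappa_0-\gamma_1}(n)/n$, off by a factor $\log^{1-\kappa_0}(n)$. The natural fix is to refine $S$ further by requiring each $z\in\partial\cB^-_{h_0}(y)$ to have a "typical" in-neighborhood at some further depth $h'=\Theta(\log\log n)$ with $\Gamma_{h'}(z)=O(1)$, so that by a pointwise strengthening of the local approximation $\pi\approx\mu_{\rm in}P^{h'}=\Gamma_{h'}/m$—refining \eqref{localapp} in the spirit of \cite[Lemma 1]{CQ:PageRank}—one obtains $\pi(z)=O(1/n)$ pointwise. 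Since the typical-$\pi$ vertices have positive asymptotic density (by convergence of the empirical distribution of $\{n\pi(x)\}_{x\in[n]}$), this additional constraint multiplies the success probability by $(\mathrm{const})^{\log n}=n^{-\e'}$, still leaving $|S|\geq n^{\beta-\e'}>0$. The technical difficulty I foresee is making this pointwise local approximation quantitative at the $1/n$ scale, especially when $\delta_n>0$ so that the tree is not purely of the extremal type.
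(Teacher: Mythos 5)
Your overall plan --- reduce to $\lambda_t$ via \eqref{distances2}, plant $n^{\Theta(1)}$ vertices whose depth-$(\log\log n)$ in-neighborhoods are trees of the extremal type $(k_*,\ell_*)$, and then control the mass flowing into them --- matches the paper's strategy, but two steps do not go through as written.

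First, the planting step. You keep $h_0=\lfloor\log_{k_*}\log n\rfloor$, so the tree has $\Theta(\log n)$ vertices with implied constant $1$, and you compensate by allowing a $\delta_n$-fraction of wrong types with $\delta_n\to 0$. But then your exponent $\theta=2D(1-\delta_n\,\|\,q)+o(1)$ converges to $2\log(1/q)$, which is $<1$ only if the extremal type has density $q>e^{-1/2}$ --- not guaranteed in general. Moreover, $\Theta(\delta_n\log n)$ wrong-typed vertices sitting near the root can already inflate $\Gamma_{h_0}(y)$ by more than a constant factor, so your constraint on $\delta_n$ does not obviously preserve the target bound either. The paper's device is different and essential: it takes $h_0=\log_{\delta_*}\log n-C_0$ with $C_0$ a large constant, so the tree has only $\delta_*^{-C_0}\log n$ vertices; requiring \emph{all} of them to be of the extremal type then has probability at least $n^{-\eta}$ with $\eta=\eta(C_0)$ as small as desired, and the loss in $\Gamma_{h_0}$ is only the constant factor $(\Delta_*/\delta_*)^{C_0}$, absorbed into $C$.

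Second, and more seriously, the step you yourself flag as ``the crux'' is exactly the part you have not proved, and the pointwise bound $\pi(z)=O(1/n)$ for all boundary vertices $z$ is neither available nor how the paper proceeds. The paper never needs pointwise control of $\pi$ or $\lambda_t$ at individual boundary vertices. Instead it works with the aggregated quantity $\cX=\sum_{y\in\bar S}\lambda_t(y)$, conditions on the realization $\si$ of the planted in-neighborhoods, and computes $\E[\cX\mid\si]$ and $\E[\cX^2\mid\si]$ using the \emph{annealed} walk: a path-counting/collision argument gives $\E[\lambda_t(z)\mid\si]=(1+o(1))d_z^-/m$, and a two-trajectory version shows the pairs decorrelate up to $O(1/m^2)$ errors, whence $\E[\cX^2\mid\si]=(1+o(1))\E[\cX\mid\si]^2$. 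Chebyshev then gives $\cX\leq\tfrac{C\bar N}{2n}\log^{1-\gamma_1}(n)$ w.h.p., and since at most half of the $y\in\bar S$ can exceed twice the average, the set $S$ is extracted. Without this annealed first/second-moment computation --- or an actual proof of your proposed pointwise refinement of \eqref{localapp}, which would be a substantially stronger statement than anything established in \cite{BCS1} --- the argument is incomplete.
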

\begin{proof}
Let $(\d_*,\D_*)\in\cL$ denote the type realizing the maximum in the definition of $\g_1$; see \eqref{eq:def-gamma01}. Let $V_*=\cV_{\d_*,\D_*}$ denote the set of vertices of this type, and let $\a_*\in(0,1)$ be a constant such that  $|V_*|\geq \a_* n$, for all $n$ large enough. Let us fix a constant $\b_1\in(0,\tfrac14)$. 
This will be related to the constant $\b$, but we shall not look for the optimal exponent $\b$ in the  statement \eqref{eq:lale1}. Consider the first $N_1:=n^{\b_1}$ vertices in the set $V_*$, and call them $y_1,\dots,y_{N_1}$. Next,  generate sequentially the in-neighborhoods $\cB^-_{h_0}(y_i)$, $i=1,\dots,N_1$, where 
\begin{equation}\label{eq:lale01}
h_0=\log_{\d_*}\!\log n - C_0,
\end{equation}
for some constant $C_0$ to be fixed later.  As in the proof of Lemma \ref{le:claim2} we couple the $\cB^-_{h_0}(y_i)$ with independent random trees $Y_i$ rooted at $y_i$. For each $\cB^-_{h_0}(y_i)$ the probability of failing to equal $Y_i$, conditionally on the previous generations, is uniformly bounded above by $p:=N_1\D^{2h_0}/m$. Let $\cA$ denote the event that 
 all $\cB^-_{h_0}(y_i)$
are successfully coupled to the $Y_i$'s and that they have no intersections.  Therefore, 
\begin{equation}\label{eq:lale2}
\P(\cA)\geq 1-O(N_1p)\geq 1-O(n^{3\b_1-1})=1-o(1).
\end{equation}
Consider now a single random tree $Y_1$. We say that $Y_1$ is {\em unlucky} if all labels of the vertices in the tree are of type $(\d_*,\D_*)$. The probability that $Y_1$ is unlucky is at least 
$$
q=\(\frac{\a_* n \D_*}{m}\)^{\d_*^{h_0}}\geq n^{-\eta},
$$ 
where $\eta=\d_*^{-C_0}\log(\D/2\a_*)$ if $C_0$ is the constant in \eqref{eq:lale01}. We choose $C_0$ so large that $0<\eta\leq \b_1/4$. 
Call $S_1$ the set of $y\in\{y_1,\dots,y_{N_1}\}$ such that $Y_i$ is unlucky. Since the $Y_i$ are i.i.d.\ the probability that $|S_1|<n^{\b_1/2}$ is bounded by the probability that  ${\rm Bin}(N_1,q)< n^{\b_1/2}$, which by Hoeffding's inequality is at most
\begin{equation}\label{eq:lale02}
\exp{\(-n^{\b_1/3}\)}
\end{equation}
Fix  a realization $\si$ of the    in-neighborhoods $\cB^-_{h_0}(y_i)$, $i=1,\dots,N_1$. Say that $y_i$ is unlucky if all vertices in $\cB^-_{h_0}(y_i)$ are of type $(\d_*,\D_*)$. Thanks to \eqref{eq:lale2} we may assume that $\si\in\cA$, i.e.\ $ \cB^-_{h_0}(y_i)=Y_i$ for all $i$ so that the set of unlucky $y_i$ coincides with  $S_1$, and thanks to \eqref{eq:lale02} we may also assume that $\si$ is such that $|S_1|\geq \bar N:=n^{\b_1/2}$. We call $\cA'\subset \cA$ the set of all $\si\in\cA$ satisfying the latter requirement. Let $\bar S$ denote the first $\bar N$ elements in $S_1$. We are going to show that uniformly in $\si\in\cA'$, for a sufficiently large constant $C>0$,
% ny fixed $u_n\to\infty$, 
any $t=\Theta(\log^3(n))$,
\begin{equation}\label{eq:lale3}
\P\Big(\sum_{y\in\bar S}\l_t(y) > \tfrac{C\bar N}{2n}\log^{1-\gamma_1}(n) \,\Big|\, \si\Big)
=o(1).
\end{equation}
Notice that \eqref{eq:lale3} says that, conditionally on a fixed $\si\in\cA'$,  with high probability
$$ \sum_{y\in\bar S}\l_t(y) \leq\tfrac{C\bar N}{2n}\log^{1-\gamma_1}(n),$$ which implies that there are at most $\bar N/2$ vertices $y\in\bar S$ with the property that $  \l_t(y) >  \frac{C}{n}\log^{1-\gamma_1}(n)$. Summarizing, the above arguments and \eqref{eq:lale3} allow one to conclude the unconditional statement that 
with high probability there are at least $\frac12n^{\b_1/2}$ vertices $y\in[n]$ such that
 $$
 \l_t(y) \leq  \tfrac{C}{n}\log^{1-\gamma_1}(n),
 $$
which implies the desired claim \eqref{eq:lale1}, taking e.g.\ $\b=\b_1/3$. 

To prove \eqref{eq:lale3}, consider the sum 
$$
\cX = \sum_{y\in\bar S}\l_t(y). 
$$
We first establish that,  uniformly in $\si\in\cA'$, for any $t=\Theta(\log^3(n))$,
\begin{equation}\label{eq:lale4}
\E\(\cX \tc \si\) =(1+o(1)) \frac{\d_*}m \bar N  \D_*^{-h_0}\d_*^{h_0}.
\end{equation}
If $y$ is unlucky then $P^{h_0}(z,y)=\D_*^{-h_0}$ for any $z\in\partial\cB^-_{h_0}(y)$. Hence, for any $y\in \bar S$: 
$$
\l_t(y) = \frac{\D_*^{-h_0}}{n}\sum_{x\in[n]}\sum_{z\in\partial\cB^-_{h_0}(y)}P^{t-h_0}(x,z) = 
\D_*^{-h_0} \sum_{z\in\partial\cB^-_{h_0}(y)}\l_{t-h_0}(z).
$$
Since $|\partial\cB^-_{h_0}(y)|=\d_*^{h_0}$, and since all $z\in\partial\cB^-_{h_0}(y)$ have the same in-degree $d_z^-=\d_*$,  using symmetry the proof of \eqref{eq:lale4} is reduced to showing that for any $z\in\partial\cB^-_{h_0}(y)$, $t=\Theta(\log^3n)$, 
\begin{equation}\label{eq:lale6}
\E\(\l_t(z) \tc \si\) =(1+o(1)) \frac{d_z^-}m.
\end{equation}
%
%We rewrite \eqref{eq:lale4} as
%% follows from 
%\begin{equation}\label{eq:lale4}
%\sum_{i=1}^{\bar N}\E\(\l_t(y_*) \tc \si\) =(1+o(1)) \frac{\d_*}m |\bar S| \D_*^{-h_0}.
%\end{equation}
%where $y_*$ denotes the first element $y\in S_1$. 
%Since $ \cB^-_{h_0}(y_*)$ is unlucky, 
%$$
%\l_t(y_*) = \frac{\D_*^{-h_0}}{n}\sum_{x\in[n]}\sum_{z\in\partial\cB^-_{h_0}(y_*)}P^{t-h_0}(x,z) = 
%\D_*^{-h_0} \sum_{z\in\partial\cB^-_{h_0}(y_*)}\l_{t-h_0}(z).
%$$
%Since $|\partial\cB^-_{h_0}(y_*)|=\d_*^{h_0}$ and $\d_*^{h_0}\D_*^{-h_0} \le C_2\log^{1-\g_1}(n)$ for some constant $C_2>0$, using symmetry the proof of \eqref{eq:lale4} is reduced to proving 
%that for all fixed $\si\in\cA'$, all $t=\Theta(\log^3(n))$,
%\begin{equation}\label{eq:lale6}
%\E\(\l_t(z_*) \tc \si\) \leq \frac{C_3}n,
%\end{equation}
%where $C_3>0$ is some new constant and $z_*$ denotes the first element in $ \partial\cB^-_{h_0}(y_*)$. Notice that $z_*$ is a deterministic point once $\si$ is given, and that by construction, after the generation of $\si$, if $\si\in\cA'$ then $z_*$ has all its heads unmatched. 
 To compute the expected value in \eqref{eq:lale6} we use the so called {\em annealed} process. Namely, observe that
\begin{equation}\label{eq:lale5}
\E\(\l_t(z) \tc \si\) =\frac{1}{n}\sum_{x\in[n]}
\E\(P^t(x,z)\tc\si\)=\frac{1}{n}\sum_{x\in[n]}\P^{a,\si}_x\(X_t=z\),
\end{equation}
where $X_t$ is the annealed walk with initial environment $\si$, and initial position $x$, and $\P^{a,\si}_x$ denotes its law. This process can be described as follows. At time 0 the environment consists of the edges from $\si$ alone, and $X_0=x$; at every step, given the current environment and position, the walker picks a uniformly random tail $e$ from its current position, if it is still unmatched then it picks a uniformly random unmatched head $f$, the edge $ef$ is added to the environment and the position is moved to the vertex of $f$, while if $e$ is already matched then the position is moved to the vertex of the head to which $e$ was matched.   Let us show that uniformly in $x\neq z\in\partial\cB^-_{h_0}(y)$, uniformly in $\si\in\cA'$:
%To prove \eqref{eq:lale6} it is then sufficient to show that
\begin{equation}\label{eq:uni1}
\P^{a,\si}_x\(X_{t}=z\)=(1+o(1))\frac{d_z^-}m.
\end{equation}
 Say that a collision occurs if the walk lands on a vertex that was already visited by using a freshly matched edge. At each time step the probability of a collision is at most $O(t/m)$, and therefore the probability of more than one collision in the first $t$ steps is at most $O(t^4/m^2)=o(m^{-1})$. Thus we may assume that there is at most one cycle in the path of the walk up to time $t$. 
There are two cases to consider: 1) there is no cycle in the path up to time $t$ or there is one cycle that  does not pass through the vertex $z$; 
2) there is a cycle and it passes through  $z$.  In case 1) since $X_{t}=z$ the walker must necessarily pick one of the heads of $z$ at the very last step. Since all heads of $z$ are unmatched by construction, and since the total number of unmatched heads at that time is at least $m-n^{\b_1}\D^{h_0}-t= (1-o(1))m$, this event has probability $(1+o(1))d_z^-/m$.
In case 2) since $x\neq z$ we argue that
in order to have a cycle that passes through  $z$, the walk has to visit $z$ at some time before $t$, which is an event of probability $O(t/m)$, and then must hit back the previous part of the path, which is an event of probability $O(t^2/m)$. This shows that we can upper bound the probability of scenario 2) by $O(t^3/m^2)=o(m^{-1})$. This concludes the proof of \eqref{eq:uni1}. Next, observe that if $x=z$, then the previous argument gives $\P^{a,\si}_z\(X_{t}=z\)=O(t/m)$ which is a bound on  the probability that the walk hits again $z$ at some point within time $t$. In conclusion, \eqref{eq:lale5} and \eqref{eq:uni1} imply \eqref{eq:lale6} which establishes \eqref{eq:lale4}.

Let us now show that 
\begin{equation}\label{eq:lale40}
\E\(\cX^2 \tc \si\) =(1+o(1))\E\(\cX \tc \si\)^2. 
\end{equation}
Once we have \eqref{eq:lale40} we can conclude \eqref{eq:lale3} by using Chebyshev's inequality
together with \eqref{eq:lale4} and the fact that $\d_*^{h_0}\D_*^{-h_0} \le C_2\log^{1-\g_1}(n)$ for some constant $C_2>0$. 
We write %ith the notation introduced above we have
\begin{equation}\label{eq:lale50}
\E\(\cX^2 \tc \si\) =\sum_{y,y'\in \bar S} \D_*^{-2h_0} \frac1{n^2}\sum_{x,x'\in[n]} \sum_{z\in\partial\cB^-_{h_0}(y)}\sum_{z'\in\partial\cB^-_{h_0}(y')}\bbP^{a,\si}_{x,x'}(X_{t-h_0}=z,X'_{t-h_0}=z'),
\end{equation}
where $\bbP^{a,\si}_{x,x'}$ is the law of two trajectories $(X_s,X'_s)$, $s=0,\dots,t$, that can be sampled as follows. Let $X$ be sampled up to time $t$ according to the previously described annealed measure $\bbP_x^{a,\si}$, call $\si'$ the environment obtained by adding to $\si$ all the edges discovered during the sampling of $X$ and then sample $X'$ up to time $t$ independently, according to $  \bbP_{x'}^{a,\si'}$. 

Let also $\bbP^{a,\si}_{\rm u}$ be defined by 
$$
\bbP^{a,\si}_{\rm u}=\frac1{n^2}\sum_{x,x'\in[n]}\bbP^{a,\si}_{x,x'}.
$$
Thus, under $\bbP^{a,\si}_{\rm u}$ the two trajectories have independent uniformly distributed starting points $x,x'$. With this notation we write
\begin{equation}\label{eq:lale5w0}
\E\(\cX^2 \tc \si\) =
\sum_{y,y'\in \bar S} 
\D_*^{-2h_0}  \sum_{z\in\partial\cB^-_{h_0}(y)}\sum_{z'\in\partial\cB^-_{h_0}(y')}\bbP^{a,\si}_{\rm u}(X_{t-h_0}=z,X'_{t-h_0}=z').
\end{equation}

Let us show that if $z\neq z'$, $t=\Theta(\log^3(n))$:
%If $z\neq x$ and $z'\neq x'$ with $z\neq z'$, we observe that for all $t=\Theta(\log^3(n))$:
\begin{equation}\label{eq:lale51}
\bbP^{a,\si}_{\rm u}(X_t=z,X'_t=z') = (1+o(1))\frac{d_z^-d_{z'}^-}{m^2}.
\end{equation}
Indeed, let $A$ be the event that the first trajectory hits $z$ at time $t$ and visits $z'$ at some time before that. Then reasoning as in \eqref{eq:uni1} 
  the event $A$ has probability $O(t/m^2)$. Given any realization $X$ of the first trajectory satisfying this event, the probability of $X'_t=z'$
is at most the probability of colliding with the trajectory $X$ within time $t$, which is $O(t/m)$. On the other hand, if the first trajectory hits $z$ at time $t$ and does  visit $z'$ at any time before that, then the conditional probability of $X'_t=z$, as in  \eqref{eq:uni1} is given by $(1+o(1))d_{z'}^-/m$. This proves  \eqref {eq:lale51} when $z\neq z'$.

If $z=z'$, $t=\Theta(\log^3(n))$, let us show that 
\begin{equation}\label{eq:lale52}
\bbP^{a,\si}_{\rm u}(X_t=z,X'_t=z) = O(1/m^2).
\end{equation}
Consider the event $A$ that the first trajectory $X$ has at most one collision. The complementary event $A^c$ has probability at most $O(t^4/m^2)$. If $A^c$ occurs, then the conditional probability of $X'_t=z$ is at most the probability that $X'$ collides with the first trajectory at some time $s\leq t$, that is $O(t/m)$. Hence,
\begin{equation}\label{eq:lale5a2}
\bbP^{a,\si}_{\rm u}(X_t=z,X'_t=z; A^c) = O(t^5/m^3)=O(1/m^2).
\end{equation}
To prove \eqref{eq:lale52}, notice that  to realize $X'_t=z$ there must be a time $s=0,\dots,t$ such that $X'$ collides with the first trajectory $X$ at time $s$, then $X'$ stays in the digraph $D_1$ defined by the first trajectory for the remaining $t-s$ units of time, and $X'$ hits $z$ at time $t$.  On the event $A$ the probability of spending $h$ units of time in $D_1$ is at most $2\d^{-h}$, and for any $h\in[0,t]$ there are at most $h+1$ points $x$ which have a path of length $h$ from $x$ to $z$ in $D_1$. Therefore
\begin{equation}\label{eq:lale5b2}
\bbP^{a,\si}_{\rm u}(X_t=z,X'_t=z; A) \leq (1+o(1))\frac{d_z^-}{m}\sum_{h=0}^t\frac{2(h+1)}{m}\,2\d^{-h}= O(1/m^2).\end{equation}
Hence, \eqref{eq:lale52} follows from \eqref{eq:lale5a2} and \eqref{eq:lale5b2}.

%
%
%Consider two cases: 1) $x=x'$ and 2) $x\neq x'$. In the first case simply note that the second trajectory follows exactly the first trajectory with probability at most $\d^{-t}$ and otherwise hits $z$ at time $t$ with probability $O(1/m)$. Therefore, in this case one has \eqref{eq:lale52}. %$$\bbP^{a,\si}_{x,x'}(X_t=z,X'_t=z) = O(1/m^2).$$ 
%\what use that first trajectory has at most one collision
%In case 2) let $\t$ denote the first time when the second trajectory first hits the first trajectory. Then summing over $\t=1,\dots t$ and noting that if $\t=s$ then the probability of following the first trajectory for the remaining steps is at most $\d^{-(t-s)}$, it follows that given any  trajectory of $X$ satisfying $X_t=z$, the probability of $X'_t=z$ is at most $O(1/m)\sum_{s=1}^t\d^{-(t-s)} + O(1/m)=O(1/m)$, and \eqref{eq:lale52} follows. \what need tree excess here (check!)
%
%Finally, if either $x=z$ or $x'=z'$ then, as in \eqref{eq:uni1} we can estimate 
%\begin{equation}\label{eq:lale53}
%\bbP^{a,\si}_{x,x'}(X_t=z,X'_t=z') = O(t^2/m^2).
%\end{equation}

In conclusion, using \eqref{eq:lale51} and \eqref{eq:lale52} in \eqref{eq:lale5w0}, and recalling  \eqref{eq:lale4}, we have obtained \eqref{eq:lale40}. 
\end{proof}

%%%%%%%%%%%%%%%%%%%%%%%% PI MAX %%%%%%%%%%%%%%%%%%%%%
%\section{Bounds on $\pi_{\max}$}\label{se:pi-max}

\subsection{Upper bound on $\pi_{\max}$}\label{suse:pi-max-ub}
As in Section \ref{suse:pimin-ub} we start by replacing $\pi(y)$ with $\l_t(y)=\frac1n \sum_x P^t(x,y)$. In \eqref{distances2} we have seen that if $t=2k\tent$, then w.h.p.
\begin{equation}\label{distances20}
\max_{y\in[n]}|\l_{t}(y)- \pi(y)|\leq e^{-k}.
\end{equation}
Thus, using a union bound over $y\in[n]$, the upper bound in Theorem \ref{th:pimax} follows from the next statement. 
\begin{lemma}\label{lem:maxlat}
There exists $C>0$ such that for any $t=t_n=\Theta(\log^3(n))$, uniformly in $y\in[n]$
 \begin{equation}\label{eq:maxlat1}
\P\(\l_{t}(y)\geq \tfrac{C}n\log^{1-\kappa_0}(n)\)
= o(n^{-1}).
\end{equation}
\end{lemma}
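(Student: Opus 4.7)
The plan is to reduce the claim to a bound on $\Gamma_{h_0}(y)$ for $h_0 = O(\log\log n)$. Specifically, I would choose
$$h_0 = \log_{\Delta_-}\log(n) + C_*$$
for a large constant $C_*$. By Lemma \ref{lem:gamma0}, on the event $\cG(\hslash)$ from Proposition \ref{pr:tx<=1} one has deterministically
$$\Gamma_{h_0}(y) \leq 2\Delta_-(\Delta_-/\delta_+)^{h_0} \leq C\log^{1-\kappa_0}(n).$$
On this tree event, $\cB^-_{h_0}(y)$ is a directed tree and one has the decomposition
$$\lambda_t(y) = \sum_{z \in \partial\cB^-_{h_0}(y)} \lambda_{t-h_0}(z)\,P^{h_0}(z,y).$$
Thus it suffices to show that this sum is bounded by $C\Gamma_{h_0}(y)/m$ with probability $1-o(n^{-1})$, from which the conclusion of Lemma \ref{lem:maxlat} follows via a union bound over $y \in [n]$.

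The next step is to condition on a realization $\sigma$ of $\cB^-_{h_0}(y)$ and compute the conditional mean of the above sum via the annealed walk technique used in equations \eqref{eq:lale5}--\eqref{eq:uni1} of Lemma \ref{lalemma}. For any $z \neq y$ and $s = t - h_0 = \Theta(\log^3 n)$, that analysis gives $\E[\lambda_s(z) \mid \sigma] = (1+o(1))\,d_z^-/m$, so the conditional mean of our sum equals $(1+o(1))\Gamma_{h_0}(y)/m$, matching the desired target up to a multiplicative constant. A second-moment computation analogous to \eqref{eq:lale50}--\eqref{eq:lale52}, exploiting decorrelation of two annealed trajectories started at independent uniformly random positions, yields a conditional variance of order $o(\mathrm{mean}^2)$.

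The main obstacle is strengthening this control from plain Chebyshev, which only gives an $o(1)$ failure probability per vertex---sufficient for Lemma \ref{lalemma} but not here---to an $o(n^{-1})$ failure probability per $y$, as required for the union bound. A natural remedy is to invoke Chatterjee's concentration inequality for uniform matchings (as in Lemma \ref{lem:chatt}), after writing the sum as a functional of the matching $\omega$ with a suitably small $\ell^\infty$ coefficient bound of the form $\|c\|_\infty = O(n^{-1-\eta})$ for some $\eta > 0$. Alternatively, one can compute moments of order $k = \Theta(\log n)$ and apply Markov, leveraging the same decorrelation structure between distinct annealed trajectories. Either route gives stretched-exponential concentration, and the technical crux is identifying the correct Lipschitz structure of $\lambda_t(y)$ under the matching, analogous to the treatment of the single-path probability $P_{0,t_\star}(x,y)$ in Lemma \ref{lem:chatt}, while handling the additional randomness arising from summing over $x \in [n]$.
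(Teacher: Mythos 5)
Your reduction is exactly the paper's: peel off $h_0=\log_{\Delta_-}\log n$ levels, use the tree-excess event to get $P^{h_0}(z,y)\le 2\delta_+^{-h_0}=2\log^{-\kappa_0}(n)$, and reduce to showing that the annealed mass $\sum_{z\in\cB^-_{h_0}(y)}\lambda_{t-h_0}(z)$ is $O(\log n/n)$ with failure probability $o(n^{-1})$. You also correctly diagnose that Chebyshev is too weak for the union bound, and one of your two proposed remedies (moments of order $K=\Theta(\log n)$ plus Markov) is precisely the route the paper takes.

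However, there is a genuine gap: the $K$-th moment bound $\E[\cX^K;\cG_y(\hslash)\mid\sigma]\le(C_1\log n/n)^K$ is the technical heart of the proof, and "leveraging the same decorrelation structure between distinct annealed trajectories" does not describe it. The $K$ annealed walks are \emph{not} decorrelated in any simple sense: the $\ell$-th walk moves in an environment containing all previously revealed paths, and the dominant danger is that it enters the accumulated digraph $D_{\ell-1}$ once and then \emph{rides} an earlier trajectory into $\partial\cB^-_{h_0}(y)$. Controlling this requires (a) propagating a tree-excess condition $\cG^\ell_y(\hslash)$ on the part of $D_\ell$ within distance $\hslash$ of $y$ so that the sojourn probability in it decays like $2\delta_+^{-k}$, (b) bounding the total collision count $\cQ_K$ by $O(\log n)$ with probability $1-e^{-c\log^2 n}$, and (c) deducing from that a bound $|L_h|\le C h\log n$ on the number of vertices of $D_{\ell-1}$ having a length-$h$ path to $\partial\cB^-_{h_0}(y)$, which is what makes the sum over entrance times converge to $O(\log n/n)$. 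None of this bookkeeping is present in your sketch, and without it the claimed factorization of the $K$-th moment fails. Your alternative route via Chatterjee's inequality is also unlikely to work as stated: the inequality used in Lemma \ref{lem:chatt} applies to linear statistics $\sum_e c(e,\omega(e))$ of the residual matching, and $P_{0,t_\star}(x,y)$ was engineered (via the pre-revealed tree $\cT(x)$ and in-neighborhood of $y$) to depend on exactly one matched edge per path; by contrast $\lambda_t(y)$ with $t=\Theta(\log^3 n)$ depends on $t$ edges of the matching per path, and its Lipschitz constant under an edge swap cannot be bounded without a priori control on quantities of the very type you are trying to estimate.
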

\begin{proof}
Fix $$h_0=\log_{\D_-}\!\log n,$$ 
and call $\sigma$ a realization of the in-neighborhood $\cB_{h_0}^-(y)$. 
Clearly,
$$\lambda_{t+h_0}(y)=\sum_{z\in\cB^-_{h_0}(y)}\lambda_t(z)P^{h_0}(z,y).$$
From \eqref{eq:0gammabounds}, under the event $\cG_y(\hslash)$ from Proposition \ref{pr:tx<=1}, we have $P^{h_0}(z,y)\le 2\delta_+^{-h_0}=2\log^{-\kappa_0}(n)$ for every $z\in\cB^-_{h_0}(y)$. Define 
$$\cX:=\sum_{z\in\cB^-_{h_0}(y)}\lambda_t(z)=\l_t(\cB^-_{h_0}(y)).$$
Then it 
is sufficient to prove that for some constant $C$, uniformly in $\si$  and $y\in[n]$:% satisfying $\cG_y(\hslash)$, 
\begin{equation}\label{eq:c1}
\P\(\cX>\tfrac{C}{n}\,\log n
%\(|\cB_{h_0}(y)|+\log n\)
\,;\;\cG_y(\hslash) \:\tc\: \si\)=o(n^{-1}).
\end{equation}

By Markov's inequality, for any $K\in\bbN$ and any constant $C>0$:
\begin{equation}\label{eq:markovc1}
\P\(\cX>\tfrac{C}{n}\,\log(n);\cG_y(\hslash) \tc \si\)\leq
\frac{\E\[\cX^K;\cG_y(\hslash)\tc \si\]}{\(\tfrac{C}{n}\log  n\)^K}.
\end{equation}
We fix $K=\log n$, and claim that 
there exists an absolute  constant $C_1>0$ such that
\begin{equation}\label{eq:c2}
\E\[\cX^K;\cG_y(\hslash)\tc \si\]
\le \(\tfrac{C_1}{n}\log n\)^K.
\end{equation}
The desired estimate \eqref{eq:c1} follows from \eqref{eq:c2} and \eqref{eq:markovc1} by taking $C$ large enough. 

We compute the $K$-th moment $\E\[\cX^K;\cG_y(\hslash)\tc \si\]$ by using the annealed process as in \eqref{eq:lale50}. This time we have $K$ trajectories instead of $2$: 
\begin{align}\label{eq:lale051}
\E\[\cX^K;\cG_y(\hslash)\tc \si\]
& =
\frac{1}{n^K}\sum_{x_1,\dots,x_K}\E\[P^t(x_1,\cB^-_{h_0}(y))\cdots
P^t(x_K,\cB^-_{h_0}(y))\,;\,\cG_y(\hslash)\tc \si\]
\nonumber\\&
= \frac{1}{n^K}\sum_{x_1,\dots,x_K} \P^{a,\si}_{x_1,\dots,x_K}\(X_t^{(1)}\in\cB^-_{h_0}(y),\dots,X_t^{(K)}\in\cB^-_{h_0}(y)\,;\, \cG_y(\hslash)
\),
%\leq\P^{a,\si}_{\rm u} \(X_t^{(1)}\in\cB^-_{h_0}(y),\dots,X_t^{(K)}\in\cB^-_{h_0}(y)\,;\, \cG^K_y(\hslash)\),
\end{align}
where $X^{(j)}:=\{X_s^{(j)}, s\in [0,t]\}$, $j=1,\dots,K$ denote $K$ annealed walks each with initial point $x_j$, and 
$\P^{a,\si}_{x_1,\dots,x_K}$ denotes the joint law of the trajectories $X^{(j)}$, $j=1,\dots,K$, and the environment, defined as follows. Start with the environment $\si$, and then run the first random walk $X^{(1)}$ up to time $t$ as described after \eqref{eq:lale5}. After that  run the walk $X^{(2)}$ up to time $t$ with initial environment given by the union of edges from $\si$ and the first trajectory, as described in \eqref{eq:lale50}. Proceed recursively until all trajectories up to time $t$ have been sampled. This produces a new environment, namely the digraph given by the union of $\si$ and all the $K$ trajectories. At this stage there are still  many unmatched heads and tails, and we complete the environment by using a uniformly random matching of the unmatched heads and tails.  This defines the coupling $\P^{a,\si}_{x_1,\dots,x_K}$ between the environment and $K$ independent walks in that environment, which justifies the expression in \eqref{eq:lale051}. It is convenient to introduce the notation
$$
\P^{a,\si}_{\rm u}= \frac{1}{n^K}\sum_{x_1,\dots,x_K} \P^{a,\si}_{x_1,\dots,x_K},
$$
for the annealed law of the $K$ trajectories such that independently each trajectory starts at a uniformly random point $X_0^{(j)}=x_j$. Let $D_0=\si$ and let $D_\ell$, for $\ell=1,\dots,K$,  denote the digraph defined by the union of $\si=\cB^-_{h_0}(y)$ with the first $\ell$ paths 
$$\{X_s^{(j)}, 0\leq s\leq t\}, \quad j=1,\dots,\ell.$$ 
Call $D_\ell(\hslash)$ the subgraph of $D_\ell$ consisting of all directed paths in $D_\ell$ ending at $y$ with length at most $\hslash$.
% and endpoint $y$ 
%induced by those vertices $x$ such that there exists $h\le \hslash$ and a path of length $h$ from $x$ to $y$ in $D_\ell $. 
We define $\cG^\ell_y(\hslash)$ as the event $\tx(D_\ell(\hslash))\leq 1$. Notice that 
if the final environment has to satisfy $\cG_y(\hslash)$, then necessarily for every $\ell$ the digraph $D_\ell$ must satisfy $\cG^\ell_y(\hslash)$. Therefore, %we may estimate
\begin{align}\label{eq:lale052}
\E\[\cX^K;\cG_y(\hslash)\tc \si\]
\leq\P^{a,\si}_{\rm u} \(X_t^{(1)}\in\cB^-_{h_0}(y),\dots,X_t^{(K)}\in\cB^-_{h_0}(y)\,;\, \cG^K_y(\hslash)\).
\end{align}

Define 
\begin{equation}\label{eq:c3}
\cW_\ell = \sum_{x\in V(D_\ell)}[d_x^-(D_\ell)-1]_+,
\end{equation}
where $V(D_\ell)$ denotes the vertex set of $D_\ell$ and $d_x^-(D_\ell)$ is the in-degree of $x$ in the digraph $D_\ell$. Define also the $(\ell,s)$ cluster $\cC_{\ell}^s$ as the digraph given by the union of  
$D_{\ell-1}$ and the truncated path $\{X_u^{(\ell)}, 0\leq  u\leq s\}$. 
We say that the $\ell$-th trajectory $X^{(\ell)}$ has a {\em collision} at time $s\geq 1$ if the edge $(X^{(\ell)}_{s-1},X^{(\ell)}_s)\notin \cC_{\ell}^{s-1}$ and $X^{(\ell)}_s\in  \cC_{\ell}^{s-1}$. 
We say that a collision occurs at time zero if $X^{(\ell)}_0\in D_{\ell-1}$. 
Notice that at least $$\sum_{x\notin\cB^-_{h_0}(y)} [d_x^-(D_\ell)-1]_+$$ collisions must have occurred after the generation of the first $\ell$ trajectories.
% $\{X_s^{(j)}, 0\leq s\leq t\}$, $j=1,\dots,\ell$. 

Let $\cQ_\ell$ denote the total number of collisions after the  generation of the first    $\ell$ trajectories.
% $\{X_s^{(j)}, 0\leq s\leq t\}$, $j=1,\dots,K$. 
Since $|\cB^-_{h_0}(y)|\leq \D\log n$ one must have 
\begin{equation}\label{eq:c03}
\cW_\ell \leq  \D\log n + \cQ_\ell.
\end{equation}
Notice that the probability of a collision at any given time by any given trajectory is bounded above by $p:=2\D(Kt + \D_-^{h_0})/m=O(\log^4(n)/n)$ and therefore $\cQ_\ell$ is stochastically dominated by the binomial ${\rm Bin}(Kt,p)$. In particular, %$\cQ_\ell\leq \cQ_K$, and 
for any $k\in\bbN$:
\begin{equation}\label{eq:c4}
\P\(\cQ_K\geq k
\)\leq (Kt p)^k \leq C_2^k\frac{\log^{8k}(n)}{n^k},
\end{equation}
for some constant $C_2>0$. If $A>0$ is a large enough constant, then
\begin{equation}\label{eq:c5}
\P\(\cQ_K\geq A\log n
\)\leq e^{-\tfrac{A}2 \log^2(n)}.
\end{equation}
If $A\geq 2$ then \eqref{eq:c5} is smaller than the right hand side of \eqref{eq:c2} with e.g.\ $C_1=1$, and therefore from now on we may restrict to proving the upper bound 
\begin{align}\label{eq:lale5000}
\P^{a,\si}_{\rm u} \(X_t^{(1)}\in\cB^-_{h_0}(y),\dots,X_t^{(K)}\in\cB^-_{h_0}(y)\,;\,\cQ_K\leq A\log n \,;\,\cG^K_y(\hslash)\)\leq \(\tfrac{C_1}{n}\log n\)^K,
\end{align}
for some constant $C_1=C_1(A)>0$.
To prove \eqref{eq:lale5000}, define the events
\begin{align}\label{eq:laleb51}
B_\ell=\{X_t^{(1)}\in\cB^-_{h_0}(y),\dots,X_t^{(\ell)}\in\cB^-_{h_0}(y)\,;\,\cQ_\ell\leq A\log n \,;\,\cG^\ell_y(\hslash)\},
\end{align}
for $\ell=1,\dots,K$.
Since $B_{\ell+1}\subset B_{\ell}$, the left hand side in \eqref{eq:lale5000} is equal to 
\begin{equation}
\P^{a,\sigma}_{\rm u}
\(B_1\)\prod_{\ell=2}^{K}\P^{a,\sigma}_{\rm u}\(
B_\ell\tc B_{\ell-1}
\)
\end{equation}
Thus, it is sufficient to show that for some constant $C_1$:
 \begin{equation}\label{eq:lalea52}
\P^{a,\sigma}_{\rm u}\(
B_\ell\tc B_{\ell-1}
\)\leq \tfrac{C_1}{n}\log n\,,
\end{equation}
for all $\ell =1,\dots, K$, where it is understood that $\P^{a,\sigma}_{\rm u}\(
B_1\tc B_{0}
\)=\P^{a,\sigma}_{\rm u}\(
B_1
\).$

Let us partition the event $\{X_t^{(\ell)}\in\cB_{h_0}^-(y) \}$  by specifying the last time in which the walk $X^{(\ell)}$ enters the neighborhood $\cB_{h_0}^-(y)$. Unless the walk starts in $\cB_{h_0}^-(y)$, at that time it must enter from $\partial\cB^-_{h_0}(y)$. Since the tree excess of $\cB_{h_0}^-(y)$ is at most $1$, once the walker is in $\cB_{h_0}^-(y)$, we can bound the chance that it remains in $\cB_{h_0}^-(y)$ for $k$ steps by $2\delta_+^{-k}$. Therefore,
\begin{align*}
\P^{a,\sigma}_{\rm u}\(
B_\ell\tc B_{\ell-1}
\)&\leq \P^{a,\sigma}_{\rm u}\(X^{(\ell)}_{t}\in\cB^-_{h_0}(y)\tc B_{\ell-1}
\)\\&
\leq 2\d_+^{-t}\P^{a,\sigma}_{\rm u}\(X^{(\ell)}_{0}\in\cB^-_{h_0}(y)\tc B_{\ell-1}
\)+ \sum_{j=1}^{t} 2\delta_+^{-(t-j)}
\P^{a,\sigma}_{\rm u}\(X^{(\ell)}_{j}\in\partial\cB^-_{h_0}(y)\tc B_{\ell-1}
\)\\&
\le2t\delta_+^{-t/2}+\sum_{j=t/2+1}^t2\delta^{-(t-j)}\P^{a,\sigma}_{\rm u}\(X^{(\ell)}_{j}\in\partial\cB^-_{h_0}(y)\tc B_{\ell-1}
\)
\end{align*} 
%It is easy to see that the walk has two options: start within $\cB_{h_0}^-(y)$ and keep being there up to time $t$, or it has to pass by $\partial\cB_{h_0}^-(y)$. The first scenario is extremely rare, indeed
%$$\sup_{\sigma\in\cG_y(h_0)}\P_{unif}^{\sigma,an,\ell-1}\(X^{(\ell)}_s\in\cB^-_{h_0}(y),\:\forall s\le t \)\le C\frac{\Delta^{h_0}}{n}\times 2\delta_+^{-t}=o(n^{-100})$$
Since $t=\Theta(\log^3(n))$, it is enough to show 
 \begin{equation}\label{eq:lale522}
\P^{a,\sigma}_{\rm u}\(X^{(\ell)}_{j}\in\partial\cB^-_{h_0}(y)\tc B_{\ell-1}
\)\leq \tfrac{C_1}{n}\log n,
\end{equation}
uniformly in $j\in(t/2,t)$ and $1\leq \ell\le K$. 

Let $\cH^{\ell}_0$ denote the event that the $\ell$-th walk makes its first visit to the digraph $D_{\ell-1}$  at the very last time $j$, when it enters $\partial\cB^-_{h_0}(y)$. Uniformly in the trajectories of the first $\ell-1$ walks, at any time there are at most $\D_-| \partial\cB^-_{h_0}(y)|\leq \D_-^{h_0+1}=\D_-\log n$ unmatched heads attached to $\partial\cB^-_{h_0}(y)$, and therefore
   \begin{equation}\label{eq:lale523}
\P^{a,\sigma}_{\rm u}\(X^{(\ell)}_{j}\in\partial\cB^-_{h_0}(y)\,;\, \cH^{\ell}_0\tc B_{\ell-1}
\)=O(| \partial\cB^-_{h_0}(y)|/m)\leq \tfrac{C_1}{n}\log n.
\end{equation}
Let $\cH^{\ell}_2$ denote the event that the $\ell$-th walk makes a first visit to $D_{\ell-1}$ at some time $s_1<j$, then at some time $s_2>s_1$ it exits $D_{\ell-1}$, and then at a later time $s_3\leq j$ enters again the digraph $D_{\ell-1}$. Since each time the walk is outside $D_{\ell-1}$ the probability of entering $D_{\ell-1}$ at the next step is  $O(Kt/m)$, it follows that 
   \begin{equation}\label{eq:laleb53}
\P^{a,\sigma}_{\rm u}\(X^{(\ell)}_{j}\in\partial\cB^-_{h_0}(y)\,;\, \cH^{\ell}_2\tc B_{\ell-1}
\)=O(K^2t^4/m^2)\leq \tfrac{C_1}{n}\log n.
\end{equation}
It remains to consider the case where the $\ell$-th walk enters only once the digraph $D_{\ell-1}$ at some time $s\leq j-1$, and then stays in $D_{\ell-1}$ for the remaining $j-s$ units of time. Calling $\cH^\ell_{1,s}$ this event, and summing over all possible values of $s$, we need to show that
   \begin{equation}\label{eq:lalec53}
\sum_{s=0}^{j-1}
\P^{a,\sigma}_{\rm u}\(X^{(\ell)}_{j}\in\partial\cB^-_{h_0}(y)\,;\, \cH^{\ell}_{1,s}\tc B_{\ell-1}
\)\leq \tfrac{C_1}{n}\log n.
\end{equation}
We divide the sum in two parts: $s\in[0, j-\hslash+h_0]$ and $s\in(j-\hslash+h_0,j)$. For the first part, note that  the walk  %must enter $\cB_{\hslash}(y)$ at some point and then 
must spend at least  $\hslash-h_0\geq \hslash/2$ units of time in $D_{\ell-1}(\hslash)$, which has probability at most $2\d_+^{-\hslash/2}=O(n^{-\e})$ for some constant $\e>0$, because of the condition $\cG^{\ell-1}_y(\hslash)$ included in the event $B_{\ell-1}$. Since the probability of hitting $D_{\ell-1}$ at time $s$ is  $O(Kt/m)$ we obtain 
    \begin{equation}\label{eq:lale54}
\sum_{s=0}^{j-\hslash+h_0}
\P^{a,\sigma}_{\rm u}\(X^{(\ell)}_{j}\in\partial\cB^-_{h_0}(y)\,;\, \cH^{\ell}_{1,s}\tc B_{\ell-1}
\)=O(Kt^2n^{-\e}/m)\leq \tfrac{C_1}{n}\log n.
\end{equation}
To estimate the  sum over $s\in(j-\hslash+h_0,j)$, notice that the walk has to enter $D_{\ell-1}$ by hitting a point $z\in D_{\ell-1}$ at time $s$ such that 
there exists a path of length $h=j-s$ from $z$ to $\partial\cB^-_{h_0}(y)$ within the digraph $D_{\ell-1}$.  Call $L_{h}$ the set of such points in $D_{\ell-1}$. Hitting this set at any given time $s$ coming from outside the digraph $D_{\ell-1}$ has probability at most $2\D|L_h|/m$, and the path followed once it has entered $D_{\ell-1}$ is necessarily in $D_{\ell-1}(\hslash)$ and therefore has weight at most $2\delta_+^{-h}$. Then,     
\begin{equation}\label{eq:lale55}
\sum_{s=j-\hslash+h_0+1}^{j-1}
\P^{a,\sigma}_{\rm u}\(X^{(\ell)}_{j}\in\partial\cB^-_{h_0}(y)\,;\, \cH^{\ell}_{1,s}\tc B_{\ell-1}
\)\leq \sum_{h=1}^{\hslash-h_0-1}\frac{2\D|L_{h}|}{m} 2\delta_+^{-h},
\end{equation}
Let $A_h\subset L_h$ denote the set of points exactly at distance $h$ from $\partial\cB^-_{h_0}(y)$ in $D_{\ell-1}$. 
%Since $ \cG^{\ell-1}_y(\hslash)$ holds, w
We have
\begin{align*}
|A_h| &\leq  \sum_{x\in A_{h-1}} d_x^-(D_{\ell-1})
\\&
     \leq  |A_{h-1}| + \sum_{x\in A_{h-1}} [d_x^-(D_{\ell-1})- 1]_+ 
\\&      \leq |A_{h-2}| + \sum_{x\in A_{h-1}\cup A_{h-2}}  [d_x^-(D_{\ell-1}) - 1]_+
\\&\leq  \dots  \leq  |A_0|+\sum_{x\in A_{0}\cup .... \cup A_{h-1}}  [d_x^-(D_{\ell-1}) - 1]_+
\\& \leq |\partial\cB^-_{h_0}(y)|  + \cW_{\ell-1}. 
\end{align*}
Since $h\leq \hslash=O( \log n)$ and $|\partial\cB^-_{h_0}(y)|\leq \log n$, using 
\eqref{eq:c03} we have obtained 
 \begin{equation}\label{eq:lale56}
|A_h|\leq C_2\log n + \cQ_{\ell-1}.
\end{equation}
On the event $B_{\ell-1}$ we know that $\cQ_{\ell-1}\leq A\log n$, and therefore
$|A_h|\leq C_3\log n$ for some absolute constant $C_3>0$. In conclusion, for all $h\in(0,\hslash-h_0)$
 \begin{equation}\label{eq:lale57}
|L_h|\leq \sum_{\ell=0}^{h}|A_\ell|\leq C_3h\log n.
\end{equation}
Inserting this estimate in \eqref{eq:lale55},
\begin{equation}\label{eq:lale58}
\sum_{s=j-\hslash+1}^{j-1}
\P^{a,\sigma}_{\rm u}\(X^{(\ell)}_{j}\in\partial\cB^-_{h_0}(y)\,;\, \cH^{\ell}_{1,s}\tc B_{\ell-1}
\)
\leq \tfrac{C_4}n \log n.
\end{equation}
Combining \eqref{eq:lale54} and \eqref{eq:lale58} we have proved \eqref{eq:lalec53} for a suitable constant $C_1$.
\end{proof}

%%%%%%%%%%%%%%%%    LOWER BOUND on PIMAX

\subsection{Lower bound on $\pi_{\max}$}\label{sec:lbpimax}
%We prove the following statement. 
\label{sec:lowbopimax}
\begin{lemma}\label{lalemmab}
There exist constants $\e,c>0$ such that
 \begin{equation}\label{eq:lale1b}
\P\Big(\exists S\subset[n],\:|S|\geq n^\e\,,\; n \min_{y\in S}\pi(y)\geq c\log^{1-\kappa_1}(n)
\Big)=1-o(1).
\end{equation}\end{lemma}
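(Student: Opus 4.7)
The plan is to mirror the argument of Lemma \ref{lalemma}, reversing the extremal direction so as to produce many vertices at which $\lambda_t(y):=\frac{1}{n}\sum_{x\in[n]}P^t(x,y)$ is \emph{large}. By \eqref{distances20} with $t=2k\tent$ and $k=\lfloor\log^2(n)\rfloor$, we have $|\lambda_t(y)-\pi(y)|\leq e^{-k}$ uniformly w.h.p.; since $e^{-\log^2(n)}$ is negligible compared to $\log^{1-\kappa_1}(n)/n$, proving Lemma \ref{lalemmab} reduces to exhibiting a random set $S\subset[n]$ with $|S|\geq n^{\varepsilon}$ and $n\lambda_t(y)\geq c\log^{1-\kappa_1}(n)$ for every $y\in S$, w.h.p.

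Let $(k_*,\ell_*)\in\cL$ realize the minimum $\kappa_1=\log\ell_*/\log k_*$, and let $\alpha_*>0$ satisfy $|\cV_{k_*,\ell_*}|\geq\alpha_* n$. Fix $\beta_1\in(0,1/4)$, set $N_1=n^{\beta_1}$ and $h_0=\log_{k_*}(\log n)-C_0$ for a constant $C_0$ to be chosen. Take $N_1$ vertices $y_1,\dots,y_{N_1}$ of type $(k_*,\ell_*)$, generate their in-neighborhoods $\cB^-_{h_0}(y_i)$ sequentially, and couple them with independent rooted random trees as in the proof of Lemma \ref{lalemma}; the event $\cA$ that the coupling succeeds and that all $\cB^-_{h_0}(y_i)$ are pairwise disjoint holds w.h.p. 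A vertex $y_i$ is called \emph{lucky} if every vertex of $\cB^-_{h_0}(y_i)$ has type $(k_*,\ell_*)$; the probability of being lucky is at least $n^{-\eta}$ where $\eta=\eta(C_0)\to 0$ as $C_0\to\infty$, so choosing $C_0$ so that $\eta\leq\beta_1/4$ and applying Hoeffding produces a set $\bar S$ of $\bar N:=n^{\beta_1/2}$ lucky vertices w.h.p. For each $y\in\bar S$ the tree structure forces $|\partial\cB^-_{h_0}(y)|=k_*^{h_0}$, $d_z^-=k_*$, and $P^{h_0}(z,y)=\ell_*^{-h_0}$ for all $z\in\partial\cB^-_{h_0}(y)$ (the last identity because the tree has no cycles, so any length-$h_0$ walk from $z$ to $y$ must stay within the shrinking balls $\cB^-_{h_0-j}(y)$ and is thereby forced to follow the unique tree path). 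Setting $\Lambda(y):=\sum_{z\in\partial\cB^-_{h_0}(y)}\lambda_{t-h_0}(z)$, it follows that
\begin{equation*}
\lambda_t(y)\geq \ell_*^{-h_0}\Lambda(y),\qquad y\in\bar S.
\end{equation*}

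Let $\cZ:=\bigsqcup_{y\in\bar S}\partial\cB^-_{h_0}(y)$, which has size $\bar N k_*^{h_0}$ on $\cA$, and $\cX:=\sum_{y\in\bar S}\Lambda(y)=\sum_{z\in\cZ}\lambda_{t-h_0}(z)$. Applying the annealed one- and two-trajectory computations of Lemma \ref{lalemma} (cf.\ \eqref{eq:uni1}, \eqref{eq:lale51}, \eqref{eq:lale52}) to the index set $\cZ$, one obtains
\begin{equation*}
\E[\cX\mid\sigma]=(1+o(1))\,\frac{\bar N k_*^{h_0+1}}{m},\qquad \E[\cX^2\mid\sigma]=(1+o(1))\,\E[\cX\mid\sigma]^2,
\end{equation*}
uniformly over realizations $\sigma\in\cA$. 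Chebyshev's inequality then gives $\cX\geq\tfrac12\bar N k_*^{h_0+1}/m$ w.h.p. Combining with the deterministic identity
\begin{equation*}
\ell_*^{-h_0}k_*^{h_0}=(k_*/\ell_*)^{h_0}=(\ell_*/k_*)^{C_0}\log^{1-\kappa_1}(n),
\end{equation*}
we obtain
\begin{equation*}
\sum_{y\in\bar S}\lambda_t(y)\geq c\,\bar N\,\frac{\log^{1-\kappa_1}(n)}{n}
\end{equation*}
for some constant $c=c(k_*,\ell_*,C_0)>0$.

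To pass from this sum bound to a pointwise lower bound on many individual $\lambda_t(y)$, I would invoke Lemma \ref{lem:maxlat} which, together with a union bound over $y\in[n]$, yields $\lambda_t(y)\leq C'\log^{1-\kappa_0}(n)/n$ simultaneously for all $y\in[n]$ w.h.p. Setting $S:=\{y\in\bar S:\lambda_t(y)\geq\tfrac{c}{2}\log^{1-\kappa_1}(n)/n\}$, the vertices in $\bar S\setminus S$ contribute at most $\tfrac{c}{2}\bar N\log^{1-\kappa_1}(n)/n$ to the total, so $\sum_{y\in S}\lambda_t(y)\geq\tfrac{c}{2}\bar N\log^{1-\kappa_1}(n)/n$; bounding each summand above by $C'\log^{1-\kappa_0}(n)/n$ then forces
\begin{equation*}
|S|\geq \frac{c}{2C'}\,\bar N\,\log^{\kappa_0-\kappa_1}(n)\geq n^{\beta_1/3}
\end{equation*}
for $n$ large, since $\kappa_0-\kappa_1\in[-1,0]$ and $\bar N=n^{\beta_1/2}$ absorbs any polylogarithmic factor. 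Setting $\varepsilon:=\beta_1/3$ and replacing $\lambda_t$ by $\pi$ via the reduction noted at the outset completes the proof. The principal technical obstacle will be the second-moment estimate $\E[\cX^2\mid\sigma]=(1+o(1))\E[\cX\mid\sigma]^2$: although the template is \eqref{eq:lale40}, the index set $\cZ$ is now a union over the boundaries of $\bar N$ disjoint lucky trees, so the diagonal/off-diagonal accounting of \eqref{eq:lale51}--\eqref{eq:lale5b2} must be redone with $\bar N k_*^{h_0}$ summands, and one must verify that the disjointness built into $\cA$ is preserved throughout the annealed two-trajectory coupling.
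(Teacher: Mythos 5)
Your proposal is correct, but the concluding mechanism is genuinely different from the paper's. The construction of the lucky vertices (type $(k_*,\ell_*)=(\D_*,\d_*)$ realizing $\kappa_1$, depth $h_0=\log_{k_*}\log n-C_0$, coupling with i.i.d.\ trees, Hoeffding to get $n^{\b_1/2}$ lucky roots) coincides with the paper's. From there the paper does \emph{not} pass through $\l_t$: it uses the nice-paths lower bound \eqref{eq:pigamma}, i.e.\ $\pi(y)\geq \frac{c}{n}\G_{h_y}(y)$, reduces $\G_{h_y}$ to $\G_{h_1}$ with $h_1=K\log\log n$ via Lemma \ref{lem:CFcon}, and then proves a \emph{per-vertex} concentration bound $\P(\G_{h_1}(y_*)<\tfrac12 \D_*^{h_0}\d_*^{-h_0})\leq \exp(-c_1\D_*^{h_0})=n^{-2\e}$ by coupling the subtrees rooted at $\partial\cB^-_{h_0}(y_*)$ with i.i.d.\ trees and applying the martingale bound \eqref{eq:gastar06}; a union bound over $n^\e$ lucky vertices finishes. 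You instead control the \emph{aggregate} $\sum_{y\in\bar S}\l_t(y)$ by the annealed first/second-moment computation and then extract $n^\e$ individually large values by pigeonhole against the a priori uniform bound of Lemma \ref{lem:maxlat}. Both routes are sound and non-circular (Lemma \ref{lem:maxlat} is proved independently), and your final deduction $|S|\geq \tfrac{c}{2C'}\bar N\log^{\kappa_0-\kappa_1}(n)\geq n^{\b_1/3}$ is valid since $\kappa_1-\kappa_0\leq 1$. The trade-off: your argument imports the cutoff-based reduction \eqref{distances20} and the $\pimax$ upper bound as black boxes and loses a harmless polylogarithmic factor in $|S|$, whereas the paper's per-vertex route yields polynomially small failure probabilities and needs no upper-bound input. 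Finally, the "principal technical obstacle" you flag — redoing the diagonal/off-diagonal accounting for $\cZ=\bigsqcup_{y\in\bar S}\partial\cB^-_{h_0}(y)$ — is not actually new work: the double sum \eqref{eq:lale50} in Lemma \ref{lalemma} already ranges over all pairs $(z,z')$ drawn from the boundaries of all $\bar N$ disjoint trees, with \eqref{eq:lale51} handling $z\neq z'$ (whether in the same tree or not) and \eqref{eq:lale52} the diagonal, whose total contribution $O(|\cZ|/m^2)$ is $o(\E[\cX\mid\si]^2)$ because $|\cZ|\to\infty$.
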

\begin{proof}
We argue as in the first part of the proof of Lemma \ref{lalemma}.
Namely, let $(\D_*,\d_*)\in\cL$ denote the type realizing the minimum in the definition of $\kappa_1$; see \eqref{eq:def-gamma01}. Let $V_*=\cV_{\D_*,\d_*}$ denote the set of vertices of this type, and let $\a_*\in(0,1)$ be a constant such that  $|V_*|\geq \a_* n$, for all $n$ large enough. Fix a constant $\b_1\in(0,\tfrac14)$ and call $y_1,\dots,y_{N_1}$ the first $N_1:=n^{\b_1}$ vertices in the set $V_*$. Then sample the in-neighborhoods $\cB^-_{h_0}(y_i)$ where %Then setting 
\begin{equation}\label{eq:lale01b}
h_0=\log_{\D_*}\!\log n - C,
\end{equation}
and call $\si$ a realization of all these neighborhoods. 
As in the proof of Lemma \ref{lalemma}, we may assume that all $\cB^-_{h_0}(y_i)$ are successfully coupled with i.i.d.\ random trees $Y_i$. Next define a $y_i$ {\em lucky} if  $\cB^-_{h_0}(y_i)$  has all its vertices of type $(\D_*,\d_*)$. Then, if $C$ in \eqref{eq:lale01b} is large enough  we may assume that at least $n^{\b_1/2}$ vertices $y_i$ are lucky; see \eqref{eq:lale02}.  As before, we call $\cA'$ the set of $\si$ realizing these constraints. Given a realization $\si\in\cA'$, and some $\e\in(0,\b_1/2)$ we fix the first $n^\e$ lucky vertices $y_{*,i}$, $i=1,\dots,n^\e$. 
Since $\bbP(\cA')=1-o(1)$, letting $S=\{y_{*,i}, i=1,\dots,n^\e\}$, it is sufficient to prove
that for some constant $c>0$ 
\begin{equation}\label{eq:laleob}
\max_{\si\in\cA'}\:\P\left(\min_{i=1,\dots,n^\e}n \pi(y_{*,i})<  c\log^{1-\kappa_1}(n)\tc \si
\right) =o(1).
\end{equation}
To prove \eqref{eq:laleob} we first observe that by \eqref{eq:pigamma} and Lemma \ref{lem:CFcon} it is sufficient to prove the same estimate with $n \pi(y_{*,i})$ replaced by $\G_{h_1}(y_{*,i})$, where $h_1=K\log\log n$ for some large but fixed constant $K$. Therefore, by using symmetry and a union bound it suffices 
to show 
\begin{equation}\label{eq:laleobb}
\max_{\si\in\cA'}\:\P\left(\G_{h_1}(y_{*})<  c\log^{1-\kappa_1}(n)\tc \si
\right) \leq n^{-2\e},
\end{equation}
where $y_*=y_{*,1}$ is the first lucky vertex. 
By definition of lucky vertex,  $\partial\cB^-_{h_0}(y_*)$ has exactly $\D_*^{h_0}$ elements. For each $z\in\partial\cB^-_{h_0}(y_*)$ we sample the in-neighborhood $\cB^-_{h_1-h_0}(z)$. The same argument of the proof of Lemma \ref{le:claim2} shows that the probability that all these neighborhoods  are successfully coupled to i.i.d.\ random directed trees is at least $1-  O(\D^{2h_1}/n)$. On this event we have 
\begin{equation}\label{eq:gastar2b}
\Gamma_{h_1}(y_*)=\d_*^{-h_0}\sum_{i=1}^{\D_*^{h_0}} X_i,
\end{equation}
where $X_i=M^i_{h_1-h_0}$ is defined by %the analogue of $\G_{h_1-h_0}$ computed on the tree; see 
\eqref{eq:gastar4}.   Then \eqref{eq:gastar06} shows that %with the same argument used in the estimate \eqref{eq:gastar6},
\begin{equation}\label{eq:gastar3b}
\P\(\Gamma_{h_1}(y_*)<\tfrac12\D_*^{h_0}\d_*^{-h_0}\)\leq \exp{\(-c_1\D_*^{h_0}\)},
\end{equation}
for some constant $c_1>0$. Since $\D_*^{h_0}=\D_*^{-C}\log n$ and $\D_*^{h_0}\d_*^{-h_0}=(\d_*/\D_*)^C\log^{1-\kappa_1}(n)$,
this shows that 
\begin{equation}\label{eq:laleobab}
\max_{\si\in\cA'}\:\P\left(\Gamma_{h_1}(y_*)<  c_2\log^{1-\kappa_1}(n)\tc \si
\right) \leq n^{-2\e},
\end{equation}
for some new constant $c_2>0$ and for $\e=c_1\D_*^{-C}/4$. 
This ends the proof of \eqref{eq:laleobb}.
\end{proof}

%%%%%%%%%_________________COVER TIME________________%%%%%%%%%
\section{Bounds on the cover time}\label{se:cover}
In this section we show how the control of the extremal values of the stationary distribution obtained in previous sections can be turned into the bounds on the cover time presented in Theorem \ref{th:covertime}. To this end
we exploit the full strength of the strategy developed by Cooper and Frieze \cite{CF1,cooper2007cover,CF4,CF2}.

\subsection{The key lemma}\label{suse:fvtl}
%Originally introduced in \cite{CF1} and presented in its final form in \cite{CF4}, 
%the so-called {\em first visit time lemma} is the key ingredient of the analysis of Cooper and Frieze. 
%Here we state the  corollary of that statement. 
Given a digraph $G$, write $X_t$ for the position of the random walk at time $t$ and 
write $\bP_x$ for the law of $\{X_t, t\geq 0\}$ with initial value $X_0=x$. In particular, $\bP_x(X_t=y )=P^t(x,y)$ denotes the transition probability. Fix a time $T>0$ and define the event that the walk does not visit $y$ in the time interval $[T,t]$, for $t>T$:
%\begin{equation}
%f_{t,T}^{x,y}=\bP_x\(X_s\not=y,\:\forall s\in[T,t-1];\:X_t=y \).
%\end{equation}
\begin{equation}
\cA^T_y(t)=\{X_s\not=y,\:\forall s\in[T,t] \}.
\end{equation}
Moreover, define the generating function
\begin{equation}
R_y^T(z)=\sum_{t=0}^{T} z^t\,\bP_y(X_t=y ),\qquad z\in\bbC.
\end{equation}
Thus, $R_y^T(1)\geq 1$ is the expected number of returns to $y$ within time $T$, if started at $y$. 
The following statement %is an immediate corollary of the so-called first visit time lemma 
is proved in \cite{CF4}, see also \cite[Lemma 3]{CF2}. 
\begin{lemma}\label{exptail}
Assume that $G=G_n$ is a sequence of digraphs with vertex set $[n]$ and stationary distribution $\pi=\pi_n$, and let $T=T_n$ be a sequence of times such that

\begin{enumerate}[(i)]
\item $\max_{x,y\in[n]}|P^T(x,y)-\pi(y)|\leq n^{-3}$. \label{it:fvtl-1}
\item $T^2\pimax=o(1)$ and $T\pimin\geq n^{-2}$. \label{it:fvtl-3}
\end{enumerate}
Suppose that $y\in[n]$ satisfies:
\begin{enumerate}[(iii)]
\item 
 there exist $K,\psi>0$ independent of $n$ such that 
$$\min_{|z|\le 1+\frac1{K T}}|R^T_y(z)|\ge\psi.$$ 
 \label{it:fvtl-2}
\end{enumerate}
Then there exist  $\xi_1,\xi_2=O(T\pimax)$ such that for all $t\geq T$:
\begin{equation}\label{eq:cfexp}
\max_{x\in[n]}\left|\bP_x\(\cA^T_y(t)\)-\frac{1+\xi_1}{(1+p_y)^{t+1}}\right|\leq e^{-\frac{t}{2K T}},
\end{equation}
where 
\begin{equation}
p_y=(1+\xi_2)\frac{\pi(y)}{R_y^T(1)}.
\end{equation}
\end{lemma}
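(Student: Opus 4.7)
The plan is to follow the generating function strategy of Cooper--Frieze \cite{CF4,CF2}: compare the probability of avoiding $y$ after time $T$ with a geometric law whose parameter is read off from the residue of an explicit meromorphic function, controlled via the analytic assumption \eqref{it:fvtl-2}.

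First I would reduce to a stationary-like starting distribution. By the Markov property,
\begin{equation*}
\bP_x(\cA^T_y(t)) = \sum_z P^T(x,z)\,\bP_z\bigl(X_s\neq y,\,\forall s\in[0,t-T]\bigr).
\end{equation*}
Using assumption (i) to replace $P^T(x,z)$ by $\pi(z)$ introduces an error of order $n^{-3}\cdot n = n^{-2}$, well below the error budget in \eqref{eq:cfexp}. Hence it suffices to establish the conclusion for the probability $\bP_\pi(\cA^0_y(s))$ started from $\pi$, and then shift $s = t-T$. The prefactor $(1+\xi_1)$ will absorb the change from $t$ to $t-T$, since $T\pimax = o(1)$ from \eqref{it:fvtl-3}.

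Next I would invoke the standard first-passage/generating-function identity. Let $F_y(z) = \sum_{t\geq 1} z^t\,\bP_y(H_y^+ = t)$ be the return-time generating function, and $R_y(z) = \sum_{t\geq 0} z^t\,\bP_y(X_t = y) = 1/(1-F_y(z))$. Truncating at time $T$ costs an error: one shows $|R_y(z)-R^T_y(z)|$ and $|F_y(z)-F^T_y(z)|$ are $O(T\pimax)$ uniformly on $|z| \leq 1 + 1/(KT)$, using that after time $T$ the walk is essentially stationary by (i), so the probability of a return at time $t > T$ is approximately $\pi(y)$, and summing a geometric series gives tails of size $O(T\pimax)$. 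Then from the identity $\bP_\pi(\cA^0_y(s)) \cdot (1-F_y(z))$ relating the avoidance generating function to $F_y$, one derives that the generating function
\begin{equation*}
A(z) := \sum_{s\geq 0} z^s\,\bP_\pi\bigl(\cA^0_y(s)\bigr)
\end{equation*}
admits a meromorphic representation whose numerator and denominator are controlled quantitatively in terms of $R^T_y$, valid on $|z|\leq 1 + 1/(KT)$ thanks to \eqref{it:fvtl-2}.

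The core analytic step is to locate the dominant pole $z_\star$ of $A(z)$ inside the disk $|z|\leq 1+1/(KT)$. A Taylor expansion around $z=1$ gives
\begin{equation*}
1 - F^T_y(z) \;=\; \bigl(1 + O(T\pimax)\bigr)\,\frac{\pi(y)}{R^T_y(1)}\,(z_\star - z) \;+\; \ldots,
\end{equation*}
locating a unique simple pole at $z_\star = 1/(1+p_y)$ with $p_y = (1+\xi_2)\pi(y)/R^T_y(1)$ and $\xi_2 = O(T\pimax)$. Condition \eqref{it:fvtl-2} guarantees $|R^T_y(z)|\geq \psi$ on the full contour $|z| = 1 + 1/(KT)$, which both forces this pole to be the only one in the disk and yields a uniform lower bound on the denominator on the contour. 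Applying Cauchy's formula,
\begin{equation*}
\bP_\pi(\cA^0_y(s)) \;=\; \operatorname{Res}_{z=z_\star}\bigl(A(z)\,z^{-(s+1)}\bigr) \;+\; \frac{1}{2\pi i}\oint_{|z|=1+1/(KT)} A(z)\,z^{-(s+1)}\,dz,
\end{equation*}
and the residue produces exactly the main term $(1+\xi_1)(1+p_y)^{-(s+1)}$, while the contour integral is bounded by $(1+1/(KT))^{-s} \leq e^{-s/(2KT)}$, using the boundedness of $A$ on the contour.

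The main obstacle is the quantitative bookkeeping of the error $T\pimax$ through all three layers (truncation of $R_y$, identification of $z_\star$, and the residue computation). In particular, one must verify that near $z=1$ the linearization of $1-F^T_y$ is legitimate: this requires that the first derivative $-{F^T_y}'(1)$ be bounded below, which follows from $R^T_y(1) = O(T)$ combined with assumption \eqref{it:fvtl-2}, and that higher-order terms contribute to $\xi_2$ at most $O(T\pimax)$, which uses $T^2\pimax = o(1)$ from \eqref{it:fvtl-3}. Everything else is routine contour manipulation along the lines of \cite[Lemma 3]{CF2}.
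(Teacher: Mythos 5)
Your proposal reconstructs the Cooper--Frieze ``first visit time lemma''. Note that the paper itself does not prove Lemma \ref{exptail}: it imports it verbatim from \cite{CF4} (see also \cite[Lemma 3]{CF2}), so there is no in-paper proof to compare against. Your outline follows exactly the route of the cited source: reduce to a near-stationary start via (i) at cost $O(n^{-2})$, express the avoidance probability through the truncated return generating function $R^T_y$ and the renewal identity $R_y=1/(1-F_y)$, use (iii) to keep the denominator bounded away from zero on $|z|\le 1+1/(KT)$, and extract the geometric main term by a residue computation, with the contour integral supplying the $e^{-t/(2KT)}$ error and with $T^2\pimax=o(1)$ controlling the second-order terms in the expansion of $1-F^T_y$ near $z=1$. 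One slip worth correcting: the dominant simple pole of $A(z)=\sum_{s\ge 0} z^s\,\bP_\pi(\cA^0_y(s))$ lies at $z_\star=1+p_y(1+O(T\pimax))$, slightly \emph{outside} the unit circle but inside $|z|=1+1/(KT)$ since $KT\pimax=o(1)$; it is there that the residue of $A(z)z^{-(s+1)}$ produces $z_\star^{-(s+1)}=(1+p_y)^{-(s+1)}$. With your stated location $z_\star=1/(1+p_y)$ the residue would instead produce the growing factor $(1+p_y)^{s+1}$, so the identification of the pole (and the accompanying sign convention when the contour is pushed past it) needs to be fixed; otherwise the architecture and the error bookkeeping match the argument of \cite{CF4}.
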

We want to apply the above lemma to digraphs from our configuration model. 
Thus, our first task is to make sure that the assumptions  of Lemma \ref{exptail} are satisfied. 
From now on we fix the sequence $T=T_n$ as
\begin{equation}\label{def:Tn}
T = \log^3(n).
\end{equation}
From \eqref{cutoff} and the argument in \eqref{distances1} it follows that item (i)
of Lemma \ref{exptail} is satisfied with high probability. Moreover, Theorem \ref{th:pimin} and Theorem \ref{th:pimax} imply that  item (ii) of Lemma \ref{exptail} is also satisfied with high probability. Next, following \cite{CF1}, we define a class of vertices $y\in[n]$ which satisfy item (iii) of Lemma \ref{exptail}. We use the convenient notation
 \begin{equation}\label{def:sigma}
\vartheta = \log\log\log(n).
\end{equation}

%\begin{definition}[Small cycle]
%We call \emph{small cycle} a collection of $t\le \vartheta$ edges such that their undirected projection forms a simple undirected cycle of length $t$.
%\end{definition}

\begin{definition}\label{def:ltl} %[Locally tree-like vertices]
We call \emph{small cycle} a collection of $\ell\le 3\vartheta$ edges 
such that their undirected projection forms a simple undirected cycle of length $\ell$. We say that $v\in[n]$ is {\em locally tree-like (LTL)} if its in- and out-neighborhoods up to depth $\vartheta$ are both directed trees  and  they intersect only at $x$. We denote by $V_1$ the set of LTL vertices, and write $V_2=[n]\setminus V_1$ for the complementary set.
\end{definition}
The next proposition can be proved as in \cite[Section 3]{CF1}.
\begin{proposition}\label{pr:nice}
The following  holds with high probability:
\begin{enumerate}
%\item\textbf{There are few small cycles.}
\item
The number of small cycles is at most $ \Delta^{9\vartheta}$. \label{fsm}
		
%\item\textbf{Most of the vertices are LTL.} 
\item
The number of vertices which are not LTL satisfies  $|V_2|\le\Delta^{15\vartheta}$. \label{mltl}
		
%\item\textbf{Small cycles are far away.} 
\item
There are no small cycles which are less than $9\vartheta$ undirected steps away. \label{scfa}
\end{enumerate}
\end{proposition}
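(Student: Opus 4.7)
All three statements follow from standard first-moment computations for the directed configuration model combined with Markov's inequality and crude deterministic neighborhood bounds, in the spirit of \cite[Section 3]{CF1}.

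For part (1), the plan is to bound the expected number of small cycles of each length $\ell\le 3\vartheta$. After fixing a cyclic sequence of $\ell$ distinct vertices, the probability under DCM($\bd^\pm$) that every consecutive pair is joined by an edge in at least one of the two directions is bounded by $(2\Delta^2/m)^\ell$, since each directed edge $v_i\to v_{i+1}$ or $v_{i+1}\to v_i$ appears in the matching with probability at most $\Delta^2/m$. The number of cyclic orderings is at most $n^\ell/(2\ell)$, so using $m\ge\delta n\ge 2n$ I obtain
\[
\E\[\#\{\text{small cycles of length }\ell\}\]\le \frac{\Delta^{2\ell}}{\ell}.
\]
Summing over $\ell\le 3\vartheta$ yields an expectation of order $\Delta^{6\vartheta}$, and Markov's inequality produces a failure probability of $\Delta^{-3\vartheta}=o(1)$ for exceeding $\Delta^{9\vartheta}$ small cycles.

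For part (2), the key claim is that every non-LTL vertex $v$ lies within directed distance $\vartheta$ (in either direction) of some small cycle. Indeed, if $\cB_\vartheta^\pm(v)$ fails to be a directed tree then it contains a directed cycle of length at most $2\vartheta\le 3\vartheta$, which is small by definition; and if $\cB_\vartheta^-(v)\cap\cB_\vartheta^+(v)$ contains some $w\neq v$, concatenating a directed path from $v$ to $w$ with one from $w$ to $v$ produces a small directed cycle through $v$. Working on the event from part (1), there are at most $\Delta^{9\vartheta}$ small cycles, each containing at most $3\vartheta$ vertices, and each such vertex has at most $\Delta^\vartheta$ vertices reachable along a directed path of length $\le \vartheta$ in either direction. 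A crude multiplication then gives $|V_2|\le 9\vartheta\,\Delta^{10\vartheta}\le \Delta^{15\vartheta}$ for $n$ large enough.

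For part (3), another first-moment computation applies: if two distinct small cycles are within $9\vartheta$ undirected steps of one another, then the union of the two cycles with an undirected connecting path of length $k\le 9\vartheta$ forms a subgraph with $v$ vertices and at least $v+1$ edges, with $v\le 15\vartheta$ (also when the cycles share vertices or edges, since two distinct cycles have edge-excess at least one). Repeating the calculation from part (1), the expected count of any fixed such topology is bounded by $n^v\cdot(2\Delta^2/m)^{v+1}=O(\Delta^{O(\vartheta)}/n)$. Summing over the $O(\vartheta^3)$ choices of lengths and the $O(1)$ topological types gives a total expectation of $O(\Delta^{O(\vartheta)}/n)=o(1)$, so with high probability no such configuration exists. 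The crucial input, which is standard in configuration model calculations, is precisely the fact that the one-edge excess beyond the number of vertices produces the extra $1/n$ factor needed to beat the polylog combinatorial overhead.
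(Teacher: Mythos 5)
Your proposal is essentially correct, and it follows the same route the paper itself points to: the paper gives no proof of this proposition but delegates it to the first-moment computations of Cooper and Frieze (\cite[Section 3]{CF1}), which is exactly what you carry out. Parts (1) and (3) are fine: the counting $n^{\ell}(2\Delta^2/m)^{\ell}$ with $m\geq \delta n\geq 2n$, and the observation that two distinct cycles joined within $9\vartheta$ undirected steps force a connected subgraph with edge excess at least $2$ (hence an extra factor $1/n$ beating the $\Delta^{O(\vartheta)}=n^{o(1)}$ overhead), are the standard and correct arguments. One local slip in part (2): it is not true that $\cB_\vartheta^{\pm}(v)$ failing to be a directed tree forces a \emph{directed} cycle — two internally disjoint directed paths from some $w$ to $v$ already destroy the tree property without creating any directed cycle. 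What is true, and what you actually need, is that tree excess $\geq 1$ forces a simple cycle in the \emph{undirected projection} (the extra edge together with the two BFS-tree paths to the root gives one of length at most $2\vartheta+1\leq 3\vartheta$), and since Definition \ref{def:ltl} defines small cycles precisely via undirected projections, this is a small cycle all of whose vertices lie in $\cB_\vartheta^{\pm}(v)$. With that one-line correction your deterministic inclusion of $V_2$ into the $\vartheta$-neighborhoods of small cycles, and hence the bound $|V_2|\leq \Delta^{15\vartheta}$, goes through as written.
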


%\begin{definition}[Good digraphs.]
%We call \emph{good} a digraph that has the $\whp$-properties mentioned in \cref{th:diam,th:pimin,th:pimax,pr:tx<=1,pr:nice}.
%\end{definition}
%
%We then have
%
%\begin{corollary}
%$G$ is \emph{good} $\whp$.
%\end{corollary}
%
%We now restrict the attention to \emph{good} digraphs. In particular, we will present two propositions. \cref{pr:rt-ltl} shows that---for a good $G$---the first ingredient in the estimates of the FVTL, i.e., $R_T^y(1)$, is asymptotically one. Then, in \cref{pr:hp-fvtl}, we show that the assumptions of the FVTL are satisfied by a good digraph from the DCM ensemble.

\begin{proposition}\label{pr:rt-ltl}
 With high probability, uniformly in $y\in
V_1$:
 \begin{equation}\label{eq:rtbo}
 R_y^T(1)=1+O(2^{-\vartheta}).
\end{equation}
 Moreover, there exist constants $K,\psi>0$ such that with high probability, every $y\in V_1$ satisfies item (iii) of Lemma \ref{exptail}. In particular, \eqref{eq:cfexp} holds uniformly in $y\in V_1$.
\end{proposition}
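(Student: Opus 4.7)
The plan is to exploit the directed tree structure of $\cB^\pm_\vartheta(y)$ for $y\in V_1$ together with the absence of short cycles guaranteed by Proposition~\ref{pr:nice}(\ref{scfa}) to show that a walker started at $y$ hardly ever returns to $y$ within time $T$, so that $R_y^T(1)$ stays close to its trivial lower bound $1$. A first structural ingredient: any directed cycle through an LTL vertex $y$ of length at most $2\vartheta$ would force a non-trivial intersection of $\cB^+_\vartheta(y)$ and $\cB^-_\vartheta(y)$, contradicting the LTL property, and any cycle of length $2\vartheta+1\le t\le 3\vartheta$ would be a small cycle at distance $0$ from $y$, contradicting Proposition~\ref{pr:nice}(\ref{scfa}). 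Hence $P^t(y,y)=0$ for every $1\le t\le 3\vartheta$.

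The key analytic step is the following tree-descent observation: since for $y\in V_1$ the in-tree $\cB^-_\vartheta(y)$ is a complete directed tree of depth $\vartheta$ rooted at $y$, every vertex $z\in\cB^-_\vartheta(y)$ at depth $k<\vartheta$ has all its graph in-neighbors inside the tree (at depth $k+1$). Thus a walker outside $\cB^-_\vartheta(y)$ can only enter the tree at depth exactly $\vartheta$, and once inside each non-root vertex has a unique out-edge in the tree (to the parent) and $d^+_z-1\ge 1$ out-edges exiting, so reaching $y$ after entering at depth $\vartheta$ demands a straight tree descent, of conditional probability at most $\prod_{j=1}^{\vartheta}1/d^+_{z_j}\le 2^{-\vartheta}$ using $\delta_+\geq 2$. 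Writing $H$ for the first return time to $y$ and decomposing by the last exit of the walk from $\cB^-_\vartheta(y)$ gives
\begin{equation*}
\bP_y(H\le T)\le 2^{-\vartheta}\,\bE_y\!\left[\#\{s\in[2\vartheta,T]:\,X_{s-1}\notin\cB^-_\vartheta(y),\ X_s\in\partial\cB^-_\vartheta(y)\}\right].
\end{equation*}
The expected occupation count on the right is $o(1)$: by Step~1, $P^s(y,\partial\cB^-_\vartheta(y))=0$ for $s\le 2\vartheta$; by the cutoff \eqref{cutoff} together with \eqref{distances1}, $\|P^s(y,\cdot)-\pi\|_\tv \le n^{-3}$ as soon as $s\ge 2\tent$; and Theorem~\ref{th:pimax} together with the size bound $|\partial\cB^-_\vartheta(y)|\le\Delta^\vartheta=(\log\log n)^{\log\Delta}$ gives $T\pi(\partial\cB^-_\vartheta(y))=o(1)$, while the transient contribution for $s\in(2\vartheta,2\tent]$ is controlled using the same cutoff estimate together with the smallness of $\pi(\partial\cB^-_\vartheta(y))$. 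Hence $\bP_y(H\le T)=O(2^{-\vartheta})$, and the strong Markov property yields $R_y^T(1)\le (1-\bP_y(H\le T))^{-1}=1+O(2^{-\vartheta})$ uniformly for $y\in V_1$ with high probability, proving \eqref{eq:rtbo}.

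For the generating function at complex $z$ with $|z|\le 1+1/(KT)$, one has $|z|^t\le(1+1/(KT))^T\le e^{1/K}$ for every $t\le T$, so
\begin{equation*}
|R_y^T(z)-1|\le e^{1/K}\sum_{t=1}^{T}P^t(y,y)=e^{1/K}\bigl(R_y^T(1)-1\bigr)=O(2^{-\vartheta})=o(1),
\end{equation*}
whence $|R_y^T(z)|\ge 1/2$ for $n$ large enough. This verifies condition \eqref{it:fvtl-2} of Lemma~\ref{exptail} with $\psi=1/2$ and any fixed $K$, and applying that lemma gives \eqref{eq:cfexp} uniformly in $y\in V_1$. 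The main obstacle is the sharpness required in Step~3: a naive application of the cutoff alone would only yield $\sum_s\bP_y(X_s\in\partial\cB^-_\vartheta(y))=O(\tent)$ and thus $\bP_y(H\le T)=O(\tent\,2^{-\vartheta})$, which is too weak. Avoiding this loss requires combining the exact vanishing $P^s(y,\partial\cB^-_\vartheta(y))=0$ for $s\le 2\vartheta$ with the fact that $\pi(\partial\cB^-_\vartheta(y))$ is much smaller than $\pimax$ and that the chain is in sharp $L^\infty$ equilibrium (item~(i) of Lemma~\ref{exptail}) well before time $T$, so that the transient contribution is absorbed rather than amplified.
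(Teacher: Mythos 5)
Your overall strategy --- bounding the number of returns by (number of entrances into an in-neighbourhood of $y$) times (per-entrance descent probability) --- is sound in outline, but you run it at depth $\vartheta$ only, and this creates a genuine gap. Your inequality reads $\bP_y(H\le T)\le 2^{-\vartheta}\,\bE_y[N]$, where $N$ is the number of entrances into $\partial\cB^-_\vartheta(y)$ from outside during $[0,T]$, so you need $\bE_y[N]=O(1)$. The tools you invoke do not deliver this. The cutoff statement \eqref{cutoff}, amplified via \eqref{distances1}, controls $\|P^s(y,\cdot)-\pi\|_\tv$ only for $s\gtrsim\tent$ (and one needs $s=\Theta(\log^2 n)$, not $s\ge 2\tent$, before the error is $n^{-3}$); it says nothing about $P^s(y,A)$ for a polylog-sized set $A$ at pre-cutoff times $s\in(2\vartheta,\tent]$, where the walk is supported on at most $\Delta^s$ vertices and can put mass as large as $\d^{-s}\gg \pimax$ on a single vertex. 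Summing over these $\Theta(\log n)$ transient times with no estimate in hand is exactly the missing step, and nothing in your write-up (or in the cited results) supplies it. The paper sidesteps the issue by descending from the much deeper boundary $\partial\cB^-_{\hslash}(y)$ with $\hslash=\tfrac15\log_\Delta n$: there the per-entrance return probability is $2\d^{-\hslash}=n^{-a}$, which beats the crude bound $T$ on the number of entrances, so no occupation-time estimate is needed at all. The scale $\d^{-\vartheta}$ enters the paper's proof only once, to handle the exceptional case where the (at most one, by Proposition \ref{pr:tx<=1}) extra edge of $\cB^-_{\hslash}(y)\cup\cB^+_\vartheta(y)$ joins $\cB^+_\vartheta(y)$ to $\cB^-_{\hslash}(y)\setminus\cB^-_\vartheta(y)$; and there it is the probability of one specific event (returning exactly $\vartheta$ steps after the first visit to $\partial\cB^-_\vartheta(y)$), not a union over $T$ time steps.

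Two secondary problems. First, $P^t(y,y)=0$ for $2\vartheta<t\le 3\vartheta$ does not follow from Proposition \ref{pr:nice}(3): that item forbids two small cycles close to \emph{each other}, not one small cycle close to $y$, and a closed directed walk need not project to a simple undirected cycle. Second, reaching $y$ after entering at depth $\vartheta$ need not be a straight tree descent: a vertex of $\cB^-_\vartheta(y)$ may carry an out-edge landing back on $\partial\cB^-_\vartheta(y)$ (such an edge is not an edge of the digraph $\cB^-_\vartheta(y)$ and is not excluded by the LTL property), so the walk can bounce back to depth $\vartheta$ and descend again. This second point is repairable (every path of length $\ell\ge\vartheta$ has weight at most $2^{-\ell}$, and the bounded tree excess controls their number), but the first point and, above all, the occupation-time bound are not.
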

\begin{proof}
We first prove \eqref{eq:rtbo}. Fix $y\in V_1$ and consider the neighborhoods $\cB_{\vartheta}^\pm(y)$ and $\cB_{\hslash}^-(y)$. By Proposition \ref{pr:tx<=1} we may assume that $\cB_{\hslash}^-(y)$ and $\cB_{\vartheta}^+(y)$ are both directed trees except for at most one extra edge. By the assumption $y\in V_1$ we know that $\cB_{\vartheta}^-(y), \cB_{\vartheta}^+(y)$ are both directed trees with no intersection except $y$, so that the extra edge in $\cB_{\hslash}^-(y)\cup\cB_{\vartheta}^+(y)$ cannot be in $\cB_{\vartheta}^-(y)\cup \cB_{\vartheta}^+(y)$. Thus, the following cases only need to be considered: 
\begin{enumerate}
\item there is no extra edge in $\cB_{\hslash}^-(y)\cup \cB_{\vartheta}^+(y)$;
%\item the extra edge connects $ \cB_{\hslash}^-(y)\setminus \cB_{\vartheta}^-(y) $ to $\cB_{\vartheta}^+(y)$;
\item the extra edge connects $\cB_{\hslash}^-(y)\setminus \cB_{\vartheta}^-(y)$ to itself 
\item the extra edge connects $\cB_{\vartheta}^-(y)$ to $ \cB_{\hslash}^-(y)\setminus \cB_{\vartheta}^-(y)$;
\item the extra edge connects $\cB_{\vartheta}^+(y)$ to $ \cB_{\hslash}^-(y)\setminus \cB_{\vartheta}^-(y)$.

\end{enumerate} 
In all cases but the last, if a walk started at $y$ returns at $y$ at time $t>0$ then it must exit $\partial\cB_{\vartheta}^+(y)$ and enter  $\partial\cB_{\hslash}^-(y)$, and from any vertex of $\partial\cB_{\hslash}^-(y)$ the probability to reach $y$ before exiting $\cB_{\hslash}^-(y)$ is at most $2\d^{-\hslash}$. Therefore, 
in these cases the number of visits to $y$ up to $T$ is stochastically dominated by  $1+{\rm Bin}(T,2\d^{-\hslash})$ and 
$$
1\leq R_y^T(1) \leq 1 + 2T\d^{-\hslash} = 1 + O(n^{-a}),
$$
for some $a>0$. 
In the last case instead it is possible for the walk to jump from $\cB_{\vartheta}^+(y)$ to $ \cB_{\hslash}^-(y)\setminus \cB_{\vartheta}^-(y)$. Let $E_k$ denote the event that the walk visits $y$ exactly $k$ times in the interval $[1,T]$.
Let $B$ denote the event that the walk visits $y$ exactly $\vartheta$ units of time after its first visit to $\partial\cB_{\vartheta}^-(y)$. 
%there exists the walk enters $\partial\cB_{\vartheta}^-(y)$ for the first time at some time $s\in(0,T]$ and visits $y$ at time$s+\vartheta$. 
Then $\bP_y(B)\leq \d^{-\vartheta}$. On the complementary event $B^c$ the walk must enter $\partial\cB_{\hslash}^-(y)$ before visiting $y$, and each time it visits $\partial\cB_{\hslash}^-(y)$ it has probability at most $2\d^{-\hslash}$ to visit $y$ before the next visit to $\partial\cB_{\hslash}^-(y)$. Since the number of attempts is at most $T$ one finds
% that the probability of at least one visit to $y$ in the time interval $[1,T]$ is at most 
$$
\bP_y(E_1) \leq \bP_y(B) + \bP_y(E_1,B^c)\leq 
\d^{-\vartheta} + 2T\d^{-\hslash} \leq 2\d^{-\vartheta}.
$$
  By the strong Markov property, 
   $$
\bP_y(E_k) \leq \bP_y(E_1)^k.
$$
Therefore 
$$
R_y^T(1) = 1 + \sum_{k=1}^\infty k \bP_y(E_k) =1+O(\d^{-\vartheta}).
$$
To see that $y\in V_1$ satisfies item (iii) of Lemma \ref{exptail}, take $z\in\bbC$ with $|z|\leq 1+ 1/KT$ and  write 
\begin{align*}
|R^T_y(z)|&\ge 1-\sum_{t=1}^T \bP_y(X_t=y)|z|^t \geq 1- e^{1/K}(R^T_y(1)-1)= 1 - O(\d^{-\vartheta}).
\end{align*}

\end{proof}

\subsection{Upper bound on the cover time}\label{suse:cover-ub}
We prove the following estimate relating the cover time to $\pimin$. From Theorem \ref{th:pimin} this implies the upper bound on the cover time in Theorem \ref{th:covertime}.

\begin{lemma}\label{lem:upcov}
%There exists a constant $C>0$ such that w
For any constant $\e>0$, with high probability
\begin{equation}\label{eq:upcov1}
%T_{\rm cov}=\max_{x\in[n]}\mathbf{E}_x[\tau_{\rm cov}]
\max_{x\in[n]}\,\bE_x\!\(\t_{\rm cov}\)\leq (1+\e)\frac{\log n}{\pimin}.
% C\,n\log^{\g_0}(n).
\end{equation}
\end{lemma}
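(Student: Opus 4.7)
The plan is to combine the exponential hitting-time tail of Lemma \ref{exptail} (valid on $V_1$ by Proposition \ref{pr:rt-ltl}) with the fact that $V_2$ is only polylogarithmic in $\log n$. Fix $\e>0$, set $t^{\ast}:=(1+\e)\log(n)/\pimin$, and take $T=\log^3(n)$ as in \eqref{def:Tn}. Starting from the identity
$$\bE_x[\t_{\rm cov}]=\sum_{t\geq 0}\bP_x(\t_{\rm cov}>t)\leq t^{\ast}+\int_{t^{\ast}}^{\infty}\bP_x(\t_{\rm cov}>t)\,dt,$$
I would reduce the lemma to showing the integral is $o(t^{\ast})$. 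The hypotheses of Lemma \ref{exptail} hold w.h.p.: item (i) follows from the cutoff \eqref{cutoff} combined with the sub-multiplicativity \eqref{distances1}--\eqref{distances2} (since $T\gg T_{\mathrm{ent}}$); item (ii) is immediate from Theorems \ref{th:pimin}--\ref{th:pimax}, which give $T^2\pimax=o(1)$ and $T\pimin\geq n^{-2}$.

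Partition $[n]=V_1\sqcup V_2$ as in Definition \ref{def:ltl} and split $\bP_x(\t_{\rm cov}>t)\leq\sum_{y\in V_1}\bP_x(H_y>t)+\sum_{y\in V_2}\bP_x(H_y>t)$. For $y\in V_1$, Proposition \ref{pr:rt-ltl} provides item (iii) uniformly together with $R_y^T(1)=1+O(2^{-\vartheta})$, so \eqref{eq:cfexp} yields
$$\bP_x(H_y>t)\leq\bP_x(\cA^T_y(t))\leq(1+o(1))\,e^{-(1-o(1))\pi(y)t}+e^{-t/(2KT)}.$$
Summing over $V_1$ with $\pi(y)\geq\pimin$ and integrating from $t^{\ast}$,
$$\int_{t^{\ast}}^{\infty}\sum_{y\in V_1}\bP_x(H_y>t)\,dt\leq\frac{n\,e^{-(1-o(1))\pimin t^{\ast}}}{(1-o(1))\pimin}+O\!\bigl(nT e^{-t^{\ast}/(2KT)}\bigr)=\frac{n^{-\e+o(1)}}{\pimin}=o(t^{\ast}),$$
using $\pimin t^{\ast}=(1+\e)\log n$ and $t^{\ast}/(2KT)\to\infty$ (because $T\pimin\to 0$ by Theorem \ref{th:pimin}).

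The main obstacle is the $V_2$ contribution: although $|V_2|\leq\Delta^{15\vartheta}$ is polylogarithmic in $\log n$, the naive uniform bound $\bE_x[H_y]=O(T/\pimin)$ summed over $V_2$ is $\gg t^{\ast}$, so a more refined argument is required. My plan is to attach to each $y\in V_2$ an LTL witness $y^{\ast}\in V_1\cap\cB^{-}_{2\vartheta}(y)$ together with a unique directed path from $y^{\ast}$ to $y$ of length $\ell\leq 2\vartheta$ and weight $\geq\delta_+^{-2\vartheta}$; the existence of $y^{\ast}$ follows from $|V_2|\leq\Delta^{15\vartheta}$ combined with the cycle-separation of Proposition \ref{pr:nice}(3). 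Applying the LTL analysis to $y^{\ast}$, the number of visits $N_{y^{\ast}}([0,t])$ concentrates around $\pi(y^{\ast})t\geq\pimin t$ with exponential tails; a pigeonhole argument then extracts $\Omega(\pimin t/\vartheta)$ visits pairwise separated by $\geq 2\vartheta$ time units, each providing an essentially independent trial to traverse the unique path to $y$ with success probability $\geq\delta_+^{-2\vartheta}$. Hence, for $t\geq t^{\ast}$,
$$\bP_x(H_y>t)\leq O\!\bigl(e^{-c\pimin t}\bigr)+\exp\!\bigl(-c'\,\delta_+^{-2\vartheta}\pimin t/\vartheta\bigr).$$
Since $\delta_+^{-2\vartheta}\log(n)/\vartheta\to\infty$ much faster than $\log|V_2|=O(\vartheta)$, the union bound over $V_2$ and integration over $t\geq t^{\ast}$ produce a total contribution of order $(\log\log n)^{O(1)}\,n^{-o(1)}/\pimin=o(t^{\ast})$. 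Combining the $V_1$ and $V_2$ contributions yields $\bE_x[\t_{\rm cov}]\leq(1+o(1))\,t^{\ast}$, uniformly in $x\in[n]$, which after redefining $\e$ gives the claimed bound.
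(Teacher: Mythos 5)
Your treatment of the dominant contribution is the same as the paper's: split $[n]=V_1\sqcup V_2$, verify hypotheses (i)--(iii) of Lemma \ref{exptail} via \eqref{cutoff}, \eqref{distances1}, Theorems \ref{th:pimin}--\ref{th:pimax} and Proposition \ref{pr:rt-ltl}, and for $y\in V_1$ use the exponential tail \eqref{eq:cfexp} with $\pi(y)\geq\pimin$ to get a contribution $O(1/\pimin)=o(t^{\ast})$ past $t^{\ast}=(1+\e)\log n/\pimin$. That part is correct and matches the paper essentially line by line.

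For $V_2$ you use the same key idea as the paper (reduce to a nearby LTL witness and a short directed path into $y$), but your execution has a gap: you assert that the visit count $N_{y^{\ast}}([0,t])$ ``concentrates around $\pi(y^{\ast})t$ with exponential tails.'' This does not follow from anything available -- Lemma \ref{exptail} controls only the probability of \emph{zero} visits in $[T,t]$, not the lower tail of the number of visits -- and it is precisely the load-bearing step of your $V_2$ bound. It is provable, e.g.\ by cutting $[0,t]$ into blocks of length $T+t_0$ with $t_0=\Theta(1/\pimin)$, noting that each block contains a visit to $y^{\ast}$ with probability at least $\tfrac12$ by \eqref{eq:cfexp}, and applying a Chernoff bound to the number of successful blocks; but once you set up that block structure you may as well demand, in each block, a visit to $y^{\ast}$ \emph{followed by} the $\leq 2\vartheta$-step traversal to $y$ (success probability $\geq\tfrac12\D^{-O(\vartheta)}$ per block by the Markov property), which gives $\bP_x(\cA^T_y(s))\leq(1-\gamma)^{\lfloor s/(T+t_1)\rfloor}$ directly and avoids both the concentration claim and the pigeonhole extraction of separated visits. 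This is exactly what the paper does. Two minor points: the weight of a path of length $\ell\leq 2\vartheta$ is bounded \emph{below} by $\D_+^{-2\vartheta}$, not by $\d_+^{-2\vartheta}$ (you have the inequality the wrong way, though both are $(\log\log n)^{-O(1)}$ so nothing breaks); and the paper's witness is taken at distance $<10\vartheta$ rather than $2\vartheta$, which is what Proposition \ref{pr:nice}(3) actually delivers.
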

\begin{proof}
Let $U_s$ denote the set of vertices that are not visited in the time interval $[0,s]$. By Markov's inequality, for all $t_*\geq T$:
\begin{align}
\bE_x[\tau_{\rm cov}]&=\sum_{s\ge 0}\bP_x(\tau_{\rm cov}>s)=\sum_{s\ge 0}\bP_x(U_s\neq\emptyset)\nonumber\\
%\text{Markov in.}\:
&\le t_* + \sum_{s\ge t_*}\bE_x\left[|U_s|\right]
%\forall{t\in\N }\:=&t+\sum_{s\ge t}\E_u[|U_s|]=t+\sum_{s\ge t}\E_u\sum_{v\in[n]}\ind_{v\in U_s}\\
=t_*+\sum_{s\ge t_*}\sum_{y\in[n]}\bP_x(y\in U_s)\nonumber\\
&\leq  t_*+\sum_{s\ge t_*}\sum_{y\in[n]}\bP_x(\cA^T_y(s)).
\label{eq:tcove}
\end{align}
Choose $$t_*:= \frac{(1+\e)\log n}\pimin,$$ for $\e>0$ fixed. %In view of the lower bound on $\pimin$ from Theorem \ref{th:pimin} 
It is sufficient to prove that the last term in \eqref{eq:tcove} is $o(t_*)$ uniformly in $x\in[n]$. 

%n C\,n\log^{\g_0}(n)$ for some constant $C$ to be determined below. 
From %Lemma \ref{exptail} and 
Proposition \ref{pr:rt-ltl}  we can estimate
\begin{align}\label{eq:guess}
\bP_x(\cA^T_y(s))= \frac{(1+\xi')}{(1+\bar p_y)^{s+1}}\,, %\qquad s\geq t_*\,,
\end{align}
where $\bar p_y:= (1+\xi)\pi(y)$ with $\xi,\xi'=O(T\pimax)+O(\d^{-\vartheta})=o(1)$ uniformly in $x\in[n],y\in V_1$. Therefore,
\begin{align}\label{eq:guessa}
\sum_{s\ge t_*}\sum_{y\in V_1}\bP_x(\cA^T_y(s))=(1+o(1))\sum_{y\in V_1} \frac{1}{\bar p_y(1+\bar p_y)^{t_*}}.  
\end{align}
Using $\pi(y)\geq \pimin$, \eqref{eq:guessa} is bounded by
% \geq c n^{-1}\log^{\g_0 -1}(n)$ 
\begin{align*}
\frac{(1+o(1))n}{\bar p_y(1+\bar p_y)^{t_*}}& \leq \frac{2n}\pimin \exp {\(-\pimin t_*(1+o(1))\)} %\\ &
\leq \frac{1}\pimin =o(t_*),
\end{align*}
for all fixed $\e>0$ in the definition of $t_*$. 

It remains to control the contribution of $y\in V_2$ to the sum in \eqref{eq:tcove}. From Proposition \ref{pr:nice} we may assume that $|V_2|=O(\D^{15\vartheta})$. In particular, it is sufficient to show that with high probability uniformly in $x\in[n]$ and $y\in V_2$:
\begin{align}\label{eq:v2si}
\sum_{s\ge t_*}\bP_x(\cA^T_y(s)) = o(t_*\D^{-15\vartheta}). 
\end{align}
To prove \eqref{eq:v2si}, fix $y\in V_2$ and notice that by Proposition \ref{pr:nice} (3), we may assume that there exists $u\in V_1$ s.t. $d(u,y)<10\vartheta$. If $t_1=t_0+10\vartheta$, $t_0:=4/\pimin$, then
\begin{align*}
\bP_x(\cA^T_y(t_1)^c)&=\bP_x( y\in\{X_T,X_{T+1},\dots,X_{t_1}\})\\
&\geq\bP_x( u\in\{X_T,X_{T+1},\dots,X_{t_0}\})\bP_u(y\in\{X_1,\dots,X_{10\vartheta}\})\\
& \geq  \(1-\bP_x(\cA^T_u(t_0))\)\D^{-10\vartheta}.
\end{align*}
Since $u\in V_1$, as in \eqref{eq:guess}, for $n$ large enough,
\begin{align}
\bP_x(\cA^T_u(t_0))\leq \frac{2}{(1+\bar p_y)^{t_0+1}}\leq \frac12. 
\end{align}
Setting $\g:=\frac12\D^{-10\vartheta}$, we have shown that $\bP_x(\cA^T_y(t_1)^c)\geq \g$. 
Since this bound is uniform over $x$, the
 Markov property implies, for all  
$ k\in\bbN$, 
\begin{equation}\label{eq:assol}
\bP_x(\cA^T_y(s))\le(1-\gamma)^k,\:\:s>k(T+t_1).
\end{equation}
Therefore,
\begin{align*}
\sum_{s\ge t_*}\bP_x(\cA^T_y(s))&\le\sum_{s\ge t_*}(1-\gamma)^{\lfloor s/(T+t_1)\rfloor}
\le \sum_{s\ge t_*}(1-\gamma)^{s/2t_1}		\\
%=&\frac{(1-\gamma)^{t_*/2t_1}}{1-(1-\gamma)^{1/2t_1}}
&\leq \frac{\exp{\(-\g t_*/2t_1\)}}{1-\exp{\(-\g/2t_1\)}}= O(t_1/\g) =%O(\D^{3\vartheta}/\pimin).
 o(t_*\D^{-15\vartheta}). 
%=\frac{\(1-\frac{1-e^{-1}}{\Delta^\vartheta}\)^{t/(2t_0+2\vartheta)}}{1-\(1-\frac{1-e^{-1}}{\Delta^\vartheta} \)^{1/2(t_0+\vartheta)}}\\
%\varthetam&\frac{n^{-\frac{1}{\Delta^\vartheta}}}{\frac{\Delta^\vartheta}{n\log^{\gamma_0-1+\varepsilon}(n)}}=O(n\log^{\gamma_0-1+\varepsilon}(n)).
\end{align*}
\end{proof}

\subsection{Lower bound on the cover time}\label{suse:cover-lb}
We prove the following stronger statement. 
\begin{lemma}\label{lem:lowbocover}
For some constant $c>0$, with high probability 
\begin{align}\label{eq:lowbocover1}
\min_{x\in[n]}\,\bP_x\!\(\t_{\rm cov} \geq c\, n\log^{\g_1} n\)=1-o(1).
%\bE_x\!\(\t_{\rm cov}\)\geq \e_n n\log^{\g_1} n.
\end{align}
\end{lemma}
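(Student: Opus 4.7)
The plan is to combine the large set of ``light'' vertices produced by Theorem \ref{th:pimin} with the exponential approximation of avoidance probabilities in Lemma \ref{exptail}, and then to pass from expectation to high-probability by a second-moment (Paley--Zygmund) argument.

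\textbf{Step 1 (locating many uncovered candidates).} By \eqref{eq:th-pimin-size}, w.h.p.\ there is a set $S\subset[n]$ with $|S|\ge n^{\b}$ and $\pi(y)\le C\log^{1-\g_1}(n)/n$ for all $y\in S$. By Proposition \ref{pr:nice}(2), $|V_2|\le \Delta^{15\vartheta}$ is sub-polynomial, so passing from $S$ to $S\cap V_1$ and slightly shrinking $\b$, we may assume that every vertex of $S$ is LTL. Fix $T=\log^3(n)$ as in \eqref{def:Tn}.

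\textbf{Step 2 (exponential avoidance estimate on $S$).} For every $y\in S\subset V_1$, the hypotheses of Lemma \ref{exptail} hold w.h.p.: item (i) follows from \eqref{cutoff} and \eqref{distances1}, item (ii) from Theorems \ref{th:pimin} and \ref{th:pimax}, and item (iii) from Proposition \ref{pr:rt-ltl}, which also gives $R^T_y(1)=1+O(2^{-\vartheta})$. Hence, uniformly in $x\in[n]$ and $y\in S$,
\begin{equation*}
\bP_x\!\left(\cA^T_y(t)\right)=(1+o(1))\exp\!\big(-(1+o(1))\,\pi(y)\,t\big)+O\!\big(e^{-t/(2KT)}\big).
\end{equation*}
Set $t_*:=c\,n\log^{\g_1}(n)$ for a small constant $c>0$ to be fixed. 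Then $\pi(y)\,t_*\le cC\log n$ for $y\in S$, and the error term is negligible, so
\begin{equation*}
\bP_x(\cA^T_y(t_*))\;\ge\;\tfrac{1}{2}\,n^{-2cC}\qquad\forall y\in S.
\end{equation*}

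\textbf{Step 3 (first moment).} Let $N:=|U_{t_*}\cap S|=\sum_{y\in S}\mathbf{1}_{\cA^T_y(t_*)}$. Choose $c>0$ small enough that $2cC<\b/2$. Then $\bE_x[N]\ge |S|\cdot \tfrac12 n^{-2cC}\ge \tfrac12 n^{\b/2}\to\infty$.

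\textbf{Step 4 (second moment via pairwise decorrelation).} To upgrade this to $\bP_x(N\ge 1)=1-o(1)$ by Paley--Zygmund, one needs
\begin{equation*}
\bE_x[N^2]=\sum_{y,z\in S}\bP_x\!\big(\cA^T_y(t_*)\cap \cA^T_z(t_*)\big)\;\le\;(1+o(1))\,\bE_x[N]^{2}.
\end{equation*}
The off-diagonal terms with $y\ne z\in S$ are bounded by a two-point version of Lemma \ref{exptail}: since $y,z\in V_1$ have (by Proposition \ref{pr:nice}(3)) disjoint and tree-like neighborhoods up to depth $9\vartheta$, the generating function of returns to $\{y,z\}$ factors, and the avoidance probability behaves as
\begin{equation*}
\bP_x\!\big(\cA^T_y(t_*)\cap \cA^T_z(t_*)\big)=(1+o(1))\exp\!\big(-(1+o(1))(\pi(y)+\pi(z))\,t_*\big),
\end{equation*}
uniformly over distinct $y,z\in S$. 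This is the standard Cooper--Frieze pairwise estimate (cf.\ \cite{CF2,CF4}); it relies on the mixing at scale $T$ (already used in Lemma \ref{exptail}) to show that after each excursion away from $\{y,z\}$ the walk is essentially stationary and hits $y$ or $z$ with rate $\approx \pi(y)+\pi(z)$. Combining the off-diagonal bound with the trivial diagonal contribution $\bE_x[N]$ yields $\bE_x[N^2]\le (1+o(1))\bE_x[N]^2+o(\bE_x[N]^2)$.

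\textbf{Step 5 (conclusion).} By Paley--Zygmund, $\bP_x(N\ge 1)\ge \bE_x[N]^2/\bE_x[N^2]\ge 1-o(1)$, uniformly in $x\in[n]$. Since $\{N\ge 1\}\subset\{\t_{\rm cov}>t_*\}$, this is exactly \eqref{eq:lowbocover1}.

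\textbf{Main obstacle.} The only non-routine step is Step 4: producing the two-point avoidance estimate with a multiplicative error of $1+o(1)$. Establishing this cleanly amounts to running Lemma \ref{exptail} simultaneously for the pair $\{y,z\}$, which requires that the joint in-neighborhood $\cB^-_{\hslash}(y)\cup\cB^-_{\hslash}(z)$ be tree-like and disjoint for the vertices of interest --- a condition guaranteed w.h.p.\ by Propositions \ref{pr:tx<=1} and \ref{pr:nice}, but one whose quantitative implementation (in particular, controlling the generating function $R^T_{\{y,z\}}(z)$ away from zero) is the technical heart of the argument.
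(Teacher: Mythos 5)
Your outline (many light vertices from Theorem \ref{th:pimin}, the exponential avoidance estimate of Lemma \ref{exptail}, then a second-moment argument) is exactly the paper's strategy, and Steps 1--3 and 5 are essentially correct. But Step 4 — which you yourself flag as the technical heart — is left as a gap, and the way you sketch it would not go through as stated. The assertion that ``the generating function of returns to $\{y,z\}$ factors'' is not established and is not how one obtains the two-point estimate; Lemma \ref{exptail} is a single-vertex statement, and in the non-reversible setting the claim that after each excursion the walk hits $y$ or $z$ ``with rate $\approx\pi(y)+\pi(z)$'' is precisely what must be proved.

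The paper resolves this by the Cooper--Frieze merging device: the vertices $y,y'$ are contracted to a single vertex $y_*$, producing a digraph $G^*$ which is again a directed configuration model, so Lemma \ref{exptail} and Proposition \ref{pr:rt-ltl} apply to $y_*$ in $G^*$ and give $\bP^*_x(\cA^T_{y_*}(t))=(1+o(1))(1+\bar p_{y_*})^{-t}$ with $\bar p_{y_*}=(1+o(1))\pi^*(y_*)$. One must then show that $\pi^*(y_*)\approx\pi(y)+\pi(y')$ and that $\pi^*\approx\pi$ elsewhere; this is Lemma \ref{lem:pistar}, a genuine perturbation argument whose error is controlled by $|\pi(y)-\pi(y')|$. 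For that bound to be useful one needs the candidate set to satisfy $|\pi(x)-\pi(y)|\le\pimin\log^{-K}(n)$ for all its pairs (property \eqref{eq:thebounds1} of Proposition \ref{de:candidate}, obtained by a pigeonhole refinement of $S$), as well as pairwise separation $\min\{d(x,y),d(y,x)\}>2\vartheta$ so that $y_*$ is LTL in $G^*$. Your set $S$ has neither property, and without them the merging argument's error control fails. So the missing ingredients are concretely: (i) the refinement of $S$ to a set $W$ with near-constant stationary mass and mutual separation, and (ii) the merged-graph construction together with the stationary-distribution perturbation bound replacing your unproved factorization claim.
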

Clearly, this implies the lower bound on $T_{\rm cov}=\max_{x\in[n]}\bE_x\!\(\t_{\rm cov}\)$ in Theorem \ref{th:covertime}.
The proof of Lemma \ref{lem:lowbocover} is based on the second moment method as in \cite{CF2}. 
%Let $S$ be the set identified in \eqref{eq:better-upbo-pimin}.
If $W\subset [n]$ is a set of vertices, let $W_t$ be the set 
\begin{equation}\label{eq:Wtset}
W_t=\{y\in W:\,y \text{  is not visited in } [0,t]\}
\end{equation}
Then 
\begin{align}\label{eq:lowbocover2}
\bP_x\!\(\t_{\rm cov}>t\)\geq \bP_x\!\(|W_t|>0\)\geq \frac{\bE_x\!\left[|W_t|\right]^2}{\bE_x\!\left[|W_t|^2\right]}.
\end{align}
Therefore, Lemma \ref{lem:lowbocover} is a consequence of the following estimate. 
%\begin{lemma}\label{lem:secmom1}
%Fix $t_*=\e_n n\log^{\g_1} n$. With high probability 
%\begin{align}\label{it:lbct3a} 
%\inf_{x\in[n]}\,\bE_x\!\left[|S_t|\right]
%\to\infty,
%\end{align}
%\end{lemma}

\begin{lemma}\label{lem:secmom2}
For some constant $c>0$, with high probability 
 there exists a nonempty set $W\subset [n]$ such that  
\begin{align}\label{it:lbct3b}
\max_{x\in[n]}\,\frac{\bE_x\!\left[|W_{t}|^2\right]}{\bE_x\!\left[|W_t|\right]^2}=1+o(1),\qquad t=c \,n\log^{\g_1} n. 
\end{align}
\end{lemma}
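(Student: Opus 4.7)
The proof will be a standard second-moment argument, in the spirit of \cite[Section~4]{CF2}. By \eqref{eq:th-pimin-size}, w.h.p.\ there is $S\subset[n]$ with $|S|\geq n^{\beta}$ and $n\pi(y)\leq C\log^{1-\gamma_1}(n)$ for every $y\in S$. Proposition~\ref{pr:nice}(\ref{mltl}) gives $|V_2|=O(\Delta^{15\vartheta})=o(n^\beta)$, so $|S\cap V_1|\geq|S|/2$. Greedily thin $S\cap V_1$ by requiring any two chosen vertices to lie at undirected graph distance larger than $L:=\lceil\beta\log_\Delta(n)/20\rceil$; since the undirected $L$-ball of any vertex contains at most $\Delta^{2L}\leq n^{\beta/4}$ vertices, the resulting set $W$ has $|W|\geq n^{\beta/2}$. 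Set $t=c\,n\log^{\gamma_1}(n)$ with a small constant $c>0$ to be tuned.

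For the first moment, each $y\in W\subset V_1$ satisfies the hypotheses of Lemma~\ref{exptail} by Theorems~\ref{th:pimin}--\ref{th:pimax}, the cutoff bound \eqref{distances2}, and Proposition~\ref{pr:rt-ltl}. Hence, uniformly in $x\in[n]$,
\begin{equation*}
\bP_x(H_y>t)=(1+o(1))\exp\bigl(-(1+o(1))\pi(y)\,t\bigr)+o(n^{-1}).
\end{equation*}
Since $\pi(y)t\leq Cc$ for $y\in W$, this gives $\bP_x(H_y>t)\geq\tfrac12 e^{-Cc}$, whence $\bE_x[|W_t|]\geq c_0|W|\to\infty$. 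The diagonal contribution to the second moment is $\bE_x[|W_t|]=o(\bE_x[|W_t|]^2)$, so \eqref{it:lbct3b} reduces to proving
\begin{equation}\label{eq:pairfact}
\bP_x(H_y>t,\,H_z>t)\leq(1+o(1))\,\bP_x(H_y>t)\,\bP_x(H_z>t)
\end{equation}
uniformly in $x\in[n]$ and distinct $y,z\in W$.

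Since $\{H_y>t\}\cap\{H_z>t\}=\{H_A>t\}$ with $A=\{y,z\}$, I will establish a set-valued version of Lemma~\ref{exptail}, replacing $\pi(y)$ by $\pi(A):=\pi(y)+\pi(z)$ and $R_y^T(\zeta)$ by
\begin{equation*}
R_A^T(\zeta):=\frac{1}{\pi(A)}\sum_{u,v\in A}\pi(u)\sum_{s=0}^T\zeta^s\,\bP_u(X_s=v),\qquad\zeta\in\bbC.
\end{equation*}
The diagonal terms $u=v$ are bounded by Proposition~\ref{pr:rt-ltl}. For the cross terms $u\neq v$, I split the sum at $T':=\lceil 4\tent\rceil$: for $s\geq T'$, \eqref{distances1}--\eqref{distances2} yield $\bP_u(X_s=v)\leq\pi(v)+n^{-3}$, so the tail contributes $O(T\pimax)=o(1)$; for $s<T'$, the pairwise separation $L\gg\vartheta$ combined with the LTL structure at $u$ forces any walk from $u$ to $v$ to exit the tree-like $\vartheta$-out-neighborhood of $u$, an escape event of probability $O(\delta^{-\vartheta})$, while the subsequent cost of returning to $v$ within the remaining time is controlled by $R_v^{T'}(1)=1+o(1)$. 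This yields $R_A^T(\zeta)=1+o(1)$ uniformly on $|\zeta|\leq 1+1/(KT)$, so the set-valued analogue of Lemma~\ref{exptail} gives $\bP_x(H_A>t)=(1+o(1))\exp(-(1+o(1))\pi(A)t)$, which together with the first-moment estimate is \eqref{eq:pairfact}. Summing over $y\neq z\in W$ yields \eqref{it:lbct3b}.

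The main obstacle is the pair version of hypothesis (iii) in Lemma~\ref{exptail}: the mixed return sums $\sum_{s\leq T}\zeta^s\,\bP_y(X_s=z)$ were absent in the single-vertex analysis of \cite{CF4}, and controlling them uniformly in complex $\zeta$ is what forces the pairwise separation $L$. Balancing $L$ against the size $|W|$ determines the allowed exponent $\beta$, but the time-scale exponent $\gamma_1$ is fixed entirely by the first-moment estimate on $W$, which in turn rests on the sharp lower tail \eqref{eq:th-pimin-size} of the stationary distribution.
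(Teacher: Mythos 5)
Your overall architecture differs from the paper's at the key step. The paper does not prove a set-valued first visit time lemma: it merges $y$ and $y'$ into a single vertex $y_*$, observes that the resulting digraph $G^*$ is again a directed configuration model in which $y_*$ is LTL (this is exactly what the separation $\min\{d(y,y'),d(y',y)\}>2\vartheta$ buys), applies the single-vertex Lemma \ref{exptail} and Proposition \ref{pr:rt-ltl} to $G^*$, and then transfers the answer back by a perturbation analysis of the stationary distribution (Lemma \ref{lem:pistar}); that last step is why Proposition \ref{de:candidate} also forces $|\pi(y)-\pi(y')|\le \pimin\log^{-K}(n)$ on $W$, a condition absent from your construction. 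Your route --- a direct two-point generalization of Lemma \ref{exptail} with $R_A^T$ --- is legitimate in principle and would make the comparison of $\pi^*$ with $\pi$ unnecessary, but it requires actually proving the set version of the lemma (the paper only has the single-vertex statement, imported from \cite{CF4}), and that is not a cosmetic extension.

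Two concrete gaps remain. First, your control of the cross terms is wrong as stated: exiting the tree-like $\vartheta$-out-neighborhood of $u$ is not a rare event of probability $O(\delta^{-\vartheta})$ --- the walk leaves it within $\vartheta$ steps essentially surely. What must be shown is $\sum_{s\le T}\bP_u(X_s=v)=o(1)$, i.e.\ that the expected number of visits to $v$ within time $T$ is small, and the available mechanisms are those of Proposition \ref{pr:rt-ltl}: either $u\notin\cB^-_{\hslash}(v)$, and each of the at most $T$ entries into $\partial\cB^-_{\hslash}(v)$ reaches $v$ before exiting with probability at most $2\delta^{-\hslash}$, giving $O(T\delta^{-\hslash})$; or $u\in\cB^-_{\hslash}(v)$, and one uses $\tx(\cB^-_{\hslash}(v))\le 1$ together with $d(u,v)>L$ to bound the direct contribution by $O(\delta^{-L})$. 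Neither bound is the one you wrote, and the case analysis is missing from your sketch. Second, a quantitative slip in the first moment: with $t=c\,n\log^{\gamma_1}(n)$ and $\pi(y)\le Cn^{-1}\log^{1-\gamma_1}(n)$ one gets $\pi(y)t\le Cc\log(n)$, not $Cc$; hence $\bP_x(H_y>t)$ is only of order $n^{-Cc}$ and $\bE_x[|W_t|]\ge n^{\beta/2-Cc+o(1)}$, so the argument survives only because $c$ may be taken small relative to $\beta$ --- exactly as in the paper's \eqref{eq:secmom3} --- but your claim that $\bP_x(H_y>t)\ge\tfrac12 e^{-Cc}$ is false and should be corrected.
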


%\begin{enumerate}
%
%\item\label{it:lbct1} First, we select a \ set to be ``hard to cover within $t$. We call this set $V^\dagger$.
%
%\item\label{it:lbct2} We call $V_t^\dagger $ the (random) subset of $V^\dagger$ that has not be covered by the random walk within $t$.
%
%\item\label{it:lbct3} We show that
%
%\begin{enumerate}
%\begin{proof}	
%\item
%\label{it:lbct3a} $\E_u[| V^\dagger_t|]\to\infty$
%
%\item\label{it:lbct3b} $\E_u\[| V^\dagger_t|^2\]\sim \E_u\[|V_t^\dagger|\]^2$.
%
%\end{enumerate}
%
%\item\label{it:lbct4} By \cref{it:lbct3} and the second moment method it follows that
%
%\begin{equation}
%\mathbf{P}_u\(|V^\dagger_t|>0 \)\ge\frac{ \mathbf{E}_u\[| V_t^\dagger|\]^2}{\mathbf{E}_u\[| V_t^\dagger|^2\]}=1-o(1).
%\end{equation}
%
%\item\label{it:lbct5} \cref{it:lbct4} implies
%
%\begin{equation}
%\mathbf{P}_u\left(\tau_{\rm cov}>n\log^{\gamma_1-\varepsilon}(n)\right)=1-o(1),
%\end{equation}
%
%\noindent which in particular implies the concentration in \cref{eq:th-cover-concentration}.
%
%\end{enumerate}
%
%We now give some further details on the recipe. In particular, we refer the reader to \cite{CF2}. The \emph{candidate set} mentioned in \cref{it:lbct1} will be the following
We start the proof of Lemma \ref{lem:secmom2} by exhibiting a candidate for the set $W$. 
\begin{proposition}\label{de:candidate}
For any constant $K>0$, with high probability there exists a set $W$ such that
\begin{enumerate}
\item\label{it:cand1} $W\subset V_1$, where $V_1$ is the LTL set from Definition \ref{def:ltl}, and $|W|\geq n^\a$ for some constant $\a>0$.

\item\label{it:cand2} For some constant $C>0$, for all $y\in W$,  
\begin{equation}\label{eq:thebounds}
\pi(y)\leq \tfrac{C}n\,\log^{1-\gamma_1}(n).
\end{equation}

\item\label{it:cand3} For all $x,y\in W$:
\begin{equation}\label{eq:thebounds1}
\left|\pi(x)-\pi(y)\right|\le\pimin\log^{-K}(n).
\end{equation}

\item\label{it:cand4} For all $x,y\in W$: $\min\{d(x,y), d(y,x)\}>2\vartheta$.
\end{enumerate}

\end{proposition}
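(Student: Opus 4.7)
The plan is to build $W$ by refining the ``unlucky'' vertex set from the proof of Lemma \ref{lalemma}, then progressively pruning to secure each property. Let $(\d_*,\D_*)\in\cL$ realize the maximum in the definition of $\g_1$, and set $h_0=\log_{\d_*}\log n - C_0$. As in the proof of Lemma \ref{lalemma}, I sample the in-neighborhoods $\cB^-_{h_0}(y_i)$ of $N_1=n^{\b_1}$ vertices from $V_*=\cV_{\d_*,\D_*}$ and extract the set $S$ of unlucky ones (those whose $h_0$-in-neighborhood is a perfect $(\d_*,\D_*)$-tree), which satisfies $|S|\geq n^{\b_1/2}$ with high probability by the estimate \eqref{eq:lale02}. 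Property (1) then follows essentially for free: each unlucky $y$ has a directed tree in-neighborhood up to depth $\vartheta\ll h_0$, and discarding the at most $\D^{15\vartheta}$ non-LTL vertices (Proposition \ref{pr:nice}(\ref{mltl})) removes only a poly-logarithmic fraction.

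To enforce property (4), observe that for each vertex the set $\cB^-_{2\vartheta}(y)\cup \cB^+_{2\vartheta}(y)$ has size at most $2\D^{2\vartheta}=\text{polylog}(n)$. A greedy selection therefore extracts a sub-collection of size at least $n^{\b_1/3}$ in which every pair satisfies $\min\{d(x,y),d(y,x)\}>2\vartheta$. Property (2), that $\pi(y)\leq C n^{-1}\log^{1-\g_1}(n)$, is precisely the content of Lemma \ref{lalemma}: combining the bound on $\l_t(y)$ established there with the approximation $|\l_t(y)-\pi(y)|=O(e^{-k})$ from \eqref{distances2}, taken at $t=2k\tent$ with $k=\Theta(\log^2 n)$, yields the desired inequality, which is preserved under the subsequent pruning.

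The real difficulty lies in property (3), which demands that $|\pi(x)-\pi(y)|\leq \pimin\log^{-K}(n)$ — i.e., the $\pi$-values in $W$ must coincide to multiplicative precision $\log^{-K}(n)$. The plan here is to exploit the fact that all unlucky vertices share the \emph{same} short-range in-structure, and so their stationary mass is determined by a common mechanism up to small fluctuations. More precisely, the annealed first-moment calculation \eqref{eq:lale4}–\eqref{eq:uni1} gives $\E[\l_t(y)\tc\si]=(1+o(1))\d_*/m$, which depends on $y\in S$ only through the shape of $\si$ and is therefore common to all unlucky vertices. I would then refine $S$ by retaining only those $y$ for which the fluctuation $|\l_t(y)-\E[\l_t(y)\tc\si]|$ is at most $c\,\pimin\log^{-K}(n)/2$. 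By applying Chebyshev's inequality together with the covariance bound \eqref{eq:lale51}–\eqref{eq:lale5b2} — or the higher moment bound from the proof of Lemma \ref{lem:maxlat} when $K$ is large — such $y$ form a polynomial fraction of $S$, so the refined set still has size $n^\a$.

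The main obstacle is precisely this last step: the second-moment argument of Lemma \ref{lalemma} was tuned to produce a constant-factor upper bound on $\pi(y)$, and to obtain the poly-logarithmic precision needed for property (3) one must push the annealed $K$-walk concentration further, likely by computing $\E[(\l_t(y)-\E\l_t(y))^{2K}\tc\si]$ via the $2K$-walk construction that drives the proof of Lemma \ref{lem:maxlat}. The key observation that makes this feasible is that each additional trajectory contributes a factor $O(\log n /n)$ from the collision bound \eqref{eq:c5}, so $K=\Theta(\log n)$ moments already suffice to control deviations of size $\pimin\log^{-K}(n)$ via Markov's inequality, in direct parallel with \eqref{eq:c2}.
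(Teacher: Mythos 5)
Your treatment of properties (1), (2) and (4) is essentially sound and close to the paper's: (2) is Theorem \ref{th:pimin} (i.e.\ Lemma \ref{lalemma} together with \eqref{distances2}), and for (1) and (4) the paper likewise prunes the ``unlucky'' set of Lemma \ref{lalemma}, generating the out-neighborhoods $\cB^+_\vartheta(y_i)$ and reusing the collision estimate behind \eqref{eq:lale2}; your greedy packing argument for (4) is an acceptable variant. The problem is property (3), which you rightly single out as the hard point but for which the mechanism you propose cannot work. At the level of logic, Chebyshev or a $2K$-th moment bound via Markov's inequality controls the probability that $|\l_t(y)-\E[\l_t(y)\tc\si]|$ \emph{exceeds} a threshold; it gives no lower bound on the number of $y$ whose deviation falls \emph{below} $\pimin\log^{-K}(n)$, which is what you need. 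More fundamentally, the statement you are aiming for --- that a polynomial fraction of the unlucky vertices have $\pi(y)$ within $\pimin\log^{-K}(n)$ of a \emph{common deterministic} value --- is false for large $K$. Conditionally on the unlucky structure up to depth $h_0$, the quantity $\G_h(y)$ for $h\gg h_0$ is, up to normalization, a sum of $\d_*^{h_0}=\Theta(\log n)$ i.i.d.\ nondegenerate martingale contributions as in \eqref{eq:gastar4}, so its relative fluctuation from one unlucky vertex to another is of order $\log^{-1/2}(n)$, not $\log^{-K}(n)$; this is consistent with the convergence of $n\pi(y)$ to the nondegenerate law \eqref{eq:receq}. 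No refinement of the annealed moment computation will concentrate the individual values to the precision demanded by \eqref{eq:thebounds1}.

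The paper's resolution of (3) is purely combinatorial and much simpler: all values $\pi(y)$, $y\in S$, lie in the interval $[\,\pimin,\,Cn^{-1}\log^{1-\g_1}(n)\,]$ with $\pimin\geq n^{-1}\log^{-K_1}(n)$ for some constant $K_1$; partitioning this interval into $\log^{2K}(n)$ equal subintervals, the pigeonhole principle produces one subinterval containing at least $n^{\b}\log^{-2K}(n)\geq n^{\b/2}$ of the $y$'s, and for $K$ large enough the subinterval length is below $\pimin\log^{-K}(n)$, so any two selected vertices satisfy \eqref{eq:thebounds1}. You should replace your concentration step with this selection argument; no control of the individual fluctuations of $\pi(y)$ is required.
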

\begin{proof}
From 
Theorem \ref{th:pimin} %and Lemma \ref{lalemma} 
we know that w.h.p.\ there exists a set $S\subset [n]$ with $|S|>n^\b$ such that \eqref{eq:thebounds} holds. Moreover, a minor modification of the proof of Lemma \ref{lalemma} shows that we may also assume that $S\subset V_1$ and that $\min\{d(x,y), d(y,x)\}>2\vartheta$ for every $x,y\in W$. Indeed, it suffices to generate the out-neighborhoods $\cB^+_\vartheta(y_i)$ for every $i=1,\dots,N_1$ and the argument for \eqref{eq:lale2} shows that these are disjoint trees with high probability.   To conclude,  we observe that there is a $W\subset S$ such that $|W|>n^{\b/2}$ and such that \eqref{eq:thebounds1} holds. Indeed, using $\pimin \geq n^{-1}\log^{-K_1}(n)$ for some constant $K_1$, for any constant $K>0$ we may partition the interval $$[n^{-1}\log^{-K_1}(n),Cn^{-1}\log^{1-\gamma_1}(n)]$$ 
in $\log^{2K}(n)$ intervals of equal length and there must be at least one of them  containing $n^{\b} \log^{-2K}(n)\geq n^{\b/2}$ elements which, if $K$ is sufficiently large, satisfy \eqref{eq:thebounds1}. 
\end{proof}
\begin{proof}[Proof of Lemma \ref{lem:secmom2}]
Consider the first moment $\bE_x\!\left[|W_t|\right]$, where $W$ is the set from Proposition \ref{de:candidate} and $t$ is fixed as $t=c\,n\log^{\g_1}(n)$. For $y\in W\subset V_1$ we use Lemma \ref{exptail} and Proposition \ref{pr:rt-ltl}. As in \eqref{eq:guess} we have
\begin{align}\label{eq:secmom01}
\bP_x(\cA^T_y(t))=(1+o(1))(1+\bar p_y)^{-(t+1)},
\end{align}
where $\bar p_y= (1+o(1))\pi(y)\leq p_W:=2C\,n^{-1}\log^{1-\gamma_1}(n)$, where $C$ is as in \eqref{eq:thebounds}.
Therefore,
\begin{align*}
\bE_x\[|W_t|\]&=\sum_{y\in W}
\bP_x\(y \text{ not visited in } [0,t] \) \\
&\ge-T+\sum_{y\in W}\P(\cA^T_y(t))
\ge-T+(1+o(1))|W|(1+p_W)^{-t}. 
\end{align*}
Taking the constant $c$ in the definition of $t$ sufficiently small, one has $p_W t\leq \a/2 \log n$ and therefore
\begin{align}\label{eq:secmom3}
\bE_x\[|W_t|\]\ge-T+(1+o(1))|W|n^{-\a/2} \geq 
\tfrac12\,n^{\a/2},
\end{align}
where we use $T=\log^3(n)$ and $|W|\geq n^\a$.  
In particular, since $T=\log^3(n)$, \eqref{eq:secmom3} shows that 
\begin{align}\label{eq:secmom4}
 \sum_{y\in W}\P(\cA^T_y(t))  =(1+o(1))\bE_x\[|W_t|\].
%\bE_x\[|W_t|\]\leq \sum_{y\in W}\P(\cA^T_y(t)) \leq T + \bE_x\[|W_t|\] =(1+o(1))\bE_x\[|W_t|\].
\end{align}
Concerning  the second moment $\bE_x\!\left[|W_t|^2\right]$, we have
\begin{align*}
\bE_x\[|W_t|^2\]&=\sum_{y,y'\in W}
\bP_x\(y \text{ and } y'  \text{ not visited in } [0,t] \) \\
&\le\sum_{y,y'\in W}\bP_x\(\cA^T_y(t)\cap \cA^T_{y'}(t)\).
\end{align*}
From this and \eqref{eq:secmom4}, the proof of Lemma \ref{lem:secmom2} is completed by showing, uniformly in $x\in[n],y,y'\in W$:
\begin{align}\label{eq:secmom5}
\bP_x\(\cA^T_y(t)\cap \cA^T_{y'}(t)\)= (1+o(1))\bP_x\(\cA^T_y(t)\) \bP_x\(\cA^T_{y'}(t)\).
\end{align}
We follow the idea of \cite{CF2}. Let $G^*$ denote the digraph obtained from our digraph $G$ by merging the two vertices $y,y'$ into the single vertex $y_*=\{y,y'\}$. Notice that $y^*$ is LTL in the graph $G^*$ in the sense of Definition \ref{def:ltl}. Moreover, $G^*$ has the law of a directed configuration model with the same degree sequence of $G$ except that at $y_*$ it has $d_{y_*}^\pm=d^\pm_y+d^\pm_{y'}$. It follows that we may apply Lemma \ref{exptail} and Proposition \ref{pr:rt-ltl}. Therefore, if $\bP^*_x$ denotes the law of the random walk on $G^*$ started at $x$, as in \eqref{eq:secmom01} we have 
 \begin{align}\label{eq:secmom00}
\bP^*_x(\cA^T_{y_*}(t))=(1+o(1))(1+\bar p_{y_*})^{-t},
\end{align}
uniformly in $x\in[n],y,y'\in W$, where $\bar p_{y_*}=(1+o(1))\pi^*(y_*)$, and $\pi^*$ is the stationary distribution of $G^*$.
 In Lemma \ref{lem:pistar} below we prove that 
 \begin{align}\label{eq:secmom001}
\maxtwo{v\in[n]:}{v\neq y,y'}|\pi(v)-\pi^*(v)|\leq a,\qquad |\pi(y)+\pi(y')-\pi^*(y_*)|\leq a,
\end{align}
where $a:= \pimin \log^{-1}(n)$. 
Assuming \eqref{eq:secmom001}, we can conclude the proof of  \eqref{eq:secmom5}. 
Indeed, letting $P_*$ denote the transition matrix of the graph $G^*$, 
\begin{align*}
\bP_x^*(\cA^T_{y_*}(t))&=\sum_{v\not=y,y'}P^T_*(x,v)\bP^*_v(X_s\not=y_*,\:\forall s\in[1,t-T])\\
&=\sum_{v\not=y,y'}\(\pi^*(v)+O(n^{-3}) \)\bP^*_v(X_s\not=y_*,\:\forall s\in[1,t-T])
\end{align*}
On the other hand,
\begin{align*}
\bP_x(\cA^T_y(t)\cap \cA^T_{y'}(t))&=
\sum_{v\not=y,y'}P^T(x,v)\bP_v(X_s\notin\{y,y'\},\:\forall s\in[1,t-T])\\
&=\sum_{v\not=y,y'}\(\pi(v)+O(n^{-3}) \)
\bP_v(X_s\notin\{y,y'\},\:\forall s\in[1,t-T])
\end{align*}
For all $v\not=y,y'$,
\begin{align*}
\bP^*_v(X_s\neq y_*,\:\forall s\in[1,t-T])&=\bP_v(X_s\not\in\{y,y'\},\:\forall s\in[1,t-T]) \\&\leq \frac{(1+o(1))}{\pimin}P^T(x,v)\bP_v(X_s\not\in\{y,y'\},\:\forall s\in[1,t-T]),
\end{align*}
uniformly in $x\in[n]$, where we have used  condition (i) in Lemma \ref{exptail}.
Therefore, using \eqref{eq:secmom001}
\begin{align*}
&\left|\bP_x\(\cA^T_y(t)\cap \cA^T_{y'}(t)\)-\bP_x^*\(\cA^T_{y_*}(t)\)\right|\\
\qquad &\leq\sum_{v\neq y,y'}
|\pi(v)-\pi_*(v)+O(n^{-3})|\,\bP_v(X_s\not\in\{y,y'\},\:\forall s\in[1,t-T]) \\
%&=O\(\frac{1}{n\log^{8+\gamma_0}(n)}\)\sum_{v\not\in\{x,y\}}\mathbf{P}_v(X(s)\not\in\{x,y\},\:\forall s\in[1,t-T])\\
%\nonumber=
&\leq (a+O(n^{-3}))\frac{(1+o(1))}{\pimin}
\sum_{v\neq y,y'}P^T(x,v)\bP_v(X_s\not\in\{y,y'\},\:\forall s\in[1,t-T])\\
&\leq \frac{2a}{\pimin}\,\bP_x(\cA_y(t)\cap\cA_{y'}(t)).
\end{align*}
By definition of $a$ we have $a/\pimin\to 0$ so that 
\begin{equation}\label{eq:aass}
\bP_x(\cA^T_y(t)\cap \cA^T_{y'}(t))=(1+o(1))\bP_x^*(\cA^T_{y_*}(t)).
\end{equation}
 Using \eqref{eq:secmom01},\eqref{eq:secmom00} and \eqref{eq:secmom001} we conclude that 
 \begin{align*}
\bP_x\(\cA^T_y(t)\cap \cA^T_{y'}(t)\)&=(1+o(1))\exp{\(-(1+o(1))(\pi(y)+\pi(y'))t\)} \\&= (1+o(1))\bP_x\(\cA^T_y(t)\)\bP_x\(\cA^T_{y'}(t)\).
\end{align*}
\end{proof}
%It remains to prove \eqref{eq:secmom001}. 

\begin{lemma}\label{lem:pistar}
The stationary distributions $\pi,\pi^*$ satisfy \eqref{eq:secmom001}.
 \end{lemma}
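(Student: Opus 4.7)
The approach is to relate $\pi^*$ to an explicit merged version of $\pi$ via a Poisson-type identity, leveraging the strong near-equality $\pi(y)\approx\pi(y')$ built into the construction of $W$. A minor strengthening of Lemma~\ref{lalemma} (whose construction of $\bar S$ already produces vertices of type $(\d_*,\D_*)$) allows the set $W$ from Proposition~\ref{de:candidate} to be chosen inside $\cV_{\d_*,\D_*}$, so that $y,y'\in W$ share a common out-degree $d:=\D_*$. Together with $y,y'\in V_1$ and $\min(d(y,y'),d(y',y))>2\vartheta>1$, this ensures that there are no edges between $y$ and $y'$ and hence no self-loops at $y_*$ in $G^*$. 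Define the probability measure $\tilde\pi$ on $V(G^*)=([n]\setminus\{y,y'\})\cup\{y_*\}$ by $\tilde\pi(v)=\pi(v)$ for $v\neq y_*$ and $\tilde\pi(y_*)=\pi(y)+\pi(y')$.

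A direct computation using $\pi=\pi P$ then shows that the defect $\eta:=\tilde\pi-\tilde\pi P^*$ satisfies $\eta(y_*)=0$ (the two stationarity equations at $y,y'$ in $G$ sum to the equation at $y_*$ in $G^*$) and, for $v\neq y_*$,
\begin{equation*}
\eta(v)=\frac{(\pi(y)-\pi(y'))(m(y,v)-m(y',v))}{2d}.
\end{equation*}
Only the at most $2d$ out-neighbors of $\{y,y'\}$ contribute, each by an amount bounded by $|\pi(y)-\pi(y')|/(2d)$, so
\begin{equation*}
\|\eta\|_1\leq|\pi(y)-\pi(y')|\leq \pimin\log^{-K}(n)
\end{equation*}
by property (3) of Proposition~\ref{de:candidate}, where $K$ is a constant we choose freely.

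Iterating $\tilde\pi=\tilde\pi P^*+\eta$, using that $\tilde\pi P^{*j}(v)\to\pi^*(v)$ by ergodicity of $G^*$, and noting that $\sum_v\eta(v)=0$ ensures convergence of the tails, yields the Poisson-type representation $\tilde\pi(v)-\pi^*(v)=\sum_{j=0}^\infty\eta P^{*j}(v)$. Writing $\eta=\eta^+-\eta^-$ with $\|\eta^\pm\|_1=\|\eta\|_1/2$ and letting $\mu^\pm:=\eta^\pm/\|\eta^\pm\|_1$ be probability measures, each term satisfies $|\eta P^{*j}(v)|\leq\|\eta\|_1\,d^*(j)$ with $d^*(j):=\max_x\|P^{*j}(x,\cdot)-\pi^*\|_\tv$. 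The cutoff estimate \eqref{cutoff} applied to $G^*$ together with \eqref{distances1} gives $\sum_j d^*(j)=O(\log n)$, so that $|\tilde\pi(v)-\pi^*(v)|=O(\pimin\log^{1-K}(n))\leq a$ for any $K\geq 2$. Unpacking the definition of $\tilde\pi$ yields both inequalities in \eqref{eq:secmom001}.

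The delicate cancellations producing $\eta(y_*)=0$ and the gain of the factor $\pi(y)-\pi(y')$ in $\eta$ depend crucially on $y$ and $y'$ being LTL, of the same type, and with no edges between them. The main technical obstacle I anticipate is transferring the cutoff bound $\sum_j d^*(j)=O(\log n)$ from $G$ to $G^*$: since $G^*$ is not literally a DCM but differs from $G$ only at the single vertex $y_*$ with still-bounded degrees $(2\d_*,2\D_*)$, the proof of \cite{BCS1} should adapt essentially verbatim, or one may directly couple the walks on $G$ and $G^*$ away from $y_*$ to transfer the bound.
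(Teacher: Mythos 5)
Your proof is correct, and while the first half coincides with the paper's (you compute the same defect vector — your $\eta$ is the paper's $-\phi=\zeta(P_*-I)$ — supported on the out-neighbours of $\{y,y'\}$ and of $\ell^1$-size at most $|\pi(y)-\pi(y')|\le\pimin\log^{-K}(n)$ by Proposition \ref{de:candidate}(3)), the second half takes a genuinely different route. The paper converts the small defect into a bound on $\zeta=\pi_*-\tilde\pi$ by an $\ell^2$ argument: it decomposes $\zeta=\alpha\pi_*+\rho$ with $\rho\perp\pi_*$, uses $\zeta(I-P_*^T)=\phi\sum_{s<T}P_*^s$ together with $P_*^T=\Pi_*+E$, and pays a factor $\pimax^*/\pimin^*$ (controlled via Theorem \ref{th:pimin} applied to $G^*$) plus a $\sqrt n$ loss when passing from $\|\rho\|_2$ to $|\alpha|$. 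You instead iterate $\tilde\pi=\tilde\pi P_*+\eta$ to the Poisson-type series $\tilde\pi-\pi^*=\sum_{j\ge0}\eta P_*^{j}$ and bound each term by $2\|\eta\|_1\max_x\|P_*^{j}(x,\cdot)-\pi^*\|_\tv$, using $\sum_x\eta(x)=0$; this needs only summability of the total-variation distances of $G^*$, which already follows from condition (i) of Lemma \ref{exptail} for $P_*$ (an input the paper's own proof also requires), giving $\sum_j d^*(j)=O(T)=O(\log^3 n)$ — amply absorbed since $K$ is free. Your route thus avoids the $\pimax^*/\pimin^*$ ratio and the Cauchy--Schwarz loss entirely. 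Two minor remarks: the common out-degree of $y,y'$ is not actually needed, since $\eta(v)=\tfrac{\pi(y)-\pi(y')}{2}\bigl(P(y,v)-P(y',v)\bigr)$ exhibits the crucial factor $\pi(y)-\pi(y')$ for arbitrary degrees; and the exact vanishing $\eta(y_*)=0$ is a pleasant but inessential consequence of $d(y,y')>2\vartheta$, since $|\eta(y_*)|\le|\pi(y)-\pi(y')|$ would do just as well in the $\ell^1$ bound.
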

\begin{proof}
We follow the proof of Eq.\ (107) in \cite{CF2}. The stochastic matrix of the simple random walk on $G^*$ is given by
\begin{equation*}
P_*(v,w)=\begin{cases}
P(v,w)&\text{if }v,w\not=y_*\\
\frac{1}{2}\(P(y,w)+P(y',w) \)&\text{if }v=y_*\\
P(v,y)+P(v,y')&\text{if }w=y_*.
\end{cases}
\end{equation*}
Let $V^* $ denote the vertices of $G^*$. Define the vector $\zeta(v)$, $v\in V^*$ via 
$$\zeta(v)=\begin{cases}\pi_*(v)-\pi(v) & v\neq y_*\\
\pi_*(y_*) - (\pi(x)+\pi(y)) & v=y_*
\end{cases}
$$
We are going to show that 
\begin{equation}\label{eq:ass2}
\max_{v\in V^*}|\zeta(v)| = o(\pimin\log^{-1}(n)),
\end{equation}
which implies \eqref{eq:secmom001}.
A computation shows that
\begin{equation*}
\zeta P_*(w)=\sum_{v\in V^*}\zeta(v)P_*(v,w)=\begin{cases}
\zeta(w)&\text{if }w\not\in\cB_1^+(y)\cup\cB^+_1(y')\\
\zeta(w)+\frac{\pi(y')-\pi(y)}{2}P(y,w)&\text{if }w\in\cB_1^+(y)\\
\zeta(w)+\frac{\pi(y)-\pi(y')}{2}P(y',w)&\text{if }w\in\cB^+_1(y').
\end{cases}
\end{equation*}
Therefore, the vector $\phi:=\zeta(I-P_*)$ satisfies 
\begin{equation*}
|\phi(w)|\le\begin{cases}
0&\text{if }w\not\in\cB_1^+(y)\cup\cB^+_1(y')\\
\frac{|\pi(y)-\pi(y')|}{2\Delta}&\text{otherwise }.
\end{cases}
\end{equation*}
Hence $\phi(v)=0$ for all but at most $2\D$ vertices $v$, and recalling \eqref{eq:thebounds1} 
we have 
\begin{equation}\label{eq:ass1}
|\phi(w)|\le (2\D)^{-1}\pimin\log^{-K}(n).
\end{equation}
Next, consider the matrix 
\begin{equation*}
M=\sum_{s=0}^{T-1}P_*^s,
\end{equation*}
and notice that
\begin{equation*}
\zeta(I-P_*^T)=\phi M.
\end{equation*}
Since $P_*$ and $\pi^*$ satisfy condition (i) in Lemma \ref{exptail}, %we see that
\begin{equation}
P_*^T=\Pi_*+E,\qquad\text{with}\qquad |E(u,v)|\leq n^{-3},\:\:\forall u,v\in V^*,
\end{equation}
where $\Pi_*$ denotes the matrix with all rows equal to $\pi_*$.
We rewrite the vector $\zeta$ as
\begin{equation*}
\zeta=\alpha\pi_*+\rho,
\end{equation*}
where $\alpha\in\bbR$ and $\rho$ is orthogonal to $\pi_*$, that is $$\scalar{\rho}{\pi_*}=\sum_{v\in V^*}\rho(v)\pi_*(v)=0.$$ 
Therefore,
\begin{align*}
\scalar{\phi M}{\rho}=
\scalar{\rho}{(I-E) \rho}.
\end{align*}
Moreover, 
\begin{equation}\label{eq:ass01}
|\scalar{\phi M}{\rho}|\le\sum_{s=0}^{T-1}|\scalar{\phi }{P_*^s\rho}|\le T\frac{\pimax^*}{\pimin^*}\|\phi\|_2\|\rho\|_2,
\end{equation}
where we use 
\begin{align*}
\scalar{P_*^s\psi }{P_*^s\psi}&\leq \frac1{\pimin^*}\sum_{v}\pi^*(v)(P_*^s\psi)^2(v)
\\& \leq \frac1{\pimin^*}\sum_{u,v}\pi^*(v)P_*^s(v,u)\psi^2(u) =\frac1{\pimin^*}\sum_{u}\pi^*(u)\psi^2(u)\leq 
\frac{\pimax^*}{\pimin^*}\|\psi\|_2^2,
\end{align*}
for any vector $\psi:V^*\mapsto \bbR$.
On the other hand,
% we can lower bound the right hand side in \cref{eq:Mperp}, obtaining
\begin{equation}\label{eq:ass02}
|\scalar{\rho}{(I-E) \rho}|\ge \|\rho\|_2^2-n^{-3}\(\sum_v |\rho(v)|\)^2 \geq \|\rho\|_2^2(1-n^{-2}).\end{equation}
Using \eqref{eq:ass1}, from \eqref{eq:ass01} and \eqref{eq:ass02} we conclude that
%from the latter two estimates that it holds
\begin{equation*}
\|\rho\|_2\le 2T\frac{\pimax^*}{\pimin^*}\|\phi\|_2=2T\frac{\pimax^*}{\pimin^*}\times O(\pimin\log^{-K}(n)).
\end{equation*}
From Theorem \ref{th:pimin} applied to $G^*$ we can assume that $ \frac{\pimax^*}{\pimin^*}=O(\log^{K/3}(n))$ if $K$ is a large enough constant. Since $T=\log^3(n)$, with $K$ sufficiently large one has 
\begin{equation*}\label{eq:Klarge}
\|\rho\|_2\leq\pimin\log^{-K/2}(n).
\end{equation*}
Next, notice that
\begin{align*}
0=
\scalar{\zeta}{1}=\scalar{\alpha\pi_*+\rho}{1}=\alpha+\scalar{\rho}{1}.
\end{align*}
Hence
\begin{equation*}
|\alpha|=|\scalar{\rho}{1}|
\le\sqrt n\,\|\rho\|_2\leq \sqrt n\,\pimin\log^{-K/2}(n).
\end{equation*}
In  conclusion,
\begin{align*}
\zeta(v)^2&\le 2\alpha^2\pi_*(v)^2+2\rho(v)^2 %\\&
\leq  2n\pimin^2\log^{-K}(n)(\pimax^*)^2+ 2\|\rho\|_2^2 \\&
\leq 2n\pimin^2\log^{-K}(n)(\pimax^*)^2 + 2\pimin^2\log^{-K}(n) \leq 4 \pimin^2\log^{-K}(n),
\end{align*}
which implies %Since this bound holds uniformly in $v\in V^*$ we have obtained 
\eqref{eq:ass2}.
\end{proof}

\subsection{The Eulerian case}\label{suse:cover-eulerian}
We prove Theorem \ref{th:covertime-eulerian}. The strategy is the same as for the proof of Theorem \ref{th:covertime}, with some significant simplifications due to the explicit knowledge of the invariant measure $\pi(x)=d_x/m$.  For the upper bound, it is then sufficient  to prove that, setting $t_*=(1+\e)\b n\log n$,  
\begin{align}\label{eq:topro}
\sum_{y\in V_1}\sum_{s\ge t_*}\bP_x(\cA^T_y(s))+
\sum_{y\in V_2}\sum_{s\ge t_*}\bP_x(\cA^T_y(s))=o(n\log n). 
\end{align}
Letting $\cV_d$ denote the set of vertices with degree $d$, reasoning as in \eqref{eq:guessa} we have
\begin{align*}
\sum_{y\in V_1}\sum_{s\ge t_*}\bP_x(\cA^T_y(s))\leq  (1+o(1))\sum_{d=\d}^\D |\cV_d|
\frac{m}{d(1+(1+o(1))d/m)^{t_*}}\end{align*}
Since $ |\cV_d|=n^{\a_d+o(1)}$, $m=\bar d n$, for any fixed $\e>0$ we obtain
\begin{align}\label{eq:topro1}
\sum_{y\in V_1}\sum_{s\ge t_*}\bP_x(\cA^T_y(s))\leq  
 \frac{2m}{\d}
\,\sum_{d=\d}^\D
\exp {\(-\(\tfrac{d\b}{\bar d} - \a_d\)\log n\)} = O(n),
\end{align}
since by definition $\frac{d\b}{\bar d} - \a_d\geq 0$.
Concerning the vertices $y\in V_2$ one may repeat the argument in \eqref{eq:assol} without modifications, to obtain %. In conclusion,
\begin{align}\label{eq:topro2}
\sum_{y\in V_2}\sum_{s\ge t_*}\bP_x(\cA^T_y(s))=o(n\log n). 
\end{align}
Thus, \eqref{eq:topro} follows from \eqref{eq:topro1} and \eqref{eq:topro2}.

It remains to prove the lower bound. We shall prove that for any fixed $d$ such that $|\cV_d|=n^{\a_d+o(1)}$, $\a_d\in(0,1]$, for any $\e>0$, 
\begin{align}\label{eq:lowboeuler1}
\min_{x\in[n]}\,\bP_x\!\(\t_{\rm cov} \geq (1-\e)\frac{\bar d\a_d}d\, n\log^{\g_1} n\)=1-o(1).
%\bE_x\!\(\t_{\rm cov}\)\geq \e_n n\log^{\g_1} n.
\end{align}
We proceed as in the proof of Lemma \ref{lem:secmom2}. Here we choose $W$ as the subset of $\cV_d$ consisting of LTL vertices in the sense of Definition \ref{def:ltl} and such that for all $x,y\in W$ one has $\min\{d(x,y),d(y,x)\}>2\vartheta$. Let us check that this set satisfies 
\begin{align}\label{eq:loel}
|W|\geq n^{\a_d +o(1)}.
\end{align}
Indeed, the vertices that are not LTL are at most $\D^{9\vartheta}$ by Proposition \ref{pr:nice}. Therefore there are at least $|\cV_d|-\D^{9\vartheta}=n^{\a_d +o(1)}$ LTL vertices in $\cV_d$. Moreover, since there are at most $\D^{2\vartheta}$ vertices at undirected distance $2\vartheta$ from any vertex, we can take a subset $W$ of LTL vertices of $\cV_d$ satisfying the requirement that $\min\{d(x,y),d(y,x)\}>2\vartheta$ for all $x,y\in W$ and such that $|W|\geq (|\cV_d|-\D^{9\vartheta})\D^{-2\vartheta}=n^{\a_d +o(1)}.$ 
From here on 
all arguments can be repeated without modifications, with the simplification that we no longer need a proof of Lemma \ref{lem:pistar} since $a$ can be taken to be zero in \eqref{eq:secmom001} in the Eulerian case. The only thing to control is the validity of the bound \eqref{eq:secmom4} with the choice $$t=(1-\e)\frac{\bar d\a_d}d\, n\log n.$$
As in \eqref{eq:secmom4}, it suffices to check that with high probability
\begin{align}\label{eq:lowboeuler10}
\sum_{y\in W}\P(\cA^T_y(t))-T\to \infty.
\end{align}
From \eqref{eq:secmom01} we obtain
\begin{align}\label{eq:lowboeuler101}
\sum_{y\in W}\P\(\cA^T_y(t)\) = (1+o(1))|W|\exp{\(-\tfrac{(1+o(1))d}{m}\,t\)}.
\end{align}
Using \eqref{eq:loel} and $dt/m = (1-\e)\a_d\log n$, \eqref{eq:lowboeuler101} is at least $n^{\e\a_d/2}$ for all $n$ large enough. Since $T=\log^3(n)$ this proves \eqref{eq:lowboeuler10}.

\bigskip

\subsection*{Acknowledgments}
We thank Charles Bordenave and Justin Salez for useful discussions. We acknowledge support of PRIN 2015 5PAWZB ``Large Scale Random Structures", and of INdAM-GNAMPA Project 2019 ``Markov chains and games on networks''.
%\newpage
\bigskip

%----------------------------------------------------------------------
%% Bibliography
%----------------------------------------------------------------------
 	
%\bibliographystyle{splncs04}
\bibliographystyle{plain}
\bibliography{bibRWRD}
 	
 \end{document}